\documentclass[]{article}

\addtolength{\oddsidemargin}{-.3in}
\addtolength{\evensidemargin}{-.3in}
\addtolength{\textwidth}{0.6in}
\addtolength{\topmargin}{-.3in}
\addtolength{\textheight}{0.6in}

\usepackage{graphicx}
\usepackage{amsmath}
\usepackage{amssymb}
\usepackage{amsthm}
\usepackage{mathrsfs}
\usepackage{pxfonts}
\usepackage{enumerate}
\usepackage{color}
\usepackage{mathdots}
\usepackage{sectsty}
\usepackage{tikz}
\usepackage{adjustbox}
\usepackage{enumitem}
\usepackage{caption}
\usepackage{bbold}
\usepackage[hidelinks]{hyperref}
\allowdisplaybreaks
\setcounter{tocdepth}{1}

\sectionfont{\scshape\centering\fontsize{11}{14}\selectfont}
\subsectionfont{\scshape\fontsize{11}{14}\selectfont}
\usepackage{fancyhdr}
\usepackage[nottoc,notlot,notlof]{tocbibind}

\newcommand\shorttitle{ISDE with logarithmic interaction and characteristic polynomials}
\newcommand\authors{T. Assiotis and Z. S. Mirsajjadi}

\fancyhf{}

\fancyhead[C]{%
\ifodd\value{page}
  \small\scshape\authors
\else
  \small\scshape\shorttitle
\fi
}
\fancyfoot[C]{\thepage}

\pagestyle{fancy}


\newtheorem{thm}{Theorem}[section]
\newtheorem{cor}[thm]{Corollary}
\newtheorem{lem}[thm]{Lemma}
\newtheorem{defn}[thm]{Definition}
\newtheorem{rmk}[thm]{Remark}
\newtheorem{prop}[thm]{Proposition}

\newtheorem*{theorem*}{Theorem}

\title{\large \bf ISDE WITH LOGARITHMIC INTERACTION AND CHARACTERISTIC POLYNOMIALS}
\author{\small THEODOROS ASSIOTIS AND ZAHRA SADAT MIRSAJJADI}
\date{}

\begin{document}

\maketitle

\begin{abstract} We consider certain random matrix eigenvalue dynamics, akin to Dyson Brownian motion, introduced by Rider and Valko \cite{Rider-Valko}. This is a multi-dimensional generalisation of a one-dimensional diffusion studied by Matsumoto and Yor \cite{MatsumotoYor2}. We show that from every initial condition, including ones involving coinciding coordinates, the dynamics, enhanced with more information, converge on path-space to a new infinite-dimensional Feller-continuous diffusion process. We show that the limiting diffusion solves an infinite-dimensional system of stochastic differential equations (ISDE) with logarithmic interaction. Moreover, we show convergence in the long-time limit of the infinite-dimensional dynamics starting from any initial condition to the equilibrium measure, given by the inverse points of the Bessel determinantal point process. As far as we can tell, this is: (a) the first path-space convergence result of random matrix dynamics starting from every initial condition to an infinite-dimensional Feller diffusion, (b) the first construction of solutions to an ISDE with logarithmic interaction from every initial condition for which the singular drift term can be defined at time $0$, (c) the first convergence to equilibrium result from every initial condition for an ISDE of this kind. The argument splits into two parts. The first part builds on the method of intertwiners introduced and developed by Borodin and Olshanski \cite{MarkovProc-pathSpace-GT}. The main new ingredients are a uniform, in a certain sense, approximation theorem of the spectrum of a family of random matrices indexed by an infinite-dimensional space and an extension of the method of intertwiners to deal with convergence to equilibrium. The second part introduces a new approach towards convergence of the singular drift term in the dynamics and for showing non-intersection of the limiting paths via certain ``characteristic polynomials" associated to the process. We believe variations of it will be applicable to other infinite-dimensional dynamics coming from random matrices.
\end{abstract}

\tableofcontents

\section{Introduction}

\subsection{Background}

The rigorous study of the non-equilibrium statistical mechanics of an infinite system of particles interacting via a potential was initiated in the seminal works \cite{LanfordI,LanfordII,LanfordNotes} of Lanford. The study of the stochastic version of this problem, when one adds independent Brownian forces, and the dynamics are now described by an infinite-dimensional system of stochastic differential equations (henceforth abbreviated ISDE) was then first taken up by 
Lang \cite{Lang1,LangII} who proved well-posedness of the equilibrium dynamics for certain nice potentials. Many authors then contributed to this program, including Doss, Royer, Rost, Lippner and Fritz, and extended such results to non-equilibrium dynamics and more general potentials, see \cite{Rost,Lippner,DossRoyer,Fritz1}. This program essentially culminates with the fundamental paper of Fritz \cite{Fritz2}, which at least when the dimension of individual particles is one, essentially provides a complete solution theory for potentials which are twice continuously differentiable everywhere and with compact support.

The solution theory of \cite{Fritz2} however excludes singular potentials and the construction of solutions is stated therein as an important open problem. In particular it excludes, as both assumptions fail, arguably the most famous potential in $1$-dimension, namely the logarithm. Formally, such a system of interacting one-dimensional particles is governed by the following singular stochastic differential equations\footnote{The model formally makes sense for an arbitrary positive constant $c$ in front of the singular drift. The choice of $c=2$ is somewhat distinguished as it (again completely formally) corresponds to a Doob $h$-transform (or Doob conditioning) \cite{Doob,Revuz-Yor} of infinitely many independent one-dimensional diffusions solving the equation $\mathrm{d}\mathsf{x}(t)=\sqrt{2\mathfrak{a}(\mathsf{x}(t))}\mathrm{d}\mathsf{w}(t)+\mathfrak{b}(\mathsf{x}(t))\mathrm{d}t$, by the infinite-dimensional version of the Vandermonde determinant $\prod_{i<j}|x_i-x_j|$. For finitely many particles, and special choices of  the functions $\mathfrak{a}(\cdot),\mathfrak{b}(\cdot)$, this conditioning has well-defined and very useful probabilistic meaning, see \cite{Grabiner,KonigOConnell,KonigOConnellRoch,CorwinHammond,BesselLineEnsemble}.}:
\begin{equation}\label{LogInteraction}
\mathrm{d}\mathsf{x}_i(t)=\sqrt{2\mathfrak{a}(\mathsf{x}_i(t))}\mathrm{d}\mathsf{w}_i(t)+\mathfrak{b}(\mathsf{x}_i(t))\mathrm{d}t+``2\mathfrak{a}(\mathsf{x}_i(t))\sum_{j\neq i}\partial_{\mathsf{x}_i(t)}\log \left|\mathsf{x}_i(t)-\mathsf{x}_j(t)\right|\mathrm{d}t",
\end{equation}
for some nice diffusion $\mathfrak{a}$ and drift $\mathfrak{b}$ functions and with the $\mathsf{w}_i$ being independent standard Brownian motions. The singular force of interaction experienced by the $i$-th particle $\mathsf{x}_i$ which involves all other particles $(\mathsf{x}_j)_{j\neq i}$ is in quotes because it may need to be renormalised in some way in order to make sense.

An important motivation for studying systems of particles interacting via a logarithmic potential comes from the theory of random matrices \cite{AGZ,ForresterBook} with the most famous example being the following. If $\left(\mathbf{W}_t;t\ge 0\right)$ is the Brownian motion on $N\times N$ Hermitian matrices then its eigenvalues satisfy the closed system of stochastic differential equations (SDE), called Dyson Brownian motion \cite{Dyson}: 
\begin{align}\label{DysonIntro}
\mathrm{d}\mathsf{x}_i(t)=\mathrm{d}\mathsf{w}_i(t)+\sum_{j=1,j\neq i}^N\frac{1}{\mathsf{x}_i(t)-\mathsf{x}_j(t)}\mathrm{d}t, \ \ i=1,\dots, N.
\end{align}
This SDE has a unique strong solution with almost surely no collisions between particles for all $t>0$, even if started from initial conditions with coinciding coordinates, see \cite{AGZ,Graczyk-Malecki}. Dyson Brownian motion, and its variants, has been the object of study for decades. Beyond its intrinsic interest from the perspective of stochastic analysis, probability and integrable systems, it has been a key tool in proving universality for random matrices, see \cite{ErdosYau}.

Coming back to our discussion of infinite systems, as far as we are aware, the only existence (and uniqueness) result of solutions to an ISDE with logarithmic interaction from concrete initial conditions is 
 in the tour-de-force work of Tsai \cite{Tsai}. This corresponds to the bulk limit (see the seminal work of Spohn \cite{SpohnDyson}) of Dyson Brownian motion \eqref{DysonIntro} and the ISDE is given by (as noted in the above footnote it is possible to include a parameter $\beta\ge 1$ in the drift which corresponds to so-called $\beta$-ensembles \cite{ForresterBook}): 
\begin{align}\label{DysonISDE}
\mathrm{d}\mathsf{x}_i(t)=\mathrm{d}\mathsf{w}_i(t)+\frac{\beta}{2} \lim_{k \to \infty }\sum_{j\neq i:|j-i|\le k}\frac{1}{\mathsf{x}_i(t)-\mathsf{x}_j(t)}\mathrm{d}t, \ \ i\in \mathbb{Z}.
\end{align}
The initial conditions allowed need to satisfy a certain quantitative ``balanced condition": particles' positions are approximately uniform in some averaged sense with lower order corrections, see \cite{Tsai}. Moreover, the solution constructed in \cite{Tsai} satisfies this property almost surely for all times. The same property is also almost surely satisfied by the $\beta$-sine point process \cite{ForresterBook,BrownianCarousel,ValkoVirag} which is the invariant\footnote{We note that the labelled ISDE (\ref{DysonISDE}) itself does not have an invariant measure, see the discussion around equation (1.7) in \cite{OsadaErgodicity}. The simplest possible instance of this phenomenon is that of infinitely many independent Brownian motions $\left(\mathsf{w}_i\right)_{i\in \mathbb{Z}}$. This process does not have an invariant measure on $\mathbb{R}^\mathbb{Z}$ but when viewed as a process on unlabelled configurations the Poisson point process is invariant for it.} \footnote{Strictly speaking this invariance does not follow directly from the techniques of \cite{Tsai} and it is only proven for $\beta=1,2,4$ by showing the solutions in \cite{Tsai} coincide with the ``near-equilibrium" solutions of \cite{OsadaTanemura}.} measure of the dynamics if we look at (\ref{DysonISDE}) as an unlabelled point process. 

It is also important to mention a remarkable, almost thirty year-long program of Osada, later in collaboration with Tanemura on ISDE, see \cite{OsadaCMP,OsadaAOP,OsadaPTRF,OsadaTanemura}. This program develops a general solution theory and can treat a number of examples, also of other singular interactions and in higher dimensions, but it only yields ``near-equilibrium" solutions. These are solutions starting from an abstract, and in particular non-explicit, set of allowed configurations in which the solution stays for all times. This set is of full measure, when viewed as a set of unlabelled configurations, with respect to the invariant\footnote{This probability measure on unlabelled configurations is given as data for the problem. One should think of it as playing the role the sine point process plays for the Dyson bulk ISDE \eqref{DysonISDE}.} probability measure\footnote{For a certain parameter range in our model this measure is infinite and this is another novel feature of the present work.} on point processes. The fact that one deals with an abstract set of initial conditions is not a technical restriction but rather an intrinsic feature of the approach which is based on the theory of Dirichlet forms \cite{MaRockner}. For other applications of Dirichlet form theory on such particle systems, from a geometric viewpoint, see \cite{AKRconfSpace,suzuki2022curvature}.

Although, as far as we can tell there are no other results on ISDE with logarithmic interaction, there is a great number of results on scaling limits of random matrix dynamics. Possibly the most famous being the edge scaling limit of Dyson Brownian motion to the Airy line ensemble  $(\mathcal{ALE}_i(\cdot))_{i=1}^\infty$, see \cite{CorwinHammond}, a central object in the KPZ universality class \cite{CorwinKPZ}. To do this, a probabilistic technique, that employs a certain Gibbs resampling property of the paths was developed in \cite{CorwinHammond}. This led to an enormous amount of activity and has been extremely successful in showing that various models belong to the KPZ universality class \cite{CorwinHammond,DauvergneVirag,XuanWu,aggarwal2024scaling}. Although our goal in this paper is rather orthogonal to these works there are some interesting connections with a new Gibbs resampling property, see Section \ref{SectionGibbs}. Finally, the way limits of random matrix dynamics, and related discrete models, were first rigorously studied was through the scaling limit of their space-time correlation functions, see \cite{ForresterDynamical1,ForresterDynamical2,ForresterBook,PrahoferSpohn,Johansson1,Johansson2,KatoriTanemura1,KatoriTanemura2}. This approach provides delicate quantitative information about the models and explicit formulae for their correlations, but as far as we can tell, cannot be used to establish the type of results we present below. It is interesting to note that, in contrast to other limits of Hermitian matrix stochastic dynamics, the first construction of the limiting object below is directly as an infinite-dimensional diffusion process and described via SDE while the explicit computation of its correlation functions is still an open problem. We will survey relevant previous works and how they compare with ours in Section \ref{SectionHistory}.

The main purpose of this paper is to analyse an ISDE with logarithmic interaction coming from random matrix dynamics for which we can go beyond what was known previously (as far as we can tell each of our main results below is new for any random matrix model and does not follow by other methods) and to introduce some new ideas for studying the singular drift term and non-intersection of the paths via certain ``characteristic polynomials" of the process. We believe that these ideas will be useful in proving analogous results for a class of infinite dimensional dynamics coming from random matrices. The main reason we have chosen to start with this specific model is that it enjoys certain integrable properties which allow us to build on a powerful method of Borodin and Olshanski, see \cite{MarkovProc-pathSpace-GT,MarkovDynam-ThomaCone,OlshanskiApproximation,OlshanskiGenerator,OlshanskiLectureNotes,OlshanskiICM}, originally developed in the discrete setting of dynamics on partitions. It is plausible that this part of the argument may be replaced by more robust techniques in the future. We have not attempted to do this, partly because the intermediate results we need are of independent interest beyond the setting of ISDE.

\subsection{Main results}

The model we consider is the eigenvalue evolution of a diffusion on the space of $N\times N$ non-negative definite matrices introduced by Rider and Valko in \cite{Rider-Valko}, that we recall in Section \ref{SubsectionWellposedness}. They used this to prove a matrix analogue \cite{Rider-Valko} of Dufresne's identity \cite{Dufresne}, partly motivated by problems in the theory of stochastic operators related to random matrices, see \cite{RiderRamirez,RiderSpiking,RiderRamirezVirag,ValkoVirag}. The $N=1$ case of this process is a celebrated one-dimensional diffusion that comes up in Matsumoto-Yor's study of exponential functionals of Brownian motion \cite{MatsumotoYor1,MatsumotoYor2} and is also connected to random polymers \cite{OConnellToda}. It is much-studied in the applied, statistical and financial mathematics literature \cite{AppliedInhomogeneousGeomBm,Pearson,finance} and sometimes goes by the name inhomogeneous geometric Brownian motion.

The dynamics we consider are given by the following system of stochastic equations, with parameter $\eta \in \mathbb{R}$,
\begin{align}\label{FSDE}
\mathrm{d}\mathfrak{x}_i(t) = \mathfrak{x}_i(t) \mathrm{d}\mathsf{w}_i(t) -\frac{\eta}{2}\mathfrak{x}_i(t)\mathrm{d}t+\frac{1}{2}\mathrm{d}t+\sum_{j=1,j\neq i}^N\frac{\mathfrak{x}_i(t)\mathfrak{x}_j(t)}{\mathfrak{x}_i(t)-\mathfrak{x}_j(t)}\mathrm{d}t, \ \ i=1,2,\dots,N,
\end{align}
with the $\mathsf{w}_i$ being independent standard Brownian motions. By general results \cite{Graczyk-Malecki} on such systems in finite dimensions we obtain that \eqref{FSDE} has a unique strong non-exploding solution starting from any initial condition in the chamber:
\begin{align}\label{PosWeylChamber}
 \mathbb{W}_{N,+} = \left\{\mathbf{x}=(x_1,x_2,\dots,x_N)\in\mathbb{R}^{N}: x_1\geq x_2 \ge \dots\geq x_N\ge 0 \right\},
\end{align}
and almost surely, for all positive times lives in the interior of $\mathbb{W}_{N,+}$, see Lemma \ref{lem-wellPosednessN}. It is instructive to see that the SDE \eqref{FSDE}, by rewriting the interaction term, is given by 
\begin{equation}\label{LogSDE}
\mathrm{d}\mathfrak{x}_i(t) = \mathfrak{x}_i(t) \mathrm{d}\mathsf{w}_i(t) -\frac{\eta}{2}\mathfrak{x}_i(t)\mathrm{d}t+\frac{1}{2}\mathrm{d}t+\left[(-N+1) \mathfrak{x}_i(t)+\mathfrak{x}_i(t)^2\sum_{j=1,j\neq i}^N \partial_{\mathfrak{x}_i(t)}\log\left|\mathfrak{x}_i(t)-\mathfrak{x}_j(t)\right|\right]\mathrm{d}t, 
\end{equation}
which elucidates the fact that the stochastic dynamics \eqref{FSDE} correspond to independent one-dimensional diffusions interacting via a logarithmic potential.

In order to state our results precisely we need some notation and definitions. Define the following spaces, endowed with the topology of coordinate-wise convergence,
\begin{align}
\mathbb{W}_{\infty,+}&=\left\{\mathbf{x}=(x_i)_{i=1}^\infty \in \mathbb{R}^{\mathbb{N}}:x_1 \ge x_2 \ge x_3 \ge \cdots \ge 0 \right\},\\
\mathbb{W}_{\infty,+}^\circ&=\left\{\mathbf{x}=(x_i)_{i=1}^\infty \in \mathbb{R}^{\mathbb{N}}:x_1 > x_2 > x_3 > \cdots > 0 \right\}.
\end{align}

In fact, our limiting dynamics will live on an enhanced space containing somewhat more information than the above. It is on this enhanced space, and this is essential, that the dynamics will enjoy a Feller-continuity property. The space is defined as follows.

\begin{defn}
We define the space
\begin{align}\label{Omega+}
\Omega_+\overset{\textnormal{def}}{=}
		\left\{\omega=(\mathbf{x},\gamma)\in \mathbb{W}_{\infty,+}\times \mathbb{R}_+: \sum_{i=1}^{\infty}x_{i}\leq\gamma\right\},
	\end{align}
endowed with the topology of coordinate-wise convergence.   
\end{defn}
We observe that, $\Omega_+$ is locally compact, metrizable and separable and the topology can be metrised with the metric $\mathsf{d}_{\Omega_+}$ as follows, with $\omega=\left(\mathbf{x},\gamma\right),\tilde{\omega}=\left(\tilde{\mathbf{x}},\tilde{\gamma}\right) \in \Omega_+$,
\begin{equation*}
\mathsf{d}_{\Omega_+}(\omega,\tilde{\omega})=\sum_{i=1}^\infty \frac{\left|x_i-\tilde{x}_i\right|}{2^i\left(1+\left|x_i-\tilde{x}_i\right|\right)} + \left| \gamma-\tilde{\gamma} \right|.
\end{equation*}
Moreover, note that $\Omega_+$ is complete and thus it is a Polish space.

Returning to the dynamics, heuristics related to the hard-edge scaling in random matrix theory \cite{ForresterHardEdge,ForresterBook} indicate that we should rescale our stochastic processes in space by $1/N$ to see non-trivial behaviour. In particular, the rescaled SDE (\ref{FSDE}) becomes:
\begin{align}\label{rescaledSDE}
		\mathrm{d}\mathsf{x}_i^{(N)}(t) = \mathsf{x}_i^{(N)}(t) \mathrm{d}\mathsf{w}_i(t) -\frac{\eta}{2}\mathsf{x}_i^{(N)}(t)\mathrm{d}t+\frac{1}{2N}\mathrm{d}t+\sum_{j=1,j\neq i}^N\frac{\mathsf{x}_i^{(N)}(t)\mathsf{x}^{(N)}_j(t)}{\mathsf{x}^{(N)}_i(t)-\mathsf{x}^{(N)}_j(t)}\mathrm{d}t, \ \  i=1,2,\dots,N.
\end{align}

We then embed, for different $N \in \mathbb{N}$, all these processes on the space $\Omega_+$.

\begin{defn}
Define the process $\left(\mathbf{X}^{(N)}(t);t\ge 0\right)$ on $\Omega_+$ by, with $\left(\mathsf{x}^{(N)}(t);t\ge 0\right)$ the solution of \eqref{rescaledSDE}, 
\begin{equation*}
\mathbf{X}^{(N)}(t)=\left(\left(\mathsf{x}_i^{(N)}(t)\right)_{i=1}^\infty,\sum_{i=1}^\infty \mathsf{x}_i^{(N)}(t)\right), \ \ \forall t \ge 0,
\end{equation*}
where $\mathsf{x}_i^{(N)}\equiv 0$, for $i>N$.
\end{defn}

Finally, we recall the definition of a Feller semigroup and Feller process in Definition \ref{DefnFellerSemigroup}, see \cite{Kallenberg}. For a Polish space $\mathfrak{X}$ write $C(\mathbb{R}_+,\mathfrak{X})$ for the space of continuous functions on $\mathbb{R}_+$ with values in $\mathfrak{X}$, endowed with the topology of locally uniform convergence. Note that, this is again a Polish space, see \cite{Kallenberg}. We can now state our first main result.

\begin{thm}\label{MainThm1Intro} Let $\eta \in \mathbb{R}$. Then, there exists a unique Feller semigroup $\left(\mathfrak{P}_\infty(t)\right)_{t\ge 0}$ with associated Feller-Markov process $\left(\mathbf{X}_t^{\Omega_+};t\ge 0\right)=\left(\left(\left(\mathsf{x}_i(t)\right)_{i=1}^\infty,\boldsymbol{\gamma}(t)\right);t\ge 0\right)$ on $\Omega_+$ satisfying the following. Let $\mathbf{X}_0^{\Omega_+}=\omega \in \Omega_+$ be arbitrary and assume $\mathbf{X}^{(N)}(0)\to \omega$ in the topology of $\Omega_+$. Then, as $N \to \infty$,
\begin{equation}\label{PathSpaceConvIntro}
 \mathbf{X}^{(N)} \overset{\textnormal{d}}{\longrightarrow} \mathbf{X}^{\Omega_+}\textnormal{ in } C(\mathbb{R}_+,\Omega_+),
\end{equation}
with $\overset{\textnormal{d}}{\longrightarrow}$ denoting convergence in distribution. In particular, $\left(\mathbf{X}_t^{\Omega_+};t \ge 0\right)$ is a diffusion\footnote{A strong Markov process with continuous sample paths.}. Moreover, there exists a coupling of the $\mathbf{X}^{(N)}$ and $\mathbf{X}^{\Omega_+}$ on a single probability space such that, almost surely, for any $T \ge 0$,
\begin{equation}\label{l^2Intro}
  \sup_{t \in [0,T]}\sum_{i=1}^\infty \left(\mathsf{x}_i^{(N)}(t)-\mathsf{x}_{i}(t)\right)^2 \overset{N \to \infty }{\longrightarrow}  0.
\end{equation}

\end{thm}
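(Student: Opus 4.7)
The plan is to follow the method of intertwiners of Borodin and Olshanski, adapted to the continuous state space $\Omega_+$, and supplemented with a uniform spectral approximation for the Rider-Valko matrix process (the first new ingredient announced in the abstract). The argument splits naturally into three phases: construction of the Feller semigroup, tightness on path space, and construction of the coupling realising (\ref{l^2Intro}). Uniqueness of the Feller semigroup is automatic once the convergence (\ref{PathSpaceConvIntro}) is established, since its finite-dimensional distributions determine the semigroup on the dense-in-$\Omega_+$ image of the embeddings from $\mathbb{W}_{N,+}$.

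For the first phase, I would produce Markov kernels $\Lambda_{N+1}^N \colon \mathbb{W}_{N+1,+} \to \mathbb{W}_{N,+}$, of interlacing type, reflecting the relation between the spectra of the $N\times N$ and $(N+1)\times (N+1)$ Rider-Valko matrices at fixed time. The key algebraic input is the semigroup intertwining $\Lambda_{N+1}^N \mathfrak{P}_N(t) = \mathfrak{P}_{N+1}(t) \Lambda_{N+1}^N$, which I would verify either at the matrix level via the nested minor structure or by a direct SDE computation on (\ref{rescaledSDE}). Combined with the embeddings $\mathbb{W}_{N,+} \hookrightarrow \Omega_+$ and a projective limit, this yields a conservative Markov semigroup $(\mathfrak{P}_\infty(t))_{t\ge 0}$. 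The hard part is to upgrade it to a \emph{Feller} semigroup on $C_0(\Omega_+)$ that is strongly continuous from \emph{every} initial condition, including ones with coinciding coordinates or accumulation at $0$. This is precisely where the $\gamma$-enhancement is essential: without it, initial conditions that are close in the coordinate-wise topology of $\mathbb{W}_{\infty,+}$ but differ in total mass $\sum_i x_i$ would produce different limits because mass can be carried in from infinity in the spectrum. The constraint $\sum_i x_i \le \gamma$ together with the finer topology of $\Omega_+$ restores continuity, but only once one establishes the uniform spectral approximation statement: the law of $\mathbf{X}^{(N)}(t)$, as a function of the initial datum $\omega \in \Omega_+$, is continuous \emph{uniformly in} $N$. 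I expect this uniform approximation to be the main technical obstacle, and would attack it by comparing the spectrum of the $N$-th Rider-Valko matrix to that of an ideal infinite matrix, controlling the remainder via the $\gamma$-coordinate.

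For the second phase, finite-dimensional convergence $\mathbf{X}^{(N)} \to \mathbf{X}^{\Omega_+}$ follows from the semigroup convergence $\mathfrak{P}_N(t)f \to \mathfrak{P}_\infty(t)f$ delivered by the intertwining setup together with the uniform approximation theorem. To upgrade to convergence in $C(\mathbb{R}_+, \Omega_+)$ I would prove tightness of $\{\mathbf{X}^{(N)}\}_N$ in path space. Coordinate-wise tightness of each $\mathsf{x}_i^{(N)}$ follows from standard SDE estimates applied to (\ref{rescaledSDE}), using that after regrouping the singular drift on the $i$-th particle is dominated by the neighbouring coordinates, so moments and a modulus-of-continuity estimate are controlled by the top coordinate. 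Tightness of the $\gamma$-component reduces to a scalar Gronwall bound on the trace $\sum_i \mathsf{x}_i^{(N)}$, which satisfies an SDE whose coefficients are linear in $\gamma$. Continuity of paths in $\Omega_+$, combined with the strong Markov property inherited from the Feller property, yields the diffusion assertion.

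For the third phase, I would realise all $\mathbf{X}^{(N)}$ and $\mathbf{X}^{\Omega_+}$ on a single probability space as a projective system driven by a shared source of randomness compatible with the kernels $\Lambda_{N+1}^N$; concretely, this can be done via a nested-minor matrix coupling. The $\Lambda$-compatibility forces interlacing between consecutive levels, hence pointwise monotone convergence $\mathsf{x}_i^{(N)}(t) \to \mathsf{x}_i(t)$ for each fixed $t$ and $i$. The bound (\ref{l^2Intro}) then reduces to showing that the sum-of-squares discrepancy is dominated by the scalar discrepancy $\boldsymbol{\gamma}(t) - \sum_i \mathsf{x}_i^{(N)}(t)$, whose locally uniform vanishing is a consequence of the path-space convergence of the $\gamma$-coordinate established in the second phase.
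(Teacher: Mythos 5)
Your Phase 1 matches the paper's skeleton (kernels $\Lambda_N^{N+1}$, intertwining with $\mathfrak{P}_N(t)$, projective limit, Feller boundary $\Omega_+$), but you misattribute the role of the uniform approximation result: in the paper the Feller property of $\mathfrak{P}_\infty$ comes directly from the intertwiner machinery (Theorem~\ref{thm-MarkovProcessonBoundary}) together with the density Lemma~\ref{LemmaDensity}, and the uniform approximation theorem (Theorem~\ref{thm-UnifApp}) is not about uniform-in-$N$ continuity of $\omega\mapsto\mathsf{Law}(\mathbf{X}^{(N)}(t))$ -- it is the statement $\sup_{\mathbf{x}^{(N)}}|\Lambda_K^\infty g(\omega(\mathbf{x}^{(N)}))-\Lambda_K^N g(\mathbf{x}^{(N)})|\to 0$, which feeds into a \emph{generator} convergence $\mathsf{L}_N\mathbf{f}_N\to\mathsf{L}_\infty\mathbf{f}$ on the explicit core \eqref{core}.

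The genuine gap is in your Phase 2. You propose to establish tightness of $\{\mathbf{X}^{(N)}\}_N$ by ``standard SDE estimates'' applied to \eqref{rescaledSDE}, claiming the singular drift on the $i$-th particle is controlled by the neighbouring coordinates. This is precisely what fails uniformly in $N$: the drift $\sum_{j\neq i}\mathsf{x}_i^{(N)}\mathsf{x}_j^{(N)}/(\mathsf{x}_i^{(N)}-\mathsf{x}_j^{(N)})$ diverges near collisions, and to get an $N$-uniform modulus-of-continuity bound you would need $N$-uniform control on the probability that two paths come close -- something the paper explicitly flags as a major difficulty, and only achieves later (Theorem~\ref{thm-NonCollision,>0}) using the Lyapunov functions \eqref{f_n^N} built from characteristic polynomials, an argument which \emph{takes path-space convergence as an input} and therefore cannot be used to establish it. The paper sidesteps tightness entirely: convergence of generators on the core $\mathscr{C}_\infty$ (via Proposition~\ref{prop-LNtoL} and Corollary~\ref{cor-LNtoL}) combined with \cite{Ethier-Kurtz}[Ch.\ 4, Thm.\ 2.11] gives convergence in the Skorokhod space $D(\mathbb{R}_+,\Omega_+)$ directly, and then $C(\mathbb{R}_+,\Omega_+)$ being closed in Skorokhod finishes the job. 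Without this route, your Phase 2 has no working mechanism.

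Your Phase 3 also diverges from the paper and contains errors. The paper's Proposition~\ref{Prop-Conv in l^2} obtains the coupling simply by Skorokhod representation from the path-space convergence of Phase 2, and the $\ell^2$ bound \eqref{l^2Intro} is obtained by splitting the sum at index $M$: the tail $\sum_{i>M}(\mathsf{x}_i^{(N)})^2$ is estimated by $\mathsf{x}_M^{(N)}\cdot\sum_{i\ge M}\mathsf{x}_i^{(N)}$, both factors controlled by the convergence in $\Omega_+$, and the head by path-space convergence. There is no monotone nested-minor coupling: the consistency \eqref{HN=corner HN+1} is a fixed-time marginal identity, not a pathwise coupling, and in any case the discrepancy you invoke, $\boldsymbol{\gamma}^{(N)}(t)-\sum_i\mathsf{x}_i^{(N)}(t)$, is identically zero for finite $N$ by definition of the embedding \eqref{embedding+}, so it cannot dominate anything.
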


As far as we can tell, this is the first result on path-space convergence of random matrix dynamics from every single initial condition for which the finite-dimensional dynamics are well-defined. Observe that, the limiting diffusion $(\mathbf{X}_t^{\Omega_+};t \ge 0)$ can start from configurations with coinciding points. In fact, it can even start from the very singular configurations $((0)_{i=1}^\infty,\gamma)$ of having all particles at $0$!

It is a natural question whether the limiting process solves an ISDE with logarithmic interaction. Our second main result answers this in the affirmative and in fact gives a lot more information. Throughout this paper we use the notation $\mathsf{Law}(\mathbf{Y})$ for the law of a random variable $\mathbf{Y}$ taking values in a Polish space.

\begin{thm}\label{MainThm2Intro} Let $\eta \in \mathbb{R}$. Let $\mathbf{x} \in \mathbb{W}_{\infty,+}^\circ$ and $\gamma \ge \sum_{i=1}^\infty x_i$. Consider the Feller process on $\Omega_+$, $\left(\mathbf{X}_t^{\Omega_+};t\ge 0\right)=\left(\left(\left(\mathsf{x}_i(t)\right)_{i=1}^\infty,\boldsymbol{\gamma}(t)\right);t\ge 0\right)$ from Theorem \ref{MainThm1Intro} with initial condition $\mathbf{X}_0^{\Omega_+}=(\mathbf{x},\gamma)$. Then, almost surely, for all $t \ge 0$, $\left(\mathsf{x}_i(t)\right)_{i=1}^\infty \in \mathbb{W}_{\infty,+}^\circ$ and the $\left(\mathsf{x}_{i}(\cdot)\right)_{i=1}^\infty$ is a weak\footnote{Recall that a weak solution means that on a (filtered) probability space $\left(\mathcal{V},\left(\mathfrak{V}_t\right)_{t\ge 0},\mathbf{V}\right)$ we can define a $\left(\mathfrak{V}_t\right)_{t\ge 0}$-adapted sequence of independent standard Brownian motions $(\mathsf{w}_i)_{i=1}^\infty$ and the process $(\mathsf{x}_i)_{i=1}^\infty$  which is adapted with respect to $\left(\mathfrak{V}_t\right)_{t\ge 0}$ such $\mathbf{V}$-a.s. (\ref{ISDEintro}) holds for all $t\ge 0$; in particular the pair $((\mathsf{x}_i)_{i=1}^\infty,(\mathsf{w}_i)_{i=1}^\infty)$ is considered a weak solution to \eqref{ISDEintro}, see \cite{Revuz-Yor,KaratzasShreve}. }
solution to the following ISDE with logarithmic interaction starting from $\mathbf{x}$, namely it satisfies,

\begin{equation}\label{ISDEintro}
 \mathsf{x}_i(t)=x_i+\int_{0}^t \mathsf{x}_i(s)\mathrm{d}\mathsf{w}_i(s) -\frac{\eta}{2}\int_0^t\mathsf{x}_i(s)\mathrm{d}s+\int_0^t\sum_{j=1,j\neq i}^{\infty}\frac{\mathsf{x}_i(s)\mathsf{x}_j(s)}{\mathsf{x}_i(s)-\mathsf{x}_j(s)}\mathrm{d}s, \ \ i \in \mathbb{N},
\end{equation}
where the $\left(\mathsf{w}_i(t);t\ge 0\right)$ are independent standard Brownian motions. Moreover, if we denote by $\left(\mathbf{X}(t;\gamma);t\ge 0\right)$ the solution above corresponding to $\gamma$, then
\begin{equation*}
\mathsf{Law} \big(\mathbf{X}(\cdot;\gamma)\big)\neq \mathsf{Law}\big(\mathbf{X}(\cdot;\tilde{\gamma})\big), \textnormal{whenever } \gamma \neq \tilde{\gamma}.
\end{equation*}
Finally, out of these solutions there exists a unique one such that almost surely $t \mapsto \sum_{i=1}^\infty \mathsf{x}_i(t;\gamma)$ is continuous for all $t \ge 0$ given by the choice $\gamma=\sum_{i=1}^\infty x_i$
and this solution is a Markov process.
\end{thm}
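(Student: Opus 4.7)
The plan is to take the coupling provided by Theorem \ref{MainThm1Intro} on a single probability space and pass to the limit $N\to\infty$ in the rescaled SDE \eqref{rescaledSDE} term by term. The bound \eqref{l^2Intro} yields, in particular, $\sup_{t\in[0,T]}\left|\mathsf{x}_i^{(N)}(t)-\mathsf{x}_i(t)\right|\to 0$ almost surely for every fixed $i$, so the non-singular pieces are routine: $\int_0^t\mathsf{x}_i^{(N)}(s)\,\mathrm{d}\mathsf{w}_i(s)\to\int_0^t\mathsf{x}_i(s)\,\mathrm{d}\mathsf{w}_i(s)$ by It\^o's isometry together with uniform-in-$N$ moment bounds on $\mathsf{x}_i^{(N)}$ (which are needed anyway for Theorem \ref{MainThm1Intro}), the drift $-\tfrac{\eta}{2}\int_0^t\mathsf{x}_i^{(N)}(s)\,\mathrm{d}s$ converges by dominated convergence, and the $\tfrac{t}{2N}$ term vanishes. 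It is essential that the Brownian motions in the coupling be consistent across $N$, which is part of what the construction behind Theorem \ref{MainThm1Intro} should deliver, so that all limiting stochastic integrals are driven by the same $\mathsf{w}_i$.

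The main obstacle is the singular interaction term, and the establishment of non-collision. For this I would introduce the characteristic polynomial
\[
\mathsf{P}_t(z)=\prod_{j=1}^{\infty}\left(1-\frac{\mathsf{x}_j(t)}{z}\right),
\]
together with its finite analogues $\mathsf{P}_t^{(N)}(z)=\prod_{j=1}^{N}\left(1-\mathsf{x}_j^{(N)}(t)/z\right)$; both are well-defined and analytic on $\mathbb{C}\setminus[0,\infty)$ because $\sum_j\mathsf{x}_j(t)\le\boldsymbol{\gamma}(t)<\infty$. The key observation is that the singular sum $\sum_{j\ne i}\frac{\mathsf{x}_i\mathsf{x}_j}{\mathsf{x}_i-\mathsf{x}_j}$ is encoded as a residue of $z^2\partial_z\log\mathsf{P}_t(z)$ at $z=\mathsf{x}_i$, so convergence of the drift reduces to locally uniform convergence of $\mathsf{P}_t^{(N)}$ and its derivative on suitable contours encircling $\mathsf{x}_i$, which follows from the $\ell^2$ coupling together with tail control of the smallest particles. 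Non-collision is the deeper issue: one must rule out that any two of the $\mathsf{x}_i(t)$ coincide for $t>0$. I would argue this by applying It\^o calculus to $\log\mathsf{P}_t(z)$ along continuous contours, obtaining an SDE for the zeros of $\mathsf{P}_t$ that forbids merging under the limiting drift, with the a priori estimates required for the argument inherited uniformly from the $N$-level. Combined with drift convergence, this yields $(\mathsf{x}_i(t))_{i=1}^\infty\in\mathbb{W}_{\infty,+}^\circ$ for all $t\ge 0$ and the ISDE \eqref{ISDEintro}.

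The $\gamma$-dependence of the law is then immediate from the Feller property: if $\mathsf{Law}(\mathbf{X}(\cdot;\gamma))=\mathsf{Law}(\mathbf{X}(\cdot;\tilde{\gamma}))$, projecting onto the $\boldsymbol{\gamma}$-coordinate at $t=0$ forces $\gamma=\tilde{\gamma}$. For the uniqueness of the solution with continuous $t\mapsto\sum_i\mathsf{x}_i(t;\gamma)$, note that at the prelimit level one has the identity $\sum_i\mathsf{x}_i^{(N)}(t)=\boldsymbol{\gamma}^{(N)}(t)$, which passes to the limit as equality $\sum_i\mathsf{x}_i(t)=\boldsymbol{\gamma}(t)$ exactly for the distinguished choice $\gamma=\sum_i x_i$; continuity of the sum then follows from that of $\boldsymbol{\gamma}(\cdot)$. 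For strictly larger $\gamma$, the positive gap $\boldsymbol{\gamma}(t)-\sum_i\mathsf{x}_i(t)$ represents mass ``at infinity" in the approximating sequence, and an argument ruling out continuous inflow of this mass into the finite-index coordinates shows that $t\mapsto\sum_i\mathsf{x}_i(t;\gamma)$ must fail to be continuous somewhere. Finally, the Markov property of the distinguished solution follows from that of the $\Omega_+$-valued Feller diffusion, since in the case $\gamma=\sum_i x_i$ one has $\boldsymbol{\gamma}(t)=\sum_i\mathsf{x}_i(t)$, so the natural filtration generated by $(\mathsf{x}_i(\cdot))_{i=1}^\infty$ coincides with that of the full enhanced process.
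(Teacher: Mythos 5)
Your overall outline (couple via Theorem~\ref{MainThm1Intro}, pass to the limit in the SDE, use characteristic polynomials for the singular drift, use the filtration identification for the Markov property) matches the paper, but three of the steps are either missing the actual argument or are incorrect as stated.

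First, the non-collision argument is structurally circular as you describe it. You propose to apply It\^{o} calculus to $\log\mathsf{P}_t(z)$ for the \emph{limiting} process, read off ``an SDE for the zeros of $\mathsf{P}_t$," and conclude that merging is forbidden. But you have no SDE for the limiting process yet --- that is precisely what the theorem establishes, and it is established \emph{after} proving non-collision. The paper (Theorem~\ref{thm-NonCollision,>0}) avoids this by doing all It\^{o} calculus at the $N$-level, on the Lyapunov functions $f_n^N=-\log\Psi_n^N$ with $\Psi_n^N=\prod_{i\le n}\prod_{j>n}\bigl(1-x^{(N)}_j/x^{(N)}_i\bigr)$; the decisive observation is that the term in the It\^{o} expansion coming from the singular interaction has a definite sign and can be dropped, yielding a uniform-in-$N$ bound on the collision times for one pair of particles at a time, via a double induction. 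You allude to ``a priori estimates inherited uniformly from the $N$-level," but that is where the theorem actually lives, and your proposal contains no substitute for it.

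Second, you do not supply a proof of the identity $\sum_{i}\mathsf{x}_i(t;\gamma)=\boldsymbol{\gamma}(t)$ for all $t>0$ (the statement \eqref{Claim-gamma=sum}), which is the load-bearing technical step for both the non-uniqueness statement and the uniqueness of the distinguished solution. Your remark about ``ruling out continuous inflow of this mass" is a heuristic, not an argument. The paper's proof is nontrivial: summing the ISDE over $i\le M$ produces the term $M\int_0^t\bigl(\boldsymbol{\gamma}(s)-\sum_i\mathsf{x}_i(s)\bigr)\mathrm{d}s$, which scales linearly in $M$ while all other terms stay bounded, forcing the time integral to vanish; this is then upgraded from almost-every $t$ to all $t>0$ by showing that any discontinuity of $t\mapsto\sum_i\mathsf{x}_i(t)$ would be an upward jump that would violate $\sum_i\mathsf{x}_i(t)\le\boldsymbol{\gamma}(t)$.

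Third, your non-uniqueness argument is incorrect as written. The object $\mathbf{X}(\cdot;\gamma)$ is the sequence $(\mathsf{x}_i(\cdot;\gamma))_{i=1}^\infty$; it does \emph{not} carry a $\boldsymbol{\gamma}$-coordinate, so ``projecting onto the $\boldsymbol{\gamma}$-coordinate at $t=0$" is not available, and in any case at $t=0$ both solutions have $\sum_i\mathsf{x}_i(0)=\sum_i x_i$ regardless of $\gamma$. The correct argument (Theorem~\ref{thm-NonUniqueness}) is that $\sum_i\mathsf{x}_i(t;\gamma)=\boldsymbol{\gamma}(t)$ for $t>0$, $\boldsymbol{\gamma}$ is continuous with $\boldsymbol{\gamma}(0)=\gamma$, and therefore the distribution of the functional $t\mapsto\sum_i\mathsf{x}_i(t;\gamma)$ recovers $\gamma$ via its $t\to 0^+$ limit --- which again relies on the identity you have not proved. (A minor additional point: the singular drift is the \emph{regular part} of $z^2\partial_z\log\mathsf{P}_t(z)$ at $z=\mathsf{x}_i$, not the residue; the residue there equals $\mathsf{x}_i^2$.) The construction of the driving Brownian motions is also not simply a matter of ``It\^{o} isometry": the paper carefully extracts a local martingale limit and invokes L\'evy's characterisation, whereas it is not clear that your coupling gives Brownian motions that are literally common across $N$.

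Your treatment of the Markov property is correct and coincides with Proposition~\ref{prop-MarkovProperty}: once $\boldsymbol{\gamma}(t)=\sum_i\mathsf{x}_i(t)$ for all $t\ge 0$, the natural filtration of $(\mathsf{x}_i)_{i=1}^\infty$ coincides with that of the enhanced process, and the Feller--Markov property descends.
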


Note that, the singular interaction drift in \eqref{ISDEintro} is really a renormalised\footnote{Observe that, without subtracting the diverging counterterm the logarithmic interaction is clearly infinite.} logarithmic interaction term, since
\begin{equation*}
\sum_{j=1, j\neq i}^\infty\frac{\mathsf{x}_i(t)\mathsf{x}_j(t)}{\mathsf{x}_i(t)-\mathsf{x}_j(t)}=\lim_{N\to \infty }\left[(-N+1)\mathsf{x}_i(t)+\mathsf{x}_i(t)^2\sum_{j=1, j\neq i}^N \frac{1}{\mathsf{x}_i(t)-\mathsf{x}_j(t)}\right].
\end{equation*}
In some sense, we could interpret the dynamics \eqref{ISDEintro} as geometric/exponential Brownian motions interacting via a renormalised logarithmic potential. As far as we know, this result is the first construction of solutions to an ISDE with logarithmic interaction from every single initial condition for which the infinitesimal singular drift term makes sense at time $t=0$. It is optimal since we need the condition $\sum_{i=1}^\infty x_i<\infty$ for otherwise the drift term is infinite. Also, we note that, the initial configurations, and more generally the distribution of the dynamics at time $t\ge 0$, do not need to resemble in any qualitative sense the invariant measure (when it exists) to be defined below, like they do, to some extent, in the case of the bulk Dyson ISDE \cite{Tsai}. This result is also the first, as far as we can tell\footnote{As far as we can tell, even for the class of initial conditions considered in the works \cite{Tsai,OsadaTanemura}, a single solution is first constructed which is then shown (under additional assumptions) to be unique.}, concrete construction of non-unique (even from a single initial condition) solutions for ISDE with logarithmic interaction. This is an explicit illustration of a phenomenon that is not seen in finite-dimensional random matrix dynamics. 

Observe that, by the change of variables $\mathsf{y}_i=\log \mathsf{x}_i$ and It\^{o}'s formula, which is valid by virtue of Theorem \ref{MainThm2Intro}, we construct solutions to the following infinite system of singularly interacting Brownian motions from any initial condition, $\mathbf{y}=(y_i)_{i=1}^\infty \in \mathbb{R}^\mathbb{N}$, so that $y_i>y_{i+1}$, for all $i\in \mathbb{N}$, and $\sum_{i=1}^\infty \mathrm{e}^{y_i}<\infty$ (again this requirement is optimal),
\begin{equation}\label{ISDElogcoord}
\mathrm{d}\mathsf{y}_i(t)=\mathrm{d}\mathsf{w}_i(t)-\frac{1+\eta}{2}\mathrm{d}t+
\sum_{j=1, j\neq i}^{\infty}\frac{\mathrm{e}^{\mathsf{y}_j(t)}}{\mathrm{e}^{\mathsf{y}_i(t)}-\mathrm{e}^{\mathsf{y}_j(t)}}\mathrm{d}t, \  \ i\in\mathbb{N}.
\end{equation}
The interaction term ($\partial_{\mathsf{y}_i}\log|1-\mathrm{e}^{\mathsf{y}_j-\mathsf{y}_i}|$) between particles $\mathsf{y}_i$ and $\mathsf{y}_j$ is reminiscent, but not the same, to the interaction term $\coth(\mathsf{y}_i-\mathsf{y}_j)$ coming up in (finite-dimensional) radial Heckman-Opdam processes, see \cite{HeckmanOpdam}.

We now move to our results on the invariant measure and convergence to equilibrium for the dynamics when $\eta>-1$. Note that, this restriction is also required for the finite-dimensional dynamics \eqref{FSDE} for otherwise there is no invariant probability measure. In fact, the construction of solutions to the ISDE with logarithmic interaction above, when $\eta \le -1$, appears to be the first, of any kind, for which there is no invariant probability measure when viewed as an evolution on unlabelled point processes. We need some more notation and terminology. Let $\mathscr{I}\subset \mathbb{R}$ be a union of open intervals. Let $\mathsf{Conf}(\mathscr{I})$ denote the space of configurations over $\mathscr{I}$, namely locally finite collections of points in $\mathscr{I}$, or equivalently the space of $\mathbb{Z}_+$-valued measures on $\mathscr{I}$ endowed with the vague topology, see \cite{BorodinDet} for details. A determinantal point process on $\mathscr{I}$, with correlation kernel $\mathfrak{K}$, is a probability measure on $\mathsf{Conf}(\mathscr{I})$ which is determined\footnote{Subject to certain mild conditions so that the correlation functions determine the point process, see \cite{Lenard}.} by the fact that all its correlation functions $\left(\rho_{n}\right)_{n=1}^\infty$ with respect to Lebesgue measure are given as $\rho_n(x_1,x_2,\dots,x_n)=\det(\mathfrak{K}(x_i,x_j))_{i,j=1}^n$, see \cite{JohanssonDet,BorodinDet} for rigorous details.

\begin{defn}
Let $\eta>-1$. The inverse Bessel point process $\mathsf{IBes}^{\textnormal{Conf}}_\eta$ with parameter $\eta$ is the determinantal point process on $(0,\infty)$ with correlation kernel $\mathsf{K}_{\eta}$ given by 
\begin{align*}
\mathsf{K}_\eta(x,y)=
\frac{8}{xy}\mathbb{J}_\eta\left(\frac{8}{x},\frac{8}{y}\right), \ \ x,y \in (0,\infty),
\end{align*}
where $\mathbb{J}_\eta\left(x,y\right)$ is the so-called the Bessel kernel, with $J_\eta$ the Bessel function of order $\eta$,
\begin{align*}
\mathbb{J}_\eta\left(x,y\right)=
\frac{x^{\frac{1}{2}}J_{\eta+1}\left(x^{\frac{1}{2}}\right)J_{\eta}\left(y^{\frac{1}{2}}\right)-
y^{\frac{1}{2}}J_{\eta+1}\left(y^{\frac{1}{2}}\right)J_{\eta}\left(x^{\frac{1}{2}}\right)
}{2(x-y)}.
\end{align*}
\end{defn}

$\mathsf{IBes}^{\textnormal{Conf}}_\eta$ is called the inverse Bessel point process because it simply consists of the inverse points (up to multiplicative constant) of the Bessel determinantal point process with correlation kernel $\mathbb{J}_\eta$ \cite{ForresterHardEdge}. The Bessel point process is the universal limit arising at the hard edge of random matrices, see \cite{ForresterHardEdge,ForresterBook,RiderBesselUniversality}.

Since $\mathsf{IBes}^{\textnormal{Conf}}_\eta$ consists of distinct points it gives rise to a unique probability measure  $\mathsf{IBes}_\eta$ on $\mathbb{W}_{\infty,+}^\circ$ by labelling the points of $\mathsf{IBes}^{\textnormal{Conf}}_\eta$ in a decreasing order. Define the space
\begin{align*}
\Omega_+^0\overset{\textnormal{def}}{=}\left\{\omega=\left(\mathbf{x},\gamma\right)\in\Omega_+:\sum_{i=1}^{\infty}x_i=\gamma\right\},
\end{align*}
and write $\pi:\Omega_+ \to \mathbb{W}_{\infty,+}$ for the map given by $\pi((\mathbf{x},\gamma))=\mathbf{x}$. 

\begin{defn}\label{DefInvariantMeasure}
Let $\eta>-1$. Define $\mathfrak{M}^\eta$ to be the unique Borel probability measure on $\Omega_+$ which is supported on $\Omega_+^0$ and satisfies 
\begin{align*}
\pi_*\mathfrak{M}^{\eta}=\mathsf{IBes}_{\eta}.
\end{align*}
\end{defn}

Then, we have the following result on convergence to equilibrium for the process $\left(\mathbf{X}_t^{\Omega_+};t \ge 0\right)$ on $\Omega_+$.

\begin{thm}\label{MainThmConvEq} 
Let $\eta>-1$ and $\mathfrak{K}$ a Borel probability measure on $\Omega_+$. Consider the Feller process $\left(\mathbf{X}_t^{\Omega_+};t \ge 0\right)$ constructed in Theorem \ref{MainThm1Intro} and suppose $\mathsf{Law}\left(\mathbf{X}_0^{\Omega_+} \right)=\mathfrak{K}$. Then, as $t\to\infty$,
\begin{equation}
\mathbf{X}_{t}^{\Omega_+} \overset{\textnormal{d}}{\longrightarrow} \mathbf{Z}, \ \ \textnormal{where } \mathsf{Law}\left(\mathbf{Z}\right)=\mathfrak{M}^\eta.
\end{equation}
In particular, if $\eta>-1$, $\mathfrak{M}^\eta$ is the unique invariant measure of $\left(\mathbf{X}_t^{\Omega_+};t \ge 0\right)$.
\end{thm}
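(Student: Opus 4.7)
My approach combines the Borodin--Olshanski method of intertwiners (as extended in this paper) with a tightness-plus-uniqueness scheme, and the plan proceeds in three stages. First, I would establish invariance of $\mathfrak{M}^\eta$ under $\left(\mathfrak{P}_\infty(t)\right)_{t\ge 0}$. The finite-dimensional SDE (\ref{rescaledSDE}) is a reversible diffusion whose unique invariant probability measure $\nu_N$ on $\mathbb{W}_{N,+}$ can be written down explicitly from the diffusion and drift coefficients (a Laguerre-type ensemble, equivalent under $y=1/x$ to a finite-$N$ inverse-Bessel ensemble), and exists precisely for $\eta>-1$. The embedding $\mathbf{x}\mapsto\bigl(\mathbf{x},\sum_i x_i\bigr)$ pushes $\nu_N$ into $\Omega_+$, and classical hard-edge convergence yields weak convergence to $\mathfrak{M}^\eta$. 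Feeding this initial law into Theorem \ref{MainThm1Intro} transports stationarity to the limit.

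Second, I would prove uniqueness of the invariant measure using the intertwining kernels $\Lambda_N^\infty\colon\Omega_+\rightsquigarrow\mathbb{W}_{N,+}$ underlying the construction of Theorem \ref{MainThm1Intro}; these satisfy $\Lambda_N^\infty\circ P_t^{(N)}=\mathfrak{P}_\infty(t)\circ \Lambda_N^\infty$. For any invariant probability $\mathfrak{M}'$ of $\mathfrak{P}_\infty$, the pushforward $(\Lambda_N^\infty)_*\mathfrak{M}'$ is invariant for the positively recurrent finite-dimensional diffusion and so must equal $\nu_N$. Since $\mathfrak{M}^\eta$ is characterized, through Definition \ref{DefInvariantMeasure} and the coherence of the family $\{\nu_N\}_{N\ge 1}$, as the unique probability on $\Omega_+$ supported on $\Omega_+^0$ with these projections, it remains to show that any invariant measure concentrates on $\Omega_+^0$. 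For this I would analyze the generator acting on functions of the defect $\boldsymbol{\gamma}(t)-\sum_{i=1}^\infty \mathsf{x}_i(t)$, arguing via Theorem \ref{MainThm2Intro} and a dissipative estimate that this defect has stationary distribution $\delta_0$.

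Third, for convergence from an arbitrary initial law $\mathfrak{K}$, I would establish tightness of $\{\mathsf{Law}(\mathbf{X}_t^{\Omega_+})\}_{t\ge 0}$ in $\mathcal{P}(\Omega_+)$. Since relatively compact subsets of $\Omega_+$ are of the form $\{\omega:\gamma\le c\}$ (by Tychonoff, using $0\le x_i\le\gamma$), tightness reduces to tightness of the real-valued process $\boldsymbol{\gamma}(t)$ as $t\to\infty$. The required uniform-in-$t$ bound follows for $\eta>0$ from the linear ODE satisfied by $\mathbb{E}[\Gamma^{(N)}(t)]$ (obtained by summing (\ref{rescaledSDE}) and noting that the logarithmic-interaction term vanishes by antisymmetry), combined with the $\ell^2$-coupling (\ref{l^2Intro}). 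For $\eta\in(-1,0]$, where $\mathfrak{M}^\eta$ may have infinite first moment, I would use a Lyapunov function such as $\log(1+\gamma)$ or a fractional moment $\gamma^\kappa$ for small $\kappa>0$ to control the tails. Any subsequential weak limit of $\mathsf{Law}(\mathbf{X}_t^{\Omega_+})$ is invariant by Feller continuity of $\mathfrak{P}_\infty$, hence equals $\mathfrak{M}^\eta$ by the uniqueness step, which gives convergence of the full trajectory.

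The main obstacle will be extending the intertwining framework, originally designed for constructing semigroups at finite times, to handle long-time behaviour, as emphasized in the abstract. The two technical novelties are (i) verifying that every invariant measure concentrates on $\Omega_+^0$ so that the coherent $\Lambda_N^\infty$-projections fully determine it, and (ii) obtaining uniform-in-$t$ tightness of $\boldsymbol{\gamma}$ in the parameter range $\eta\in(-1,0]$, where the natural linear bound fails and a more delicate Lyapunov analysis is required to dominate the $\gamma$-tail against a potentially non-integrable stationary measure.
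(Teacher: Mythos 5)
Your plan is a genuinely different route from the paper's, and it contains one real gap and considerable unnecessary machinery. The paper's proof of Theorem \ref{MainThmConvEq} is a direct application of its abstract Theorem \ref{thm-ergodicity}: for $f=\Lambda_N^\infty f_N$ with $f_N\in C_0(\mathbb{W}_{N,+})$, the intertwining $\mathfrak{P}_\infty(t)\Lambda_N^\infty=\Lambda_N^\infty\mathfrak{P}_N(t)$ gives $\nu_\infty\mathfrak{P}_\infty(t)(\Lambda_N^\infty f_N)=(\nu_\infty\Lambda_N^\infty)\mathfrak{P}_N(t)(f_N)$, which by the finite-dimensional ergodicity of Proposition \ref{prop-ConvToEquilibrium} converges to $\mathfrak{M}_N^\eta(f_N)=\mathfrak{M}^\eta(\Lambda_N^\infty f_N)$. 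Since $\bigcup_N\Lambda_N^\infty C_0(\mathbb{W}_{N,+})$ is dense in $C_0(\Omega_+)$ (Lemma \ref{LemmaDensity}), this is vague convergence, and because the target $\mathfrak{M}^\eta$ is a full probability measure, vague convergence upgrades automatically to weak convergence. This completely bypasses tightness, Lyapunov functions, and identification of subsequential limits; uniqueness of the invariant measure then falls out by setting $\nu_\infty$ equal to an arbitrary invariant measure. Notice that this \emph{is} the extension of the intertwining framework to long-time behaviour that you flag as the main obstacle, but which your proposal does not actually carry out.

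The concrete gap in your scheme is the assertion that ``any subsequential weak limit of $\mathsf{Law}(\mathbf{X}_t^{\Omega_+})$ is invariant by Feller continuity.'' This is false in general: if $\mathsf{Law}(\mathbf{X}_{t_n})\to\mu$ along some $t_n\to\infty$, Feller continuity gives $\mathsf{Law}(\mathbf{X}_{t_n+s})\to\mu\mathfrak{P}_\infty(s)$, but nothing forces $\mathsf{Law}(\mathbf{X}_{t_n+s})$ to converge to $\mu$ as well, so one cannot conclude $\mu\mathfrak{P}_\infty(s)=\mu$. (Rotation on a circle is the standard counterexample: subsequential limits of $\delta_{x_0+t}$ are point masses, none of which is the invariant Lebesgue measure.) The standard patches — Krylov–Bogoliubov averaging, an $e$-property, or Harris-type coupling — only yield convergence of Cesàro averages or require additional regularity you have not established. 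Beyond this gap, your step 2 (uniqueness via a dissipative estimate on the defect $\boldsymbol{\gamma}-\sum_i\mathsf{x}_i$) imports the heavy machinery of Theorem \ref{MainThm2Intro}/\ref{thm-ISDE} to prove something the intertwining identity gives for free, and your step 3 tightness problem for $\eta\in(-1,0]$ — which you correctly identify as delicate — simply does not arise in the paper's argument.
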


\begin{rmk}
It is interesting to note that the finite-dimensional version of \eqref{ISDEintro}, which is different from \eqref{FSDE}, does not have an invariant probability measure. Looking at the equation in log-coordinates \eqref{ISDElogcoord} one may guess why $\eta=-1$ is the critical value. For $\eta>-1$ the individual Brownian particles have a negative drift and want to move towards $(-\infty,0)$, while the interaction force wants to push particles up towards $(0,\infty)$ which balances things out. Making this vague intuition rigorous, let alone identifying the invariant measure so explicitly, directly at the level of \eqref{ISDEintro}, \eqref{ISDElogcoord} appears to be difficult. We instead prove Theorem \ref{MainThmConvEq} by making use of the finite-dimensional dynamics \eqref{FSDE}.
\end{rmk}

As far as we can tell, the result above is the only convergence to equilibrium result on ISDE with logarithmic interaction. Nevertheless, there is very interesting recent work on ergodicity of unlabelled infinite-dimensional diffusions (namely looking at diffusions on the space $\mathsf{Conf}$) with logarithmic interaction. Dyson's model in the bulk is considered in \cite{OsadaErgodicity} and a more general class of models with determinantal correlations is treated by Suzuki in \cite{Suzuki}. Both \cite{OsadaErgodicity} and \cite{Suzuki}  use  a Dirichlet form approach. The paper \cite{OsadaErgodicity} builds on Osada-Tanemura's theory and \cite{Suzuki} makes use of the tail-triviality and number rigidity properties of determinantal point processes, see \cite{OsadaErgodicity,Suzuki} for details.

We finally prove the following result which says that although we can construct infinitely many non-equal in law solutions for the ISDE (\ref{ISDEintro}) starting from the inverse Bessel points $\mathsf{IBes}_\eta$, there is in some sense only one true equilibrium solution. Another interesting feature is that all of these different solutions start distributed according to $\mathsf{IBes}_\eta$, all but one of them will stop being distributed according to $\mathsf{IBes}_\eta$ instantaneously (this does not appear in the statement below but will be clear from the proof), but all of them will converge in distribution back to $\mathsf{IBes}_\eta$ as $t\to \infty$.

\begin{thm}\label{MainThmEquilibriumProcess}
Let $\eta>-1$. Let $\mathfrak{m}$ be a Borel probability measure on $\Omega_+$ satisfying $\pi_*\mathfrak{m}=\mathsf{IBes}_{\eta}$. Consider the Feller process $\left(\mathbf{X}_t^{\Omega_+};t\ge 0\right)$ with random initial condition $\mathsf{Law}\big(\mathbf{X}_0^{\Omega_+}\big)=\mathfrak{m}$ and write $\left(\mathbf{X}_t^{\Omega_+};t\ge 0\right)=\left(\left(\mathbf{X}(t;\mathfrak{m}),\boldsymbol{\gamma}(t;\mathfrak{m})\right);t\ge 0\right)$ where $\mathbf{X}(t;\mathfrak{m})=\left(\mathsf{x}_i(t;\mathfrak{m})\right)_{i=1}^\infty$. Then, $\left(\mathsf{x}_i(\cdot;\mathfrak{m})\right)_{i=1}^\infty$ solves (\ref{ISDEintro}) with initial condition distributed according to $\mathsf{IBes}_\eta$. Out of these solutions (for different $\mathfrak{m}$) there exists a unique solution $\mathbf{X}(\cdot;\mathfrak{m})$ satisfying
\begin{equation*}
\mathsf{Law}\big(\mathbf{X}(t;\mathfrak{m})\big)=\mathsf{IBes}_\eta, \ \ \forall t \ge 0,
\end{equation*}
and it is given by the choice $\mathfrak{m}=\mathfrak{M}^\eta$.
\end{thm}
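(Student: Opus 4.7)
The first claim, that $(\mathsf{x}_i(\cdot;\mathfrak{m}))_{i=1}^\infty$ weakly solves \eqref{ISDEintro}, follows from Theorem \ref{MainThm2Intro} by disintegration: since $\pi_*\mathfrak{m}=\mathsf{IBes}_\eta$ is concentrated on $\mathbb{W}_{\infty,+}^\circ$ and has a.s.\ summable coordinates (from the asymptotics of $\mathsf{K}_\eta$), the deterministic-initial-condition hypotheses of Theorem \ref{MainThm2Intro} are satisfied $\mathfrak{m}$-a.s., and conditioning on $\mathbf{X}_0^{\Omega_+}$ then integrating back against $\mathfrak{m}$ recovers the weak ISDE with random initial configuration $\mathsf{IBes}_\eta$. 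The stationarity of the $\mathbf{x}$-marginal under $\mathfrak{m}=\mathfrak{M}^\eta$ is immediate from Theorem \ref{MainThmConvEq}: $\mathfrak{M}^\eta$ is the unique invariant probability measure of $(\mathfrak{P}_\infty(t))_{t\ge 0}$, so $\mathsf{Law}(\mathbf{X}_t^{\Omega_+})=\mathfrak{M}^\eta$ for every $t\ge 0$ and $\pi_*\mathfrak{M}^\eta=\mathsf{IBes}_\eta$.

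The substance of the theorem is the uniqueness of such $\mathfrak{m}$. The plan is to extract an autonomous evolution for the auxiliary coordinate $\boldsymbol{\gamma}(t)$ and combine it with convergence to equilibrium. Summing the prelimit SDE \eqref{rescaledSDE} over $i=1,\dots,N$, the pairwise interactions cancel antisymmetrically, producing the closed linear SDE
\[
\mathrm{d}\boldsymbol{\gamma}^{(N)}(t)=\sum_{i=1}^N\mathsf{x}_i^{(N)}(t)\mathrm{d}\mathsf{w}_i(t)-\tfrac{\eta}{2}\boldsymbol{\gamma}^{(N)}(t)\mathrm{d}t+\tfrac{1}{2}\mathrm{d}t,
\]
where $\boldsymbol{\gamma}^{(N)}(t)=\sum_{i=1}^N\mathsf{x}_i^{(N)}(t)$. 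Passing to $N\to\infty$ via the $\ell^2$-coupling \eqref{l^2Intro} of Theorem \ref{MainThm1Intro} (which controls both the martingale through quadratic variations and the linear drift), the limiting $\boldsymbol{\gamma}(t)$ satisfies the same SDE, driven by the same Brownian motions $(\mathsf{w}_i)_{i\ge 1}$ as \eqref{ISDEintro}.

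Comparing this with the evolution of $\sum_i\mathsf{x}_i(t)$ obtained from the ISDE \eqref{ISDEintro}, and using the identity $\sum_{i=1}^\infty b_i(\mathbf{x})=\tfrac{1}{2}$ for the sum of the renormalised logarithmic drifts $b_i(\mathbf{x})=\sum_{j\neq i}\tfrac{x_ix_j}{x_i-x_j}$ (forced by consistency of the two SDEs on the support of $\mathfrak{M}^\eta$, where invariance of $\Omega_+^0$ gives $\boldsymbol{\gamma}(t)=\sum_i\mathsf{x}_i(t)$, and extended to all $\mathbf{x}\in\mathbb{W}_{\infty,+}^\circ$ with $\sum_i x_i<\infty$ by continuity in the configuration), the martingale contributions cancel and the excess mass $\boldsymbol{\delta}(t):=\boldsymbol{\gamma}(t)-\sum_i\mathsf{x}_i(t)\ge 0$ obeys the deterministic pathwise ODE
\[
\tfrac{\mathrm{d}}{\mathrm{d}t}\boldsymbol{\delta}(t)=-\tfrac{\eta}{2}\boldsymbol{\delta}(t),\qquad\boldsymbol{\delta}(t)=\mathrm{e}^{-\eta t/2}\boldsymbol{\delta}(0)\text{ a.s.}
\]
Suppose now $\pi_*\mathfrak{m}_t=\mathsf{IBes}_\eta$ for all $t\ge 0$. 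By Theorem \ref{MainThmConvEq}, $\mathfrak{m}_t\to\mathfrak{M}^\eta$ weakly, and since $\mathfrak{M}^\eta$ is supported on $\{\boldsymbol{\delta}=0\}$, $\boldsymbol{\delta}(t)\overset{\textnormal{d}}{\longrightarrow}0$. When $\eta\le 0$ the factor $\mathrm{e}^{-\eta t/2}$ is non-decreasing in $t$, so $\boldsymbol{\delta}(t)\overset{\textnormal{d}}{\longrightarrow}0$ forces $\boldsymbol{\delta}(0)=0$ a.s., i.e., $\mathfrak{m}$ is supported on $\Omega_+^0$ and hence $\mathfrak{m}=\mathfrak{M}^\eta$. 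When $\eta>0$ the deterministic scaling vanishes the $\boldsymbol{\delta}$-contribution automatically, so one supplements the decay with a strict monotonicity argument: by coupling two Feller processes starting from $(\mathbf{x}_0,\gamma_1)$ and $(\mathbf{x}_0,\gamma_2)$ with $\gamma_1<\gamma_2$, the deterministic gap $\boldsymbol{\delta}_2(t)-\boldsymbol{\delta}_1(t)=\mathrm{e}^{-\eta t/2}(\gamma_2-\gamma_1)>0$ feeds back stochastically monotonically into the $\mathbf{x}$-process (refining the distinctness $\mathsf{Law}(\mathbf{X}(\cdot;\gamma_1))\neq\mathsf{Law}(\mathbf{X}(\cdot;\gamma_2))$ from Theorem \ref{MainThm2Intro} into strict monotonicity of $\mathbb{E}[F(\mathbf{x}(t;\mathbf{x}_0,\gamma))]$ in $\gamma$ for a suitable increasing functional $F$); combined with the preserved marginal $\pi_*\mathfrak{m}_t=\mathsf{IBes}_\eta$ and the pointwise constraint $\gamma\ge\sum_i x_i$, this strict monotonicity forces the conditional law of $\gamma$ given $\mathbf{x}_0$ to be the Dirac mass at $\sum_i x_i$ for $\mathsf{IBes}_\eta$-a.e.\ $\mathbf{x}_0$, concluding $\mathfrak{m}=\mathfrak{M}^\eta$.

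The principal technical obstacle is the rigorous establishment of the identity $\sum_{i=1}^\infty b_i(\mathbf{x})=\tfrac{1}{2}$ and the cancellation of the martingale parts of $\boldsymbol{\gamma}(t)-\sum_i\mathsf{x}_i(t)$ in the $N\to\infty$ limit; both hinge on the $\ell^2$-coupling \eqref{l^2Intro} together with careful handling of the counterterm in the renormalised drift and the order of the infinite sums. A secondary and more delicate technical point, needed only when $\eta>0$, is the strict stochastic monotonicity of the Feller flows in the $\gamma$ coordinate, which we expect to inherit from a pathwise comparison principle for the prelimit dynamics \eqref{rescaledSDE} combined with the convergence of Theorem \ref{MainThm1Intro}. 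With these two inputs the scheme above yields the desired uniqueness.
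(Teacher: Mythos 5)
Your proposal takes a genuinely different route from the paper for the uniqueness part, but the central step fails. You attempt to derive a pathwise ODE for the excess mass $\boldsymbol{\delta}(t)=\boldsymbol{\gamma}(t)-\sum_{i=1}^\infty\mathsf{x}_i(t)$ by comparing the SDE for $\boldsymbol{\gamma}$ with the one you obtain by summing the ISDE \eqref{ISDEintro} over $i$, and you conclude $\boldsymbol{\delta}(t)=\mathrm{e}^{-\eta t/2}\boldsymbol{\delta}(0)$. This is inconsistent with the paper's own Theorem \ref{thm-ISDE}, equation \eqref{Claim-gamma=sum}, which shows that $\boldsymbol{\delta}(t)=0$ almost surely for \emph{every} $t>0$, for \emph{any} initial $(\mathbf{x},\gamma)\in\Omega_+$: the process enters $\Omega_+^0$ instantaneously even when $\boldsymbol{\delta}(0)>0$, so $\boldsymbol{\delta}$ jumps to zero rather than decaying like $\mathrm{e}^{-\eta t/2}$. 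Your derivation breaks down because summing \eqref{ISDEintro} over $i$ does \emph{not} give a drift $+\tfrac{1}{2}\mathrm{d}t$ for $\sum_i\mathsf{x}_i(t)$: the double sum $\sum_{i\neq j}\tfrac{\mathsf{x}_i\mathsf{x}_j}{\mathsf{x}_i-\mathsf{x}_j}$ is not absolutely convergent, and the residual drift that actually appears (see display \eqref{sum} in the proof of Theorem \ref{thm-ISDE}) is a nonnegative but highly non-constant quantity, not the constant $\tfrac{1}{2}$. The claimed ``identity'' $\sum_i b_i(\mathbf{x})=\tfrac{1}{2}$ is neither literally true nor the correct renormalization; the $\tfrac12$ in the $\boldsymbol{\gamma}$-equation comes from $\sum_{i=1}^N\tfrac{1}{2N}$, not from the interaction drift. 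With the ODE gone, both halves of your uniqueness argument (the monotonicity in $\eta\le 0$ and the strict stochastic monotonicity for $\eta>0$, which is in any case unverified) collapse.

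The fact that saves the day is precisely the one your ODE contradicts: because $\boldsymbol{\delta}(t)=0$ for all $t>0$ while $\boldsymbol{\gamma}$ is continuous, the process $t\mapsto\sum_i\mathsf{x}_i(t;\mathfrak{m})$ has a jump at $t=0$ on a set of positive probability whenever $\mathfrak{m}\neq\mathfrak{M}^\eta$ with $\pi_*\mathfrak{m}=\mathsf{IBes}_\eta$. Since $\sum_i\mathsf{x}_i(t;\mathfrak{m})\to\boldsymbol{\gamma}(0;\mathfrak{m})$ as $t\downarrow 0$, the law of $\sum_i\mathsf{x}_i(t;\mathfrak{m})$ for small $t>0$ is that of $\boldsymbol{\gamma}(0;\mathfrak{m})$, which strictly stochastically dominates the law of $\sum_i\mathsf{x}_i(0;\mathfrak{m})$ when $\mathfrak{m}\neq\mathfrak{M}^\eta$; this rules out constancy of the $\mathbf{x}$-marginal. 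This is the paper's route (via the argument of Theorem \ref{thm-NonUniqueness}), and it works for all $\eta>-1$ at once, with no case split. Your first paragraph — the disintegration/approximation argument for the ISDE with random initial data, and stationarity under $\mathfrak{M}^\eta$ — is fine and close in spirit to the paper, but the uniqueness scheme needs to be replaced.
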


\begin{rmk}
 Observe that, for all $\lambda>0$, if $(\mathsf{x}_i(\cdot))_{i=1}^\infty$ is a solution to \eqref{ISDEintro} with initial condition $(\mathsf{x}_i(0))_{i=1}^\infty=\mathbf{x}\in \mathbb{W}_{\infty,+}^\circ$ then $(\lambda\mathsf{x}_i(\cdot))_{i=1}^\infty$ is a solution with initial condition $\lambda \mathbf{x}$. There is an obvious analogue of constructing solutions to \eqref{ISDElogcoord} by translation. Fix $\mathbf{x} \in \mathbb{W}_{\infty,+}^\circ$, $\lambda >0$, $\gamma \ge \lambda^{-1}\sum_{i=1}^\infty x_i$ and define,
 \begin{equation}\label{TwoParSol}
\left(\mathsf{x}_i(\cdot;\lambda,\gamma)\right)_{i=1}^\infty\overset{\textnormal{def}}{=}\pi\left(\lambda \mathbf{X}_\cdot^{\Omega_+}\right), \ \ \textnormal{where} \ \mathbf{X}_0^{\Omega_+}=(\lambda^{-1}\mathbf{x},\gamma).
 \end{equation}
 Then, by virtue of Theorem \ref{MainThm2Intro} and the observation above, for any pair $(\lambda,\gamma)$, \eqref{TwoParSol} is a solution to \eqref{ISDEintro} with initial condition $\mathbf{x}$. Moreover, at least for $\eta>-1$, whenever $(\lambda,\gamma)\neq (\tilde{\lambda},\tilde{\gamma})$, we have
 \begin{equation*}
\mathsf{Law}\left(\left(\mathsf{x}_i(\cdot;\lambda,\gamma)\right)_{i=1}^\infty\right) \neq \mathsf{Law}\left(\left(\mathsf{x}_i(\cdot;\tilde{\lambda},\tilde{\gamma})\right)_{i=1}^\infty\right).
 \end{equation*}
 For $\lambda \neq \tilde{\lambda}$ this follows from Theorem \ref{MainThmConvEq} as the two processes have different $t \to \infty $ limits (we need $\eta>-1$ here), while for $\lambda=\tilde{\lambda}$ and $\gamma \neq \tilde{\gamma}$ (we do not need $\eta>-1$ for this case) this follows by the same argument given in Theorem \ref{thm-NonUniqueness} to prove the $\lambda=1$ case. This phenomenon is in stark contrast with the finite-dimensional version of \eqref{ISDEintro} and also the bulk Dyson ISDE \eqref{DysonISDE}.
\end{rmk}

\subsection{Strategy of proof}\label{SectionStrategy}
\textbf{(a) Convergence on path-space and Feller property.} Our starting point is a certain non-obvious consistency relation the finite-dimensional dynamics enjoy when $N$ varies. Let $\left(\mathfrak{P}_N(t)\right)_{t\ge 0}$ be the Markov semigroup associated to the unique strong solution of \eqref{FSDE}. For $\mathbf{x}=(x_1>x_2>\cdots>x_{N+1})$, we consider the Markov kernel, 
\begin{equation*}
\Lambda_N^{N+1}(\mathbf{x},\mathrm{d}\mathbf{y})=\frac{N!\prod_{1\le i<j \le N}(y_i-y_j)}{\prod_{1\le i<j \le N+1}(x_i-x_j)}\mathbf{1}_{x_1>y_1>x_2>\cdots>x_N>y_N>x_{N+1}}\mathrm{d}\mathbf{y}.
\end{equation*}
This Markov kernel has an important interpretation in terms of Hermitian random matrices whose law is invariant under unitary conjugation \cite{Baryshnikov,ProjOrbitalMeas,GelfandNaimark}  and can be extended to $\mathbf{x}$ with coinciding coordinates. We will say more in Section \ref{Preliminaries}. The consistency relation we are alluding to, is the following so-called intertwining, between the semigroups:
\begin{equation}\label{IntertwiningIntro1}
\mathfrak{P}_{N+1}(t)\Lambda_{N}^{N+1}=\Lambda_N^{N+1}\mathfrak{P}_N(t), \ \ \forall t\ge 0, \ N\in \mathbb{N}.
\end{equation}
We give a direct proof of this relation but also a different argument using matrix stochastic calculus which gives more intuition regarding where it actually comes from. It is worth mentioning that Dyson Brownian motion also satisfies a relation of this kind, see \cite{OConnellYor,Warren}.

Now, one can view $(\mathbb{W}_{N,+},\Lambda_{N}^{N+1})_{N=1}^\infty$ as a projective system of measures or equivalently as a Markov chain moving in discrete time, see \cite{Winkler} for background. Then, it is a classical theorem of Pickrell \cite{Pickrell} and Olshanski-Vershik \cite{Olshanski-Vershik}, in equivalent form, that the entrance boundary of this Markov chain can be identified with the space $\Omega_+$ and the extremal entrance laws $\Lambda_N^{\infty}(\omega,\mathrm{d}\mathbf{x})$, with $\omega\in \Omega_+$, have a random matrix interpretation. Moreover, it can be shown that all Markov kernels appearing above are Feller continuous. Then, by the method of intertwiners of Borodin and Olshanski \cite{MarkovProc-pathSpace-GT} the above considerations guarantee the existence of a Feller semigroup $\left(\mathfrak{P}_\infty(t)\right)_{t\ge 0}$ and associated Feller process $\mathbf{X}^{\Omega_+}$ on $\Omega_+$ which is uniquely determined via the intertwinings:
\begin{equation}\label{IntertwiningIntro2}
\mathfrak{P}_{\infty}(t)\Lambda_{N}^{\infty}=\Lambda_N^{\infty}\mathfrak{P}_N(t),\ \ \forall t \ge 0, \ N \in \mathbb{N}.
\end{equation}

To prove convergence to equilibrium in the long-time $t$ limit for $\mathbf{X}^{\Omega_+}$, Theorem \ref{MainThmConvEq}, we extend the method of intertwiners to deal with such questions by proving the following simple statement. If the finite-dimensional processes converge to equilibrium from any initial condition then the infinite-dimensional process converges to equilibrium from any initial condition. This holds in a general setting and can be applied to other models. In particular, it applies directly to the infinite-dimensional Feller processes constructed in \cite{MarkovProc-pathSpace-GT,MarkovDynam-ThomaCone} to show convergence to equilibrium for them.

Coming back to describing the Feller process $\mathbf{X}^{\Omega_+}$, at this stage we know essentially nothing about its trajectories. We do not know if and how it is approximated by the finite dimensional dynamics. We do not know if it has continuous sample paths. It might even be, in some way, degenerate\footnote{This may well happen. The most famous example satisfying \eqref{IntertwiningIntro1}, Dyson Brownian motion itself, also gives rise, by the method of intertwiners, to a Feller process on an extension $\Omega$ of the space $\Omega_+$, see Section \ref{Preliminaries}, but this ``infinite-dimensional stochastic process" is a degenerate deterministic flow on the coordinates of $\Omega$.}.

Proving convergence on the path-space $C(\mathbb{R}_+,\Omega_+)$ would resolve these questions. Since we are dealing with Feller processes one can prove this convergence by proving convergence at the level of the infinitesimal generators for a sufficiently large class of functions. This is in general highly non-trivial though since we are dealing with infinite-dimensional objects. But we have more structure which helps. Let us write $\mathsf{L}_N$ and $\mathsf{L}_\infty$ for the infinitesimal generators of the Feller semigroups $\left(\mathfrak{P}_N(t)\right)_{t\ge 0}$ and $\left(\mathfrak{P}_\infty(t)\right)_{t\ge0}$ respectively. First of all, if $\mathscr{C}_N$ is a core for $\mathsf{L}_N$ for each $N \in \mathbb{N}$, the space of functions
\begin{equation}\label{IntroCore}
\textnormal{span}\left(\bigcup_{K=1}^{\infty}\Lambda_K^{\infty}\mathscr{C}_K\right),
\end{equation}
is known to be a core for $\mathsf{L}_\infty$. In our setting, $\mathscr{C}_K$ will be a class of smooth functions with compact support with symmetry, see Section \ref{SubSec-UnifAppThm}. Then, for an arbitrary $\mathbf{f}$ in \eqref{IntroCore}, of the form $\mathbf{f}=\Lambda_K^\infty g$, for $g \in \mathscr{C}_K$, we will show that there exists $\mathbf{f}_N$ in the domain of $\mathsf{L}_N$ such that we have the following convergence in a uniform sense (we are abusing notation here; to be precise we have to embed all of them on $\Omega_+$), as $N\to\infty$,
\begin{align}
\mathbf{f}_N &\longrightarrow \mathbf{f}, \label{IntroFnApprox}\\
\mathsf{L}_N \mathbf{f}_N &\longrightarrow \mathsf{L}_\infty \mathbf{f}.\label{IntroGenApprox}
\end{align}
The right choice for $\mathbf{f}_N$ will be nothing but,
\begin{equation*}
\mathbf{f}_N=\Lambda_{N-1}^N \Lambda_{N-2}^{N-1}\cdots \Lambda_{K}^{K+1}g.
\end{equation*}
Thus, showing \eqref{IntroFnApprox} will turn out to be equivalent to proving 
\begin{equation}\label{IntroUnifApproxThm}
 \sup_{\mathbf{x}^{(N)}\in \mathbb{W}_{N,+}}\left|\Lambda_K^{\infty}g\left(\omega\left(\mathbf{x}^{(N)}\right)\right)-\Lambda_{N-1}^{N}\Lambda_{N-2}^{N-1}\cdots\Lambda_{K}^{K+1}g\left(\mathbf{x}^{(N)}\right)\right|\overset{N\to \infty}{\longrightarrow} 0,   
\end{equation}
where $\omega\left(\mathbf{x}^{(N)}\right)$ denotes a certain embedding of $\mathbf{x}^{(N)}\in \mathbb{W}_{N,+}$ into $\Omega_+$, see Section \ref{Preliminaries}. 

This is what we call the uniform approximation theorem and it is arguably our main contribution for this part of the argument. It is inspired by an analogous theorem in the combinatorial setting of the Gelfand-Tsetlin graph \cite{Boundary-GT-newApp}. There are some important new difficulties which arise in the continuous setting of random matrices however. First of all, the explicit formulae for $\Lambda_K^\infty$ and $\Lambda_{N-1}^{N}\cdots \Lambda_{K}^{K+1}$ as $K\times K$ determinants, due to Olshanski-Vershik \cite{Olshanski-Vershik} and Olshanski \cite{ProjOrbitalMeas}, that we start with, when extended to arbitrary $\omega\in \Omega_+$ and $\mathbf{x}\in \mathbb{W}_{N,+}$ involve as entries derivatives of distributions which is problematic. Fortunately, we can make sense of them by virtue of the fact that we are testing\footnote{There is a tension here between the need for the function spaces $\mathscr{C}_K$ to be large enough to deduce convergence from them while at the same time they should only include nice enough functions for the explicit formulae to make sense. We note that, there are alternative, random matrix interpretations of $\Lambda_K^{\infty}$ and $\Lambda_{N-1}^{N}\cdots \Lambda_K^{K+1}$, see Section \ref{Preliminaries}, which can be tested against arbitrary bounded Borel functions but, as far as we can tell, these cannot be used to prove \eqref{IntroUnifApproxThm}, or even guess that a statement like \eqref{IntroUnifApproxThm} is true.} against compactly supported symmetric (which is important) smooth functions $g\in \mathscr{C}_K$. Moreover, in the combinatorial setting once one gets a good explicit formula as a $K\times K$ determinant for the analogues of $\Lambda_K^{\infty}$ and $\Lambda_{N-1}^{N}\cdots \Lambda_K^{K+1}$ the asymptotic analysis is fairly straightforward: one needs to take the limit of each individual entry in the determinant. In the random matrix setting, despite some post-processing of the formulae of \cite{Olshanski-Vershik,ProjOrbitalMeas} to make them look as similar as possible, this does not appear to be the case. For this reason we needed to devise a non-trivial inductive argument in order to establish the uniform asymptotics \eqref{IntroUnifApproxThm}. Finally, to prove \eqref{IntroGenApprox}, by virtue of the intertwining \eqref{IntertwiningIntro2}, it again boils down to \eqref{IntroUnifApproxThm}.
Hence, given \eqref{IntroFnApprox} and \eqref{IntroGenApprox}, by general results \cite{Ethier-Kurtz}, we obtain the path-space convergence in \eqref{PathSpaceConvIntro}. Some additional arguments then also give convergence in $\ell^2$ as in \eqref{l^2Intro}.

It is interesting to note that in the above argument we did not need to use any explicit information about the generator $\mathsf{L}_\infty$. This should not be too surprising. The whole point, and in some sense appeal, of the method of intertwiners is that at the level of dynamics we only need to work with $\mathsf{L}_N$ for fixed $N$. However, this is also a caveat. If one wants to understand the actual dynamics of $\mathbf{X}^{\Omega_+}$ new arguments are needed. 

\textbf{(b) Non-intersection of the paths.} The next key step is to show non-intersection of the limiting paths $\left(\mathsf{x}_i(\cdot)\right)_{i=1}^\infty$. This is essential input to obtain the ISDE. It is also essential input in the approach of Tsai \cite{Tsai} and Osada-Tanemura \cite{OsadaTanemura}; in Tsai's work proving this is one of the major challenges, see discussion around Remark 2.9 therein. As far as we can tell, the method of intertwiners is not useful for proving this type of statement. There is also a new Gibbs resampling property behind the SDE \eqref{FSDE}, that we attempted to use, but this approach runs into problems that we explain in Section \ref{SectionGibbs}. So we turned back to make use of the stochastic equations. This cannot be done directly in limit: we have no stochastic equations for the infinite-dimensional process to speak of yet! Instead, we need to control, uniformly in $N$, the probabilities that the finite-dimensional paths solving \eqref{rescaledSDE} come close directly from the SDE. As far as we can tell, this is the first\footnote{Both Tsai's \cite{Tsai} and Osada-Tanemura's \cite{OsadaTanemura} approaches for non-intersection do not make direct use of the finite-dimensional SDE. Similarly, the Gibbsian line ensemble approach \cite{CorwinHammond} to prove non-intersection is totally different and does not use SDE in any way.} time this has been done. Of course, for fixed $N$, showing non-intersection is standard and this is the mainstream approach to attack Dyson-like SDE, see \cite{AGZ}, but the bounds in the literature blow up with $N$. There is good reason for this, that we comment on below.

In essence, although the actual proof is presented very differently in terms of multiple stopping times, we establish statements which informally can be interpreted as the following. Let $\mathbf{x}^{(N)}$ be arbitrary in the interior of $\mathbb{W}_{N,+}$ such that $\omega(\mathbf{x}^{(N)})\longrightarrow \omega=(\mathbf{x},\gamma)$, with $\mathbf{x}\in \mathbb{W}_{\infty,+}^\circ$. Then, for any $n \in \mathbb{N}$, $T\ge 0$ and $\varepsilon>0$, we can find $\delta$ such that,
\begin{equation}\label{CollisionTimesBOundIntro}
\sup_{N\ge n+1}\mathrm{Prob}\left(\inf_{t\in [0,T]}\left|\mathsf{x}_n^{(N)}(t)-\mathsf{x}_{n+1}^{(N)}(t)\right|<\delta\Big|\mathsf{x}^{(N)}(0)=\mathbf{x}^{(N)}\right)< \varepsilon.
\end{equation}

The reason the bounds, and observables used to obtain them, that are given in the literature for fixed $N$, are not useful as $N \to \infty$, is that they are controlling the collision times of all $N$ paths simultaneously and this cannot work. Instead we perform a double induction argument to prove the statements above one pair of paths at a time. The main ingredients are certain novel observables or Lyapunov functions associated to the dynamics given by variations of the reverse characteristic polynomial of a matrix with eigenvalues $(\mathsf{x}^{(N)}_{i}(t))_{i=1}^N$, see the functions $f_n^N$ and $\Psi_{n}^N$ and discussion around \eqref{f_n^N}. The crux of the argument, and the reason considering the aforementioned observables is actually useful is the following. When applying It\^{o}'s formula to them, the term which comes from the singular drift, which would be hard to control uniformly in $N$, has negative sign and can simply be dropped to give an $N$-independent bound. This feature is rather generic. For example the argument works for the $\beta$-ensemble version\footnote{Include a multiplicative constant $\beta$ in front of the singular interaction drift.} of the dynamics and variations of it should work for other models. Even if we hadn't known convergence of the finite-dimensional dynamics, the argument gives that any possible subsequential limit would necessarily consist of non-intersecting paths.

\textbf{(c) The ISDE.} Finally, to prove $(\mathsf{x}_i(\cdot))_{i=1}^\infty$ solves the ISDE \eqref{ISDEintro}, modulo technicalities, this boils down to convergence of the singular drift term in the finite-dimensional SDE to its infinite-dimensional counterpart. Again, viewing things in terms of certain ``characteristic polynomials" of the process will do the trick. In particular, if we write
\begin{equation*}
\Phi_i^N\left(z;\mathbf{x}^{(N)}\right)=\prod_{j=1,j\neq i}^{N}\left(1-x_j^{(N)}z\right)^2,\  \  \mathbf{x}^{(N)}\in\mathbb{W}_{N,+},
\end{equation*}
then we can observe that,
\begin{equation*}
\sum_{j\neq i}^N \frac{\mathsf{x}_i^{(N)}(t)\mathsf{x}_j^{(N)}(t)}{\mathsf{x}_i^{(N)}(t)-\mathsf{x}_j^{(N)}(t)}=\frac{1}{2}\frac{\mathrm{d}}{\mathrm{d} z}\log \Phi_i^N\left(z,\mathsf{x}^{(N)}(t)\right)\bigg|_{z=\left(\mathsf{x}_i^{(N)}(t)\right)^{-1}}.
\end{equation*}
Then, making use of the following fact, by virtue of convergence in $\Omega_+$, where convergence holds uniformly on compact sets in $z\in \mathbb{C}$,
\begin{equation*}
\Phi_i^N\left(z;\mathsf{x}^{(N)}(t)\right) \longrightarrow  \mathrm{e}^{-2\left(\boldsymbol{\gamma}(t)-\mathsf{x}_i(t)\right) z}\prod_{j=1,j\neq i}^{\infty}\mathrm{e}^{2\mathsf{x}_j(t)z}\left(1-\mathsf{x}_j(t)z\right)^2,
\end{equation*}
combined with some further arguments that make use of the fact that paths are non-intersecting, and no $\mathsf{x}_i$ hits zero, we can show convergence of the drift term.

At this stage the ISDE for the $(\mathsf{x}_i(\cdot))_{i=1}^\infty$ coordinates may still depend on $\boldsymbol{\gamma}(\cdot)$. Then, the final part of the argument, making use of the structure of the ISDE shows that almost surely, for all $t>0$, $\sum_{i=1}^\infty \mathsf{x}_i(t)=\boldsymbol{\gamma}(t)$ which completes the proof. Thus, in some sense, the true state space of the diffusion $\mathbf{X}^{\Omega_+}$ is $\Omega_+^0$ and $\Omega_+ \backslash \Omega_+^0$ acts as a kind of entrance boundary for it. At this point all the hard work is done and the proofs of the remaining statements in our main results are fairly straightforward. Finally, an interesting point to note in the argument is that, in this infinite-dimensional limit, some ``additional randomness" is created, which is in some sense the source of non-uniqueness of solutions to the ISDE \eqref{ISDEintro}, see the discussion preceding Theorem \ref{thm-ISDE} and its proof for more details.

\subsection{Previous approaches}\label{SectionHistory}

The kinds of problems we are considering have been studied intensely for decades. As a result a number of rather sophisticated approaches have been developed to attack them. We survey them below. To different extents, we have attempted to apply each of them to our problem.

\textbf{(1)} Tsai's important paper \cite{Tsai} was the only, up until now, work which gave existence of solutions to an ISDE with logarithmic interaction from explicit initial conditions. Tsai uses in a very essential and delicate way a  certain shift-invariance of the dynamics and a novel monotonicity property of the corresponding process of gaps between particles. The shift-invariance is definitely not true for our model and it is unclear what the right analogue of the monotonicity property (if one exists) should be. For these fundamental reasons it seems hard to adapt the approach of \cite{Tsai} to even give solutions  from a class of initial conditions in our case. Finally, in regards to invariant measures and convergence to equilibrium this approach does not seem well-adapted for dealing with such questions.

\textbf{(2)} We now move to Osada and Tanemura's program \cite{OsadaCMP,OsadaAOP,OsadaPTRF,OsadaTanemura}. This spans almost three decades and several papers so it is impossible to survey in a few lines. The upshot is the following: one is given as basic data a probability measure $\mathsf{M}$ on the configuration space $\mathsf{Conf}(\mathscr{Z})$ of some Euclidean space $\mathscr{Z}$ consisting of unlabelled collections of points. From this, under certain assumptions, one first constructs a diffusion on $\mathsf{Conf}(\mathscr{Z})$, starting from any initial condition in an abstract set $\mathscr{S}_\mathsf{M} \subset \mathsf{Conf}(\mathscr{Z})$,  so that $\mathsf{M}(\mathscr{S}_\mathsf{M})=1$, with $\mathsf{M}$ as invariant measure. With this as input, and under further assumptions, a weak solution to a labelled ISDE is constructed (always starting and remaining in a labelled version of $\mathscr{S}_\mathsf{M}$), which under additional conditions is shown to be the unique one satisfying certain properties. For some of these conditions\footnote{These are for the most part conditions on the correlation functions of $\mathsf{M}$ and its Campbell measures \cite{OsadaTanemura}.} black-box theorems already exist to verify them, however some of them do need to be checked by hand. This is in general a highly non-trivial task. For example, the main purpose of the long preprint \cite{OsadaTanemuraAiry} is to verify these conditions in the case of the ISDE associated to the stationary version of the Airy line ensemble in order to apply the theory.

Let us be more precise on how we think the Osada-Tanemura theory may relate to our approach. We believe that if one attempts (for $\eta>-1$ certain modifications are needed but we will not check the technical conditions here, while for $\eta \le -1$ a fundamental extension of this theory is required since there is no invariant probability measure on $\mathsf{Conf}((0,\infty))$ to start with) to use this theory to construct ``near-equilibrium" solutions for our model, these should match, in a way that needs to be made precise, for initial conditions $\mathbf{x}\in \mathbb{W}_{\infty,+}^\circ$ that qualitatively resemble the invariant measure, the solutions that arise from $\Omega_+^0$. It seems plausible that this is a more general phenomenon. The Dirichlet form approach will pick out a distinguished solution from the ones living on an enhanced space. Another related point is that the singular interaction drift of the ISDE in Osada-Tanemura's theory is given by the so-called logarithmic derivative of the point process \cite{OsadaPTRF}. In some sense this notion associates to the invariant point process a canonical limiting characteristic polynomial (a random analytic function). It would be interesting to understand these potential connections more precisely and rigorously in the future.

\textbf{(3)} The recent work of Landon \cite{landon2020edge} on the edge scaling of $\beta$-Dyson Brownian motion at equilibrium is also worth mentioning. Using a different approach to previous works, that takes rigidity estimates for the invariant point process (the $\beta$-Airy point process \cite{RiderRamirezVirag}) as a key ingredient, Landon can prove convergence under a subsequence for the equilibrium dynamics for all $\beta \ge 1$ (the paper \cite{OsadaTanemuraAiry} only deals with $\beta=1,2,4$); the fact that the process is in equilibrium is essential for the use of rigidity estimates. However, the limiting process is not shown to solve an ISDE, or even have non-intersecting paths. Nevertheless, it is very natural to expect that these statements should be true.

\textbf{(4)} A different method for proving path-space convergence of random matrix dynamics, having special structure, is the probabilistic approach of Gibbsian line ensembles \cite{CorwinHammond,XuanWu}. This type\footnote{We will not give the precise definitions here, see for example \cite{CorwinHammond,XuanWu}.} of Gibbs property is, in some sense, the right dimension-independent analogue of the fact that the $N$-dimensional Dyson Brownian motion \eqref{DysonIntro} can equivalently be constructed as $N$ independent Brownian motions conditioned to never intersect \cite{Grabiner}. This approach is  useful for proving tightness of the paths. To conclude convergence some more information is needed. The current state of the art, in the case of models in the KPZ universality class, is the recent remarkable strong characterisation theorem \cite{aggarwal2023strong} of the Airy line ensemble. This says that, beyond a Brownian resampling property, we only need to know that the top path approximates a parabola to pin down the Airy line ensemble.

\textbf{(5)} Yet another approach to taking limits of random matrix dynamics is to view the $N$-dimensional interacting SDE as diffusions on $\mathsf{Conf}$ and take the limit of their space-time correlation functions. The state of the art, for limits of Dyson Brownian motion in the bulk and non-intersecting squared Bessel processes starting from a class of initial conditions, appears to be the work of Katori-Tanemura \cite{KatoriTanemura1,KatoriTanemura2}. As far as we can tell, this approach cannot, on its own at least, be used to obtain results on ISDE.

\textbf{(6)} We now comment on the method of intertwiners\footnote{For uses of intertwining relations, with other applications in mind, see for example \cite{BorodinDynamics,Warren,OConnellToda,OConnellMatrixDiffusions,MicloPatie1,MicloPatie2}.}. An exposition can be found in the lecture notes and ICM proceedings by Olshanski \cite{OlshanskiLectureNotes,OlshanskiICM}. The method was originally introduced in a discrete setting to construct infinite-dimensional Feller processes coming from continuous-time jump dynamics on partitions which satisfy an intertwining. These Markov dynamics can be thought of as discrete variants of finite-dimensional SDE with logarithmic interaction. The main example the method was developed for in \cite{MarkovProc-pathSpace-GT} is related to the projective system associated to the Gelfand-Tseltin graph \cite{Boundary-GT-newApp}, whose boundary can be identified with an infinite-dimensional space $\mathfrak{GT}$ with continuous parameters, akin to $\Omega_+$. The invariant measure of the process, the analogue of $\mathfrak{M}^\eta$, is given by a so-called $zw$-measure which comes up in representation theory and harmonic analysis \cite{BorodinOlshanskiHarmonic,OlshanskiHarmonic}. One could think of these measures as a more sophisticated version of determinantal point processes that arise in random matrix theory. The current state of the art on this construction is the paper \cite{OlshanskiGenerator} of Olshanski, where the generator of the process on $\mathfrak{GT}$ is computed explicitly, not in the natural coordinates of $\mathfrak{GT}$, but rather in a different set of variables and is realised by a second order differential operator. The main idea is that in this other set of variables, which come from abstract algebraic considerations, the generator is not singular. Nevertheless, it is still not known if the process of \cite{MarkovProc-pathSpace-GT,OlshanskiGenerator} has continuous\footnote{Of course, by the Feller property it has a modification with c\`{a}dl\`{a}g paths \cite{Kallenberg,Revuz-Yor}.} sample paths  and this is one of the questions posed in \cite{OlshanskiGenerator}. In fact, it is not unreasonable, in analogy with the results of our paper, to expect even more: that in natural coordinates this process should solve some kind of ISDE with singular interaction, at least for some initial conditions. This is probably a hard problem.

In the random matrix setting the method of intertwiners was first applied by one of us in \cite{H-P}. There an abstract construction of an infinite-dimensional Feller process related to the Cauchy ensemble \cite{ForresterWitte,BorodinOlshanski} is provided. We recall the main result of \cite{H-P} in Theorem \ref{thm-HPabstractThm} below and we extend it, using the machinery we develop in this paper, to a strong path-space convergence result which also proves that the limiting process has continuous sample paths.

\textbf{(7)} Finally, let us mention that for a wide class of finite-dimensional systems with interactions of the form  \eqref{LogInteraction} a complete solution theory exists, see \cite{Graczyk-Malecki}. In finite dimensions one can even construct dynamics when collisions occur between particles (this corresponds to $\beta<1$ in the Dyson SDE) using the theory of multivalued SDE, see \cite{Cepa1,Cepa2,Demni}. In infinite dimension, for Dyson's model for $\beta<1$, when viewing the dynamics as a process on configuration space,  the corresponding Dirichlet form was constructed in \cite{suzuki2022curvature}, making use of the DLR equations for the sine process \cite{DLRsine}. It is natural to expect that this process on configuration space (for $\beta<1$) will involve collisions between particles but this has not been shown yet. At the level of labelled SDE, as far we can tell, nothing is known for $\beta<1$.

\subsection{Open problems}\label{SectionOpenProblems}
We end this introduction with a number of open problems.
\begin{enumerate}
    \item \textbf{Gibbs property.} It would be  interesting if the limiting non-intersecting paths $(\mathsf{x}_i)_{i=1}^\infty$ satisfy a Gibbs resampling property \cite{CorwinHammond}. We say more about this in Section \ref{SectionGibbs}.
    \item \textbf{Space-time determinantal correlations.} A special class of finite-dimensional dynamics with logarithmic interaction, including \eqref{FSDE}, starting from any deterministic initial condition, is known to have determinantal space-time correlations, see \cite{ExactSolution}. It is an interesting question whether the limiting process we constructed enjoys the space-time determinantal property with an explicit correlation kernel. A somewhat easier question, which nevertheless involves the same fundamental difficulties, is to try to do this for the dynamics starting from the equilibrium measure.
    \item \textbf{Rate of convergence to equilibrium.} It would be interesting if one could quantify (in any sense) the convergence to equilibrium from Theorem \ref{MainThmConvEq}.
    \item \textbf{Existence of solutions for other models.} 
    Our approach to construct solutions to ISDE with logarithmic interaction will also work for the dynamical Cauchy model in  Section \ref{SectionCauchy}. There is an additional and non-trivial complication there in that the pre-limit paths can cross zero. Towards a more general class of models, one needs to replace the first, integrable, part of our argument with more robust techniques coming from stochastic analysis or probability. Then\footnote{In fact, most likely an extension of the second part of our argument could be used to replace certain aspects of the first. At least when it comes to tightness of the paths, a uniform bound like \eqref{CollisionTimesBOundIntro} is the main ingredient in \cite{CorwinHammond}. Note that, if one only cares about the ISDE we do not need the full power of a result like Theorem \ref{MainThm1Intro}.}, we believe variations of the second part of our argument can be applied.
    
    \item \textbf{Uniqueness of solutions.} We proved a certain kind of uniqueness in Theorem \ref{MainThm2Intro}. However, a more natural question is to prove uniqueness, for a general class of ISDE with singular interaction, within a space of paths that take values in certain ``rigid configurations", the analogue of ``balanced configurations" from the work of Tsai for the Dyson ISDE \cite{Tsai}. The precise meaning of ``rigid configurations" will need to be model-specific. One would hope that in such spaces of paths the tail-end of the singular drift term can be controlled which is the key requirement for uniqueness. Finally, one needs to ensure solutions taking values in such spaces exist in the first place; a-priori it is not clear at all that they should.
\end{enumerate}

\paragraph{Organisation of the paper} In Section \ref{SectionConvergence}, we first recall the method of intertwiners of \cite{MarkovProc-pathSpace-GT} in a general setting. We then extend it to deal with convergence to equilibrium and most importantly we prove the uniform approximation theorem and its various consequences on consistent Feller processes. In Section \ref{SectionFiniteDimDiffusions}, we establish various properties of the solution to the finite-dimensional SDE \eqref{FSDE} that allow us to apply the results of Section \ref{SectionConvergence}. We also briefly discuss a new Gibbs resampling property for \eqref{FSDE}. In Section \ref{Section-ISDE}, we first show non-intersection of the limiting paths in Theorem \ref{thm-NonCollision,>0} and then obtain the ISDE in Theorem \ref{thm-ISDE}. In Section \ref{SectionCauchy}, we quickly apply our framework to a dynamical model for the Cauchy ensemble, upgrading the abstract results of \cite{H-P}.

\paragraph{Acknowledgements} TA is grateful to Duncan Dauvergne and Evgeni Dimitrov for a useful discussion regarding the Gibbs resampling property and to Kohei Suzuki for very informative discussions on the Dirichlet form approach to infinite-dimensional diffusions with singular interaction and helpful comments on an earlier draft. Part of this research was performed while TA was visiting the Institute for Pure and Applied Mathematics (IPAM), which is supported by the National Science Foundation (Grant No. DMS-1925919). ZSM is grateful to EPSRC for funding through a PhD studentship.
   
 \section{Convergence of consistent Markov processes}\label{SectionConvergence}
The main purpose of this section is to establish a general result for path space convergence for a sequence of Markov processes related to random matrices  satisfying  consistency relations.

\subsection{Method of intertwiners: Construction of the Feller process}\label{SubSec-Intertwining}

We recall the method of intertwiners of \cite{MarkovProc-pathSpace-GT}. Since there is a subtle difference in that we put the weak topology on spaces of probability measures instead of the topology induced by the total variation norm used in \cite{MarkovProc-pathSpace-GT} we give a detailed exposition and some proofs. To set things up, we will need to collect several definitions and facts which can be found, for example, in \cite{Winkler}.

The measurable structures on topological spaces below are always understood to be the Borel $\sigma$-algebras. A Polish space $\mathscr{X}$ is a separable completely metrisable topological space. A standard Borel space is a measurable space $(\mathscr{X},\mathscr{F})$ such that there exists a metric on the base set so that the induced Borel $\sigma$-algebra is $\mathscr{F}$. A Borel isomorphism is a bi-measurable bijective function between two standard Borel spaces. 

We write $\mathscr{M}_{\textnormal{p}}(\mathscr{X})$ for the set of Borel probability measures on a Polish space $\mathscr{X}$. Unless otherwise stated, we always endow $\mathscr{M}_{\textnormal{p}}(\mathscr{X})$ with the weak topology. This makes it a Polish space (and so a standard Borel space). Moreover, the Borel $\sigma$-algebra coincides with the $\sigma$-algebra generated by the evaluation maps $\mu \mapsto \mu(\mathscr{A})$, for $\mathscr{A}$ measurable (but the topology on the base set $\mathscr{M}_{\textnormal{p}}(\mathscr{X})$ generated by these maps is different from the weak topology). 

For a Polish space $\mathscr{X}$ denote by $\mathscr{B}(\mathscr{X})$ the space of real-valued bounded Borel functions on $\mathscr{X}$. A Markov kernel $\mathscr{K}: \mathscr{X}_1 \to \mathscr{X}_2$ from $\mathscr{X}_1$ to $\mathscr{X}_2$ is a function $\mathscr{K}(x,\mathscr{A})$, where $x\in \mathscr{X}_1$ and $\mathscr{A} \subset \mathscr{X}_2$ is a measurable subset, such that $\mathscr{K}(x,\cdot)$ is a probability measure on $\mathscr{X}_2$ and $x\mapsto\mathscr{K}(x,\mathscr{A})$ is a measurable function on $\mathscr{X}_1$. Observe that, a Markov kernel $\mathscr{K}: \mathscr{E}_1 \to \mathscr{X}_2$ can be viewed as a map on probability measures $\mathscr{K}:\mathscr{M}_{\textnormal{p}}(\mathscr{X}_1)\to \mathscr{M}_\textnormal{p}(\mathscr{X}_2)$ or as a map on functions $\mathscr{K}:\mathscr{B}(\mathscr{X}_2) \to \mathscr{B}(\mathscr{X}_1)$ in the obvious way.

For a locally compact Polish space $\mathscr{X}$, we write $C_0(\mathscr{X})$ for the space of continuous (real-valued) functions vanishing at infinity, endowed with the supremum norm which makes it a Banach space.
 
\begin{defn} Let $\mathscr{X}_1$ and $\mathscr{X}_2$ be locally compact Polish spaces. A Markov kernel $\mathscr{K}:\mathscr{X}_1 \to \mathscr{X}_2$ is Feller if the induced map from $\mathscr{B}(\mathscr{X}_2)$ to $\mathscr{B}(\mathscr{X}_1)$ maps $C_0(\mathscr{X}_2)$ to $C_0(\mathscr{X}_1)$.
\end{defn}

The following is our basic data. Suppose that we are given:
\begin{enumerate}
    \item A sequence of locally compact Polish spaces $(\mathcal{E}_N)_{N=1}^\infty$.
    \item For each $N\in \mathbb{N}$, a Feller Markov kernel $\mathcal{L}_N^{N+1}:\mathcal{E}_{N+1} \to \mathcal{E}_N$.
\end{enumerate}

We would now like to associate a certain limit object to the above data. There are a number of (not always equivalent) ways of doing this \cite{Winkler,OlshanskiHarmonic}; see in particular Section 4.3.1 of \cite{Winkler} for the connection to entrance boundaries of Markov chains mentioned in the introduction. Below, we discuss the bare minimum we need for our applications.

To begin with, observe that for any $N \in \mathbb{N}$, the affine maps $\mathcal{L}_N^{N+1}:\mathscr{M}_\textnormal{p}(\mathcal{E}_{N+1}) \to \mathscr{M}_\textnormal{p}(\mathcal{E}_N)$ are continuous. Hence, we can take the projective limit of the convex topological spaces $\mathscr{M}_{\textnormal{p}}(\mathcal{E}_N)$, which is defined as follows, see Section 3 of \cite{Winkler},
\begin{equation}\label{ProjectiveLimitDef}
\lim_{\leftarrow \textnormal{Top}} \mathscr{M}_\textnormal{p}\left(\mathcal{E}_N\right)\overset{\textnormal{def}}{=}\left\{(\mu_N)_{N=1}^\infty \in \prod_{N=1}^\infty \mathscr{M}_{\textnormal{p}}(\mathcal{E}_N): \mu_{N+1}\mathcal{L}_{N}^{N+1}=\mu_N, \ \forall N \in \mathbb{N}\right\}.
\end{equation}
This is endowed with the projective limit topology. It is in fact a Polish simplex (and in particular a standard Borel space), see Section 4 of \cite{Winkler}. The following terminology will be used throughout the paper.
\begin{defn}
We call any element of \eqref{ProjectiveLimitDef} a consistent sequence of (probability) measures.
\end{defn}

We then come to the following definition.
\begin{defn}
Suppose that $\mathcal{E}_\infty$ is a locally compact Polish space, equipped with Feller Markov kernels $\mathcal{L}_N^{\infty}:\mathcal{E}_\infty \to \mathcal{E}_N$ satisfying, for all $N \in \mathbb{N}$, $\mathcal{L}_{N+1}^{\infty}\mathcal{L}_N^{N+1}=\mathcal{L}_{N}^\infty$, so that the map,
\begin{align}\label{Borel isomorphism}
    \mathscr{M}_\textnormal{p}(\mathcal{E}_\infty) &\to \lim_{\leftarrow \textnormal{Top}} \mathscr{M}_\textnormal{p}\left(\mathcal{E}_N\right)\\
\mathfrak{m}&\mapsto \left(\mathfrak{m}\mathcal{L}_N^\infty\right)_{N=1}^\infty,\nonumber
\end{align}
is a Borel isomorphism. Then, we say that $\mathcal{E}_\infty$ is a Feller boundary of the system $(\mathcal{E}_N,\mathcal{L}_N^{N+1})_{N=1}^\infty$.
\end{defn}

We will not be concerned with existence and uniqueness of such an object for general systems here. In related settings, abstract existence and uniqueness (in an appropriate measurable sense) results exist, see \cite{Winkler,OlshanskiHarmonic}. For our ultimate purposes of constructing dynamics on $\mathcal{E}_\infty$, the above abstract setting is only useful if we have a nice explicit description for it and the kernels $\mathcal{L}_N^\infty$. This is in general a difficult problem but in some cases it admits a good solution. This is the case for the concrete applications to random matrix dynamics we have in mind.

We can now prove the following lemma, which is essentially Lemma 2.3 of \cite{MarkovProc-pathSpace-GT}.
\begin{lem}\label{LemmaDensity}
Assume $\mathcal{E}_\infty$ is a Feller boundary of $\left(\mathcal{E}_N,\mathcal{L}_N^{N+1}\right)_{N=1}^\infty$. Then, the space 
\begin{equation}\label{DefDenseSet}
\textnormal{span}\left(\bigcup_{K=1}^{\infty}\mathcal{L}_K^{\infty}C_0(\mathcal{E}_K)\right)
\end{equation}
is dense in $C_0(\mathcal{E}_\infty)$.
\end{lem}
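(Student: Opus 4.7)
My strategy is to establish density by a Hahn--Banach duality argument. Since $\mathcal{E}_\infty$ is a locally compact Polish space, the dual of the Banach space $C_0(\mathcal{E}_\infty)$ is identified via the Riesz representation theorem with the space $\mathscr{M}(\mathcal{E}_\infty)$ of finite signed Borel (automatically Radon) measures on $\mathcal{E}_\infty$. The span in \eqref{DefDenseSet} lies inside $C_0(\mathcal{E}_\infty)$ because each $\mathcal{L}_K^{\infty}$ is assumed to be Feller, so it will suffice to prove that its annihilator is trivial: every $\nu \in \mathscr{M}(\mathcal{E}_\infty)$ satisfying $\int_{\mathcal{E}_\infty} \mathcal{L}_K^{\infty} f \, \mathrm{d}\nu = 0$ for every $K \in \mathbb{N}$ and every $f \in C_0(\mathcal{E}_K)$ must be the zero measure.

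Given such a $\nu$, I would write its Jordan decomposition $\nu = \nu_+ - \nu_-$ and rephrase the annihilator condition, using Fubini's theorem, as
\[
\int_{\mathcal{E}_K} f \, \mathrm{d}(\nu_+ \mathcal{L}_K^{\infty}) = \int_{\mathcal{E}_K} f \, \mathrm{d}(\nu_- \mathcal{L}_K^{\infty}), \quad \forall K \in \mathbb{N}, \ \forall f \in C_0(\mathcal{E}_K).
\]
Because $C_0$ separates finite Borel measures on the locally compact Polish space $\mathcal{E}_K$ (again by Riesz representation), this forces the equality of finite measures $\nu_+ \mathcal{L}_K^{\infty} = \nu_- \mathcal{L}_K^{\infty}$ on $\mathcal{E}_K$ for each $K$. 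Since $\mathcal{L}_K^{\infty}$ is a Markov kernel, comparing total masses yields $c := \nu_+(\mathcal{E}_\infty) = \nu_-(\mathcal{E}_\infty)$. If $c = 0$, then $\nu = 0$ and we are finished.

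Otherwise, the normalised measures $\mu_\pm := c^{-1}\nu_\pm$ are elements of $\mathscr{M}_{\textnormal{p}}(\mathcal{E}_\infty)$ for which $\mu_+ \mathcal{L}_K^{\infty} = \mu_- \mathcal{L}_K^{\infty}$ for every $K \in \mathbb{N}$; equivalently, $\mu_+$ and $\mu_-$ have the same image in the projective limit $\lim_{\leftarrow \textnormal{Top}} \mathscr{M}_{\textnormal{p}}(\mathcal{E}_N)$ under the map \eqref{Borel isomorphism}. Invoking the hypothesis that this map is a Borel isomorphism (in particular, injective) yields $\mu_+ = \mu_-$, hence $\nu = 0$, which completes the argument. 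The only non-routine ingredient is precisely the injectivity of \eqref{Borel isomorphism}, which is the defining feature of a Feller boundary; the remainder is standard functional analysis, so I do not foresee any substantial obstacle.
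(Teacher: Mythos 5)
Your proof is correct and follows essentially the same route as the paper's: both reduce density to the triviality of the annihilator via Riesz duality, decompose the annihilating measure into a difference of (normalised) nonnegative parts, use that $\mathcal{L}_K^{\infty}$ is a Markov kernel to equalise the total masses, and then invoke the injectivity of the Borel isomorphism \eqref{Borel isomorphism} to conclude. The only cosmetic difference is that you start from the Jordan decomposition and normalise afterwards, whereas the paper writes the measure directly as a weighted difference of two probability measures; the content is identical.
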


\begin{proof}
On a locally compact Polish space $\mathscr{X}$, let $\mathscr{M}(\mathscr{X})$ denote the set of signed measures with finite total variation norm $||\cdot ||_{\textnormal{tv}}$.
By the Riesz representation theorem the Banach space $(\mathscr{M}(\mathcal{E}_\infty),||\cdot ||_{\textnormal{tv}})$ is the Banach dual of $C_0(\mathcal{E}_\infty)$. Hence, in order to prove the result it suffices to show that if $\mu \in \mathscr{M}(\mathcal{E}_\infty)$ annihilates all functions in \eqref{DefDenseSet} then necessarily $\mu = 0$.
Observe that, this last statement makes no reference to the topology we put on $\mathscr{M}(\mathcal{E}_\infty)$. Assume that $\mu \in \mathscr{M}(\mathcal{E}_\infty)$  annihilates $\mathcal{L}_N^\infty C_0(\mathcal{E}_N)$, for all $N \in \mathbb{N}$. This is equivalent, again by the Riesz representation theorem applied to each $C_0(\mathcal{E}_N)$, to $\mu\mathcal{L}_N^\infty \in \mathscr{M}(\mathcal{E}_N)$ being $0$, for all $N \in \mathbb{N}$. We can write $\mu(\cdot)=||\mu_+||_{\textnormal{tv}}\mu_+(\cdot)-||\mu_-||_{\textnormal{tv}}\mu_-(\cdot)$ for $\mu_+,\mu_- \in \mathscr{M}_{\textnormal{p}}(\mathcal{E}_\infty)$. Since $\mu \mathcal{L}_N^\infty =0$, for all $N \in \mathbb{N}$ we obtain $||\mu_+||_{\textnormal{tv}}=||\mu_-||_{\textnormal{tv}}$ and thus $\mu_+\mathcal{L}_N^\infty=\mu_-\mathcal{L}_N^\infty$, for all $N \in \mathbb{N}$. Since the map $\eqref{Borel isomorphism}$ is a bijection we must have $\mu_+=\mu_-$ and hence $\mu =0$ as desired.
\end{proof}

Before constructing Feller Markov dynamics on $\mathcal{E}_\infty$, we need to recall the definition of a Feller semigroup, see \cite{Kallenberg,Revuz-Yor}.

\begin{defn}\label{DefnFellerSemigroup} Let $\mathscr{X}$ be a locally compact Polish space.
A semigroup $(\mathsf{P}_t)_{t\ge 0}$ of positive contraction operators on $C_0(\mathscr{X})$ is called a Feller (or Feller-Dynkin) semigroup if: (i) $\mathsf{P}_t C_0(\mathscr{X})\subset C_0(\mathscr{X})$, for all $t\ge 0$, (ii) $\mathsf{P}_tf(x)\to f(x)$, as $t\to 0$, for all $f\in C_0(\mathscr{X})$, $x\in \mathscr{X}$. Associated to $(\mathsf{P}_t)_{t\ge 0}$ we have a unique Markov process, called a Feller process, on $\mathscr{X}$.
\end{defn}

Now, for each $N\in \mathbb{N}$, let $\left(\mathbf{X}^{(N)}(t);t\ge 0\right)$ be a Feller process on $ \mathcal{E}_N $ with  associated semigroup $ \left(\mathsf{P}_{N}(t)\right)_{t\geq 0}$, and suppose these semigroups are consistent 
with $ (\mathcal{L}_N^{N+1})_{N=1}^{\infty}$, namely, 
\begin{align}\label{MethodOfInterEq}
\mathsf{P}_{N+1}(t)\mathcal{L}_N^{N+1} = \mathcal{L}_N^{N+1}\mathsf{P}_{N}(t),\ \ \forall t\geq 0,\; \forall N\in \mathbb{N}.
\end{align}
The following result from \cite{MarkovProc-pathSpace-GT}, essentially Proposition 2.4 therein, gives the existence of a Feller process $\mathbf{X}^{(\infty)} $ on $\mathcal{E}_{\infty}$ consistent with the $\mathbf{X}^{(N)}$. Because the main idea is beautifully simple and since there are some subtleties with measurability we give some details.

\begin{thm}\label{thm-MarkovProcessonBoundary}
	Let $\mathcal{E}_{\infty}$ be a Feller boundary of $ (\mathcal{E}_N,\mathcal{L}_N^{N+1})_{N=1}^{\infty}$.  For all $N\in \mathbb{N}$, let $\left(\mathsf{P}_N(t)\right)_{t\ge 0}$ be Feller semigroups satisfying (\ref{MethodOfInterEq}). Then, there exists a unique Feller Markov semigroup
	$ \left(\mathsf{P}_{\infty}(t)\right)_{t\geq 0} $ on $ \mathcal{E}_{\infty}$, with associated process $\left(\mathbf{X}^{(\infty)}(t);t\ge 0\right)$, such that 
	\begin{align}\label{Pt-intertwining}
		\mathsf{P}_{\infty}(t)\mathcal{L}_N^{\infty}
		= \mathcal{L}_N^{\infty}\mathsf{P}_{N}(t),\ \ \forall t\geq 0,\;\forall N\in\mathbb{N}.
	\end{align}
 \end{thm}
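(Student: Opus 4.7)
The plan is to construct $\mathsf{P}_\infty(t)$ first as a Markov kernel on $\mathcal{E}_\infty$ by exploiting the Borel isomorphism \eqref{Borel isomorphism}, and only then verify it is Feller and strongly continuous by testing against the dense subspace from Lemma \ref{LemmaDensity}. The intertwining \eqref{Pt-intertwining} will be built in by construction, and uniqueness will follow immediately from density.

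\textbf{Construction of the kernel and Feller property.} Fix $t\ge 0$ and $\omega\in\mathcal{E}_\infty$, and form the sequence $\left(\mathcal{L}_N^\infty(\omega,\cdot)\mathsf{P}_N(t)\right)_{N=1}^\infty$. Using \eqref{MethodOfInterEq} and $\mathcal{L}_N^\infty = \mathcal{L}_{N+1}^\infty\mathcal{L}_N^{N+1}$ one checks
\[ \left(\mathcal{L}_{N+1}^\infty(\omega,\cdot)\mathsf{P}_{N+1}(t)\right)\mathcal{L}_N^{N+1} = \mathcal{L}_{N+1}^\infty(\omega,\cdot)\mathcal{L}_N^{N+1}\mathsf{P}_N(t) = \mathcal{L}_N^\infty(\omega,\cdot)\mathsf{P}_N(t), \]
so this sequence lies in $\lim_{\leftarrow\textnormal{Top}}\mathscr{M}_{\textnormal{p}}(\mathcal{E}_N)$. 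Applying the inverse of \eqref{Borel isomorphism} yields a unique $\mathsf{P}_\infty(t)(\omega,\cdot)\in\mathscr{M}_{\textnormal{p}}(\mathcal{E}_\infty)$ with this image, equivalently $\mathsf{P}_\infty(t)(\omega,\cdot)\mathcal{L}_N^\infty = \mathcal{L}_N^\infty(\omega,\cdot)\mathsf{P}_N(t)$ for every $N$. For $f\in C_0(\mathcal{E}_K)$ this identity evaluates as $\mathsf{P}_\infty(t)(\mathcal{L}_K^\infty f)(\omega) = \left(\mathcal{L}_K^\infty\mathsf{P}_K(t)f\right)(\omega)$, which lies in $C_0(\mathcal{E}_\infty)$ because $\mathsf{P}_K(t)f\in C_0(\mathcal{E}_K)$ and $\mathcal{L}_K^\infty$ is Feller. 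Linearity extends this action to the span of Lemma \ref{LemmaDensity}, and combined with the contraction bound $\|\mathsf{P}_\infty(t)F\|_\infty\le\|F\|_\infty$ (inherited from $\mathsf{P}_\infty(t)(\omega,\cdot)$ being a probability measure) a routine density argument extends $\mathsf{P}_\infty(t)$ to a positive contraction mapping $C_0(\mathcal{E}_\infty)$ into itself. Borel measurability of $\omega\mapsto\mathsf{P}_\infty(t)(\omega,\mathscr{A})$ holds because the Borel $\sigma$-algebra on $\mathscr{M}_{\textnormal{p}}(\mathcal{E}_\infty)$ is generated by evaluations against $C_0$ functions, and we have just verified continuity on a dense set.

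\textbf{Semigroup property, strong continuity, uniqueness.} On a test element $\mathcal{L}_K^\infty f$, the $N$-level semigroup property gives
\[ \mathsf{P}_\infty(s)\mathsf{P}_\infty(t)\mathcal{L}_K^\infty f = \mathsf{P}_\infty(s)\mathcal{L}_K^\infty\mathsf{P}_K(t)f = \mathcal{L}_K^\infty\mathsf{P}_K(s+t)f = \mathsf{P}_\infty(s+t)\mathcal{L}_K^\infty f, \]
which extends to $C_0(\mathcal{E}_\infty)$ by density. Strong continuity at $t=0$ follows from $\|\mathsf{P}_\infty(t)\mathcal{L}_K^\infty f - \mathcal{L}_K^\infty f\|_\infty \le \|\mathsf{P}_K(t)f - f\|_\infty \to 0$, valid because each $\mathsf{P}_K$ is Feller hence strongly continuous on $C_0(\mathcal{E}_K)$, combined with a three-$\varepsilon$ density argument. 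Uniqueness is immediate: any other Feller semigroup satisfying \eqref{Pt-intertwining} agrees with $\mathsf{P}_\infty(t)$ on the dense span of Lemma \ref{LemmaDensity}, hence on all of $C_0(\mathcal{E}_\infty)$. The associated Feller process $\left(\mathbf{X}^{(\infty)}(t);t\ge 0\right)$ is then furnished by the standard construction from a Feller semigroup.

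\textbf{Main obstacle.} The only delicate conceptual point is the \emph{well-definedness} of $\mathsf{P}_\infty(t)$ as a linear operator on the span from Lemma \ref{LemmaDensity}: distinct representations of the same function must yield the same image. The Markov-kernel viewpoint bypasses this essentially for free, since $\mathsf{P}_\infty(t)F(\omega)=\int F\,\mathrm{d}\mathsf{P}_\infty(t)(\omega,\cdot)$ depends only on $F$. The more hands-on alternative---defining directly on the span and then proving the implication $\mathcal{L}_K^\infty g = 0 \Rightarrow \mathcal{L}_K^\infty\mathsf{P}_K(t)g = 0$ after pushing several summands to a common top level $K$ via the consistency relations---is workable but essentially unwinds the same Borel-isomorphism information one obtains cleanly from the kernel construction above.
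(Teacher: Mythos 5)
Your proposal is correct and follows essentially the same route as the paper's proof: construct $\mathsf{P}_\infty(t)(\omega,\cdot)$ via the Borel isomorphism \eqref{Borel isomorphism} applied to the consistent sequence $\left(\mathcal{L}_N^\infty(\omega,\cdot)\mathsf{P}_N(t)\right)_{N\geq 1}$, then verify the semigroup, Feller and uniqueness statements by density of the span from Lemma \ref{LemmaDensity}. The only cosmetic difference is that you obtain measurability of $\omega\mapsto\mathsf{P}_\infty(t)(\omega,\cdot)$ as a by-product of first establishing the Feller property on the dense span, whereas the paper argues measurability directly by factoring this map through the bi-measurable Borel isomorphism and the continuous postcomposition $(\mu_N)_N\mapsto(\mu_N\mathsf{P}_N(t))_N$; both routes rest on the same underlying facts.
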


\begin{proof}
To construct the semigroup $(\mathsf{P}_\infty(t))_{t\ge 0}$ we first need to define, for each $t \ge 0$, $x\in \mathcal{E}_\infty$, a probability measure $\mathsf{P}(t)(x,\cdot)$ on $\mathcal{E}_\infty$. Note that, this probability measure needs to satisfy,
\begin{equation}\label{IntermediateIntertwiningProof}
\mathsf{P}(t)(x,\cdot)\mathcal{L}_N^\infty(\mathrm{d}y)=\mathcal{L}_N^\infty \mathsf{P}_N(t)(x,\mathrm{d}y).
\end{equation}
Then, note that, because of \eqref{MethodOfInterEq}, and since $\mathcal{L}_{N}^{\infty}=\mathcal{L}^\infty_{N+1}\mathcal{L}_N^{N+1}$, we have
\begin{equation*}
 \left(\mathcal{L}_N^\infty\mathsf{P}_N(t)(x,\cdot)\right)_{N=1}^\infty\in   \lim_{\leftarrow \textnormal{Top}} \mathscr{M}_\textnormal{p}\left(\mathcal{E}_N\right),
\end{equation*}
and by virtue of the fact that the map \eqref{Borel isomorphism} is a Borel isomorphism this defines a unique probability measure $\mathsf{P}(t)(x,\cdot)$ on $\mathcal{E}_\infty$ as desired. Observe also that,  since \eqref{IntermediateIntertwiningProof} holds for each $x\in \mathcal{E}_\infty$, it is equivalent to \eqref{Pt-intertwining}. To see that $\mathsf{P}(t):\mathcal{E}_\infty \to \mathcal{E}_\infty$ is a genuine Markov kernel we first observe that the map $\mathcal{E}_\infty \to \mathscr{M}_\textnormal{p}(\mathcal{E}_\infty)$ given by $x\mapsto \mathsf{P}(t)(x,\cdot)$ is measurable. This is a consequence of the following facts: the map $\mathcal{E}_\infty \to \mathscr{M}_{\textnormal{p}}(\mathcal{E}_\infty)$ which takes $x$ to its delta measure is continuous, the map \eqref{Borel isomorphism} is a Borel isomorphism and so bi-measurable and the fact that, for each $t\ge 0$, the map,
\begin{align*}
\lim_{\leftarrow \textnormal{Top}} \mathscr{M}_\textnormal{p}\left(\mathcal{E}_N\right) &\to \lim_{\leftarrow \textnormal{Top}} \mathscr{M}_\textnormal{p}\left(\mathcal{E}_N\right) \\
(\mu_N)_{N=1}^\infty&\mapsto \left(\mu_N\mathsf{P}_N(t)\right)_{N=1}^\infty,
\end{align*}
is continuous, since for each $N \in \mathbb{N}$, $(\mathsf{P}_N(t))_{t\ge 0}$ is Feller. Hence, to conclude that, for $\mathscr{A}$ measurable, $x\mapsto \mathsf{P}(t)(x,\mathscr{A})$ is a measurable function, it suffices to observe that the evaluation maps (for measurable $\mathscr{A}$) $\mathscr{M}_\textnormal{p}(\mathcal{E}_\infty) \to [0,1]$ given  by $\mu \mapsto \mu(\mathscr{A})$ are measurable; recall the $\sigma$-algebra generated by these maps is the same as the Borel $\sigma$-algebra (but the induced topology on the set $\mathscr{M}_\textnormal{p}(\mathcal{E}_\infty)$ is different from the weak topology). The semigroup property then follows by virtue of \eqref{IntermediateIntertwiningProof} and the semigroup property of the $(\mathsf{P}_N(t))_{t\ge 0}$. Finally, the Feller continuity property, boils down, by virtue of Lemma \ref{LemmaDensity}, to the Feller property of the $\mathcal{L}_N^\infty$ and $(\mathsf{P}_N(t))_{t\ge 0}$. See \cite{MarkovProc-pathSpace-GT} for more details.
\end{proof}

Note that, the way we obtained $(\mathsf{P}_\infty(t))_{t\ge 0}$ is non-constructive. Thus a direct description of its dynamics becomes a whole separate task. On the other hand, we have obtained a-apriori the highly non-trivial Feller continuity property, which especially when $\mathcal{E}_\infty$ turns out to be infinite-dimensional is a major achievement. We also remark that it is easy to see that we can  obtain invariant measures for $\left(\mathsf{P}_\infty(t)\right)_{t\ge 0}$ from invariant measures for the $\left(\mathsf{P}_N(t)\right)_{t\ge 0}$, see \cite{MarkovProc-pathSpace-GT}. Since we will prove a stronger statement in Theorem \ref{thm-ergodicity} below we do not make this explicit. We now move to some important definitions.

\begin{defn}\label{DefGenerator}
 Let $\mathscr{X}$ be a locally compact Polish space and let $(\mathsf{P}_t)_{t\ge 0}$ be a Feller semigroup on $C_0(\mathscr{X})$. Its infinitesimal generator $\mathcal{A}$ is defined to be,
 \begin{equation*}
    \mathcal{A}f=\lim_{t\downarrow 0}t^{-1}\left(\mathsf{P}_t f-f\right),
 \end{equation*}
 whenever the limit exists in $C_0(\mathscr{X})$.
 The domain of $\mathcal{A}$, that we denote throughout the paper by $\mathcal{D}(\mathcal{A})$, is the set of $f\in C_0(\mathscr{X})$ for which the limit exists.
\end{defn}

In the setting of Theorem \ref{thm-MarkovProcessonBoundary}, we denote by 
$\mathsf{L}_N$ and $\mathsf{L}_{\infty}$ the generators, with domains $\mathcal{D}(\mathsf{L}_N)\subset C_0(\mathcal{E}_N)$ and $\mathcal{D}(\mathsf{L}_\infty)\subset C_0(\mathcal{E}_\infty)$, associated to the Feller processes $(\mathbf{X}^{(N)}(t);t\ge 0)$ and $(\mathbf{X}^{(\infty)}(t);t\ge 0)$ respectively. The following standard definition will be useful.

    \begin{defn}\label{DefCore}
    Let $\mathcal{A}$ be a Feller generator with domain $\mathcal{D}(\mathcal{A})$. 
A subspace $\mathscr{C}\subseteq \mathcal{D}(\mathcal{A})$ is
    called a core of $\mathcal{A}$ if the closure of $\mathcal{A}|_{\mathscr{C}}$ coincides with $\mathcal{A}$, where, by $\mathcal{A}|_{\mathscr{C}}$ we mean restriction of $\mathcal{A}$ to $\mathscr{C}$.
    \end{defn}
    \noindent
    The generator is clearly uniquely determined by its action on a core. Then, the following very useful lemma, see Proposition 5.2 of \cite{MarkovDynam-ThomaCone} for a proof, provides a core for $\mathsf{L}_{\infty}$.

    \begin{lem}\label{lem core}
    In the setting of Theorem \ref{thm-MarkovProcessonBoundary}, for each $K\in \mathbb{N}$ suppose $\mathscr{C}_K\subseteq \mathcal{D}(\mathsf{L}_K)$ is a core for the generator $\mathsf{L}_K$. The subspace $\mathscr{C}_{\infty} \subseteq C_0(\mathcal{E}_{\infty})$ spanned by all subspaces of the form $\mathcal{L}_K^{\infty}\mathscr{C}_K,\; K = 1, 2, \dots $, namely
\begin{equation}\label{core}
    \mathscr{C}_{\infty}\overset{\textnormal{def}}{=} \textnormal{span}\left(\bigcup_{K=1}^{\infty}\mathcal{L}_K^{\infty}\mathscr{C}_K\right)
\end{equation}
is then a core for the generator $\mathsf{L}_{\infty}$.
\end{lem}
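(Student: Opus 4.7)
My plan is to verify the two ingredients defining a core in sequence: first that $\mathscr{C}_\infty \subseteq \mathcal{D}(\mathsf{L}_\infty)$ with an explicit formula for the generator's action, and second that the closure of $\mathsf{L}_\infty|_{\mathscr{C}_\infty}$ coincides with $\mathsf{L}_\infty$. For the latter I will use the standard resolvent criterion for cores, namely that $\mathscr{C} \subseteq \mathcal{D}(\mathcal{A})$ is a core for a Feller generator $\mathcal{A}$ if and only if $(\lambda - \mathcal{A})\mathscr{C}$ is dense in $C_0$ for some (equivalently, every) $\lambda > 0$, which is a direct consequence of the Hille--Yosida theorem.

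For the first step, fix $K \in \mathbb{N}$ and $g \in \mathscr{C}_K \subseteq \mathcal{D}(\mathsf{L}_K)$. The intertwining \eqref{Pt-intertwining} gives
\begin{equation*}
\frac{\mathsf{P}_\infty(t)\mathcal{L}_K^\infty g - \mathcal{L}_K^\infty g}{t} = \mathcal{L}_K^\infty \left(\frac{\mathsf{P}_K(t)g - g}{t}\right).
\end{equation*}
Since $\mathcal{L}_K^\infty$ is a contraction on $C_0(\mathcal{E}_K)$ (this is exactly the Feller property) and the right hand side converges in $C_0(\mathcal{E}_K)$ to $\mathsf{L}_K g$ as $t \downarrow 0$, the left hand side converges in $C_0(\mathcal{E}_\infty)$ to $\mathcal{L}_K^\infty \mathsf{L}_K g$. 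Thus $\mathcal{L}_K^\infty g \in \mathcal{D}(\mathsf{L}_\infty)$ and $\mathsf{L}_\infty \mathcal{L}_K^\infty g = \mathcal{L}_K^\infty \mathsf{L}_K g$; taking linear combinations shows $\mathscr{C}_\infty \subseteq \mathcal{D}(\mathsf{L}_\infty)$.

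For the resolvent step, fix any $\lambda > 0$. The identity above rearranges to
\begin{equation*}
(\lambda - \mathsf{L}_\infty)\mathcal{L}_K^\infty g = \mathcal{L}_K^\infty (\lambda - \mathsf{L}_K) g, \quad g \in \mathscr{C}_K.
\end{equation*}
Because $\mathscr{C}_K$ is a core for $\mathsf{L}_K$, the range $(\lambda - \mathsf{L}_K)\mathscr{C}_K$ is dense in $C_0(\mathcal{E}_K)$, and since $\mathcal{L}_K^\infty$ is a bounded operator on $C_0$, the closure of $\mathcal{L}_K^\infty (\lambda - \mathsf{L}_K)\mathscr{C}_K$ contains $\mathcal{L}_K^\infty C_0(\mathcal{E}_K)$. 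Summing over $K$, the closure of $(\lambda - \mathsf{L}_\infty)\mathscr{C}_\infty$ contains $\mathrm{span}\bigl(\bigcup_K \mathcal{L}_K^\infty C_0(\mathcal{E}_K)\bigr)$, which is dense in $C_0(\mathcal{E}_\infty)$ by Lemma \ref{LemmaDensity}. Hence $(\lambda - \mathsf{L}_\infty)\mathscr{C}_\infty$ is dense in $C_0(\mathcal{E}_\infty)$, and the resolvent criterion yields that $\mathscr{C}_\infty$ is a core for $\mathsf{L}_\infty$.

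The step I expect to be the genuine substance is the density output of Lemma \ref{LemmaDensity}, which is what ties the tower of kernels $\mathcal{L}_K^\infty$ to the topology on $\mathcal{E}_\infty$; everything else is then a clean algebraic manipulation that exploits the intertwining $\mathsf{P}_\infty(t)\mathcal{L}_K^\infty = \mathcal{L}_K^\infty \mathsf{P}_K(t)$ and the contractivity of $\mathcal{L}_K^\infty$. Note that one cannot, in general, hope for the simpler semigroup-invariance criterion $\mathsf{P}_\infty(t)\mathscr{C}_\infty \subseteq \mathscr{C}_\infty$, since the intertwining only gives $\mathsf{P}_\infty(t)\mathcal{L}_K^\infty g = \mathcal{L}_K^\infty \mathsf{P}_K(t) g$ and $\mathsf{P}_K(t)g$ need not lie in $\mathscr{C}_K$; the resolvent formulation is what avoids this issue.
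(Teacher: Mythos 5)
Your proof is correct and uses the standard resolvent argument: show $\mathsf{L}_\infty\mathcal{L}_K^\infty g=\mathcal{L}_K^\infty\mathsf{L}_K g$ via the intertwining and boundedness of $\mathcal{L}_K^\infty$, then verify the core property by showing $(\lambda-\mathsf{L}_\infty)\mathscr{C}_\infty$ is dense, which follows from density of $(\lambda-\mathsf{L}_K)\mathscr{C}_K$ (the core hypothesis downstairs) combined with Lemma~\ref{LemmaDensity}. The paper defers this lemma to Proposition 5.2 of \cite{MarkovDynam-ThomaCone}, whose argument proceeds along exactly these lines, so this is essentially the same proof.
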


\subsection{Method of intertwiners: Convergence to equilibrium}

We now extend the formalism of the method of intertwiners to also deal with convergence to equilibrium for the process on the boundary. The following result, despite its simplicity, appears to be new. Let $C_b(\mathscr{X})$ denote the space of (real-valued) bounded continuous functions on a locally compact Polish space $\mathscr{X}$.

 \begin{thm}\label{thm-ergodicity}
     In the setting of Theorem \ref{thm-MarkovProcessonBoundary}, suppose $(\mu_N)_{N=1}^\infty \in \lim_{\leftarrow \textnormal{Top}} \mathscr{M}_\textnormal{p}\left(\mathcal{E}_N\right) $ with  corresponding boundary measure $\mu_\infty \in \mathscr{M}_{\textnormal{p}}(\mathcal{E}_\infty)$ uniquely determined by,
     \begin{equation*}
      \mu_\infty \mathcal{L}_N^\infty=\mu_N, \ \ \forall N\in\mathbb{N}.
     \end{equation*}
     Moreover, suppose that for all $\left(\nu_N\right)_{N=1}^{\infty}\in \lim_{\leftarrow \textnormal{Top}} \mathscr{M}_\textnormal{p}\left(\mathcal{E}_N\right) $, we have
       \begin{align}
           \nu_N\mathsf{P}_{N}(t)(f)\overset{t\to\infty}{\longrightarrow} \mu_{N}(f),\ \ \forall f\in C_b(\mathcal{E}_N), \  \forall N \in \mathbb{N}.
       \end{align}  
       Then, for any $\nu_\infty \in \mathscr{M}_\textnormal{p}(\mathcal{E}_\infty)$, we have 
       \begin{align}\label{ergodicity}
            \nu_\infty\mathsf{P}_{\infty}(t)(f)\overset{t\to\infty}{\longrightarrow} \mu_{\infty}(f),\ \ \forall f\in C_b(\mathcal{E}_\infty).
       \end{align} 
       In particular, $\mu_\infty$ is the unique invariant measure of $\left(\mathsf{P}_\infty(t)\right)_{t\ge 0}$.
    \end{thm}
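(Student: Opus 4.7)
The plan is to reduce the convergence $\nu_\infty \mathsf{P}_\infty(t)(f) \to \mu_\infty(f)$ for arbitrary $f \in C_b(\mathcal{E}_\infty)$ to convergence on the dense subspace $\textnormal{span}\left(\bigcup_{K}\mathcal{L}_K^\infty C_0(\mathcal{E}_K)\right)$ supplied by Lemma \ref{LemmaDensity}, and then handle the $C_0$-to-$C_b$ extension via tightness. First, given $\nu_\infty \in \mathscr{M}_{\textnormal{p}}(\mathcal{E}_\infty)$, form its projections $\nu_N := \nu_\infty \mathcal{L}_N^\infty$. Since $\mathcal{L}_N^\infty = \mathcal{L}_{N+1}^\infty \mathcal{L}_N^{N+1}$, the sequence $(\nu_N)_{N=1}^\infty$ is consistent, so the hypothesis applies: $\nu_N \mathsf{P}_N(t)(g) \to \mu_N(g)$ for every $g \in C_b(\mathcal{E}_N)$ and every $N$.

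Next, for $f$ of the special form $f = \mathcal{L}_K^\infty g$ with $g \in C_0(\mathcal{E}_K)$, the intertwining \eqref{Pt-intertwining} yields the clean identity
\begin{equation*}
\nu_\infty \mathsf{P}_\infty(t)(f) = \nu_\infty \mathsf{P}_\infty(t)\mathcal{L}_K^\infty g = \nu_\infty \mathcal{L}_K^\infty \mathsf{P}_K(t) g = \nu_K \mathsf{P}_K(t) g \overset{t\to\infty}{\longrightarrow} \mu_K(g) = \mu_\infty(f),
\end{equation*}
where in the last equality we used $\mu_\infty \mathcal{L}_K^\infty = \mu_K$. By linearity the same convergence holds for every $f$ in the span of such functions, which by Lemma \ref{LemmaDensity} is dense in $C_0(\mathcal{E}_\infty)$. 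Extending to arbitrary $f \in C_0(\mathcal{E}_\infty)$ uses the fact that the operators $\mathsf{P}_\infty(t)$ are contractions on $C_0(\mathcal{E}_\infty)$, so that both $\nu_\infty \mathsf{P}_\infty(t)$ and $\mu_\infty$ act on $f$ with uniform bound $\|f\|_\infty$; a standard $3\varepsilon$ argument approximating $f$ by elements of the dense subspace finishes the step. At this point we have vague convergence $\nu_\infty \mathsf{P}_\infty(t) \to \mu_\infty$ in $\mathscr{M}_{\textnormal{p}}(\mathcal{E}_\infty)$.

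To upgrade vague convergence to weak convergence (i.e.\ testing against $C_b(\mathcal{E}_\infty)$), I would invoke the well-known fact that on a locally compact Polish space, a net of probability measures converging vaguely to a probability measure automatically converges weakly: no mass can escape to infinity because total mass is preserved in the limit. Concretely, tightness of $\{\nu_\infty \mathsf{P}_\infty(t)\}_{t \ge 0}$ can be extracted by taking a sequence of compactly supported $\varphi_n \in C_0(\mathcal{E}_\infty)$ with $0 \le \varphi_n \le 1$ and $\varphi_n \uparrow 1$; vague convergence gives $\liminf_t \nu_\infty \mathsf{P}_\infty(t)(\varphi_n) \ge \mu_\infty(\varphi_n)$, and monotone convergence for $\mu_\infty$ forces no mass loss. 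This produces \eqref{ergodicity}. Finally, uniqueness of the invariant measure is immediate: if $\tilde\mu$ is any invariant probability measure, then $\tilde\mu = \tilde\mu \mathsf{P}_\infty(t) \to \mu_\infty$ weakly by the result just proved, so $\tilde\mu = \mu_\infty$.

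The essentially non-trivial ingredient is Lemma \ref{LemmaDensity}, which has already been established; once density is in hand, the argument is short and the only subtlety is the passage from $C_0$ to $C_b$ test functions, which is handled by the tightness remark above using only that the limit $\mu_\infty$ has full mass. No further delicate input is required.
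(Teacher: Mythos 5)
Your argument is correct and follows the same route as the paper: project $\nu_\infty$ to $\nu_N=\nu_\infty\mathcal{L}_N^\infty$, use the intertwining \eqref{Pt-intertwining} to reduce the claim on functions $\mathcal{L}_K^\infty g$ to the finite-level hypothesis, invoke the density from Lemma \ref{LemmaDensity} to cover $C_0(\mathcal{E}_\infty)$, and then upgrade vague to weak convergence using that the limit $\mu_\infty$ is a probability measure. The only difference is that you spell out the tightness/no-mass-loss step and the $3\varepsilon$ extension that the paper compresses into ``a standard argument''.
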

    \begin{proof}
    We show \eqref{ergodicity} holds for all functions in
$\mathscr{C}=\bigcup_{N=1}^{\infty}\mathcal{L}_N^{\infty}C_0(\mathcal{E}_N)$. Observe that, this implies \eqref{ergodicity} for all $f\in C_0(\mathcal{E}_\infty)$ as, by virtue of Lemma \ref{LemmaDensity}, $\mathscr{C}$ is a dense subset of $C_0(\mathcal{E}_\infty)$. This is vague convergence, which together with the fact that the limiting measure $\mu_{\infty}$ is a probability measure, by a standard argument upgrades it to convergence in distribution which proves the theorem.
        
        Fix $N\in\mathbb{N}$ and let $f_N\in C_0(\mathcal{E}_N)$. We have using the intertwining relation \eqref{Pt-intertwining} that
        \begin{align*}
            \nu_\infty\mathsf{P}_{\infty}(t)\mathcal{L}_N^{\infty}(f_N)=\nu_\infty\mathcal{L}_N^{\infty}\mathsf{P}_{N}(t)(f_N),\  \ \forall t\ge 0.
        \end{align*}
        Now, set $\nu_N=\nu_\infty\mathcal{L}_N^{\infty}$. Since $\left(\nu_N\right)_{N=1}^{\infty} \in \lim_{\leftarrow \textnormal{Top}} \mathscr{M}_\textnormal{p}\left(\mathcal{E}_N\right) $, by our assumption $\nu_N\mathsf{P}_{N}(t)(f_N)\to\mu_N(f_N)$ as $t\to\infty$. Hence, 
       \begin{align*}
       \nu_\infty\mathcal{L}_N^{\infty}\mathsf{P}_{N}(t)(f_N)\to\mu_N(f_N),\  \  t\to\infty.
        \end{align*}
       Recall that $\mu_N(f_N)=\mu_{\infty}\left(\mathcal{L}_N^{\infty}f_N\right)$, and thus, display  (\ref{ergodicity}) follows. Then, the fact that $\mu_\infty$ is the unique invariant measure follows by a standard argument.
    \end{proof}

\subsection{Projective system from unitarily invariant Hermitian matrices}\label{Preliminaries}

From now on we focus on two specific projective systems that are related to random matrices. They form the basic integrable structure behind our problem. They come from the study of unitarily invariant measures on infinite Hermitian matrices, see \cite{Pickrell,Olshanski-Vershik,BorodinOlshanski,Theo-Joseph} for background. We will need several definitions and preliminary results. For the most part, the material in this subsection is standard and has appeared in equivalent forms in \cite{Pickrell,Olshanski-Vershik,BorodinOlshanski,Theo-Joseph,H-P}.

For any $N\in\mathbb{N}$, let $\mathbb{W}_N$ be the Weyl chamber,
\begin{align}\label{Weyl chamber}
 \mathbb{W}_N = \left\{\mathbf{x}=(x_1,x_2,\dots,x_N)\in\mathbb{R}^{N}: x_1\geq x_2 \ge \dots\geq x_N\right\},
\end{align}
The interior of $\mathbb{W}_N$ and $\mathbb{W}_{N,+}$ from \eqref{PosWeylChamber} will be denoted by $\mathbb{W}_N^{\circ}$ and $\mathbb{W}_{N,+}^{\circ}$ respectively.

 For each $N\in\mathbb{N}$, let 
 $\mathbb{H}(N)$ denote the space of $N\times N$ Hermitian matrices, and $\mathbb{H}_+(N)\subset\mathbb{H}(N)$ the subspace of non-negative definite ones.
 
 \begin{defn}
 We define the eigenvalue maps
 \begin{align*}
 \mathsf{eval}_N : \mathbb{H}(N) &\to \mathbb{W}_N,\\
\mathbf{A}&\mapsto \mathbf{x}=(x_1\ge x_2 \ge \dots\ge x_N),
 \end{align*}
 where $\mathbf{x}=(x_1\ge x_2 \ge \dots\ge x_N)$ are the ordered eigenvalues of the matrix $\mathbf{A}\in\mathbb{H}(N)$. Observe that, $\mathsf{eval}_N$ restricts to a map from $\mathbb{H}_+(N)$ to $\mathbb{W}_{N,+}$.
 \end{defn}

 We now let, for each $N \in \mathbb{N}$, 
\begin{align*}
\Pi_N^{N+1}:\mathbb{H}(N+1)&\to\mathbb{H}(N),\\ 
\left[\mathbf{A}_{ij}\right]_{i,j=1}^{N+1}&\mapsto\left[\mathbf{A}_{ij}\right]_{i,j=1}^{N},
\end{align*}
be the so-called corners map. Also, observe that, $\Pi_{N}^{N+1}$ restricts to a map from $\mathbb{H}_+(N+1)$ to $\mathbb{H}_+(N)$. Let $\mathbb{U}(N)$ be the space of $N\times N$ unitary matrices. We will denote by $\mathbf{A}^\dag$ the complex conjugate of a matrix $\mathbf{A}$. The following definition is one of our basic building blocks.

\begin{defn}
For any $N\in\mathbb{N}$, we define
\begin{align}\label{Lambda^N=lawEval}
\Lambda_N^{N+1}\left(\mathbf{x},\cdot\right)\overset{\textnormal{def}}{=}\mathsf{Law}\left[\mathsf{eval}_N\left(\Pi_N^{N+1}\left(\mathbf{U}\mathsf{diag}(\mathbf{x})\mathbf{U}^{\dag}\right)\right)\right],\  \ \forall\mathbf{x}\in \mathbb{W}_{N+1},
\end{align}   
where $\mathbf{U}$ is Haar-distributed on $\mathbb{U}(N)$ and $\mathsf{diag}(\mathbf{x})$ indicates the diagonal matrix with entries $\mathbf{x}$.  More generally, we define the composition
 	\begin{align}\label{Lambda_K^N}
\Lambda_K^{N}\overset{\textnormal{def}}{=}\Lambda_{N-1}^{N}\Lambda_{N-2}^{N-1}\cdots\Lambda_K^{K+1},\  \ \forall N>K\geq 1.
	\end{align} 
\end{defn}

We write $ \Delta_N$ for the Vandermonde determinant given by, for $\mathbf{x}\in \mathbb{R}^N$,  
 \begin{align*}
\Delta_N(\mathbf{x})=\prod_{1\leq i<j\leq N} (x_i-x_j).
 \end{align*}
When $\mathbf{x}\in\mathbb{W}_{N+1}$ and $\mathbf{y}\in\mathbb{W}_N$, $\mathbf{y}\prec \mathbf{x} $ denotes interlacing:
	$ x_1\geq y_1\geq\dots\geq y_N\geq x_{N+1}$. 
It is a classical result by Baryshnikov \cite{Baryshnikov} 
 (in fact the computation is implicit in \cite{GelfandNaimark}, see also \cite{ProjOrbitalMeas} for an alternative proof) that the following holds. This explicit formula will be useful in the sequel.

\begin{lem} For $N\in\mathbb{N}$, we have
	\begin{align}\label{link}
		\Lambda_N^{N+1}(\mathbf{x},\mathrm{d}\mathbf{y})=\frac{N!\Delta_N(\mathbf{y})}{\Delta_{N+1}\left(\mathbf{x}\right)}\mathbf{1}_{\{\mathbf{y}\prec \mathbf{x}\}}\mathrm{d}\mathbf{y},\  \ \forall \mathbf{x}\in \mathbb{W}_{N+1}^{\circ}.
	\end{align} 
\end{lem}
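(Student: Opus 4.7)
The plan is to derive the density of $\Lambda_N^{N+1}(\mathbf{x}, \cdot)$ by writing the random matrix $\mathbf{A} = \mathbf{U}\mathsf{diag}(\mathbf{x})\mathbf{U}^{\dag}$ in $2 \times 2$ block form with respect to the decomposition $\mathbb{C}^{N+1} = \mathbb{C}^{N}\oplus\mathbb{C}$, namely $\mathbf{A} = \begin{pmatrix} \mathbf{B} & \mathbf{v} \\ \mathbf{v}^{\dag} & a \end{pmatrix}$ with $\mathbf{B}\in\mathbb{H}(N)$, $\mathbf{v}\in\mathbb{C}^N$ and $a\in\mathbb{R}$, and counting Jacobians. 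Since $\mathbf{x}\in\mathbb{W}_{N+1}^{\circ}$, Cauchy interlacing guarantees that $\mathbf{y}=\mathsf{eval}_N(\mathbf{B})$ lies almost surely in $\mathbb{W}_N^{\circ}$ with $\mathbf{y}\prec\mathbf{x}$ strict, so one may work on this open region. Spectrally decompose $\mathbf{B}=\mathbf{V}\mathsf{diag}(\mathbf{y})\mathbf{V}^{\dag}$ and set $\mathbf{w}=\mathbf{V}^{\dag}\mathbf{v}\in\mathbb{C}^N$; expanding $\det(z\mathbf{I}-\mathbf{A})$ along the last row in a basis diagonalising $\mathbf{B}$ yields the polynomial identity
\begin{equation*}
\prod_{i=1}^{N+1}(z-x_i)=(z-a)\prod_{j=1}^{N}(z-y_j)-\sum_{k=1}^{N}|w_k|^{2}\prod_{j\neq k}(z-y_j).
\end{equation*}
Comparing leading coefficients gives $a=\sum_{i}x_i-\sum_{j}y_j$, and evaluating at $z=y_k$ gives
\begin{equation*}
|w_k|^{2}=-\frac{\prod_{i=1}^{N+1}(y_k-x_i)}{\prod_{j\neq k}(y_k-y_j)},
\end{equation*}
with positivity precisely equivalent to $\mathbf{y}\prec\mathbf{x}$.

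The next step is to express Lebesgue measure $d\mathbf{A}$ on $\mathbb{H}(N+1)$ in two ways. Via the spectral decomposition of $\mathbf{A}$ and Weyl's integration formula, $d\mathbf{A}=c_{N+1}\Delta_{N+1}(\mathbf{x})^{2}\,d\mathbf{x}\,d[\mathbf{U}]$, where $d[\mathbf{U}]$ is the invariant measure on $\mathbb{U}(N+1)/\mathbb{T}^{N+1}$. Alternatively, decomposing $\mathbf{A}\mapsto(\mathbf{B},\mathbf{v},a)$, diagonalising $\mathbf{B}$, and writing each $w_k\in\mathbb{C}$ in polar coordinates (so that $d(\mathrm{Re}\,w_k)d(\mathrm{Im}\,w_k)=\tfrac{1}{2}d|w_k|^{2}d\theta_k$), one gets $d\mathbf{A}=c_N\Delta_N(\mathbf{y})^{2}\,d\mathbf{y}\,d[\mathbf{V}]\prod_k d|w_k|^{2}\prod_k d\theta_k\,da$. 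Equating the two representations and using that the pushforward of Haar measure under $\mathbf{U}\mapsto\mathbf{U}\mathsf{diag}(\mathbf{x})\mathbf{U}^{\dag}$ is the unique $\mathbb{U}(N+1)$-invariant probability measure on the adjoint orbit through $\mathsf{diag}(\mathbf{x})$, the conditional law of $\mathbf{y}$ given the spectrum $\mathbf{x}$ is obtained by integrating out $d[\mathbf{V}]$ and the phases $d\theta_k$, and computing the Jacobian of the smooth bijection $(|w_1|^{2},\dots,|w_N|^{2},a)\leftrightarrow\mathbf{x}$ (with $\mathbf{y}$ held fixed) from the explicit formulas above. The resulting ratio simplifies, thanks to partial-fraction identities between $\prod_i(z-x_i)$ and $\prod_j(z-y_j)$, to show that the conditional density of $\mathbf{y}$ on $\{\mathbf{y}\prec\mathbf{x}\}$ is proportional to $\Delta_N(\mathbf{y})/\Delta_{N+1}(\mathbf{x})$.

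The normalisation constant is then fixed by the classical Vandermonde interlacing integral
\begin{equation*}
\int_{\mathbf{y}\prec\mathbf{x}}\Delta_N(\mathbf{y})\,d\mathbf{y}=\frac{\Delta_{N+1}(\mathbf{x})}{N!},
\end{equation*}
which one proves by writing $\Delta_N(\mathbf{y})=\det[y_j^{k-1}]_{j,k=1}^{N}$, carrying out the iterated integrals over the simplex $\{y_1\in[x_2,x_1],\dots,y_N\in[x_{N+1},x_N]\}$ inside the determinant, and recognising the result as $\Delta_{N+1}(\mathbf{x})/N!$ after elementary column operations.

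The main obstacle is the bookkeeping in the Jacobian computation: one must carefully track signs and absolute values coming from the interlacing constraints so that the numerous rational factors $\prod_{i}(y_k-x_i)$ and $\prod_{j\neq k}(y_k-y_j)$ cancel against the pieces of $\Delta_{N+1}(\mathbf{x})^{2}$ and $\Delta_N(\mathbf{y})^{2}$ to leave only the clean ratio $\Delta_N(\mathbf{y})/\Delta_{N+1}(\mathbf{x})$. An alternative that sidesteps this calculation is to invoke the Harish-Chandra-Itzykson-Zuber orbital integral formula for the density of $(\mathbf{x},\mathbf{y})$; one then still has to perform a residue/contour-integral argument to reduce the resulting alternating sum of exponentials to the elementary rational expression, but the algebra is arguably cleaner.
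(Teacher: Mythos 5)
The paper does not prove this lemma itself; it only cites Baryshnikov \cite{Baryshnikov}, noting the computation is implicit in Gelfand--Naimark \cite{GelfandNaimark} and that an alternative proof appears in \cite{ProjOrbitalMeas}. Your block-decomposition and change-of-variables argument is precisely Baryshnikov's approach, and it is correct. The one step you flag as ``the main obstacle'' --- the Jacobian bookkeeping --- does go through, and here is a clean way to see it: factor the map $(|w_1|^2,\dots,|w_N|^2,a)\mapsto\mathbf{x}$ (with $\mathbf{y}$ fixed) through the coefficient vector $\mathbf{c}=(c_0,\dots,c_N)$ of $p(z)=\prod_i(z-x_i)$. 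The map $\mathbf{c}\mapsto\mathbf{x}$ has Jacobian $\pm\Delta_{N+1}(\mathbf{x})$. The map $(|w|^2,a)\mapsto\mathbf{c}$ is affine; its linear part $M$ has last row $(0,\dots,0,-1)$ (from the $z^N$ coefficient), and the remaining $N\times N$ block $M'$ has the property that $V^{\mathrm T}M'$ is diagonal with entries $\prod_{j\neq k}(y_k-y_j)$, where $V$ is the Vandermonde matrix in $\mathbf{y}$, giving $\det M=\pm\Delta_N(\mathbf{y})$. Hence $\bigl|\det\partial(|w|^2,a)/\partial\mathbf{x}\bigr|=\Delta_{N+1}(\mathbf{x})/\Delta_N(\mathbf{y})$, and when you substitute into $dA=c_N\Delta_N(\mathbf{y})^2\,d\mathbf{y}\,d[\mathbf{V}]\,2^{-N}\prod_k d|w_k|^2\,d\theta_k\,da$ and compare against $dA=c_{N+1}\Delta_{N+1}(\mathbf{x})^2\,d\mathbf{x}\,d[\mathbf{U}]$, the quadratic Vandermondes reduce to the linear ratio $\Delta_N(\mathbf{y})/\Delta_{N+1}(\mathbf{x})$ as you claim. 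Your formula $|w_k|^2=-\prod_i(y_k-x_i)/\prod_{j\neq k}(y_k-y_j)$, the sign check that positivity is equivalent to interlacing, and the normalising integral $\int_{\mathbf{y}\prec\mathbf{x}}\Delta_N(\mathbf{y})\,d\mathbf{y}=\Delta_{N+1}(\mathbf{x})/N!$ with its column-operation derivation are all correct.
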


As expected these Markov kernels are Feller.

 \begin{lem}\label{lem-FellerPropLambdaN}
      Let $N>K\geq 1$. The Markov kernel $\Lambda_K^{N}$ is Feller, that is, for all $f\in C_0(\mathbb{W}_K)$, we have $\Lambda_K^{N}f\in C_0(\mathbb{W}_N)$. Similarly, for all $f\in C_0(\mathbb{W}_{K,+})$, we have $\Lambda_K^{N}f\in C_0(\mathbb{W}_{N,+})$.
 \end{lem}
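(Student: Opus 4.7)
The plan is to reduce to a one-step statement using the fact that compositions of Feller kernels are Feller: given $f\in C_0(\mathbb{W}_K)$, one has $\Lambda_K^N f = \Lambda_{N-1}^N\bigl(\Lambda_{N-2}^{N-1}\cdots(\Lambda_K^{K+1} f)\bigr)$, so it suffices to show each $\Lambda_N^{N+1}$ sends $C_0(\mathbb{W}_N)$ into $C_0(\mathbb{W}_{N+1})$ and similarly in the non-negative chamber. Throughout, I would work with the random-matrix representation \eqref{Lambda^N=lawEval}, writing $\mathbf{B}=\Pi_N^{N+1}(\mathbf{U}\,\mathsf{diag}(\mathbf{x})\,\mathbf{U}^{\dag})$ and $\mathbf{y}=\mathsf{eval}_N(\mathbf{B})$ for Haar-distributed $\mathbf{U}$.

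Continuity of $\Lambda_N^{N+1}f$ on $\mathbb{W}_{N+1}$ follows from dominated convergence, since for each fixed $\mathbf{U}$ the map $\mathbf{x}\mapsto \mathsf{eval}_N(\Pi_N^{N+1}(\mathbf{U}\,\mathsf{diag}(\mathbf{x})\,\mathbf{U}^{\dag}))$ is continuous (the eigenvalue map being continuous on $\mathbb{H}(N)$) and $|f|\le\|f\|_\infty$ serves as dominator. The non-negative case is identical, as conjugation and principal corners preserve positive semidefiniteness.

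For vanishing at infinity it suffices, by density, to take $f$ continuous with compact support in a ball $\{\|\mathbf{y}\|_\infty\le R\}$, so that $|\Lambda_N^{N+1}f(\mathbf{x})|\le\|f\|_\infty\,\mathrm{Prob}(\|\mathbf{y}\|_\infty\le R)$. Cauchy interlacing ($\mathbf{y}\prec\mathbf{x}$) yields deterministically $y_1\ge x_2$ and $y_N\le x_N$, hence $\|\mathbf{y}\|_\infty\ge\max(x_2^+,(-x_N)^+)$, which kills the probability along any way to infinity with $x_2\to+\infty$ or $-x_N\to+\infty$. The residual configurations are those with $|x_2|,|x_N|$ bounded while $x_1\to+\infty$ and/or $x_{N+1}\to-\infty$ (in the non-negative chamber only the first can occur). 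In the non-negative case, writing $a_k=|U_{N+1,k}|^2$ (uniform on the simplex), I would use $\mathrm{tr}(\mathbf{B})=\sum_k(1-a_k)x_k\ge(1-a_1)x_1$ and $y_1\ge \mathrm{tr}(\mathbf{B})/N$ together with the Dirichlet tail $\mathrm{Prob}(a_1\ge 1-\varepsilon)=\varepsilon^N$ to conclude $\mathrm{Prob}(y_1\le R)=O((R/x_1)^N)\to 0$. In the general chamber one argues instead with $\mathrm{tr}(\mathbf{B}^2)=\sum_i y_i^2\le N\|\mathbf{y}\|_\infty^2$; the block identity $\mathrm{tr}(\mathbf{B}^2)=\sum_k x_k^2-2\sum_k a_k x_k^2+\bigl(\sum_k a_k x_k\bigr)^2$, a homogeneous quadratic in $\mathbf{x}$ almost surely positive, combined with a compactness/continuity argument along the unit sphere of $\mathbb{W}_{N+1}$ delivers the desired uniform tail $\mathrm{Prob}(\mathrm{tr}(\mathbf{B}^2)\le NR^2)\to 0$.

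The main obstacle is the residual estimate in the general chamber: the deterministic interlacing bound on $\|\mathbf{y}\|_\infty$ fails when $|x_2|,|x_N|$ stay bounded, so one genuinely needs a probabilistic concentration argument on $\mathrm{tr}(\mathbf{B}^2)$ with uniformity in the direction $\mathbf{x}/\|\mathbf{x}\|$. The non-negative chamber bypasses this entirely since every summand in $\mathrm{tr}(\mathbf{B})$ has a definite sign and a single Dirichlet tail suffices.
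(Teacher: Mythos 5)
Your proposal is correct, and it takes a genuinely different, more self-contained route than the paper's. The paper's proof is essentially a citation: it invokes Lemma 2.5 of [H-P] for the Feller property of the single-step kernel $\Lambda_N^{N+1}$ and then observes that compositions of Feller kernels are Feller. You perform the same reduction (single-step $\Rightarrow$ composite), but supply a direct probabilistic argument for the one-step statement using the random-matrix representation \eqref{Lambda^N=lawEval}: continuity via dominated convergence and continuity of $\mathsf{eval}_N$; deterministic interlacing to dispatch directions where $x_2 \to +\infty$ or $-x_N \to +\infty$; the trace bound $y_1 \ge \mathrm{tr}(\mathbf{B})/N \ge (1-a_1)x_1/N$ together with the Beta$(1,N)$ tail $\mathrm{Prob}(a_1 \ge 1-\varepsilon) = \varepsilon^N$ in the non-negative chamber; and, in the general chamber, the second-moment identity $\mathrm{tr}(\mathbf{B}^2) = \sum_k x_k^2 - 2\sum_k a_k x_k^2 + \bigl(\sum_k a_k x_k\bigr)^2$ followed by a compactness argument over the unit sphere. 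What your approach buys is that a reader of this paper gets a full proof without chasing a reference, and the structure of the argument (interlacing plus low-order trace moments plus a Dirichlet tail) makes transparent which features of the model are actually responsible for the Feller property. The one place where your argument is only a sketch is the uniform tail $\sup_{\hat{\mathbf{x}}}\mathrm{Prob}\bigl(Q(\hat{\mathbf{x}},a)\le \delta\bigr)\to 0$ as $\delta \to 0$ over the unit sphere of $\mathbb{W}_{N+1}$; this does go through — for each fixed $\hat{\mathbf{x}} \ne 0$, $Q(\hat{\mathbf{x}},a) = 0$ iff $\mathbf{B} = 0$, a measure-zero event for Dirichlet-distributed $a$, and the contradiction argument (along a convergent subsequence of directions, using uniform continuity of $a\mapsto Q(\hat{\mathbf{x}},a)$ on the compact simplex) then yields the claim — but these details deserve to be written out explicitly rather than compressed into "a compactness/continuity argument". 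You correctly identify this as the crux, so the gap is one of exposition rather than of ideas.
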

 \begin{proof}
     The statement follows from Lemma 2.5 of \cite{H-P}, and using the fact that the composition of Feller kernels is Feller. 
 \end{proof}

We now define the infinite dimensional space $\Omega$, an analogue of the space $\Omega_+$ from the introduction, that will allow in the sequel to also consider negative coordinates.
\begin{defn}

We define the space $\Omega$ as follows:
\begin{align}\label{Omega}
		\Omega\overset{\textnormal{def}}{=}
		\left\{\omega=(\mathbf{x}^{+},\mathbf{x}^{-},\gamma,\delta)\in \mathbb{W}_{\infty,+}\times \mathbb{W}_{\infty,+} \times \mathbb{R} \times \mathbb{R}_+:
		\sum_{i=1}^{\infty}(x_{i}^{+})^2+\sum_{i=1}^{\infty}(x_{i}^{-})^2\leq \delta\right\},
	\end{align} 
endowed with the topology of coordinate-wise convergence. 
\end{defn}
 Observe that, $\Omega$ like $\Omega_+$ is locally compact, metrizable and separable. Its topology can be metrized with  the following metric $\mathsf{d}_{\Omega}$, with $\omega=\left(\mathbf{x}^+,\mathbf{x}^-,\gamma,\delta\right)$
\begin{equation}
\mathsf{d}_\Omega(\omega,\tilde{\omega})=\sum_{i=1}^\infty\frac{\left|x_i^+-\tilde{x}_i^+\right|}{2^i\left(1+\left|x_i^+-\tilde{x}_i^+\right|\right)}+\sum_{i=1}^\infty\frac{\left|x_i^--\tilde{x}_i^-\right|}{2^i\left(1+\left|x_i^--\tilde{x}_i^-\right|\right)}+\left|\gamma-\tilde{\gamma}\right|+\left|\delta-\tilde{\delta}\right|.
\end{equation}
We note moreover that $\Omega_+$ can be embedded into $\Omega$ as follows:
\begin{align}\label{omega_+ToOmega}
(\mathbf{x},\gamma)\mapsto\left(\mathbf{x},\mathbf{0},\gamma,\sum_{j=1}^\infty x_j^2\right),
\end{align}
and observe that that this is a homeomorphism. This embedding is the natural embedding of $\Omega_+$ into $\Omega$ and this will be clearer in the sequel.

We now embed $\mathbb{W}_N$ and $\mathbb{W}_{N,+}$ into $\Omega$ and $\Omega_+$ respectively, in a natural way (again the fact that this is a reasonable thing to do will be clear in what follows).

\begin{defn}
For each $N\in\mathbb{N}$, we embed the space $\mathbb{W}_N$ into $\Omega$ as follows, with $\mathbf{x}^{(N)}\in \mathbb{W}_N$,
 \begin{align}\label{embedding}
 \mathbf{x}^{(N)}\mapsto \left(\left(N^{-1}\max\left\{x_{i}^{(N)},0\right\}\right)_{i=1}^\infty,\left(N^{-1}\max\left\{-x_{N+1-i}^{(N)},0\right\}\right)_{i=1}^\infty,N^{-1}\sum_{i=1}^Nx_i^{(N)} ,N^{-2}\sum_{i=1}^N \left(x_i^{(N)}\right)^2\right),  
 \end{align}
 where by convention $x_i^{(N)}\equiv 0$ whenever $i\notin \{1,\dots,N\}$. We denote this embedded point by $\omega\left(\mathbf{x}^{(N)}\right)$.
Similarly, we can define an embedding of $\mathbb{W}_{N,+}$ into $\Omega_+$ as follows, with $\mathbf{x}^{(N)}\in \mathbb{W}_{N,+}$,
\begin{align}\label{embedding+}
    \mathbf{x}^{(N)}\mapsto \left(\left(N^{-1}x_i^{(N)}\right)_{i=1}^\infty,N^{-1}\sum_{i=1}^N x_i^{(N)}\right).
\end{align}
Abusing notation, we denote this embedded point in $\Omega_+$ by $\omega\left(\mathbf{x}^{(N)}\right)$ again.
\end{defn}
 
Note that, the embedding of $\mathbb{W}_{N,+}$ into $\Omega_+$ matches the embedding of the stochastic dynamics from the introduction.

Now, let 
 $\mathbb{H}(\infty)$ denotes the space of infinite Hermitian matrices, and write $\mathbb{H}_+(\infty)$ for the space of non-negative definite infinite Hermitian matrices (defined as the projective limits of the spaces $\left(\mathbb{H}(N)\right)_{N=1}^\infty$ and $\left(\mathbb{H}_+(N)\right)_{N=1}^\infty$ respectively under the corners maps $\Pi_N^{N+1}$; we do not make this more precise as we will only need to work with their finite-dimensional projections). Define the map,
\begin{align*}
\Pi_N^{\infty}:\mathbb{H}(\infty)&\to\mathbb{H}(N),\\ 
\left[\mathbf{A}_{ij}\right]_{i,j=1}^{\infty}&\mapsto\left[\mathbf{A}_{ij}\right]_{i,j=1}^{N}.
\end{align*}
Observe that this restricts to a map from $\mathbb{H}_+(\infty)$ to $\mathbb{H}_+(N)$. 

The following definition is important.

 \begin{defn}
For $\omega \in \Omega$, define the infinite Hermitian random matrix  
 \begin{align*}     \mathbf{A}_{\omega}^{\infty}\overset{\textnormal{def}}{=} \gamma\mathbf{I} + \sqrt{\delta-\sum_{i=1}^\infty\left[(x_i^+)^2+(x_i^-)^2\right]}\mathbf{G}+\sum_{j=1}^{\infty}x_j^+\left( \boldsymbol{\xi}(j)\boldsymbol{\xi}(j)^{\dag}-\mathbf{I}\right) - \sum_{j=1}^{\infty}x_j^-\left(\boldsymbol{\zeta}(j)\boldsymbol{\zeta}^{\dag}(j)-\mathbf{I}\right),
 \end{align*}
where, $\mathbf{G}$ is an infinite Gaussian Hermitian matrix, namely, 
\begin{align*}
    \mathbf{G}_{ii}\overset{\textnormal{d}}{=}\mathcal{N}(0,1),\;\;\forall i\in\mathbb{N},\  \ 
    \Re( \mathbf{G}_{ij}),\; \Im( \mathbf{G}_{ij})\overset{\textnormal{d}}{=}\mathcal{N}\left(0,\frac{1}{2}\right),\;\;\forall i<j,
\end{align*}
with all independent entries subject to the Hermitian constraint and where $\mathcal{N}\left(0,\sigma^2\right)$ denotes the centered Gaussian random variable of variance $\sigma^2$. Moreover, for any $j\in\mathbb{N}$
the $\boldsymbol{\xi(j)}$ and $\boldsymbol{\zeta(j)}$ are infinite vectors of $\textnormal{i.i.d.}$ (independent, identically distributed) 
 standard complex Gaussian random variables (real and imaginary parts are independent normally distributed random variables with mean zero and variance $\frac{1}{2}$).

For $\omega \in \Omega_+$, we defined, abusing notation,
\begin{align}\label{A-omega}
     \mathbf{A}_{\omega}^{\infty}\overset{\textnormal{def}}{=}\left(\gamma-\sum_{j=1}^{\infty}x_j\right)\mathbf{I} +\sum_{j=1}^{\infty}x_j\boldsymbol{\xi}(j)\boldsymbol{\xi}^{\dag}(j),
 \end{align}
 where, as before, $\left(\boldsymbol{\xi(j)}\right)_{j=1}^\infty$ is a sequence of infinite vectors of $\textnormal{i.i.d.}$ complex Gaussian random variables.
 \end{defn}

 We note that the definition of the matrices $\mathbf{A}_\omega^\infty$, for $\omega\in \Omega_+$, is consistent with viewing $\omega \in \Omega$ under the embedding \eqref{omega_+ToOmega}. We also note that we will never really need to consider the $\mathbf{A}_\omega^\infty$ as infinite matrices but rather only consider them through their $K\times K$ top-left corner projections $\Pi_K^\infty(\mathbf{A}_\omega^\infty)$. We can now define a family of Markov kernels that play a key role in our argument.

\begin{defn}
For any $K\in\mathbb{N}$, we define, with (abusing notation) either $\omega \in \Omega$ or $\omega \in \Omega_+$,
  \begin{align}\label{Lambda=lawEval}
\Lambda_K^{\infty}(\omega,\cdot)\overset{\textnormal{def}}{=}\mathsf{Law}\left[\mathsf{eval}_K\left(\Pi_K^{\infty}\left(\mathbf{A}_{\omega}^{\infty}\right)\right)\right].
\end{align}
    
\end{defn}

The following is easy to see.

\begin{prop}
For any $K\in\mathbb{N}$ and $\omega \in \Omega$ or $\omega \in \Omega_+$, $\Lambda_K^{\infty}(\omega,\cdot)$ is a probability measure on $\mathbb{W}_K$ or $\mathbb{W}_{K,+}$ respectively. Moreover, we have
    \begin{align*}
         \Lambda_N^{\infty} \Lambda_K^{N} = \Lambda_K^{\infty},\  \ \forall N>K.
    \end{align*}
\end{prop}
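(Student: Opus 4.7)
For the first claim, that $\Lambda_K^\infty(\omega,\cdot)$ is a probability measure on the right space, I would argue as follows. For $\omega\in\Omega$, the matrix $\Pi_K^\infty(\mathbf{A}_\omega^\infty)$ is Hermitian (a finite linear combination/sum of Hermitian matrices, with the infinite sum of positive rank-one parts converging in each entry by the $\ell^2$ control built into the definition of $\Omega$), so $\mathsf{eval}_K$ is well-defined and lands in $\mathbb{W}_K$; measurability of the map $\omega\mapsto\Pi_K^\infty(\mathbf{A}_\omega^\infty)$ together with the continuity of $\mathsf{eval}_K$ gives that $\Lambda_K^\infty(\omega,\cdot)$ is a bona-fide probability measure. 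For $\omega=(\mathbf{x},\gamma)\in\Omega_+$ in \eqref{A-omega}, the coefficient $\gamma-\sum_j x_j$ is non-negative by the definition of $\Omega_+$, and each $x_j\boldsymbol{\xi}(j)\boldsymbol{\xi}(j)^\dagger$ is positive semi-definite since $x_j\ge 0$. Hence $\mathbf{A}_\omega^\infty$ and thus its $K\times K$ corner are almost surely non-negative definite, so $\mathsf{eval}_K$ lands in $\mathbb{W}_{K,+}$.

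For the intertwining $\Lambda_N^\infty\Lambda_K^N=\Lambda_K^\infty$, the main idea is to exploit unitary invariance. First, I note $\Pi_K^\infty=\Pi_K^N\circ\Pi_N^\infty$, so setting $\mathbf{B}=\Pi_N^\infty(\mathbf{A}_\omega^\infty)$, we have $\Lambda_K^\infty(\omega,\cdot)=\mathsf{Law}[\mathsf{eval}_K(\Pi_K^N(\mathbf{B}))]$. Next, I would show that $\mathbf{B}$ is unitarily invariant as an $N\times N$ random Hermitian matrix: for any $\mathbf{U}\in\mathbb{U}(N)$,
\begin{equation*}
\mathbf{U}\mathbf{B}\mathbf{U}^\dagger=\bigl(\gamma-\textstyle\sum_j x_j\bigr)\mathbf{I}_N+\sum_{j=1}^\infty x_j(\mathbf{U}\Pi_N^\infty\boldsymbol{\xi}(j))(\mathbf{U}\Pi_N^\infty\boldsymbol{\xi}(j))^\dagger,
\end{equation*}
and since the vectors $\Pi_N^\infty\boldsymbol{\xi}(j)$ are jointly i.i.d.\ complex standard Gaussians in $\mathbb{C}^N$, their joint law is invariant under left multiplication by $\mathbf{U}$. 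The analogous computation, using additionally the invariance of the GUE matrix $\Pi_N^\infty\mathbf{G}$, handles the $\omega\in\Omega$ case. By the standard fact that a unitarily invariant Hermitian random matrix, conditioned on its vector of ordered eigenvalues $\mathbf{y}$, is distributed as $\mathbf{V}\mathsf{diag}(\mathbf{y})\mathbf{V}^\dagger$ with $\mathbf{V}$ Haar on $\mathbb{U}(N)$ independent of $\mathbf{y}$, this gives the conditional representation of $\mathbf{B}$ given $\mathsf{eval}_N(\mathbf{B})$.

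It then remains to identify, for deterministic $\mathbf{y}\in\mathbb{W}_N$, the law of $\mathsf{eval}_K(\Pi_K^N(\mathbf{V}\mathsf{diag}(\mathbf{y})\mathbf{V}^\dagger))$ with the composition $\Lambda_K^N=\Lambda_{N-1}^N\cdots\Lambda_K^{K+1}$. I would do this by a short downward induction on the number of corners taken: if $\mathbf{M}^{(M)}=\mathbf{V}\mathsf{diag}(\mathbf{y})\mathbf{V}^\dagger$ with $\mathbf{V}$ Haar on $\mathbb{U}(M)$, then $\Pi_{M-1}^M(\mathbf{M}^{(M)})$ is itself unitarily invariant on $\mathbb{U}(M-1)$ (by embedding $\mathbb{U}(M-1)$ as the top-left block in $\mathbb{U}(M)$ and using invariance of $\mathbf{M}^{(M)}$ under this conjugation), its eigenvalue law is by definition $\Lambda_{M-1}^M(\mathbf{y},\cdot)$, and so conditional on these eigenvalues it is again of the form $\mathbf{V}'\mathsf{diag}(\cdot)\mathbf{V}'^\dagger$ with $\mathbf{V}'$ Haar on $\mathbb{U}(M-1)$. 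Iterating from $M=N$ down to $M=K+1$ produces exactly the composition $\Lambda_K^N(\mathbf{y},\cdot)$. Combining this with the previous paragraph yields $\Lambda_K^\infty(\omega,\cdot)=\int\Lambda_K^N(\mathbf{y},\cdot)\,\Lambda_N^\infty(\omega,d\mathbf{y})=\Lambda_N^\infty\Lambda_K^N(\omega,\cdot)$. The only delicate step is the careful bookkeeping of conditional distributions for the unitarily invariant $\Pi_N^\infty(\mathbf{A}_\omega^\infty)$, but this is standard; the rest is a bookkeeping exercise.
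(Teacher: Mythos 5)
Your proposal is correct and follows the same route as the paper: the paper's one-sentence proof invokes "unitary-invariance by virtue of Baryshnikov's result," which is precisely the argument you spell out — noting that $\Pi_N^\infty(\mathbf{A}_\omega^\infty)$ is unitarily invariant, conditioning on its eigenvalues to write it as a Haar-conjugated diagonal matrix, and iterating through consecutive corners to identify the composed kernel $\Lambda_K^N$. You have simply filled in the bookkeeping that the paper leaves to the reader.
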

\begin{proof}
This first statement is immediate from the definition and the second statement follows by unitary-invariance by virtue of Baryshnikov's result \cite{Baryshnikov}.
\end{proof}

The kernels $\Lambda_K^\infty$ are Markov and in fact Feller as expected.

 \begin{prop}\label{prop-FellerPropLambda}
    Let $K\in\mathbb{N}$. Then, we have
    \begin{align*}
        \Lambda_K^{\infty}f&\in C_0(\Omega),\  \ \;\,\forall f\in C_0\left(\mathbb{W}_K\right),\\
        \Lambda_K^{\infty}f&\in C_0(\Omega_+),\  \ \forall f\in C_0\left(\mathbb{W}_{K,+}\right).
    \end{align*} 
In particular, the kernels are Feller Markov.
 \end{prop}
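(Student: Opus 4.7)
The claim requires showing (i) continuity of $\Lambda_K^\infty f$ and (ii) vanishing at infinity. I would use the explicit representation of $\Lambda_K^\infty(\omega,\cdot)$ as the law of the eigenvalues of $\Pi_K^\infty(\mathbf{A}_\omega^\infty)$ so that, by continuity of the eigenvalue map on $\mathbb{H}(K)$ and of $f$, part (i) reduces to convergence in distribution of $\Pi_K^\infty(\mathbf{A}_{\omega_n}^\infty)$ to $\Pi_K^\infty(\mathbf{A}_\omega^\infty)$ whenever $\omega_n\to\omega$, while part (ii) reduces to showing that the spectral radius of $\Pi_K^\infty(\mathbf{A}_\omega^\infty)$ goes to infinity in probability as $\omega$ leaves every compact set.

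For continuity on $\Omega_+$ I would couple all matrices through a single sequence $(\boldsymbol{\xi}(j))_{j=1}^\infty$. Rewriting the $(a,b)$-entry as $\gamma\delta_{ab}+\sum_j x_j(\xi_a(j)\overline{\xi_b(j)}-\delta_{ab})$, the $L^2(\mathbb{P})$-distance between the entries at $\omega_n$ and $\omega$ is controlled by $|\gamma^{(n)}-\gamma|$ and $\big(\sum_j(x_j^{(n)}-x_j)^2\big)^{1/2}$. The essential input is the tail bound
\begin{equation*}
\sum_{j>M}\bigl(x_j^{(n)}\bigr)^2 \;\le\; x_{M+1}^{(n)}\,\gamma^{(n)} \;\le\; \frac{(\gamma^{(n)})^2}{M+1},
\end{equation*}
where the last inequality uses $x_{M+1}\le \gamma/(M+1)$ (immediate from monotonicity and $\sum x_j\le\gamma$); this is uniform in $n$ because $\gamma^{(n)}\to\gamma$. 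Combined with pointwise convergence on the head $\{j\le M\}$, it yields $\sum_j(x_j^{(n)}-x_j)^2\to 0$, hence $\Pi_K^\infty(\mathbf{A}_{\omega_n}^\infty)\to\Pi_K^\infty(\mathbf{A}_\omega^\infty)$ in $L^2(\mathbb{P})$ on the coupling, and bounded continuity of $f$ concludes. For $\Omega$ the same $L^2$-coupling can fail (taking for instance $x_j^{+,(n)}=n^{-1/2}$ for $j\le n$ gives pointwise limit zero but $\sum(x_j^{+,(n)})^2\equiv 1$), so I would instead establish convergence in distribution through characteristic functions. Using $\mathbb{E}[\exp(it\,\boldsymbol{\xi}_K^\dagger\mathbf{T}\boldsymbol{\xi}_K)]=\det(\mathbf{I}-it\mathbf{T})^{-1}$ and the identity $\mathbb{E}[(\boldsymbol{\xi}_K^\dagger\mathbf{T}\boldsymbol{\xi}_K-\mathrm{tr}(\mathbf{T}))^2]=\|\mathbf{T}\|_F^2$, the characteristic function of $\Pi_K^\infty(\mathbf{A}_\omega^\infty)$ factors as $\exp\bigl(i\gamma\,\mathrm{tr}(\mathbf{T})-\tfrac12\delta\|\mathbf{T}\|_F^2\bigr)\cdot\prod_j\exp\bigl(R(x_j^+,\mathbf{T})+R(-x_j^-,\mathbf{T})\bigr)$ with a cubic remainder $R(x,\mathbf{T})=O(x^3)$; the crucial cancellation is that the second-order variance of each rank-one increment combines with the Gaussian coefficient $\delta-\sum(x_j^\pm)^2$ to produce the clean coefficient $\delta$. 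A tail estimate analogous to the one above controls the remainder sum, giving pointwise convergence of characteristic functions and hence weak convergence of the matrices.

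For vanishing at infinity, the constraints in both spaces make compact sets bounded in $\gamma$ (in $\Omega_+$) or in $(\gamma,\delta)$ (in $\Omega$), so $\omega\to\infty$ means $\gamma\to\infty$, respectively $|\gamma|\to\infty$ or $\delta\to\infty$. For $\Omega_+$, positive semi-definiteness of $\Pi_K^\infty(\mathbf{A}_\omega^\infty)$ gives $\lambda_1\ge Y_\omega:=(\gamma-\sum x_j)+\sum_j x_j|\xi_1(j)|^2$, which has mean $\gamma$ and variance $\sum x_j^2\le x_1\gamma$; splitting into the cases $x_1\le\sqrt{\gamma}$ (apply Chebyshev) and $x_1>\sqrt{\gamma}$ (use the single rank-one contribution $x_1|\xi_1(1)|^2$) delivers $Y_\omega\to\infty$ in probability, and the $C_0$ property of $f$ closes the argument. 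The same approach adapts to $\Omega$: if $|\gamma|\to\infty$ the shift $\gamma\mathbf{I}$ dominates; if $\delta\to\infty$ with $\gamma$ bounded, either the Gaussian coefficient $\sqrt{\delta-\sum(x_j^\pm)^2}$ diverges (spreading the spectrum of $\mathbf{G}_K$) or $\sum(x_j^\pm)^2\to\infty$, and in both sub-cases the spectral radius of the corner matrix tends to infinity. The main obstacle in the whole argument is the $\Omega$-case of continuity: a pathwise coupling is impossible when rank-one mass leaks to infinity in the index $j$, and one has to pass to characteristic functions and recognize the algebraic identity matching the Gaussian and rank-one variances, which is precisely what makes the representation close.
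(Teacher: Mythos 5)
Your proposal takes a genuinely more self-contained route than the paper. For continuity the paper simply cites \cite{Theo-Joseph}, whereas you prove it directly: the $L^2$-coupling through a single sequence $(\boldsymbol{\xi}(j))_{j=1}^\infty$ with the tail estimate $\sum_{j>M}(x_j^{(n)})^2\le (\gamma^{(n)})^2/(M+1)$ handles $\Omega_+$ cleanly, and you correctly identify that this coupling breaks on $\Omega$ (rank-one mass leaking to high indices, preserving $\ell^2$ but destroying the pointwise limit) and replace it by the characteristic-function computation $\exp\bigl(i\gamma\,\mathrm{tr}(\mathbf{T})-\tfrac{\delta}{2}\|\mathbf{T}\|_F^2\bigr)\prod_j \mathrm{e}^{R(\cdot,\mathbf{T})}$, whose algebraic cancellation between the Gaussian variance and the second-order rank-one variances is exactly the point that makes the Olshanski--Vershik representation tight. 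For vanishing at infinity on $\Omega_+$ the paper reduces to $K=1$ by Cauchy interlacing (essentially the same observation as your $\lambda_1\ge\mathbf{A}_{11}$, which in fact needs only Hermiticity rather than positive semi-definiteness), writes $\mathbf{A}_{11}$ as $\gamma-\sum x_j+\sum_j\mathsf{\Gamma}_{x_j}$, dichotomizes on whether $\sum_j x_j$ stays bounded, and closes with a Laplace-transform bound $\mathbb{P}(\sum\mathsf{\Gamma}_{x_j}\le R)\le \mathrm{e}^{zR}/(1+z\sum x_j)$; your dichotomy on $x_1\lessgtr\sqrt{\gamma}$ with Chebyshev on one branch and the exponential tail of a single $\mathsf{\Gamma}_{x_1}$ on the other is equally valid and slightly more elementary. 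The paper punts the $\Omega$-side vanishing-at-infinity to \cite{H-P}.

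There is, however, one genuine soft spot in your $\Omega$-side vanishing-at-infinity argument that you should not leave implicit. When $\delta\to\infty$ with $\gamma$ bounded, $\sigma^2$ bounded, and $x_1^\pm$ bounded but $\sum_j\bigl[(x_j^+)^2+(x_j^-)^2\bigr]\to\infty$, the $(1,1)$-entry has bounded mean and diverging variance, so Chebyshev gives you a \emph{concentration} bound --- the wrong direction. What you actually need is an \emph{anti-concentration} estimate: the modulus of the characteristic function of $W=\sum_j x_j^+(|\xi_1(j)|^2-1)$ is $\prod_j(1+t^2(x_j^+)^2)^{-1/2}\le\exp\bigl(-\tfrac{1}{2}\,t^2\sum_j(x_j^+)^2/(1+t^2(x_1^+)^2)\bigr)\to 0$ pointwise in $t\ne0$, and Esseen's concentration inequality then yields $\sup_a\mathbb{P}(W\in[a-R,a+R])\to 0$, from which conditioning on the independent $G_{11}$ and $x^-$-contributions finishes. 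Without this step the sub-case is genuinely incomplete, not merely terse; with it, your argument closes and you have an honest self-contained proof of both the citation to \cite{Theo-Joseph} and the citation to \cite{H-P} that the paper relies on.
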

\begin{proof} Continuity of $\omega \mapsto \Lambda_N^{\infty}(\omega,\mathrm{d}\mathbf{y})$, in the sense of weak convergence, was proven in \cite{Theo-Joseph} (see the proof of Proposition 3.5 therein). From this we also obtain the measurability of $\omega \mapsto \Lambda_{K}^\infty(\omega,\mathscr{A})$ for measurable $\mathscr{A}$. It remains to prove the vanishing at infinity property. In the case of $\Omega$ this was shown in \cite{H-P}. We give a more direct argument in the case of $\Omega_+$ (an adaptation will work for $\Omega$ as well). Let $f\in C_0\left(\mathbb{W}_{K,+}\right)$ be arbitrary. 
We need to show that for any sequence $\left(\omega^{(n)}\right)_{n=1}^{\infty}$ in $\Omega_+$, with $\omega^{(n)}=\left(\mathbf{x}^{(n)},\gamma^{(n)}\right)$, such that $\omega^{(n)}\overset{n\to\infty}{\longrightarrow}\infty$ (which means $\gamma^{(n)}\to \infty$), we have $\left(\Lambda_K^{\infty}f\right)\left(\omega^{(n)}\right)\overset{n\to\infty}{\longrightarrow} 0$.
To do so, it suffices to show that when $\gamma^{(n)}\overset{n\to\infty}{\longrightarrow}\infty$, the largest eigenvalue of $\Pi_K^\infty\left(\mathbf{A}_{\omega^{(n)}}^\infty\right)$ converges to infinity in distribution. Since the eigenvalues of sub-matrices of different orders interlace (Cauchy interlacing theorem), we only need to prove the $K=1$ case. Note that, from the explicit formula \eqref{A-omega}, the corresponding random variable $\mathsf{x}_n$ is simply
 \begin{align*}
     \mathsf{x}_n\overset{\textnormal{def}}{=}\gamma^{(n)}-\sum_{j=1}^{\infty}x_j^{(n)} + \sum_{j=1}^{\infty}\mathsf{\Gamma}_{x_j^{(n)}},
 \end{align*}
 where each $\mathsf{\Gamma}_\theta$ denotes a gamma random variable with Laplace transform $z\mapsto (1+\theta z)^{-1}$ and they are all independent. Thus, we want to show that 
 $\mathsf{x}_n\overset{n\to\infty}{\longrightarrow}\infty$
 when $\gamma^{(n)}\overset{n\to\infty}{\longrightarrow}\infty$. We have two possibilities; either $\sum_{j=1}^{\infty}x_j^{(n)}$ remains bounded, in which case the result follows immediately, otherwise, $\sum_{j=1}^{\infty}x_j^{(n)}\overset{n\to\infty}{\longrightarrow}\infty$  which implies, $\sum_{j=1}^{\infty}\mathsf{\Gamma}_{x_j^{(n)}}{\longrightarrow}\infty$ in distribution. This last claim can easily be seen by virtue of the fact that, for any $z,R\ge 0$, we have,
 \begin{equation*}
  \mathbb{P}\left(\sum_{j=1}^\infty \mathsf{\Gamma}_{x_j^{(n)}}\le R\right)   \le \mathbb{E}\left[\exp\left(-z\sum_{j=1}^\infty \mathsf{\Gamma}_{x_j^{(n)}}\right)\right] \mathrm{e}^{zR} \le \frac{\mathrm{e}^{zR}}{1+z\sum_{j=1}^\infty x_j^{(n)}}.
 \end{equation*}
 This completes the proof. 
\end{proof}

The following fundamental result is due to Pickrell \cite{Pickrell} and Olshanski and Vershik \cite{Olshanski-Vershik}, in equivalent form (modulo the Feller property of the kernels). In rather less obvious equivalent form, it can be traced back to the work of Schoenberg \cite{Schoenberg,KarlinTotalPositivity} on totally positive functions, see \cite{Olshanski-Vershik}. It was translated to the setting of projective systems in \cite{H-P}. A proof, different from \cite{Pickrell,Olshanski-Vershik}, directly in this setup and a generalisation is given in \cite{Theo-Joseph}.

\begin{thm}\label{thm-Boundary}
The space $\Omega$ is a Feller boundary of $\left(\mathbb{W}_N,\Lambda_N^{N+1}\right)_{N=1}^{\infty}$.
\end{thm}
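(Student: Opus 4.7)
The plan is to reduce the statement to the classical result of Pickrell \cite{Pickrell} and Olshanski--Vershik \cite{Olshanski-Vershik} on ergodic decomposition of unitarily invariant measures on infinite Hermitian matrices, after having explicitly verified the three requirements of a Feller boundary. Two of them are already in hand: the Feller continuity of each $\Lambda_N^\infty$ is Proposition \ref{prop-FellerPropLambda}, and the consistency $\Lambda_{N+1}^\infty \Lambda_N^{N+1}=\Lambda_N^\infty$ is immediate from unitary invariance and the identity \eqref{Lambda^N=lawEval}. So the real task is to show that the map $\mathfrak{m}\mapsto (\mathfrak{m}\Lambda_N^\infty)_{N=1}^\infty$ is a Borel isomorphism from $\mathscr{M}_\textnormal{p}(\Omega)$ onto $\lim_{\leftarrow\textnormal{Top}}\mathscr{M}_\textnormal{p}(\mathbb{W}_N)$.

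First, I would factor the map through the space $\mathscr{M}_\textnormal{p}^{\mathbb{U}(\infty)}(\mathbb{H}(\infty))$ of unitarily invariant Borel probability measures on the space of infinite Hermitian matrices. Sending such a $\tilde{\mu}$ to the sequence of eigenvalue laws $\left(\textnormal{Law}(\mathsf{eval}_N\circ \Pi_N^\infty(\tilde{\mu}))\right)_{N=1}^\infty$ produces, by definition of $\Lambda_N^{N+1}$ in \eqref{Lambda^N=lawEval} and unitary invariance, a consistent sequence in $\lim_{\leftarrow\textnormal{Top}}\mathscr{M}_\textnormal{p}(\mathbb{W}_N)$. This map is a Borel isomorphism: injectivity is immediate because, by unitary invariance, $\tilde{\mu}$ is determined by the joint laws on each $\mathbb{H}(N)$, which in turn are determined by the eigenvalue laws; surjectivity uses Kolmogorov extension together with the observation that one can reconstruct the law on each $\mathbb{H}(N)$ as the $\Lambda_N^{N+1}$-compatible mixture of Haar-random unitary conjugates of diagonal matrices, and Borel measurability of inverse follows from continuity of the corner projections. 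On the other side, the Pickrell/Olshanski--Vershik classification (recast for projective systems in \cite{Theo-Joseph,H-P}) says precisely that the extreme points of the Choquet simplex $\mathscr{M}_\textnormal{p}^{\mathbb{U}(\infty)}(\mathbb{H}(\infty))$ are parametrized in a Borel manner by $\omega\in \Omega$ via $\omega \mapsto \textnormal{Law}(\mathbf{A}_\omega^\infty)$, and that every element decomposes uniquely as a convex integral over these ergodic components. This gives a Borel isomorphism $\mathscr{M}_\textnormal{p}(\Omega)\cong \mathscr{M}_\textnormal{p}^{\mathbb{U}(\infty)}(\mathbb{H}(\infty))$ via $\mathfrak{m}\mapsto \int_\Omega \textnormal{Law}(\mathbf{A}_\omega^\infty)\mathfrak{m}(\mathrm{d}\omega)$.

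Composing the two Borel isomorphisms and unpacking the definition \eqref{Lambda=lawEval} of $\Lambda_K^\infty$ as the eigenvalue law on the top-left $K\times K$ corner of $\mathbf{A}_\omega^\infty$, the composition is exactly the map $\mathfrak{m}\mapsto (\mathfrak{m}\Lambda_N^\infty)_{N=1}^\infty$, finishing the proof. The only genuinely deep ingredient is the classification of the ergodic components and that is precisely what is cited; this is the main obstacle but it has already been dispatched in \cite{Pickrell,Olshanski-Vershik,Theo-Joseph}. The remaining care I would need to take is purely bookkeeping: checking that the topology of $\Omega$ (coordinate-wise convergence) matches the topology that the Pickrell/Olshanski--Vershik parametrization induces from the weak topology on ergodic measures, and that measurability, rather than continuity, is sufficient for the ``Borel isomorphism'' requirement of the Feller boundary definition.
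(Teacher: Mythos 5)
Your proposal is correct and follows the same route the paper takes: the statement is essentially a restatement, in the projective-system/Feller-boundary language, of the Pickrell and Olshanski--Vershik classification of ergodic unitarily invariant measures on $\mathbb{H}(\infty)$, combined with the Feller property of $\Lambda_N^\infty$ from Proposition \ref{prop-FellerPropLambda}. The paper simply cites \cite{Pickrell,Olshanski-Vershik,H-P,Theo-Joseph} for this translation, whereas you spell out the reduction through $\mathscr{M}_\textnormal{p}^{\mathbb{U}(\infty)}(\mathbb{H}(\infty))$ explicitly, but the substance is the same.
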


The following result is again due to Olshanski and Vershik in equivalent form (modulo the Feller property of the kernels), see Remark 2.11 in \cite{Olshanski-Vershik}, .

\begin{thm}\label{thm-Boundary+}
 The space $\Omega_+$ is a Feller boundary of $\left(\mathbb{W}_{N,+},\Lambda_N^{N+1}\right)_{N=1}^{\infty}$.
\end{thm}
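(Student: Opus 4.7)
The plan is to reduce the statement to the already-established Theorem \ref{thm-Boundary} for $\Omega$ via the natural embedding $\iota:\Omega_+\hookrightarrow \Omega$ given by \eqref{omega_+ToOmega}. Three ingredients are required for $\Omega_+$ to be a Feller boundary of $(\mathbb{W}_{N,+},\Lambda_N^{N+1})_{N=1}^\infty$: the Feller property of the kernels $\Lambda_N^\infty:\Omega_+\to \mathbb{W}_{N,+}$, their consistency with $\Lambda_N^{N+1}$, and the Borel isomorphism condition defining a Feller boundary. The first two are already in hand from Proposition \ref{prop-FellerPropLambda} and the proposition immediately preceding it, so only the last point requires work. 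Since the forward map $\mathfrak{m}\mapsto (\mathfrak{m}\Lambda_N^\infty)_{N=1}^\infty$ is continuous (it is composed of Feller kernels, whose continuity in the weak topology is verified in the proof of Proposition \ref{prop-FellerPropLambda}), and both source and target are standard Borel spaces, Kuratowski's theorem reduces the task to showing bijectivity.

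For injectivity, I would use that the obvious inclusions $j_N:\mathbb{W}_{N,+}\hookrightarrow \mathbb{W}_N$ are compatible with $\iota$: from \eqref{Lambda=lawEval} and \eqref{A-omega}, the matrix $\mathbf{A}_\omega^\infty$ built from $\omega \in \Omega_+$ agrees with the one built from $\iota(\omega)\in \Omega$, so $\Lambda_N^\infty(\iota(\omega),\cdot)=(j_N)_*\Lambda_N^\infty(\omega,\cdot)$. Consequently, if $\mathfrak{m}_1,\mathfrak{m}_2\in \mathscr{M}_\textnormal{p}(\Omega_+)$ project to the same consistent sequence on $(\mathbb{W}_{N,+})_{N=1}^\infty$, then $\iota_*\mathfrak{m}_1$ and $\iota_*\mathfrak{m}_2$ have equal projections onto every $\mathbb{W}_N$. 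Theorem \ref{thm-Boundary} forces $\iota_*\mathfrak{m}_1=\iota_*\mathfrak{m}_2$, and injectivity of $\iota$ then gives $\mathfrak{m}_1=\mathfrak{m}_2$.

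For surjectivity, start with a consistent sequence $(\mu_N)\in \lim_{\leftarrow \textnormal{Top}}\mathscr{M}_\textnormal{p}(\mathbb{W}_{N,+})$. The interlacing constraint in \eqref{link} combined with $x_{N+1}\ge 0$ guarantees $y_N\ge 0$, so $\Lambda_N^{N+1}$ restricts to a probability kernel between positive cones; hence the pushforwards $((j_N)_*\mu_N)_{N=1}^\infty$ form a consistent sequence on $(\mathbb{W}_N,\Lambda_N^{N+1})$. Theorem \ref{thm-Boundary} yields a unique $\widetilde{\mathfrak{m}}\in \mathscr{M}_\textnormal{p}(\Omega)$ projecting to this sequence. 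Integrating $\Lambda_N^\infty(\omega,\mathbb{W}_{N,+})\le 1$ against $\widetilde{\mathfrak{m}}$ and using $\widetilde{\mathfrak{m}}\Lambda_N^\infty(\mathbb{W}_{N,+})=1$, we obtain $\Lambda_N^\infty(\omega,\mathbb{W}_{N,+})=1$ for $\widetilde{\mathfrak{m}}$-almost every $\omega$ and, by countable intersection, for all $N$ simultaneously. It would then suffice to show that the set
\begin{equation*}
\mathscr{P}\overset{\textnormal{def}}{=}\left\{\omega\in \Omega:\Lambda_N^\infty(\omega,\mathbb{W}_{N,+})=1\textnormal{ for every }N\in\mathbb{N}\right\}
\end{equation*}
equals $\iota(\Omega_+)$, whence $\widetilde{\mathfrak{m}}$ pulls back along $\iota$ to the required $\mathfrak{m}\in \mathscr{M}_\textnormal{p}(\Omega_+)$.

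The main obstacle is the identification $\mathscr{P}=\iota(\Omega_+)$, i.e.\ every top-left corner $\Pi_N^\infty(\mathbf{A}_\omega^\infty)$ is almost surely non-negative definite precisely when $\mathbf{x}^-=0$, $\delta=\sum_j(x_j^+)^2$, and $\gamma\ge \sum_j x_j^+$. The inclusion $\iota(\Omega_+)\subseteq \mathscr{P}$ is immediate from \eqref{A-omega}, which exhibits $\mathbf{A}_\omega^\infty$ as a non-negative scalar multiple of $\mathbf{I}$ plus a sum of the rank-one positive semi-definite operators $x_j^+ \boldsymbol{\xi}(j)\boldsymbol{\xi}(j)^\dag$. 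The reverse inclusion is the delicate step and I expect to argue it via a moment or characteristic-function analysis of the entries of $\Pi_N^\infty(\mathbf{A}_\omega^\infty)$: a nonzero coefficient of $\mathbf{G}$ makes the $(1,1)$-entry a Gaussian with positive variance, taking negative values with positive probability and already violating $1\times 1$ non-negative definiteness, so $\delta=\sum[(x_j^+)^2+(x_j^-)^2]$; the negative rank-one perturbations indexed by $\mathbf{x}^-$ can then be ruled out by testing $\mathbf{A}_\omega^\infty$ on random vectors aligned with the $\boldsymbol{\zeta}(j)$ directions, together with exchangeability of the i.i.d.\ components of $\boldsymbol{\zeta}(j)$, forcing each $x_j^-=0$; finally, the shift coefficient $\gamma-\sum_jx_j^+$ must be non-negative, which is read off from the mean of the trace. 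Equivalent inputs are available inside the Olshanski–Vershik argument behind Theorem \ref{thm-Boundary} (see in particular Remark 2.11 in \cite{Olshanski-Vershik}), and once this characterisation is established the Borel measurability of the inverse is automatic for standard Borel spaces, completing the proof.
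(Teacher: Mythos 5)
The paper does not give a self-contained proof of this statement: it cites Remark~2.11 of Olshanski--Vershik (plus the separately-proven Feller property of $\Lambda_K^\infty$), and later notes a second possible route via the uniform approximation theorem of Section~\ref{SubSec-ProofUnifAppThm}, which it also chooses not to pursue because it would need an independent density argument. Your proposal is a genuinely different, third route: rather than reprove the boundary classification from scratch or via uniform approximation, you take Theorem~\ref{thm-Boundary} for $\Omega$ as given and transport it along the embedding $\iota:\Omega_+\hookrightarrow\Omega$ of \eqref{omega_+ToOmega}, using the compatibility $\Lambda_K^\infty(\iota(\omega),\cdot)=(j_K)_*\Lambda_K^\infty(\omega,\cdot)$ (which the paper notes explicitly after \eqref{A-omega}). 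The structure is sound: the injectivity argument is correct; the surjectivity argument, reducing the problem to the identification of the support set $\mathscr{P}$ with $\iota(\Omega_+)$, is the right move; and the inclusion $\iota(\Omega_+)\subseteq\mathscr{P}$ is indeed immediate from \eqref{A-omega}. What this buys is that the positive-cone boundary theorem becomes a corollary of the full-line one, with the only new content the description of which $\omega\in\Omega$ give almost-surely non-negative corners --- precisely the content of the cited Olshanski--Vershik remark, now made explicit.

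One step is stated incorrectly, though the gap is small and fixable. You extract $\gamma-\sum_j x_j^+\ge 0$ ``from the mean of the trace''; this does not give the constraint, since the mean of the $(1,1)$-entry (or of the trace of any corner) is exactly $\gamma$ (or $N\gamma$), regardless of $\mathbf{x}^+$. The correct extraction, after having forced $\delta=\sum_j[(x_j^+)^2+(x_j^-)^2]$ and $\mathbf{x}^-=\mathbf{0}$, is via the essential infimum of the $(1,1)$-entry: that entry is then $\gamma-\sum_j x_j^+ + \sum_j x_j^+|\boldsymbol{\xi}(j)_1|^2$, and the sum of independent exponentials $\sum_j x_j^+|\boldsymbol{\xi}(j)_1|^2$ has essential infimum $0$, so almost-sure non-negativity of the $(1,1)$-entry is equivalent to $\gamma\ge\sum_j x_j^+$. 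With this repair the identification $\mathscr{P}=\iota(\Omega_+)$, and hence the whole reduction, goes through.
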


The uniform approximation theorem that we state below, could also be used to prove the results above, by following the argument in Section 3 of \cite{Boundary-GT-newApp}, modulo an independent proof (we cannot use Lemma \ref{LemmaDensity} as the argument would be circular) of the density of $\cup_N \Lambda_N^\infty C_0(\mathbb{W}_N)$ and $\cup_N \Lambda_N^\infty C_0(\mathbb{W}_{N,+})$ in $C_0(\Omega)$ and $C_0(\Omega_+)$ respectively. In the combinatorial setting of the Gelfand-Tsetlin graph this is not too difficult, see Section 2.7 in \cite{Boundary-GT-newApp}, but in the continuous setting there are additional subtleties. We will not pursue this here.

\subsection{Uniform Approximation theorem}\label{SubSec-UnifAppThm}

Let $(\mathcal{E}_N)_{N=1}^{\infty}=\left(\mathbb{W}_N\right)_{N=1}^{\infty}$ or $\left(\mathbb{W}_{N,+}\right)_{N=1}^{\infty}$. Let $C_c^{\infty}
(\mathbb{R}^{K})$ denote the space of (real-valued) smooth and compactly supported functions on $\mathbb{R}^{K}$. It will be very convenient in what follows to consider the symmetric extension of the kernels given in \eqref{Lambda^N=lawEval} and \eqref{Lambda=lawEval} to $\mathbb{R}^{K}$, which we denote by the same notation. Thus, both $\Lambda_K^N(\mathbf{x},\mathrm{d}\mathbf{y})$ and $\Lambda_K^\infty(\omega,\mathrm{d}\mathbf{y})$ can be viewed as positive measures on $\mathbb{R}^{K}$ with total mass $K!$ (we choose not to normalise them to be probability measures as it will be more convenient) and can act on functions defined on $\mathbb{R}^K$ (this will be clear from context). The following is the main technical result of this part of the paper.
    
\begin{thm}\label{thm-UnifApp} Let $K\in \mathbb{N}$.
For all $g\in C_c^\infty(\mathbb{R}^K)$,
we have 
\begin{align}\label{UniformApproximation}
\sup_{\mathbf{x}^{(N)}\in \mathcal{E}_N}\left|\Lambda_K^{\infty}g\left(\omega\left(\mathbf{x}^{(N)}\right)\right)-\Lambda_K^{N}g\left(\mathbf{x}^{(N)}\right)\right|\to 0,
\ \ \textnormal{as } N\to\infty. 
 \end{align}
\end{thm}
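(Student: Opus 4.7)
The plan is to obtain explicit $K \times K$ determinantal representations for $\Lambda_K^N g(\mathbf{x}^{(N)})$ and $\Lambda_K^\infty g(\omega(\mathbf{x}^{(N)}))$ acting on smooth, symmetric, compactly supported test functions, and then to carry out a uniform-in-$\mathbf{x}^{(N)}$ asymptotic comparison of their entries. Since both extended kernels on $\mathbb{R}^K$ are symmetric in their arguments, by replacing $g$ with its symmetrisation $\tilde g(\mathbf{y}) = \frac{1}{K!}\sum_{\sigma\in S_K} g(\mathbf{y}_\sigma)$ we may assume $g$ is symmetric. I would treat the two cases $\mathcal{E}_N = \mathbb{W}_{N,+}$ and $\mathcal{E}_N = \mathbb{W}_N$ separately and focus on the former (the relevant model for this paper); the latter requires only cosmetic changes.

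For the finite-$N$ kernel, iterating Baryshnikov's formula \eqref{link} yields the Olshanski-type representation
\[
\Lambda_K^N g(\mathbf{x}^{(N)}) \;=\; \frac{c_K}{\Delta_N(\mathbf{x}^{(N)})}\,\det\bigl[\Phi^N_{ij}(\mathbf{x}^{(N)},g)\bigr]_{i,j=1}^K,
\]
whose entries are integrals of $g$ against certain kernels involving derivatives in the $x^{(N)}$-variables. Because $g$ is smooth and compactly supported, integration by parts turns each $\Phi^N_{ij}$ into a bounded continuous function of $\mathbf{x}^{(N)}$ expressed in terms of partial derivatives of $g$. Starting from the random matrix description \eqref{A-omega} of $\mathbf{A}_\omega^\infty$ and computing the joint density of the top-$K$ corner one obtains a parallel formula
\[
\Lambda_K^\infty g(\omega) \;=\; \det\bigl[\Phi^\infty_{ij}(\omega,g)\bigr]_{i,j=1}^K,
\]
whose entries are Laplace-type transforms of $g$ against the tail configuration $(x_j)_{j\ge 1}$ and the parameter $\gamma$.

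The main work is then to show, uniformly in $\mathbf{x}^{(N)}$, that when $\omega=\omega(\mathbf{x}^{(N)})$ the two determinants differ by $o(1)$ as $N\to\infty$. The coordinate-wise convergence defining the topology of $\Omega_+$, together with the identity $\gamma(\omega(\mathbf{x}^{(N)})) = N^{-1}\sum_i x_i^{(N)}$ built into the embedding \eqref{embedding+}, matches the ``discrete sums'' $N^{-1}\sum_{k=1}^N(\cdots)$ appearing after integration by parts in $\Phi^N_{ij}$ with the infinite objects in $\Phi^\infty_{ij}$. Compact support of $g$ confines the relevant integrals to a fixed bounded region and gives uniform bounds on every derivative of $g$, which is exactly the leverage needed to turn pointwise entry convergence into a quantitative bound.

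The main obstacle is that entry-wise pointwise estimates are insufficient: individual entries $\Phi^N_{ij}$ can be unbounded or genuinely singular when coordinates of $\mathbf{x}^{(N)}$ cluster together or approach $0$, while the whole determinant remains bounded thanks to cancellation against the Vandermonde $\Delta_N(\mathbf{x}^{(N)})$. To handle this I would proceed by induction on $K$: by elementary row/column operations one can isolate the row that captures the worst singular behaviour and reduce the analysis to a $(K-1)\times(K-1)$ sub-determinant (to which the inductive hypothesis applies) plus a remainder controlled by the smoothness and compact support of $g$ alone. The base case $K=1$ reduces to a convergence statement for a single Laplace transform of $g$ against the tail distribution of the largest eigenvalue of $\Pi_1^\infty(\mathbf{A}_\omega^\infty)$, which can be handled directly as in the proof of Proposition \ref{prop-FellerPropLambda}. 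Iterating this reduction yields \eqref{UniformApproximation}.
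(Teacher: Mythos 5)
Your high-level template — determinantal formulas for both kernels, interpret the entries weakly against smooth compactly-supported test functions via integration by parts, then induct on $K$ — matches the spine of the paper's argument. However, two essential technical ideas are missing, and without them the induction does not close.

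First, your proposal does not address the structural mismatch between the two determinants. In Olshanski's formula for $\Lambda_K^N$ the $(i,j)$-entry involves a spline $\mathcal{M}^{(K-i)}(\mathrm{d}y;\, x_1^{(N)},\dots,x_{N-i+1}^{(N)})$ whose knot set depends on the row index $i$, whereas in $\Lambda_K^\infty(\omega,\cdot)$ every entry involves the \emph{same} measure $\phi_\omega$. Because of this, if you expand either determinant along a row to isolate ``the worst singular behaviour'', the resulting minor is \emph{not} of the form $\Lambda_{K-1}^N$ or $\Lambda_{K-1}^\infty$, so the inductive hypothesis cannot be applied directly. The paper's fix is to introduce an auxiliary kernel $\tilde\Lambda_K^\infty$ (their equation \eqref{tildeLambda}) with the same row-dependent ``staircase'' structure as $\Lambda_K^N$; then $\tilde\Lambda_K^\infty g \approx \Lambda_K^N g$ is proved by induction (its minors are exactly of the right type), and $\tilde\Lambda_K^\infty g \approx \Lambda_K^\infty g$ is shown separately by Fourier analysis (Proposition \ref{prop-tildeLambda-Lambda}). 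Your proposal contains no analogue of $\tilde\Lambda_K^\infty$, and without it the ``row operations plus inductive hypothesis'' reduction you describe does not go through. A second, smaller issue in the same vein: after expanding along a row, one is left comparing $\tilde\Lambda_{K-1}^\infty \mathsf{h}_a$ and $\Lambda_{K-1}^N\mathsf{h}_a$ for a family of test functions $\mathsf{h}_a$ indexed by the frozen variable $a$; one must upgrade the pointwise-in-$a$ convergence to convergence uniform over a compact interval of $a$'s. The paper does this by an Arzelà–Ascoli compactness/contradiction argument, and something of this kind is required — pointwise application of the inductive hypothesis is not by itself enough.

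Second, the base case is not as routine as you indicate. You cite the proof of Proposition \ref{prop-FellerPropLambda}, but that argument is about the vanishing-at-infinity property of $\Lambda_K^\infty f$ (a tightness estimate), not about uniform-in-$\mathbf{x}^{(N)}$ approximation. In the paper, $K=1$ reduces (via a Fourier identity relating the characteristic function $F(\cdot;\omega(\mathbf{x}^{(N)}))$ to the spline $\mathcal{M}(\cdot;\mathbf{x}^{(N)})$) to proving that
\[
\mathsf{G}_N(w)=\frac{1}{2\pi}\int_{-\infty}^\infty\left[\left(1-\tfrac{\mathrm{i}zw}{N}\right)^{-N}-\mathrm{e}^{\mathrm{i}zw}\right]\check g(z)\,\mathrm{d}z
\]
converges to $0$ \emph{uniformly} over $w\in\mathbb{R}$ (Lemma \ref{lem UnifConv G_N}), which requires a careful case split on the behaviour of $|w_{N}|/\sqrt{N}$ together with Arzelà–Ascoli on compact $w$-intervals. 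Your proposal leaves this entirely implicit.

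Minor: the formula you write for $\Lambda_K^N g$, with $\Delta_N(\mathbf{x}^{(N)})$ appearing alone in the denominator of a $K\times K$ determinant, is not the correct Olshanski representation (Theorem \ref{thm-Lambda_K^N-detM}); that formula has a product of gaps $x_i^{(N)}-x_j^{(N)}$ for $j-i\ge N-K+1$ in the denominator, and the cleaner version (Theorem \ref{thm2-Lambda_K^N-detM}) removes the denominator entirely at the cost of introducing distributional derivatives of the spline. Getting the right form of these formulas is not cosmetic: it is precisely what makes the staircase structure visible and suggests the auxiliary kernel $\tilde\Lambda_K^\infty$.
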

The proof of this theorem will be given in Subsection \ref{SubSec-ProofUnifAppThm}. The following function spaces will appear frequently below.
\begin{defn}
Define $C_{c,\textnormal{sym}}^{\infty}(\mathbb{R}^{K})$ to be the space of smooth compactly supported functions on $\mathbb{R}^{K}$ that are moreover symmetric in their variables. Then, we define 
$C_{c,\textnormal{sym}}^{\infty}(\mathcal{E}_K)$ to be the space of functions on $\mathcal{E}_K$ which are restrictions of functions $C_{c,\textnormal{sym}}^{\infty}(\mathbb{R}^{K})$ to $\mathcal{E}_K$.  
\end{defn}

Observe that, by definition
 \begin{align*}
  \int_{\mathcal{E}_K}g(\mathbf{y})\Lambda_K^N(\mathbf{x},\mathrm{d}\mathbf{y})&=
     \frac{1}{K!}
     \int_{\mathbb{R}^{K}}g(\mathbf{y})\Lambda_K^N(\mathbf{x},\mathrm{d}\mathbf{y}),\\
     \int_{\mathcal{E}_K}g(\mathbf{y})\Lambda_K^\infty(\omega,\mathrm{d}\mathbf{y})&=
     \frac{1}{K!}
     \int_{\mathbb{R}^{K}}g(\mathbf{y})\Lambda_K^\infty(\omega,\mathrm{d}\mathbf{y}), \  \  \forall g\in C_{c,\textnormal{sym}}^{\infty}(\mathbb{R}^{K}).
 \end{align*}
 We have the following corollary of Theorem \ref{thm-UnifApp}.
 \begin{cor}\label{cor-UnifApp}
Let $K \in \mathbb{N}$. For all $g \in C_{c,\textnormal{sym}}^{\infty}(\mathcal{E}_K)$, we have
\begin{align}\label{UnifApp}
\sup_{\mathbf{x}^{(N)}\in \mathcal{E}_N}\left|\int_{\mathcal{E}_K}g\left(\mathbf{y}\right)\Lambda_K^{\infty}\left(\omega\left(\mathbf{x}^{(N)}\right),\mathrm{d}\mathbf{y}\right)-\int_{\mathcal{E}_K}g(\mathbf{y})\Lambda_K^{N}\left(\mathbf{x}^{(N)},\mathrm{d}\mathbf{y}\right)\right|\to 0,
\ \ \textnormal{as } N\to\infty. 
 \end{align}
 \end{cor}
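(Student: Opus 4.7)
The plan is to deduce Corollary \ref{cor-UnifApp} directly from Theorem \ref{thm-UnifApp}, essentially as a trivial restatement. Given $g \in C_{c,\textnormal{sym}}^{\infty}(\mathcal{E}_K)$, by definition of this space there exists a symmetric smooth compactly supported function $\tilde{g} \in C_{c,\textnormal{sym}}^{\infty}(\mathbb{R}^{K})$ whose restriction to $\mathcal{E}_K$ coincides with $g$. Since $\tilde{g} \in C_c^{\infty}(\mathbb{R}^K)$ a fortiori, Theorem \ref{thm-UnifApp} applies to it and yields
\begin{equation*}
\sup_{\mathbf{x}^{(N)}\in \mathcal{E}_N}\left|\Lambda_K^{\infty}\tilde{g}\left(\omega\left(\mathbf{x}^{(N)}\right)\right)-\Lambda_K^{N}\tilde{g}\left(\mathbf{x}^{(N)}\right)\right|\longrightarrow 0, \quad \textnormal{as } N \to \infty,
\end{equation*}
where the kernels on the left-hand side are viewed as their symmetric extensions to $\mathbb{R}^K$ with total mass $K!$.

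Next I would invoke the identity recorded immediately before the statement of the corollary, namely that for any symmetric function $\tilde{g}$ on $\mathbb{R}^K$,
\begin{equation*}
\int_{\mathcal{E}_K}g(\mathbf{y})\,\Lambda_K^{N}(\mathbf{x}^{(N)},\mathrm{d}\mathbf{y}) = \frac{1}{K!}\int_{\mathbb{R}^{K}}\tilde{g}(\mathbf{y})\,\Lambda_K^{N}(\mathbf{x}^{(N)},\mathrm{d}\mathbf{y}),
\end{equation*}
and similarly with $\Lambda_K^{N}$ replaced by $\Lambda_K^{\infty}$ and $\mathbf{x}^{(N)}$ by $\omega(\mathbf{x}^{(N)})$. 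This identity is built into the definition of the symmetric extension: the symmetrised kernel on $\mathbb{R}^K$ is simply the sum of the pushforwards of $\Lambda_K^{\cdot}(\cdot,\mathrm{d}\mathbf{y})$ by all $K!$ permutations of coordinates, so integrating a permutation-invariant function against it produces exactly a factor of $K!$ compared to integration over the fundamental domain $\mathcal{E}_K$.

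Subtracting the two identities above and combining with the estimate from Theorem \ref{thm-UnifApp}, the quantity inside the supremum in \eqref{UnifApp} equals $\frac{1}{K!}$ times the quantity inside the supremum in \eqref{UniformApproximation}. Dividing the uniform bound by the constant $K!$ yields \eqref{UnifApp}. There is essentially no obstacle here beyond correctly invoking the symmetric extension convention, so the entire work is contained in Theorem \ref{thm-UnifApp} itself; the corollary exists merely to package that theorem in a form usable with $\mathcal{E}_K$-based cores of symmetric test functions, as needed when applying Lemma \ref{lem core} to the consistent family $(\mathbb{W}_N,\Lambda_N^{N+1})$ or $(\mathbb{W}_{N,+},\Lambda_N^{N+1})$.
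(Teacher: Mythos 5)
Your proposal is correct and matches what the paper intends: the corollary is an immediate consequence of Theorem \ref{thm-UnifApp} together with the $\frac{1}{K!}$ identity recorded just above it, exactly as you spell out. The paper leaves the deduction implicit, and your account of the symmetric-extension convention and the factor of $K!$ is accurate.
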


\subsection{Approximation of Markov processes}\label{SubSec-AppMarkovProcesses}

In this subsection we show how using the uniform approximation theorem above and an underlying intertwining we can prove a path-space convergence result to the abstract Feller process associated to $\left(\mathsf{P}_\infty(t)\right)_{t\ge 0}$. 

To be precise, the running assumption is the following. In the setting of Subsection \ref{SubSec-Intertwining}, let $(\mathcal{E}_N)_{N=1}^{\infty}=\left(\mathbb{W}_N\right)_{N=1}^{\infty}$ or $\left(\mathbb{W}_{N,+}\right)_{N=1}^{\infty}$ and 
$ \left(\mathcal{L}_N^{N+1}\right)_{N=1}^{\infty}=\left(\Lambda_N^{N+1}\right)_{N=1}^{\infty}$. Namely, we consider the systems $\left(\mathbb{W}_N,\Lambda_N^{N+1}\right)_{N=1}^{\infty}$ and $\left(\mathbb{W}_{N,+},\Lambda_N^{N+1}\right)_{N=1}^{\infty}$, 
with $\Lambda_N^{N+1}$ given by \eqref{Lambda^N=lawEval}.  As we have seen in Theorems \ref{thm-Boundary} and \ref{thm-Boundary+}, the Feller boundaries of these systems are given by $\Omega$ and $\Omega_+$ respectively.

We moreover consider, for each $N\in \mathbb{N}$, a Feller Markov process $\left(\mathfrak{x}^{(N)}(t);t\ge 0\right)$, with continuous sample paths,
on $\mathcal{E}_N$ with transition semigroup $\left(\mathsf{P}_N(t)\right)_{N=1}^\infty$ and we assume that these semigroups are intertwined:
\begin{align}\label{P^N-intertwining}
\mathsf{P}_{N+1}(t)\Lambda_N^{N+1} = \Lambda_N^{N+1}\mathsf{P}_{N}(t),\ \ \forall t\geq 0,\; \forall N\in \mathbb{N}.
\end{align}
Let $\left(\mathbf{X}_t^{\Omega};t\geq 0\right)$ and
$\left(\mathbf{X}_t^{\Omega_+};t\geq 0\right)$
be the corresponding Feller processes on $\Omega$ and $\Omega_+$ constructed as in Theorem \ref{thm-MarkovProcessonBoundary},  respectively. We denote the Feller semigroup of the process on the boundary, abusing notation, for either $\Omega$ or $\Omega_+$, by $(\mathsf{P}_{\infty}(t))_{t\ge 0}$.
Moreover, recall that we have 
\begin{equation}\label{P-intertwining}
		\mathsf{P}_{\infty}(t)\Lambda_N^{\infty}
		= \Lambda_N^{\infty}\mathsf{P}_{N}(t),\ \ \forall t\geq 0,\;\forall N\in\mathbb{N}.
	\end{equation}
 
We need two abstract definitions that we will specialise to our setting shortly. 

\begin{defn}\label{Def fn to f}
    Suppose $\left(\mathscr{L}_n\right)_{n=1}^\infty$ is a sequence of Banach spaces, with the norm of $\mathscr{L}_n$ denoted by $\|\cdot\|_n$. Let also $\mathscr{L}$ be another Banach space with norm denoted by $\|\cdot\|$. 
    We say that a sequence of vectors $(\mathbf{f}_n)_{n=1}^\infty$ with $\mathbf{f}_n\in \mathscr{L}_n$ approximates a vector $\mathbf{f}\in\mathscr{L}$, and we write $\mathbf{f}_n\to \mathbf{f}$, if and only if there exist bounded linear operators $\pi_n: \mathscr{L}\to\mathscr{L}_n$, satisfying $\sup_{n\in\mathbb{N}}\||\pi_n\||_n<\infty$, where $\||\cdot\||_n$ denotes the operator norm of linear operators from $\mathscr{L}$ to $\mathscr{L}_n$, such that,
    \begin{equation*}
        \lim_{n\to\infty}\big\|\mathbf{f}_n-\pi_n \mathbf{f}\big\|_n=0.
    \end{equation*}
\end{defn}

\begin{defn}\label{defn-GenApprox}
Let $\mathcal{A}_N$ and $\mathcal{A}_\infty$ be the generators (defined in an analogous way to Definition \ref{DefGenerator}) corresponding to strongly continuous semigroups of contractions on the Banach spaces $\mathscr{L}_N$ and $\mathscr{L}_\infty$, respectively. We say that $\mathcal{A}_\infty$ is approximated by $\left(\mathcal{A}_N\right)_{N=1}^{\infty}$
  if for any $\mathbf{f}$ in a core (defined in an analogous way to Definition \ref{DefCore}) of $\mathcal{A}_\infty$, there exist a sequence  $\mathbf{f}_N\in\mathcal{D}(\mathcal{A}_N)$ such that $\mathbf{f}_N\to \mathbf{f}$ and $\mathcal{A}_N\mathbf{f}_N \to \mathcal{A}_{\infty}\mathbf{f}$.
\end{defn}

The following proposition is key for our purposes. Here, we take the Banach spaces to be $\mathscr{L}_N=C_0(\mathcal{E}_N)$ and $\mathscr{L}=C_0(\mathcal{E}_\infty)$ and the implicit maps $\pi_N: C_0(\mathcal{E}_\infty) \to C_0(\mathcal{E}_N)$ to be the ones induced by embeddings of $\mathcal{E}_N$ into $\mathcal{E}_\infty$, namely \eqref{embedding} or \eqref{embedding+}.

\begin{prop}\label{prop-LNtoL}
In the setting described above, we let $\mathsf{L}_N$ and $\mathsf{L}_\infty$ denote the generators of the Feller semigroups  $\left(\mathsf{P}_N(t)\right)_{t\ge 0}$ and $\left(\mathsf{P}_{\infty}(t)\right)_{t\ge 0}$, respectively. Let $\mathscr{C}_N=C_{c,\textnormal{sym}}^{\infty}(\mathcal{E}_N)$ and $\mathscr{C}_{\infty}$ be as in \eqref{core}. Moreover, assume that, for all $N \in \mathbb{N}$, $\mathscr{C}_N\subseteq\mathcal{D}(\mathsf{L}_N)$ and $\mathsf{L}_N\mathscr{C}_N \subset \mathscr{C}_N$. Then, for any $\mathbf{f}\in\mathscr{C}_{\infty}$, there exists a sequence $(\mathbf{f}_N)_{N=1}^{\infty}$ with $\mathbf{f}_N\in\mathscr{C}_N$ 
such that $\mathbf{f}_N\to \mathbf{f}$ and $\mathsf{L}_N\mathbf{f}_N \to \mathsf{L}_{\infty}\mathbf{f}$.
\end{prop}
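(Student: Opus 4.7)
The plan is to reduce everything, via the two intertwining relations \eqref{P^N-intertwining} and \eqref{P-intertwining}, to the uniform approximation theorem (Corollary \ref{cor-UnifApp}). By linearity of the defining span in \eqref{core}, it suffices to treat a single $\mathbf{f} = \Lambda_K^{\infty} g$ with fixed $K \in \mathbb{N}$ and $g \in \mathscr{C}_K$, and the natural candidate is $\mathbf{f}_N = \Lambda_K^{N} g$ for $N \ge K$ (with $\mathbf{f}_K = g$, and any smooth symmetric compactly supported choice for $N < K$, which is irrelevant for the tail). The implicit projections in Definition \ref{Def fn to f} are the contractions $\pi_N: C_0(\mathcal{E}_\infty) \to C_0(\mathcal{E}_N)$ given by precomposition with the embeddings \eqref{embedding}, \eqref{embedding+}.

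The first step is to establish the generator-level intertwinings
\begin{equation*}
\mathsf{L}_N \Lambda_K^{N} g = \Lambda_K^{N} \mathsf{L}_K g, \qquad \mathsf{L}_\infty \Lambda_K^{\infty} g = \Lambda_K^{\infty} \mathsf{L}_K g.
\end{equation*}
Both follow from iterating \eqref{P^N-intertwining} and using \eqref{P-intertwining} to obtain $\mathsf{P}_N(t) \Lambda_K^{N} = \Lambda_K^{N} \mathsf{P}_K(t)$ and $\mathsf{P}_\infty(t) \Lambda_K^{\infty} = \Lambda_K^{\infty} \mathsf{P}_K(t)$, differentiating at $t = 0^+$ using $g \in \mathcal{D}(\mathsf{L}_K)$, and commuting the strong limit past the bounded Feller kernels $\Lambda_K^{N}$ and $\Lambda_K^{\infty}$ acting on $C_0$. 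This is the crucial step where the intertwining lifts infinitesimal information from level $K$ both to level $N$ and to the boundary, without requiring any direct understanding of the action of $\mathsf{L}_\infty$ — which is in any case inaccessible at this stage.

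Given these identities, both desired convergences reduce to a single type of input. The convergence $\mathbf{f}_N \to \mathbf{f}$ in the sense of Definition \ref{Def fn to f} becomes
\begin{equation*}
\sup_{\mathbf{x}^{(N)} \in \mathcal{E}_N} \bigl|\Lambda_K^{N} g(\mathbf{x}^{(N)}) - \Lambda_K^{\infty} g(\omega(\mathbf{x}^{(N)}))\bigr| \xrightarrow{N \to \infty} 0,
\end{equation*}
which is exactly Corollary \ref{cor-UnifApp} for $g \in \mathscr{C}_K$. Similarly, writing $\mathsf{L}_N \mathbf{f}_N = \Lambda_K^{N} \mathsf{L}_K g$ and $\mathsf{L}_\infty \mathbf{f} = \Lambda_K^{\infty} \mathsf{L}_K g$ and applying Corollary \ref{cor-UnifApp} to $\mathsf{L}_K g$ — which lies in $\mathscr{C}_K$ by the standing hypothesis $\mathsf{L}_K \mathscr{C}_K \subseteq \mathscr{C}_K$ — delivers $\mathsf{L}_N \mathbf{f}_N \to \mathsf{L}_\infty \mathbf{f}$.

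The main obstacle is verifying that $\mathbf{f}_N = \Lambda_K^{N} g$ genuinely belongs to $\mathscr{C}_N = C_{c,\textnormal{sym}}^{\infty}(\mathcal{E}_N)$. Symmetry and smoothness can be read off from the unitary-integral representation \eqref{Lambda^N=lawEval} (alternatively from \eqref{link} after some manipulation and interpretation of the formula across coinciding coordinates), but compact support in $\mathcal{E}_N$ is not automatic: Lemma \ref{lem-FellerPropLambdaN} only gives $\Lambda_K^{N} g \in C_0(\mathcal{E}_N)$. The remedy is to replace $\mathbf{f}_N$ by $\chi_{N,R} \Lambda_K^{N} g$ for a smooth symmetric cutoff $\chi_{N,R}$ on $\mathcal{E}_N$, to choose $R = R(N)$ growing fast enough that the $C_0$-tails of both $\Lambda_K^{N} g$ and $\Lambda_K^{N} \mathsf{L}_K g$ become uniformly negligible, and to verify that the commutator $[\mathsf{L}_N, \chi_{N,R}]$ contributes negligibly as well; this last point requires using the specific quadratic diffusion and drift structure of $\mathsf{L}_N$ coming from \eqref{rescaledSDE} to control derivatives of the cutoff uniformly in $N$.
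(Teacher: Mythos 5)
Your core argument---reducing to a single $\mathbf{f}=\Lambda_K^\infty g$, taking $\mathbf{f}_N=\Lambda_K^N g$, deriving $\mathsf{L}_N\Lambda_K^N g=\Lambda_K^N\mathsf{L}_K g$ and $\mathsf{L}_\infty\Lambda_K^\infty g=\Lambda_K^\infty\mathsf{L}_K g$ from the semigroup intertwinings, and then feeding both $g$ and $\mathsf{L}_K g\in\mathscr{C}_K$ into Corollary \ref{cor-UnifApp}---is exactly the paper's proof.

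Your worry about $\mathbf{f}_N\in\mathscr{C}_N$ is a sharp observation, and you are right that it fails as stated: by interlacing, $\Lambda_K^N g$ is supported in the unbounded set $\{\mathbf{x}:x_{1+N-K}\le R,\,x_K\ge -R\}$ (or $\{x_{1+N-K}\le R\}$ for $\mathcal{E}_N=\mathbb{W}_{N,+}$), so $\Lambda_K^N g\in C_0(\mathcal{E}_N)\setminus C_c^\infty(\mathcal{E}_N)$. The paper's proof, however, never actually asserts $\mathbf{f}_N\in\mathscr{C}_N$; it records $\mathbf{f}_N\in C_0(\mathcal{E}_N)$ (Feller property of $\Lambda_K^N$) and $\mathbf{f}_N\in\mathcal{D}(\mathsf{L}_N)$ (from the intertwining), and this is all that Definition \ref{defn-GenApprox} and Corollary \ref{cor-LNtoL} consume. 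So the conclusion of Proposition \ref{prop-LNtoL} should be read with ``$\mathbf{f}_N\in\mathcal{D}(\mathsf{L}_N)$'' in place of ``$\mathbf{f}_N\in\mathscr{C}_N$''; this is a slip in the wording of the statement, not a gap in the proof. Your cutoff remedy $\chi_{N,R}\Lambda_K^N g$ is therefore unnecessary, and it would also be awkward to execute: the Proposition is stated for an abstract Feller semigroup $(\mathsf{P}_N(t))_{t\ge 0}$ intertwined with $\Lambda_N^{N+1}$, with no reference to \eqref{rescaledSDE} or any explicit differential operator, so the ``specific quadratic diffusion and drift structure'' you would need to control $[\mathsf{L}_N,\chi_{N,R}]$ uniformly in $N$ is simply not available at this level of generality---it only enters in Section 3. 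Cutting off would also destroy the exact identity $\mathsf{L}_N\mathbf{f}_N=\Lambda_K^N\mathsf{L}_K g$, which is what makes the generator convergence collapse cleanly into the uniform approximation theorem.
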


An immediate consequence of Proposition \ref{prop-LNtoL} and Lemma \ref{lem core} is the following result.

\begin{cor}\label{cor-LNtoL} 
    In the setting of Proposition \ref{prop-LNtoL}, suppose moreover, that for each $N\in \mathbb{N}$,
    $\mathscr{C}_N$
    is a core for $\mathsf{L}_N$. Then, $\mathsf{L}_{\infty}$ is approximated by  $\mathsf{L}_N$ in the sense of Definition \ref{defn-GenApprox}.
\end{cor}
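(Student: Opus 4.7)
The corollary is essentially a bookkeeping statement that directly combines the two results it cites, so the plan is very short. The first step is to produce a core for $\mathsf{L}_\infty$ on which we can test the approximation property. Since by hypothesis each $\mathscr{C}_N = C_{c,\textnormal{sym}}^\infty(\mathcal{E}_N)$ is a core for $\mathsf{L}_N$, Lemma \ref{lem core} applies verbatim and tells us that the subspace
\begin{equation*}
\mathscr{C}_\infty = \textnormal{span}\left(\bigcup_{K=1}^\infty \Lambda_K^\infty \mathscr{C}_K\right)
\end{equation*}
is a core for $\mathsf{L}_\infty$.

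The second step is to verify, against this core, the approximation property in Definition \ref{defn-GenApprox}. Fix an arbitrary $\mathbf{f} \in \mathscr{C}_\infty$. Proposition \ref{prop-LNtoL} immediately produces a sequence $(\mathbf{f}_N)_{N=1}^\infty$ with $\mathbf{f}_N \in \mathscr{C}_N \subseteq \mathcal{D}(\mathsf{L}_N)$ such that $\mathbf{f}_N \to \mathbf{f}$ in the sense of Definition \ref{Def fn to f} (with the bounded linear maps $\pi_N : C_0(\mathcal{E}_\infty) \to C_0(\mathcal{E}_N)$ induced by the embeddings \eqref{embedding} or \eqref{embedding+}, whose operator norms are all equal to one), and such that $\mathsf{L}_N \mathbf{f}_N \to \mathsf{L}_\infty \mathbf{f}$ in the same sense. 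This is exactly what Definition \ref{defn-GenApprox} requires, so $\mathsf{L}_\infty$ is approximated by $(\mathsf{L}_N)_{N=1}^\infty$, which completes the proof.

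Since the real work is done in Proposition \ref{prop-LNtoL} (which itself relies on Theorem \ref{thm-UnifApp} and the intertwining \eqref{P-intertwining}) and in Lemma \ref{lem core}, there is no genuine obstacle at this level; the only point worth checking is that the hypothesis $\mathscr{C}_N \subseteq \mathcal{D}(\mathsf{L}_N)$ with $\mathsf{L}_N \mathscr{C}_N \subset \mathscr{C}_N$ assumed in Proposition \ref{prop-LNtoL} is consistent with $\mathscr{C}_N$ being a core for $\mathsf{L}_N$, which is guaranteed by the added hypothesis of the corollary.
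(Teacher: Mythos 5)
Your proof is correct and follows exactly the route the paper intends: Lemma \ref{lem core} upgrades the core hypothesis on each $\mathscr{C}_N$ to the statement that $\mathscr{C}_\infty$ is a core for $\mathsf{L}_\infty$, and Proposition \ref{prop-LNtoL} then supplies the approximating sequence required by Definition \ref{defn-GenApprox}. The paper states this corollary as an immediate consequence without spelling out the two steps; you have simply made the bookkeeping explicit.
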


The two main ingredients to prove Proposition \ref{prop-LNtoL} is
Theorem \ref{thm-UnifApp}, in the form of Corollary \ref{cor-UnifApp}, and the intertwining relation (\ref{P-intertwining}). Before giving the proof we obtain as an immediate consequence the convergence of the corresponding Markov dynamics on path space.

\begin{thm}\label{thm-ConvMarkovProc}
In the setting described above, let $\mathcal{E}_N=\mathbb{W}_N$ and 
$\left(\mathbf{X}^{(N)}_t; t\geq 0\right)$ be the process on $\Omega$ corresponding to the Feller process $\left(\mathfrak{x}^{(N)}(t); t\geq 0\right)$, having continuous trajectories, with transition semigroup $\left(\mathsf{P}_N(t)\right)_{t\ge 0}$,
under the embedding \eqref{embedding} of $\mathbb{W}_N$ into $\Omega$. Suppose moreover that, for each $N \in \mathbb{N}$, $\mathscr{C}_N=C_{c,\textnormal{sym}}^{\infty}(\mathbb{W}_N)$ 
is a core, and invariant under $\mathsf{L}_N$, the generator of $\left(\mathsf{P}_N(t)\right)_{t\ge 0}$. Then, if $\mathbf{X}^{(N)}_0\overset{\textnormal{d}}{\underset{N\to\infty}{\longrightarrow}}\mathbf{X}^{\Omega}_0$, we have that,
\begin{equation*}
    \mathbf{X}^{(N)}\overset{\textnormal{d}}{\underset{N\to\infty}{\longrightarrow}}\mathbf{X}^{\Omega}, \ \ \textnormal{in } C(\mathbb{R}_+,\Omega).
\end{equation*}
   \begin{proof}
        Using Corollary \ref{cor-LNtoL} (recall that the implicit maps $\pi_N$ therein are induced by the embeddings \eqref{embedding} of $\mathbb{W}_N$ into $\Omega$), it follows from \cite{Ethier-Kurtz}[Chapter 4, Theorem 2.11] that $\mathbf{X}^{(N)}\longrightarrow\mathbf{X}^{\Omega}$, in distribution, in $D(\mathbb{R}_+,\Omega)$
        the space of càdlàg functions on $\mathbb{R}_+$ with values in $\Omega$ endowed with the Skorokhod topology, see \cite{Ethier-Kurtz}. 
       The desired result is then concluded from the well-known fact that the space $C(\mathbb{R}_+,\Omega)$ is closed in $D(\mathbb{R}_+,\Omega)$ in the Skorokhod topology and recalling that, for each $N\in \mathbb{N}$, $\mathbf{X}^{(N)} \in C(\mathbb{R}_+,\Omega)$ .
   \end{proof}
\end{thm}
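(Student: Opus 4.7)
The plan is to reduce the theorem to the standard generator-convergence machinery for Feller processes. First I would verify that the hypotheses of Corollary \ref{cor-LNtoL} are in force: the assumptions in the statement give precisely that $\mathscr{C}_N=C_{c,\textnormal{sym}}^{\infty}(\mathbb{W}_N)$ is a core for $\mathsf{L}_N$ and is invariant under $\mathsf{L}_N$, and the setup of this subsection ensures the intertwining \eqref{P-intertwining} holds. Invoking Proposition \ref{prop-LNtoL} (whose proof is the hard work, relying on the uniform approximation Theorem \ref{thm-UnifApp} in conjunction with \eqref{P-intertwining}) and then Corollary \ref{cor-LNtoL} produces the generator approximation $\mathsf{L}_N \Rightarrow \mathsf{L}_\infty$ in the sense of Definition \ref{defn-GenApprox}, with approximating maps $\pi_N \colon C_0(\Omega) \to C_0(\mathbb{W}_N)$ induced by the embeddings \eqref{embedding}; these maps are contractions, so $\sup_N \||\pi_N\||_N \le 1$.

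Next, with generator approximation and convergence of initial distributions $\mathbf{X}^{(N)}_0 \overset{\textnormal{d}}{\longrightarrow} \mathbf{X}^\Omega_0$ both established, I would appeal to the classical Trotter--Kurtz-type result (Ethier--Kurtz, Chapter 4, Theorem 2.11), which converts these two inputs into weak convergence of the processes $\mathbf{X}^{(N)} \overset{\textnormal{d}}{\longrightarrow} \mathbf{X}^\Omega$ in the Skorokhod space $D(\mathbb{R}_+,\Omega)$ of càdlàg paths. One should check that the Feller semigroups $(\mathsf{P}_N(t))_{t\ge 0}$ and $(\mathsf{P}_\infty(t))_{t\ge 0}$ are strongly continuous on $C_0(\mathcal{E}_N)$ and $C_0(\Omega)$ respectively, which is automatic from the Feller property.

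Finally, I would upgrade this to convergence in $C(\mathbb{R}_+,\Omega)$ using two standard facts: first, $C(\mathbb{R}_+,\Omega)$ is a closed subset of $D(\mathbb{R}_+,\Omega)$ in the Skorokhod topology, and second, each $\mathbf{X}^{(N)}$ has continuous sample paths since the underlying $\mathfrak{x}^{(N)}$ does and the embedding \eqref{embedding} is continuous. Hence the weak limit is supported on $C(\mathbb{R}_+,\Omega)$, on which the Skorokhod and locally uniform topologies agree, yielding the desired convergence. The only genuinely delicate step in this chain has already been executed upstream: the uniform approximation theorem together with its use in the proof of Proposition \ref{prop-LNtoL}; everything downstream is routine invocation of abstract Markov-process machinery and a topological cleanup, so I do not expect a substantial obstacle here.
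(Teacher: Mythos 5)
Your proposal is correct and follows essentially the same route as the paper's own proof: invoke Corollary \ref{cor-LNtoL} to get generator approximation with the embedding-induced maps $\pi_N$, apply Ethier--Kurtz Chapter 4, Theorem 2.11 to obtain convergence in $D(\mathbb{R}_+,\Omega)$, then upgrade to $C(\mathbb{R}_+,\Omega)$ via closedness of $C$ in $D$ under the Skorokhod topology and continuity of each $\mathbf{X}^{(N)}$. The extra remarks you include (strong continuity of the Feller semigroups, the contraction bound on $\pi_N$) are accurate but already implicit in the paper's setup.
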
   

An analogous statement holds when the processes live in $\left(\mathbb{W}_{N,+}\right)_{N=1}^{\infty}$. The proof is the same.

 \begin{thm}\label{thm-ConvMarkovProc+}
In the setting described above, let $\mathcal{E}_N=\mathbb{W}_{N,+}$ and 
$\left(\mathbf{X}^{(N)}_t; t\geq 0\right)$ be the process on $\Omega_+$ corresponding to the Feller process $\left(\mathfrak{x}^{(N)}(t); t\geq 0\right)$, having continuous trajectories, with semigroup $\left(\mathsf{P}_N(t)\right)_{t\ge 0}$
under the embedding (\ref{embedding+}) of $\mathbb{W}_{N,+}$ into $\Omega_+$. Suppose moreover that, for each $N \in \mathbb{N}$, $\mathscr{C}_N=C_{c,\textnormal{sym}}^{\infty}(\mathbb{W}_{N,+})$ 
is a core, and invariant under $\mathsf{L}_N$, the generator of $\left(\mathsf{P}_N(t)\right)_{t\ge 0}$. Then, if $\mathbf{X}^{(N)}_0\overset{\textnormal{d}}{\underset{N\to\infty}{\longrightarrow}}\mathbf{X}^{\Omega_+}_0$, we have that,
\begin{align}
    \mathbf{X}^{(N)}\overset{\textnormal{d}}{\underset{N\to\infty}{\longrightarrow}}\mathbf{X}^{\Omega_+}, \ \ \textnormal{in } C(\mathbb{R}_+,\Omega_+).
\end{align}
  \end{thm}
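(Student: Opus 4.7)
My plan is to mimic the proof of Theorem \ref{thm-ConvMarkovProc} line for line, since the abstract machinery that drives it has been set up to cover both cases $\mathcal{E}_N = \mathbb{W}_N$ and $\mathcal{E}_N = \mathbb{W}_{N,+}$ simultaneously. In particular, Theorem \ref{thm-UnifApp} is stated uniformly for $\mathcal{E}_N \in \{\mathbb{W}_N, \mathbb{W}_{N,+}\}$, with corresponding boundary $\Omega$ or $\Omega_+$, and the embeddings \eqref{embedding} and \eqref{embedding+} play symmetric roles. So every ingredient needed is already available for the $\Omega_+$ setting.

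The plan breaks into three steps. First, I would verify the hypotheses of Corollary \ref{cor-LNtoL} in this setting: by assumption $\mathscr{C}_N = C_{c,\textnormal{sym}}^\infty(\mathbb{W}_{N,+})$ is a core for $\mathsf{L}_N$, it lies in $\mathcal{D}(\mathsf{L}_N)$, and is invariant under $\mathsf{L}_N$. Thus the hypotheses of Proposition \ref{prop-LNtoL} are met in the $\mathbb{W}_{N,+}$ version (whose proof relies on Corollary \ref{cor-UnifApp} in the form applicable to $\Omega_+$, together with the intertwining $\mathsf{P}_\infty(t)\Lambda_N^\infty = \Lambda_N^\infty\mathsf{P}_N(t)$). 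Consequently $\mathsf{L}_\infty$ on $C_0(\Omega_+)$ is approximated by $(\mathsf{L}_N)_{N=1}^\infty$ on $(C_0(\mathbb{W}_{N,+}))_{N=1}^\infty$ in the sense of Definition \ref{defn-GenApprox}, with the connecting operators $\pi_N : C_0(\Omega_+) \to C_0(\mathbb{W}_{N,+})$ induced by the embedding \eqref{embedding+}.

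Second, I would invoke the classical Trotter--Kurtz semigroup convergence / generator convergence criterion (\cite{Ethier-Kurtz}, Chapter 4, Theorem 2.11) in the form used in the proof of Theorem \ref{thm-ConvMarkovProc}. Combined with the assumption $\mathbf{X}^{(N)}_0 \overset{\textnormal{d}}{\longrightarrow}\mathbf{X}^{\Omega_+}_0$ on $\Omega_+$ (and the fact that $\pi_N$ is precisely the pullback by the embedding so that initial distributions match consistently), this yields
\begin{equation*}
\mathbf{X}^{(N)} \overset{\textnormal{d}}{\longrightarrow} \mathbf{X}^{\Omega_+} \quad \textnormal{in } D(\mathbb{R}_+, \Omega_+),
\end{equation*}
where $D(\mathbb{R}_+,\Omega_+)$ carries the Skorokhod $J_1$ topology.

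Third, I would upgrade this to convergence in $C(\mathbb{R}_+,\Omega_+)$. By construction the process $\mathbf{X}^{(N)}$ corresponds to the continuous Feller process $\mathfrak{x}^{(N)}$ on $\mathbb{W}_{N,+}$ composed with the embedding \eqref{embedding+}, which is continuous, so $\mathbf{X}^{(N)} \in C(\mathbb{R}_+,\Omega_+)$ almost surely. Since $C(\mathbb{R}_+,\Omega_+)$ is a closed subset of $D(\mathbb{R}_+,\Omega_+)$ in the Skorokhod topology, any weak limit of laws supported on continuous paths is itself supported on $C(\mathbb{R}_+,\Omega_+)$, and convergence in distribution in $D$ restricted to continuous limits upgrades to convergence in $C$ with its locally uniform topology. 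This identifies $\mathbf{X}^{\Omega_+}$ as having continuous sample paths and completes the proof.

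There is no genuine obstacle beyond ensuring that the $\Omega_+$ versions of all the intermediate results hold, but these were established earlier: Theorem \ref{thm-Boundary+} provides the Feller boundary, Proposition \ref{prop-FellerPropLambda} gives the Feller property of $\Lambda_K^\infty$ on $\Omega_+$, and Theorem \ref{thm-UnifApp} (through Corollary \ref{cor-UnifApp}) gives the uniform approximation on $\Omega_+$. Thus the proof is a word-for-word repeat of that of Theorem \ref{thm-ConvMarkovProc}, with $\Omega$ replaced by $\Omega_+$ and $\mathbb{W}_N$ by $\mathbb{W}_{N,+}$ throughout.
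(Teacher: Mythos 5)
Your proposal is correct and mirrors the paper's own approach: the paper explicitly states that Theorem \ref{thm-ConvMarkovProc+} is proved in the same way as Theorem \ref{thm-ConvMarkovProc}, with $\Omega_+$, $\mathbb{W}_{N,+}$ and the embedding \eqref{embedding+} replacing $\Omega$, $\mathbb{W}_N$ and \eqref{embedding}. Your three-step outline (generator approximation via Corollary \ref{cor-LNtoL}, convergence in $D(\mathbb{R}_+,\Omega_+)$ by Ethier--Kurtz, then upgrading to $C(\mathbb{R}_+,\Omega_+)$ by closedness of the continuous paths) is exactly the argument.
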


  Let us write $\mathbf{X}_{\cdot}^{\Omega_+}=\left((\mathsf{x}_i(\cdot))_{i=1}^\infty,\boldsymbol{\gamma}(\cdot)\right)$ below for the process on $\Omega_+$ in coordinates and similarly $\mathbf{X}_{\cdot}^{(N)}=\left((\mathsf{x}_i^{(N)}(\cdot))_{i=1}^\infty,\boldsymbol{\gamma}^{(N)}(\cdot)\right)$ for the embedded process (recall $\boldsymbol{\gamma}^{(N)}=\sum \mathsf{x}_i^{(N)}$).
  
  \begin{rmk}
Observe that, Theorem \ref{thm-ConvMarkovProc+}, in particular implies that
\begin{align}\label{omega(XN)ToX}
   \mathsf{x}_i^{(N)}
    \overset{\textnormal{d}}{\underset{N\to\infty}{\longrightarrow}}
    \mathsf{x}_i,\ \ \textnormal{in } C(\mathbb{R}_+,\mathbb{R}_+), \ \ \forall i \in \mathbb{N}, \ 
   \textnormal{ and } \
    \sum_{i=1}^{N}\mathsf{x}_i^{(N)}\overset{\textnormal{d}}{\underset{N\to\infty}{\longrightarrow}} \boldsymbol{\gamma},\ \ \textnormal{in } C(\mathbb{R}_+,\mathbb{R}_+).
\end{align}
  \end{rmk}

  \begin{prop}\label{Prop-Conv in l^2}
    In the setting of Theorem \ref{thm-ConvMarkovProc+}, there exists a coupling of the $\mathbf{X}^{(N)}$ and $\mathbf{X}^{\Omega_+}$ on a single probability space such that, almost surely, for any $T \ge 0$,
    \begin{align}\label{l^2 convergence}
\sup_{t\in[0,T]}\sum_{i=1}^{\infty}\left(\mathsf{x}_i^{(N)}(t)-\mathsf{x}_i(t)\right)^2 \overset{N\to\infty}{\longrightarrow} 0.
    \end{align}
\end{prop}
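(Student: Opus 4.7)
The plan is to realise the weak convergence $\mathbf{X}^{(N)}\overset{\textnormal{d}}{\to}\mathbf{X}^{\Omega_+}$ in $C(\mathbb{R}_+,\Omega_+)$ granted by Theorem \ref{thm-ConvMarkovProc+} as almost sure convergence on a single probability space via Skorokhod's representation theorem; this is legitimate since both $\Omega_+$ and $C(\mathbb{R}_+,\Omega_+)$ are Polish. After this coupling, locally uniform convergence in the metric $\mathsf{d}_{\Omega_+}$ immediately gives, almost surely and for every $T\geq 0$, that $\sup_{t\in[0,T]}|\mathsf{x}_i^{(N)}(t)-\mathsf{x}_i(t)|\to 0$ for each fixed $i\in\mathbb{N}$, together with $\sup_{t\in[0,T]}|\boldsymbol{\gamma}^{(N)}(t)-\boldsymbol{\gamma}(t)|\to 0$. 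It is on such a realisation that the $\ell^2$ statement will be proven.

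Fix $T\geq 0$ and split the sum at an arbitrary $M\in\mathbb{N}$. The head $\sum_{i=1}^{M}(\mathsf{x}_i^{(N)}(t)-\mathsf{x}_i(t))^2$ converges to $0$ uniformly in $t\in[0,T]$ as a finite sum of squared coordinate-wise uniform limits. The main step is the tail, and the key observation is the defining constraint of $\Omega_+$: the coordinates are in decreasing order and $\sum_{i\geq 1} x_i \le \gamma$. Since $x_1\geq\cdots\geq x_{M+1}\geq 0$, this forces $x_{M+1}\le\gamma/(M+1)$, and hence
\[
\sum_{i=M+1}^{\infty}x_i^2 \;\leq\; x_{M+1}\sum_{i=M+1}^{\infty}x_i \;\leq\; \frac{\gamma^2}{M+1}.
\]
Applying this bound to the limiting configuration $(\mathsf{x}(t),\boldsymbol{\gamma}(t))$ and to the approximating embedded configuration $(\mathsf{x}^{(N)}(t),\boldsymbol{\gamma}^{(N)}(t))$ (with the convention $\mathsf{x}_i^{(N)}\equiv 0$ for $i>N$), and using $(a-b)^2\leq 2a^2+2b^2$, yields
\[
\sup_{t\in[0,T]}\sum_{i=M+1}^{\infty}\left(\mathsf{x}_i^{(N)}(t)-\mathsf{x}_i(t)\right)^2 \;\leq\; \frac{2}{M+1}\left(\sup_{t\in[0,T]}\boldsymbol{\gamma}^{(N)}(t)^2 \,+\, \sup_{t\in[0,T]}\boldsymbol{\gamma}(t)^2\right).
\]

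To conclude, continuity of $\boldsymbol{\gamma}$ on $[0,T]$ together with the almost sure uniform convergence $\boldsymbol{\gamma}^{(N)}\to\boldsymbol{\gamma}$ on $[0,T]$ bound the right hand side by $C_T/(M+1)$ almost surely, uniformly in $N$, where $C_T$ is a finite random constant. Given any $\varepsilon>0$ one chooses $M$ so large that $C_T/(M+1)<\varepsilon/2$ and then $N_0$ so large that the head is below $\varepsilon/2$ for all $N\geq N_0$, giving $\sup_{t\in[0,T]}\sum_{i=1}^\infty(\mathsf{x}_i^{(N)}(t)-\mathsf{x}_i(t))^2<\varepsilon$. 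I do not foresee any substantial obstacle: the whole argument rests on the geometric constraint $\sum_i x_i\le \gamma$ built into the definition of $\Omega_+$, which is precisely what makes the tail uniformly small as $M\to\infty$ and which is the structural reason the auxiliary coordinate $\gamma$ was introduced into the state space in the first place.
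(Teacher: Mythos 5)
Your proof is correct and takes a route that differs in its key estimate from the paper's, though both start from the same Skorokhod coupling and the same split at index $M$. The paper bounds the $N$-dependent tail as
\[
\sup_{t\in[0,T]}\sum_{i=M_0+1}^{\infty}\left(\mathsf{x}_i^{(N)}(t)\right)^2\leq \sup_{t\in[0,T]}\mathsf{x}_{M_0}^{(N)}(t)\cdot\sup_{t\in[0,T]}\sum_{i=M_0}^{\infty}\mathsf{x}_i^{(N)}(t),
\]
then uses the uniform convergence of the single coordinate $\mathsf{x}_{M_0}^{(N)}\to\mathsf{x}_{M_0}$ together with the a.s.\ uniform-in-$N$ boundedness of the trailing sum (itself a consequence of the uniform convergence $\boldsymbol{\gamma}^{(N)}\to\boldsymbol{\gamma}$), and picks $M_0$ large enough that $\sup_t\mathsf{x}_{M_0}(t)$ is small. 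You instead extract the elementary pigeonhole estimate $x_{M+1}\leq\gamma/(M+1)$ directly from the defining constraint of $\Omega_+$ and the ordering, which gives the quantitative bound $\sum_{i>M}x_i^2\leq\gamma^2/(M+1)$ for \emph{both} the prelimit and limit configurations simultaneously and uniformly in $t$ and $N$. This is cleaner: you obtain an explicit $O(1/M)$ decay rate for the tail, uniform over the sequence, and you avoid the slightly awkward two-stage choice of $M_0$ the paper makes (once to kill the limit's tail, then again to make $\sup_t\mathsf{x}_{M_0}(t)$ small). Both arguments ultimately rest on the same structural feature of $\Omega_+$, namely the $\gamma$-constraint on $\sum_i x_i$, which you correctly identify as the reason the auxiliary coordinate must be carried along.
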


    \begin{proof} 
    We use the Skorokhod representation theorem to couple $\mathbf{X}^{(N)}$ and $\mathbf{X}^{\Omega_+}$ on a single probability space, and, abusing notation, denote the new processes by the same symbol. One can then write for all $M\in\mathbb{N}$,
    \begin{align*}
\sup_{t\in[0,T]}\sum_{i=1}^{\infty}\left(\mathsf{x}_i^{(N)}(t)-\mathsf{x}_i(t)\right)^2\leq &\, 
2\sup_{t\in[0,T]}\sum_{i=M+1}^{\infty}\left(\mathsf{x}_i^{(N)}(t)\right)^2+2\sup_{t\in[0,T]}\sum_{i=M+1}^{\infty}\left(\mathsf{x}_i(t)\right)^2\\&+\sup_{t\in[0,T]}\sum_{i=1}^{M}\left(\mathsf{x}_i^{(N)}(t)-\mathsf{x}_i(t)\right)^2. 
    \end{align*}
    Let $\varepsilon>0$ be given. Observe first that as $\sup_{t\in[0,T]}\sum_{i=1}^{\infty}\left(\mathsf{x}_i(t)\right)^2<\infty$, there exists $M_0\in\mathbb{N}$, such that for all $M\geq M_0$, 
    \begin{align*}
         2\sup_{t\in[0,T]}\sum_{i=M+1}^{\infty}
        \left(\mathsf{x}_i(t)\right)^2\leq\frac{\varepsilon}{4}.
    \end{align*}
    Then, using Theorem \ref{thm-ConvMarkovProc+} we have 
    \begin{align*}
\sup_{t\in[0,T]}\sum_{i=1}^{M_0}\left(\mathsf{x}_i^{(N)}(t)-\mathsf{x}_i(t)\right)^2\leq\frac{\varepsilon}{4},
    \end{align*}
    for large enough $N$. 
    Note moreover that,
    \begin{align*}
\sup_{t\in[0,T]}\sum_{i=M_0+1}^{\infty}\left(\mathsf{x}_i^{(N)}(t)\right)^2\leq \sup_{t\in[0,T]}\mathsf{x}_{M_0}^{(N)}(t)\sup_{t\in[0,T]}\sum_{i=M_0}^{\infty}\mathsf{x}_i^{(N)}(t).
    \end{align*}
Now, from \eqref{omega(XN)ToX}, we have 
 \begin{align*}
       \lim_{N\to\infty}\sup_{t\in[0,T]}\mathsf{x}_{M_0}^{(N)}(t)=\sup_{t\in[0,T]}\mathsf{x}_{M_0}(t),\ 
 \ \
       \sup_{N\in\mathbb{N}}\sup_{t\in[0,T]}\sum_{i=M_0}^{\infty}\mathsf{x}_i^{(N)}(t)\overset{\textnormal{def}}{=}c<\infty.
    \end{align*}
    Hence, one can pick a possibly larger $M_0$ such that $\sup_{t\in[0,T]}\mathsf{x}_{M_0}(t)<\frac{\varepsilon}{4c}$ also holds, and thus,
    \begin{align*}
    \limsup_{N \ge 1}\sup_{t\in[0,T]}\sum_{i=M_0+1}^{\infty}
    \left(\mathsf{x}_i^{(N)}(t)\right)^2\leq\frac{\varepsilon}{4}.
    \end{align*}
    Putting everything together, we conclude \eqref{l^2 convergence}.
    \end{proof}

   When considering processes on $\mathbb{W}_N$ and $\Omega$ instead, a very similar argument can be used to show the following convergence in $\ell^3$ as a consequence of Theorem \ref{thm-ConvMarkovProc}. Below we write $\mathbf{X}_{\cdot}^{\Omega}=\left(\left(\mathsf{x}^+_i(\cdot)\right)_{i=1}^\infty,\left(\mathsf{x}^-_i(\cdot)\right)_{i=1}^\infty,\boldsymbol{\gamma}(\cdot),\boldsymbol{\delta}(\cdot)\right)$  for the process on $\Omega$ in coordinates and similarly $\mathbf{X}_{\cdot}^{(N)}=\left(\left(\mathsf{x}^{(N),+}_i(\cdot)\right)_{i=1}^\infty,\left(\mathsf{x}^{(N),-}_i(\cdot)\right)_{i=1}^\infty,\boldsymbol{\gamma}^{(N)}(\cdot),\boldsymbol{\delta}^{(N)}(\cdot)\right)$ for the embedded process.

  \begin{prop}\label{Prop-Conv in l^3} 
    In the setting of Theorem \ref{thm-ConvMarkovProc},  there exists a coupling of the $\mathbf{X}^{(N)}$ and $\mathbf{X}^{\Omega}$ on a single probability space such that, almost surely, for any $T \ge 0$,
    \begin{align}\label{l^3 convergence} 
\sup_{t\in[0,T]}\left(\sum_{i=1}^{\infty}\left|\mathsf{x}_i^{(N),+}(t)-\mathsf{x}_i^+(t)\right|^3+\sum_{i=1}^{\infty}\left|\mathsf{x}_i^{(N),-}(t)-\mathsf{x}_i^-(t)\right|^3\right)\overset{N\to\infty}{\longrightarrow} 0.
    \end{align}
\end{prop}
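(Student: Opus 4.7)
The plan is to follow exactly the template of the proof of Proposition \ref{Prop-Conv in l^2}, replacing the single linear factor $\sum x_i \le \gamma$ that controls $\ell^2$ tails on $\Omega_+$ by the quadratic constraint $\sum_{i=1}^{\infty}\bigl[(x_i^+)^2+(x_i^-)^2\bigr]\le \delta$ that is built into the definition of $\Omega$, which will control $\ell^3$ tails. First, I would invoke the Skorokhod representation theorem to realise the processes $\mathbf{X}^{(N)}$ and $\mathbf{X}^{\Omega}$ on a common probability space so that the convergence in $C(\mathbb{R}_+,\Omega)$ provided by Theorem \ref{thm-ConvMarkovProc} holds almost surely, and I will abuse notation by calling the coupled processes by the same names.

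For each $M\in\mathbb{N}$, I would use the pointwise inequality $|x_i^{(N),+}(t)-x_i^+(t)|^3\le 4|x_i^{(N),+}(t)|^3+4|x_i^+(t)|^3$ (and similarly for the $-$ components) together with the split
\begin{equation*}
\sum_{i=1}^\infty |\mathsf{x}_i^{(N),+}(t)-\mathsf{x}_i^+(t)|^3 \le \sum_{i=1}^M |\mathsf{x}_i^{(N),+}(t)-\mathsf{x}_i^+(t)|^3 + 4\sum_{i=M+1}^\infty |\mathsf{x}_i^{(N),+}(t)|^3 + 4\sum_{i=M+1}^\infty |\mathsf{x}_i^+(t)|^3,
\end{equation*}
and an analogous decomposition for the minus-components. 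The first sum, being a finite sum, is controlled directly by the coordinate-wise convergence in $C(\mathbb{R}_+,\Omega)$ for each fixed $M$ and all sufficiently large $N$.

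The main point is to bound the tails uniformly in $N$. Here I use the key observation
\begin{equation*}
\sum_{i=M+1}^\infty \bigl(x_i^+\bigr)^3 \le x_M^+ \sum_{i=M+1}^\infty \bigl(x_i^+\bigr)^2 \le x_M^+\cdot \delta, \qquad (\mathbf{x}^+,\mathbf{x}^-,\gamma,\delta)\in\Omega,
\end{equation*}
and analogously on the minus-side. Applied pathwise to $\mathbf{X}^{(N)}$ this gives
\begin{equation*}
\sup_{t\in[0,T]}\sum_{i=M+1}^\infty\bigl(\mathsf{x}_i^{(N),+}(t)\bigr)^3 \le \Bigl(\sup_{t\in[0,T]}\mathsf{x}_M^{(N),+}(t)\Bigr)\Bigl(\sup_{t\in[0,T]}\boldsymbol{\delta}^{(N)}(t)\Bigr),
\end{equation*}
and similarly with $\mathsf{x}_M^{(N),-}$ and for the limit process. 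By Theorem \ref{thm-ConvMarkovProc} the locally uniform convergence $\boldsymbol{\delta}^{(N)}\to\boldsymbol{\delta}$ and $\mathsf{x}_M^{(N),\pm}\to\mathsf{x}_M^{\pm}$ in $C(\mathbb{R}_+,\mathbb{R}_+)$ yields
\begin{equation*}
\sup_{N\in\mathbb{N}}\sup_{t\in[0,T]}\boldsymbol{\delta}^{(N)}(t) <\infty, \qquad \lim_{N\to\infty}\sup_{t\in[0,T]}\mathsf{x}_M^{(N),\pm}(t) = \sup_{t\in[0,T]}\mathsf{x}_M^{\pm}(t).
\end{equation*}
Since $\sum_{i=1}^\infty (\mathsf{x}_i^{\pm}(t))^2 \le \boldsymbol{\delta}(t)<\infty$ for each $t$ and $\sup_{t\in[0,T]}\boldsymbol{\delta}(t)<\infty$ by continuity, we have $\sup_{t\in[0,T]}\mathsf{x}_M^{\pm}(t)\to 0$ as $M\to\infty$.

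Putting the estimates together: given $\varepsilon>0$, I first pick $M$ so large that $4\sup_{t\in[0,T]}\sum_{i=M+1}^\infty (\mathsf{x}_i^\pm(t))^3 <\varepsilon/4$ and so that, using the two displays above, $\limsup_{N\to\infty} 4\sup_{t\in[0,T]}\sum_{i=M+1}^\infty (\mathsf{x}_i^{(N),\pm}(t))^3 <\varepsilon/4$; then I pick $N$ large enough that the finite-$M$ contribution from the first sum is at most $\varepsilon/4$ for each sign. Summing over the two signs yields \eqref{l^3 convergence}. The potential obstacle (and what distinguishes this from the $\ell^2$ case on $\Omega_+$) is ensuring uniform-in-$N$ tail control using only the constraint built into $\Omega$, but the monotone bound $(x_i^\pm)^3\le x_M^\pm (x_i^\pm)^2$ for $i\ge M$ combined with the built-in $\ell^2$ bound $\sum (x_i^\pm)^2\le\delta$ provides exactly this.
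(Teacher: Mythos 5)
Your proposal is correct and follows exactly the template the paper has in mind: the authors do not spell out the $\ell^3$ proof, only remark that it is ``a very similar argument'' to Proposition \ref{Prop-Conv in l^2}, and your adaptation --- Skorokhod coupling, a head/tail split with $|a-b|^3\le 4|a|^3+4|b|^3$ on the tails, and the monotone bound $\sum_{i>M}(x_i^\pm)^3\le x_M^\pm\sum_{i>M}(x_i^\pm)^2\le x_M^\pm\delta$ replacing the $\ell^1$-based tail bound used on $\Omega_+$ --- is precisely the intended one.
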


We now prove Proposition \ref{prop-LNtoL}.

   \begin{proof}[Proof of Proposition \ref{prop-LNtoL}]
   We prove the proposition in the case $\left(\mathcal{E}_N\right)_{N=1}^{\infty}=\left(\mathbb{W}_N\right)_{N=1}^{\infty}$. The exact same argument works for $\left(\mathcal{E}_N\right)_{N=1}^{\infty}=\left(\mathbb{W}_{N,+}\right)_{N=1}^{\infty}$.

We then take the Banach spaces $\left(\mathscr{L}_N\right)_{N=1}^\infty=\left(C_0(\mathbb{W}_N)\right)_{N=1}^\infty$ and $\mathscr{L}=C_0(\Omega)$ in the Definition \ref{Def fn to f}.
    Considering the definition of $\mathscr{C}_\infty$ from Lemma \ref{lem core},  it only suffices to prove the statement for functions of the form $\mathbf{f}=\Lambda_K^{\infty}g$  with some $K\in\mathbb{N}$ and $g\in\mathscr{C}_K$.
       We thus fix a positive integer $K$, and take an arbitrary function $g\in\mathscr{C}_K$. For any $N>K$, let 
       $\mathbf{f}_N\overset{\textnormal{def}}{=}\Lambda_K^{N}g$ and $\mathbf{f}\overset{\textnormal{def}}{=}\Lambda_K^{\infty}g$. Observe that, by the Feller property of the kernels, 
       $\mathbf{f}_N\in C_0(\mathbb{W}_N)$ and $\mathbf{f}\in C_0(\Omega)$, respectively. We want to show that 
       \begin{align}\label{fN to f}
           \mathbf{f}_N\to \mathbf{f},\  \  \text{ as } N\to\infty,
       \end{align}
        in the sense of Definition \ref {Def fn to f}. For each $N\in\mathbb{N}$, we then define the operator $\pi_N:C_0(\Omega)\to C_0(\mathbb{W}_N)$ as follows
       \begin{align}
           (\pi_Nf)\left(\mathbf{x}^{(N)}\right)=f\left(\omega\left(\mathbf{x}^{(N)}\right)\right),\  \  \mathbf{x}^{(N)}\in\mathbb{W}_N,
       \end{align}
       where by $\omega\left(\mathbf{x}^{(N)}\right)$ we mean the embedding given in \eqref{embedding}. It is easy to see  that the operator norm $\||\pi_N\||_N\le 1$, uniformly in $N$, and so the conditions given in Definition \ref{Def fn to f} are satisfied. We thus need to check that 
       \begin{align*}
        \big\|\pi_N\mathbf{f}-\mathbf{f}_N\big\|_N=\sup_{\mathbf{x}^{(N)}\in \mathbb{W}_N}\left|(\pi_N\mathbf{f})\left(\mathbf{x}^{(N)}\right)-\mathbf{f}_N\left(\mathbf{x}^{(N)}\right)\right|\overset{N\to\infty}{\longrightarrow} 0.
       \end{align*}
       Note that this is exactly the expression in \eqref{UnifApp} from Corollary \ref{cor-UnifApp}, and so \eqref{fN to f} is verified. We next want to show that
              \begin{align*}
           \mathsf{L}_N\mathbf{f}_N\to \mathsf{L}_{\infty}\mathbf{f},\  \  \text{ as } N\to\infty,
       \end{align*}
       also holds, that is, 
            \begin{align}\label{LNfN to Lf}
         \big\|\pi_N(\mathsf{L}_{\infty}\mathbf{f})-\mathsf{L}_N\mathbf{f}_N\big\|_N\to 0,\  \  \text{ as } N\to\infty.
       \end{align}
       
    First, it follows from the intertwining relations \eqref{P^N-intertwining} and \eqref{P-intertwining} between the semigroups that for all $f\in\mathcal{D}(\mathsf{L}_K)$, 
    \begin{align*}
    \mathsf{L}_{N}\Lambda_K^{N}f&=\Lambda_K^{N}\mathsf{L}_Kf,\  \  \forall N>K,\\
\mathsf{L}_{\infty}\Lambda_K^{\infty}f&=\Lambda_K^{\infty}\mathsf{L}_Kf.
    \end{align*}
    In particular, one can check that
    $\mathbf{f}_N\in \mathcal{D}(\mathsf{L}_{N})$. We can then write
    \begin{align*}
         \big\|\pi_N(\mathsf{L}_\infty\mathbf{f})-\mathsf{L}_N\mathbf{f}_N\big\|_N=
         \big\|\pi_N\left(\mathsf{L}_\infty\Lambda_K^{\infty}g\right)-\mathsf{L}_{N}\Lambda_K^{N}g\big\|_N
         =\big\|\pi_N\left(\Lambda_K^{\infty}\mathsf{L}_Kg\right)-\Lambda_K^{N}\mathsf{L}_Kg\big\|_N.
       \end{align*}
       Note moreover that, by assumption, the space $\mathscr{C}_K$ is invariant under the action of $\mathsf{L}_K$  and thus $\mathsf{L}_Kg\in\mathscr{C}_K$. Hence, \eqref{LNfN to Lf} follows exactly in the same way as \eqref{fN to f} from Corollary \ref{cor-UnifApp}. 
   \end{proof}
\subsection{Proof of the uniform approximation theorem}\label{SubSec-ProofUnifAppThm}

We only consider the case $\left(\mathcal{E}_N\right)_{N=1}^{\infty}=\left(\mathbb{W}_N\right)_{N=1}^{\infty}$ throughout this section. It is not hard to see, by inspecting \eqref{UniformApproximation} and the relevant definitions carefully, that in the case $\left(\mathcal{E}_N\right)_{N=1}^{\infty}=\left(\mathbb{W}_{N,+}\right)_{N=1}^{\infty}$ we are simply taking the supremum over $\mathbb{W}_{N,+}$ instead of $\mathbb{W}_N$ while the quantity we are taking the supremum of is exactly the same as for $\mathbb{W}_N$. Thus, the result for $\mathbb{W}_{N,+}$ is a special case of the one over $\mathbb{W}_{N}$. We need some preliminaries.

\subsubsection{Explicit formulae of Markov kernels}
We first present an explicit formula for $\Lambda_K^{N}\left(\mathbf{x}^{(N)},\mathrm{d}\mathbf{y}\right)$ which is the starting point in our analysis. For any $N\geq 2$ and $\mathbf{x}^{(N)}\in\mathbb{W}_N^{\circ}$, we consider the fundamental spline function $\mathcal{M}\left(y;\mathbf{x}^{(N)}\right):\mathbb{R}\to\mathbb{R}$, with knots $\mathbf{x}^{(N)}$, given by 
\begin{align}\label{M}
        \mathcal{M}\left(y;\mathbf{x}^{(N)}\right)= (N-1)\sum_{i=1}^N
        \frac{\left(x_i^{(N)}-y\right)_+^{N-2}}{\prod_{j=1,j\neq i}^N\left(x_i^{(N)}-x_j^{(N)}\right)},
    \end{align} 
   where, for $z\in \mathbb{R}$, $(z)_+=\max\{z,0\}$. One can check that, for all $\mathbf{x}^{(N)}\in\mathbb{W}_N^{\circ}$, $y \mapsto \mathcal{M}\left(y;\mathbf{x}^{(N)}\right)$ is a piecewise polynomial function 
   with $N-2$ continuous derivatives. Moreover, it satisfies
   \begin{align*}
\int_\mathbb{R}\mathcal{M}\left(y;\mathbf{x}^{(N)}\right)\mathrm{d}y=1.
   \end{align*}
The next theorem, proven by Olshanski in \cite{ProjOrbitalMeas}, gives an expression for the Markov kernel $\Lambda_{K}^{N}$ defined in \eqref{Lambda_K^N} in terms of the spline function introduced above.

\begin{thm}\label{thm-Lambda_K^N-detM}
    Let $N\geq 2$ and $\mathbf{x}^{(N)}\in\mathbb{W}_{N}^{\circ}$. Then, for any $1\leq K\leq N-1$, we have
\begin{align}\label{Lambda_K^N formula}
\Lambda_K^{N}\left(\mathbf{x}^{(N)},\mathrm{d}\mathbf{y}\right)&=
\prod_{l=1}^{K-1}\binom{N-K+l}{l}
\frac{\det {\begin{pmatrix}\mathcal{M}\left(y_{K-j+1};x_{N-i+1}^{(N)},\dots,x_{K-i+1}^{(N)}\right)\end{pmatrix}}_{i,j=1}^{K}}{\prod_{j-i\geq N-K+1}\left(x_{i}^{(N)}-x_{j}^{(N)}\right)}\Delta_K\left(\mathbf{y}\right)\mathrm{d}\mathbf{y},\  \ \forall\mathbf{y}\in\mathbb{W}_K.
\end{align}
\end{thm}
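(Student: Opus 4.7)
The plan is to induct on $N - K \ge 1$ using the semigroup identity $\Lambda_K^N = \Lambda_{K+1}^N \circ \Lambda_K^{K+1}$ together with the explicit formula \eqref{link} for single-step kernels. The base case $K = N-1$ reduces directly to \eqref{link}: since a two-knot spline is a scaled indicator, $\mathcal{M}(y;a,b) = (a-b)^{-1}\mathbf{1}_{b \le y < a}$ for $a > b$, the $(N-1)\times(N-1)$ spline determinant collapses on $\mathbb{W}_{N-1}$ to the product of its diagonal entries (all off-diagonal permutations vanish on the chamber up to a measure-zero set), producing the nested interlacing indicator $\mathbf{1}_{\mathbf{y} \prec \mathbf{x}^{(N)}}$ divided by $\prod_i (x_i^{(N)}-x_{i+1}^{(N)})$. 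Matching the binomial prefactor $(N-1)!$ and the denominator $\prod_{j-i\ge 2}(x_i^{(N)} - x_j^{(N)})$ with the Vandermonde $\Delta_N(\mathbf{x}^{(N)})$ is then immediate.

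\textbf{Inductive step.} Assume the formula at level $K+1$. Substituting the hypothesis and \eqref{link} into
\begin{equation*}
\Lambda_K^N\bigl(\mathbf{x}^{(N)}, \mathrm{d}\mathbf{y}\bigr) = \int_{\mathbb{W}_{K+1}} \Lambda_{K+1}^N\bigl(\mathbf{x}^{(N)}, \mathrm{d}\mathbf{z}\bigr)\, \Lambda_K^{K+1}(\mathbf{z}, \mathrm{d}\mathbf{y}),
\end{equation*}
the factor $\Delta_{K+1}(\mathbf{z})$ appearing in the numerator of $\Lambda_{K+1}^N$ and in the denominator of $\Lambda_K^{K+1}$ cancels, leaving a product involving the $(K+1)\times(K+1)$ spline determinant and $\Delta_K(\mathbf{y})\,\mathbf{1}_{\mathbf{y}\prec\mathbf{z}}$ under the integral. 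Antisymmetrising the interlacing indicator against $\mathbf{y}$, the integrand becomes a product of two alternating expressions in $\mathbf{z}$; Cauchy--Binet then collapses the $\mathbf{z}$-integration into a single $K \times K$ determinant whose $(i,j)$ entry is a one-variable integral of the spline $\mathcal{M}(z; x_{N-i+1}^{(N)}, \ldots, x_{K-i+2}^{(N)})$ against an indicator of the form $\mathbf{1}_{y_{K-j+1} \le z}$. A standard recursion for fundamental splines (equivalently, a divided-difference identity for the truncated powers $(x-y)_+^n$) then identifies each such integral with $\mathcal{M}(y_{K-j+1}; x_{N-i+1}^{(N)}, \ldots, x_{K-i+1}^{(N)})$, appending exactly the knot $x_{K-i+1}^{(N)}$ prescribed by the target formula.

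\textbf{Main obstacle.} The essential difficulty is combinatorial bookkeeping rather than conceptual. One must check that the binomial prefactor $\prod_{l=1}^{K}\binom{N-K-1+l}{l}$ inherited from the inductive hypothesis, the $K!$ from the single-step formula \eqref{link}, and the scalar constants produced row by row by the spline recursion telescope, together with the level-$(K+1)$ denominator $\prod_{j-i\ge N-K}(x_i^{(N)} - x_j^{(N)})$, to yield exactly the claimed $\prod_{l=1}^{K-1}\binom{N-K+l}{l}$ and $\prod_{j-i\ge N-K+1}(x_i^{(N)} - x_j^{(N)})$. Tracking the sliding knot window in each row so that the newly appended knot at row $i$ is precisely $x_{K-i+1}^{(N)}$, and verifying that the Cauchy--Binet sign pattern cooperates with the orientation of the interlacing constraint, is delicate but purely algebraic; once the telescoping is executed, the formula at level $K$ follows, completing the induction.
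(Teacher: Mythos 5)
The paper does not prove this statement: Theorem~\ref{thm-Lambda_K^N-detM} is cited directly from Olshanski \cite{ProjOrbitalMeas} (see the sentence preceding it), so you are proposing a proof where the paper supplies none. Your strategy --- downward induction on $K$ via $\Lambda_K^N = \Lambda_{K+1}^N \Lambda_K^{K+1}$, with \eqref{link} as the single-step kernel and the two-knot spline $\mathcal{M}(y;a,b) = (a-b)^{-1}\mathbf{1}_{b\le y<a}$ collapsing the determinant in the base case $K=N-1$ --- is sound, and the base case checks out (it recovers $(N-1)!\Delta_{N-1}(\mathbf{y})/\Delta_N(\mathbf{x})\cdot\mathbf{1}_{\mathbf{y}\prec\mathbf{x}}$).

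There is a genuine gap in the inductive step, and it is exactly at the point you label the crux. First, the dimension count is off: you are integrating over $\mathbb{W}_{K+1}$ (or equivalently $\mathbb{R}^{K+1}$ after antisymmetrisation), so Cauchy--Binet/Andreief produces a $(K+1)\times(K+1)$ determinant, not a $K\times K$ one. Its entries are of the form $F_i(y_b)\overset{\textnormal{def}}{=}\int_{y_b}^{\infty}\mathcal{M}\left(z; x_{N-i+1}^{(N)},\dots,x_{K-i+2}^{(N)}\right)\mathrm{d}z$ for $b\le K$, plus a column of $1$'s coming from the virtual boundary variable. Second, and more importantly, your claim that ``each such integral [equals] $\mathcal{M}(y_{K-j+1};x_{N-i+1}^{(N)},\dots,x_{K-i+1}^{(N)})$'' is false: $F_i$ is a cumulative distribution function of a spline, not a spline density of one higher order (already for two knots, $\int_y^\infty\mathcal{M}(z;a,b)\,\mathrm{d}z$ is piecewise linear on $[b,a]$ and constant outside, which is not $\mathcal{M}(y;a,b,c)$ for any $c$). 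The correct mechanism is to perform row operations $R_i\to R_i-R_{i+1}$ on the $(K+1)\times(K+1)$ matrix, expand along the constant column, and only then apply the integral form of the spline recursion \eqref{M'}: the relation $F_i(y)-F_{i+1}(y)= -\tfrac{x_{K-i+1}^{(N)}-x_{N-i+1}^{(N)}}{N-K}\,\mathcal{M}\bigl(y;x_{N-i+1}^{(N)},\dots,x_{K-i+1}^{(N)}\bigr)$, which adds one knot precisely because it differences two knot windows shifted by one. Once you substitute this corrected identity, the $K$ extracted factors $\prod_i\bigl(x_{K-i+1}^{(N)}-x_{N-i+1}^{(N)}\bigr)$ cancel the extra $\prod_{j-i= N-K}$ layer of the level-$(K+1)$ denominator, and the $(N-K)^{-K}$ combines with the inherited binomials and the $K!$ from \eqref{link} to give exactly $\prod_{l=1}^{K-1}\binom{N-K+l}{l}$. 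So the argument does close, but not by the route as you wrote it; as stated the key step would not compute.
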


We note that, the case $K=1$, boils down to the following identity
\begin{align}\label{Lambda_1=M}
\Lambda_1^{N}\left(\mathbf{x}^{(N)},\mathrm{d}y\right)=\mathcal{M}\left(y;\mathbf{x}^{(N)}\right)\mathrm{d}y,\  \ \forall \mathbf{x}^{(N)}\in\mathbb{W}_N^{\circ},
\end{align}
which is an observation first due to Okounkov \cite{Olshanski-Vershik}.

The formula from Theorem \ref{thm-Lambda_K^N-detM} is not yet amenable to asymptotic analysis, partly due to the denominator, which may vanish and lead to singularities. Thankfully, we can use a trick based on a formula for derivatives of spline functions to get rid of the denominator entirely. The new formula for $\Lambda_K^N$ will however involve derivatives of the spline function $\mathcal{M}$ which when we extend in the sequel the formula to $\mathbf{x}^{(N)}$ with coinciding coordinates will need to be interpreted in a weak sense.

\begin{thm}\label{thm2-Lambda_K^N-detM}
In the setting of Theorem \ref{thm-Lambda_K^N-detM}, we have
    \begin{align}\label{Lambda_K^N-M}
\Lambda_K^{N}\left(\mathbf{x}^{(N)},\mathrm{d}\mathbf{y}\right)=
\left(\prod_{l=1}^{K-1}\frac{1}{l!}\right)
\det \left(\mathcal{M}^{(K-i)}\left(\mathrm{d}y_{K-j+1};x_{1}^{(N)},\dots,x_{N-i+1}^{(N)}\right)\right)_{i,j=1}^{K}\Delta_K\left(\mathbf{y}\right).
\end{align}
Here, for any $j=1,\dots,K-1$,  $\mathcal{M}^{(j)}$ denotes the $j$-th derivative of the function $\mathcal{M}$ with the convention $\mathcal{M}^{(0)}=\mathcal{M}$, 
and, $\mathcal{M}^{(j)}(\mathrm{d}y,\cdot)=\mathcal{M}^{(j)}(y,\cdot)\mathrm{d}y$
\end{thm}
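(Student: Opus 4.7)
The plan is to derive \eqref{Lambda_K^N-M} from \eqref{Lambda_K^N formula} by performing a carefully ordered sequence of row operations on the determinant. The crucial ingredient is the spline derivative identity
\begin{equation*}
\mathcal{M}^{(k)}(y; x_1, \dots, x_n) = \frac{n-1}{x_1 - x_n}\Bigl[\mathcal{M}^{(k-1)}(y; x_2, \dots, x_n) - \mathcal{M}^{(k-1)}(y; x_1, \dots, x_{n-1})\Bigr],
\end{equation*}
valid for $1 \le k \le n - 2$, which I would first prove as a preliminary lemma by writing $\mathcal{M}(y; x_1, \dots, x_n) = (n-1)\,(x-y)_+^{n-2}[x_1, \dots, x_n]$ as a divided difference in the $x$-variable, differentiating $k$ times in $y$ to obtain a constant multiple of the divided difference of $(x-y)_+^{n-2-k}$, and applying the standard divided difference recurrence.

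I would then transform the matrix $A^{(1)}$ of \eqref{Lambda_K^N formula} into the matrix $A^{(2)}$ of \eqref{Lambda_K^N-M} through $K-1$ waves. The inductive claim at the start of wave $r \in \{1, \dots, K-1\}$ is that the $i$-th row, for $i \le K-r+1$, equals $(\mathcal{M}^{(r-1)}(y_{K-j+1}; x_{K-i-r+2}, \dots, x_{N-i+1}))_{j=1}^{K}$, while the rows $i > K-r+1$ already coincide with the corresponding rows of $A^{(2)}$ (the case $r=1$ reduces to the matrix of \eqref{Lambda_K^N formula}). In wave $r$, for each $i = 1, \dots, K-r$ processed in order, I rearrange the identity as
\begin{equation*}
\mathcal{M}^{(r-1)}(y; x_a, \dots, x_b) = \frac{x_{a-1}-x_b}{b-a+1}\,\mathcal{M}^{(r)}(y; x_{a-1}, \dots, x_b) + \mathcal{M}^{(r-1)}(y; x_{a-1}, \dots, x_{b-1})
\end{equation*}
with $a=K-i-r+2$ and $b=N-i+1$. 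The crucial observation is that the remainder $\mathcal{M}^{(r-1)}(y; x_{a-1}, \dots, x_{b-1})$ is exactly the current $(i+1)$-th row. Subtracting row $i+1$ from row $i$ leaves the determinant unchanged and converts row $i$ into $\tfrac{x_{K-i-r+1}-x_{N-i+1}}{N-K+r}$ times a row of shape $\mathcal{M}^{(r)}(y; x_{K-i-r+1}, \dots, x_{N-i+1})$. Factoring out the scalar preserves the inductive shape with $r$ replaced by $r+1$, and after wave $K-1$ the matrix coincides with $A^{(2)}$.

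Accumulating the scalar factors yields
\begin{equation*}
\det A^{(1)} = \prod_{r=1}^{K-1}\prod_{i=1}^{K-r}\frac{x_{K-i-r+1}-x_{N-i+1}}{N-K+r}\cdot \det A^{(2)},
\end{equation*}
and two combinatorial identities close the argument. First, a direct regrouping gives
\begin{equation*}
\prod_{r=1}^{K-1}(N-K+r)^{K-r} = \prod_{l=1}^{K-1}\binom{N-K+l}{l}\,l!,
\end{equation*}
so that the binomial prefactor of Theorem \ref{thm-Lambda_K^N-detM} becomes the factorial prefactor of Theorem \ref{thm2-Lambda_K^N-detM}. Second, the map $(r,i)\mapsto (K-i-r+1,\,N-i+1)$ is a bijection from $\{(r,i): 1\le r\le K-1,\ 1\le i\le K-r\}$ onto $\{(i',j'): 1\le i'<j'\le N,\ j'-i'\ge N-K+1\}$, yielding
\begin{equation*}
\prod_{r=1}^{K-1}\prod_{i=1}^{K-r}(x_{K-i-r+1}-x_{N-i+1}) = \prod_{j-i\ge N-K+1}(x_i-x_j),
\end{equation*}
which cancels the denominator of Theorem \ref{thm-Lambda_K^N-detM}. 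Substituting yields \eqref{Lambda_K^N-M}.

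The main obstacle is the combinatorial bookkeeping across the waves: verifying the inductive shape of the rows, checking that the remainder produced by each application of the spline identity coincides with the current next row (which fixes the order in which the operations within a wave must be performed), and matching the accumulated product of factors with the binomial prefactor and the Vandermonde-type denominator of Theorem \ref{thm-Lambda_K^N-detM}. The spline identity itself and each individual row operation are elementary; the work lies in the careful ordering and in the index-bijection check.
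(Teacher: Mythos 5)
Your proof is correct and takes the same route as the paper: start from the determinantal formula of Theorem \ref{thm-Lambda_K^N-detM} and eliminate the denominator via row operations driven by the spline derivative identity \eqref{M'} (of which your general-$k$ version is the repeated $y$-differentiation, applicable since $K\le N-1$ keeps all derivatives classical). The paper compresses all of this into ``repeatedly apply elementary row operations'', so you have simply supplied the wave-by-wave bookkeeping, the index bijection matching the accumulated numerator with $\prod_{j-i\ge N-K+1}(x_i-x_j)$, and the regrouping that converts the binomial prefactor into $\prod_{l=1}^{K-1}1/l!$ — all of which check out.
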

\begin{proof}
    We have following relation for the derivative of the spline function in \eqref{M}, obtained by direct computation,
\begin{align}\label{M'} 
\mathcal{M}^{(1)}\left(y;\mathbf{x}^{(N)}\right)=\frac{N-1}{x_1^{(N)}-x_N^{(N)}}\left(\mathcal{M}\left(y;x_2^{(N)},\dots,x_{N}^{(N)}\right)-\mathcal{M}\left(y;x_{1}^{(N)},\dots,x_{N-1}^{(N)}\right)\right).
\end{align}
We now consider the formula given in Theorem \eqref{thm-Lambda_K^N-detM} and repeatedly apply elementary row operations to the determinant in \eqref{Lambda_K^N formula} and use \eqref{M'} to obtain \eqref{Lambda_K^N-M}.
\end{proof}

Let 
$g\in  C_c^\infty(\mathbb{R}^K)$
and consider
$\Lambda_K^{N}g\left(\mathbf{x}^{(N)}\right)$ for $\mathbf{x}^{(N)}\in\mathbb{W}_N^\circ$. We have
from \eqref{Lambda_K^N-M} that
\begin{align*}
\Lambda_K^{N}g\left(\mathbf{x}^{(N)}\right)
&=\int_{\mathbb{R}^K}\Lambda_K^{N}\left(\mathbf{x}^{(N)},\mathrm{d}\mathbf{y}\right)g\left(\mathbf{y}\right)
\\&=
\int_{\mathbb{R}^K}
\left(\prod_{l=1}^{K-1}\frac{1}{l!}\right)
\det \left(\mathcal{M}^{(K-i)}\left(\mathrm{d}y_{K-j+1};x_{1}^{(N)},\dots,x_{N-i+1}^{(N)}\right)\right)_{i,j=1}^{K}\Delta_K(\mathbf{y})g(\mathbf{y}).
\end{align*}
 Next, one can expand the determinant and use integration by parts for each term in the expansion to rewrite the above expression as 
\begin{align}\label{WeakExpansion}
\Lambda_K^{N}g\left(\mathbf{x}^{(N)}\right)&=(-1)^{\frac{K(K-1)}{2}}\int_{\mathbb{R}^K}\sum_{\sigma\in\mathbb{S}_K}
\textnormal{sgn}(\sigma)\prod_{j=1}^{K}\Lambda_1^{N-K+j}\left(\mathrm{d}y_{\sigma(j)};x_{1}^{(N)},\dots,x_{N-K+j}^{(N)}\right)\nonumber\\
&\hspace{8cm}\times
\partial_{\mathbf{y}}^{\sigma}\left(\Delta_K(\mathbf{y})g(\mathbf{y})\right),
\end{align}
where 
we used \eqref{Lambda_1=M} and for $\sigma$ in $\mathbb{S}_K$, the symmetric group of degree $K$, the differential operator $\partial^{\sigma}$ acts on functions $h\in C_c^\infty(\mathbb{R}^K)$ as follows:
\begin{align}\label{DifferentialOperator}
\partial_{\mathbf{y}}^{\sigma}h(\mathbf{y})=\frac{\partial^{\frac{K(K-1)}{2}}}{\partial y_1^{\sigma(1)-1}\partial y_2^{\sigma(2)-1}\cdots\partial y_K^{\sigma(K)-1}}h(\mathbf{y}).
\end{align}
 Now suppose $\mathbf{x}\in\mathbb{W}_N$ is arbitrary. Take a sequence $\left(\mathbf{x}_{n}\right)_{n=1}^{\infty}$ in $\mathbb{W}_N^\circ$ such that $\mathbf{x}_{n}\overset{n\to\infty}{\longrightarrow}\mathbf{x}$. It then follows from Lemma \ref{lem-FellerPropLambdaN} that both the left and right hand side of equality (\ref{WeakExpansion}) above, when evaluating at $\mathbf{x}_{n}$,  converge to the corresponding terms for $\mathbf{x}$, as $n\to\infty$. This gives an explicit expression of $\Lambda_K^{N}g\left(\mathbf{x}\right)$ for all $g\in C_c^\infty(\mathbb{R}^K)$ and general $\mathbf{x}\in\mathbb{W}_N$.
Note that, this expression matches the one we obtain by formally expanding the following determinant
 \begin{align*}
\int_{\mathbb{R}^K}
\left(\prod_{l=1}^{K-1}\frac{1}{l!}\right)
\det \left(\mathcal{M}^{(K-i)}\left(\mathrm{d}y_{K-j+1};x_{1},\dots,x_{N-i+1}\right)\right)_{i,j=1}^{K}\Delta_K\left(\mathbf{y}\right)g\left(\mathbf{y}\right), 
\end{align*}
 where, the derivatives correspond to the distributional derivative
 of the kernel $\mathcal{M}(\mathrm{d}y,\cdot)$. In particular, it is consistent with the formal expansion of the determinant along rows and columns.  From now on, all appearances of these distributional determinants will be understood to be interpreted in the weak sense explained above when applied to $C_c^{\infty}(\mathbb{R}^K)$ functions (and the functions we need to apply them to will always be in this space).

We next present an analogous explicit formula for the kernel $\Lambda_K^\infty$. We do this in the case of $\Omega$. The case of $\Omega_+$ is in fact a special case under the embedding \eqref{omega_+ToOmega}. So, suppose $\omega\in \Omega$ is arbitrary and written in coordinates as $\omega=\left(\mathbf{x}^+,\mathbf{x}^-,\gamma,\delta\right)$. We will make very frequent use of the following notation in everything that follows.

\begin{defn}
 For $\omega \in \Omega$, or, abusing notation, for $\omega \in \Omega_+$, we define the probability measure $\phi_\omega(\mathrm{d}y)$ on $\mathbb{R}$ by,
\begin{equation}
 \phi_{\omega}(\mathrm{d}y)\overset{\textnormal{def}}{=}\Lambda_1^\infty(\omega,\mathrm{d}y).
\end{equation}
\end{defn}

Given arbitrary $\omega \in \Omega$ as above, we define, for $\epsilon>0$, $\omega_\epsilon \in \Omega$ by:
\begin{equation*}
\omega_\epsilon=\left(\mathbf{x}^+,\mathbf{x}^-,\gamma,\delta+\epsilon\right).
\end{equation*}
We observe that, since $\phi_{\omega_\epsilon}$ involves a Gaussian component, it has a smooth density with respect to Lebesgue measure in $\mathbb{R}$, see \cite{Olshanski-Vershik}, which we denote by, abusing notation, $\phi_{\omega_\epsilon}(y)$. We write $\phi_{\omega_\epsilon}^{(\ell)}(\mathrm{d}y)=\phi_{\omega_\epsilon}^{(\ell)}(y)\mathrm{d}y$ for its $\ell$-th derivative. We have the following result.

\begin{prop}
Let $\epsilon>0$. Then, the Markov kernel $\Lambda_K^\infty(\omega_\epsilon,\mathrm{d}\mathbf{y})$ has an explicit density with respect to Lebesgue measure in $\mathbb{W}_K$ and is given by,
\begin{align}\label{Lambda^infty}         \Lambda_K^{\infty}\left(\omega_{\epsilon},\mathrm{d}\mathbf{y}\right)=
          \left(\prod_{l=1}^{K-1}\frac{1}{l!}\right)\det \left(\phi_{\omega_\epsilon}^{(j-1)}(\mathrm{d}y_{i})\right)_{i,j=1}^{K} \Delta_K(\mathbf{y})\mathrm{d}\mathbf{y}.
    \end{align} 
\end{prop}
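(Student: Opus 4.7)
I would prove \eqref{Lambda^infty} by direct random matrix computation. The key observation is that for $\epsilon > 0$, the top $K \times K$ corner $\mathbf{A}_{\omega_\epsilon,K}^\infty$ of $\mathbf{A}_{\omega_\epsilon}^\infty$ is a random Hermitian matrix with absolutely continuous distribution, and its characteristic function, on a Hermitian test matrix $T$ with eigenvalues $(t_\ell)_{\ell=1}^K$, factorises completely: using the GUE identity $\mathbb{E}[e^{i\sigma\text{Tr}(TG_K)}]=e^{-\sigma^2\text{Tr}(T^2)/2}$ and the complex-Gaussian quadratic form identity $\mathbb{E}[e^{is\xi^\dagger T\xi}]=\det(I-isT)^{-1}$, one obtains
\[
\mathbb{E}\!\left[e^{i\text{Tr}(T\mathbf{A}_{\omega_\epsilon,K}^\infty)}\right]=\prod_{\ell=1}^K\hat\phi_{\omega_\epsilon}(t_\ell),
\]
where $\hat\phi_{\omega_\epsilon}$ is the characteristic function of the one-dimensional marginal $\phi_{\omega_\epsilon}=\Lambda_1^\infty(\omega_\epsilon,\cdot)$, and $\sigma^2=\delta+\epsilon-\sum_i((x_i^+)^2+(x_i^-)^2)>0$.

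Since the characteristic function depends only on the eigenvalues of $T$, the matrix density $\tilde f$ of $\mathbf{A}_{\omega_\epsilon,K}^\infty$ depends only on the eigenvalues of its argument. I would then compute $\tilde f(\text{diag}(\mathbf{y}))$ by combining Fourier inversion with Weyl integration on $T$ and the Harish-Chandra-Itzykson-Zuber integral
\[
\int_{\mathbb{U}(K)}e^{-i\text{Tr}(U\text{diag}(\mathbf{t})U^\dagger\text{diag}(\mathbf{y}))}\,\mathrm{d}U=\frac{c_K\det\!\left(e^{-iy_it_j}\right)_{i,j=1}^K}{\Delta_K(\mathbf{y})\Delta_K(\mathbf{t})},
\]
after which expanding $\Delta_K(\mathbf{t})=\det(t_j^{i-1})_{i,j}$ and applying Andreief's identity give
\[
\tilde f(\text{diag}(\mathbf{y}))=\frac{c'_K}{\Delta_K(\mathbf{y})}\det\!\left(\int t^{i-1}\hat\phi_{\omega_\epsilon}(t)e^{-iy_jt}\,\mathrm{d}t\right)_{i,j=1}^K=\frac{c''_K}{\Delta_K(\mathbf{y})}\det\!\left(\phi_{\omega_\epsilon}^{(i-1)}(y_j)\right)_{i,j=1}^K,
\]
the last equality by Fourier inversion for derivatives. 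The Weyl integration formula then converts $\tilde f$ into the joint eigenvalue density via multiplication by $C_K\Delta_K(\mathbf{y})^2$; one factor of $\Delta_K(\mathbf{y})$ cancels, and transposing the resulting matrix (which leaves the determinant invariant) rewrites $\det(\phi_{\omega_\epsilon}^{(i-1)}(y_j))_{i,j}$ as $\det(\phi_{\omega_\epsilon}^{(j-1)}(y_i))_{i,j}$, yielding \eqref{Lambda^infty} after matching constants to identify the prefactor $\prod_{l=1}^{K-1}1/l!$.

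\textbf{Main obstacle.} The principal analytic issue is the absolute convergence of the nested integrals: $\hat\phi_{\omega_\epsilon}$ contains the infinite product $\prod_m(1\mp ix_m^\pm t)^{-1}$, whose individual factors decay only as $|t|^{-1}$. Without stronger decay, neither the Fourier inversion on Hermitian matrices nor the Andreief interchange are justified. The hypothesis $\epsilon>0$ is precisely what produces the strictly positive Gaussian variance $\sigma^2>0$ and hence the super-exponentially decaying factor $e^{-\sigma^2 t^2/2}$ in $\hat\phi_{\omega_\epsilon}$; this decay dominates all rational factors and legitimises every exchange of integrals in the proof. A secondary but non-trivial bookkeeping task is tracking all multiplicative constants arising from HCIZ, Weyl integration and Fourier normalisation in order to pin down the exact prefactor $\prod_{l=1}^{K-1}1/l!$ appearing in the statement.
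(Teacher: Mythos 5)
Your proposal is sound. The paper itself does not prove this proposition: it simply says ``This is implicit in the proof of Theorem 7.7 in \cite{Olshanski-Vershik}'' and moves on. You have essentially reconstructed the underlying Olshanski--Vershik computation from scratch, and the structure you lay out is the right one. The factorisation of $\mathbb{E}\bigl[e^{\textnormal{i}\,\textnormal{Tr}(T\mathbf{A}_{\omega_\epsilon,K}^\infty)}\bigr]$ into $\prod_{\ell}\hat\phi_{\omega_\epsilon}(t_\ell)$ is exactly right (the $\gamma\mathbf{I}$ term contributes $e^{\textnormal{i}\gamma\sum t_\ell}$, the Gaussian term contributes $e^{-\sigma^2\sum t_\ell^2/2}$, and each rank-one summand contributes $e^{\mp\textnormal{i}x_j^\pm\sum t_\ell}\prod_\ell(1\mp\textnormal{i}x_j^\pm t_\ell)^{-1}$, all depending only on the spectrum of $T$); the HCIZ plus Weyl integration plus Andreief pipeline is the standard route to a determinantal eigenvalue density with entries given by inverse Fourier transforms; and the conversion of $\int t^{i-1}\hat\phi(t)e^{-\textnormal{i}y t}\,\mathrm{d}t$ into $\phi^{(i-1)}(y)$ (up to powers of $\textnormal{i}$ and $2\pi$) is the correct final step. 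You have also correctly identified the role of the hypothesis $\epsilon>0$: it guarantees $\sigma^2=\delta+\epsilon-\sum_i\bigl[(x_i^+)^2+(x_i^-)^2\bigr]\ge\epsilon>0$, so $|\hat\phi_{\omega_\epsilon}(t)|\le e^{-\sigma^2 t^2/2}$ (each rational factor has modulus $\le 1$), which is exactly what makes the Fourier inversion and the Fubini/Andreief interchanges legitimate and makes $\phi_{\omega_\epsilon}$ smooth. The remaining constant chase (powers of $\textnormal{i}$, $2\pi$, the Weyl constant, and the HCIZ constant $\prod_{l=1}^{K-1}l!$) is tedious but mechanical, and you are right that it pins down the prefactor $\prod_{l=1}^{K-1}(l!)^{-1}$. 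Net effect: where the paper outsources the claim to a citation, your argument makes it self-contained; the price is the constant bookkeeping, which the cited reference already did.
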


\begin{proof}
This is implicit in the proof of Theorem 7.7 in \cite{Olshanski-Vershik}.
\end{proof}

Recall (or simply observe) that, as $\epsilon \to 0$, we have the weak convergence
\begin{equation}\label{ConvEpsilonTo0}
\Lambda^\infty_K(\omega_\epsilon,\mathrm{d}\mathbf{y})\longrightarrow \Lambda^\infty_K(\omega,\mathrm{d}\mathbf{y}).
\end{equation}
Now, given $g\in C_c^\infty(\mathbb{R}^K)$, we expand the determinant in \eqref{Lambda^infty} and use integration  by parts as before and then take $\epsilon \to 0$ by virtue of \eqref{ConvEpsilonTo0} to make sense of the potentially distributional determinant, for general $\omega \in \Omega$,
\begin{equation*}
        \left(\prod_{l=1}^{K-1}\frac{1}{l!}\right)\det \left(\phi_{\omega}^{(j-1)}(\mathrm{d}y_{i})\right)_{i,j=1}^{K} \Delta_K(\mathbf{y}),
\end{equation*}
as follows, when acting on $g\in C_c^\infty(\mathbb{R}^K)$,
 
\begin{align*}
\int_{\mathbb{R}^K}\Lambda_K^{\infty}\left(\omega,\mathrm{d}\mathbf{y}\right)g(\mathbf{y})
&=(-1)^{\frac{K(K-1)}{2}}
\int_{\mathbb{R}^K}\prod_{j=1}^{K}\phi_\omega(\mathrm{d}y_{j})\sum_{\sigma\in\mathbb{S}_K}\textnormal{sgn}(\sigma)\partial_{\mathbf{y}}^{\sigma}\left(\Delta_K(\mathbf{y})g(\mathbf{y})\right),
\end{align*}
where the differential operator is the same as in \eqref{DifferentialOperator}.
 
As before, this weak-sense definition is consistent with the formal expansion of the determinant along rows and columns.  From now on, all appearances of these distributional determinants will be understood to be interpreted in the weak sense explained above when applied to $C_c^{\infty}(\mathbb{R}^K)$ functions (all functions we need to apply them to will always be in this space).

\subsubsection{Proof of Theorem \ref{thm-UnifApp}}
 For any fixed $K\in\mathbb{N}$, $N>K$, and $\mathbf{x}^{(N)}\in\mathbb{W}_N$, we consider the following variant of $\Lambda_K^{\infty}$,
 \begin{align}\label{tildeLambda}
\tilde{\Lambda}_K^{\infty}\left(\omega\left(\mathbf{x}^{(N)}\right), \mathrm{d}\mathbf{y}\right)&=
\left(\prod_{l=1}^{K-1}\frac{1}{l!}\right)
\det \left(\phi_{\omega\left(x_{1}^{(N)},\dots,x_{N-K+j}^{(N)}\right)}^{(j-1)}(\mathrm{d}y_{i})\right)_{i,j=1}^{K} \Delta_K(\mathbf{y}).
 \end{align}
We note that we only need (and in fact only define) $\tilde{\Lambda}_K^{\infty}$ for $\omega\in \Omega$ of the form $\omega\left(\mathbf{x}^{(N)}\right)$ for $\mathbf{x}^{(N)}\in \mathbb{W}_N$. Again the determinant in equation \eqref{tildeLambda} above is interpreted in the weak sense when tested against smooth functions of compact support as explained previously. We first prove the desired result with
$\Lambda_K^{\infty}$ replaced by
$\tilde{\Lambda}_K^{\infty}$ in \eqref{UniformApproximation}.

\begin{thm}\label{thm-tildeUnifApp}
In the setting of Theorem \ref{thm-UnifApp}, we have
    \begin{align*}
\sup_{\mathbf{x}^{(N)}\in \mathbb{W}_N}\left|\tilde{\Lambda}_K^{\infty}g\left(\omega\left(\mathbf{x}^{(N)}\right)\right)-\Lambda_K^{N}g\left(\mathbf{x}^{(N)}\right)\right|\to 0,\  \ 
 \textnormal{as } N\to\infty. 
 \end{align*} 
\end{thm}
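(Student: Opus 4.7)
The plan is to compare the two weak-sense determinantal formulas for $\Lambda_K^N g(\mathbf{x}^{(N)})$ and $\tilde\Lambda_K^\infty g(\omega(\mathbf{x}^{(N)}))$ by first forcing them into a common shape and then reducing, via telescoping, to a single one-dimensional uniform estimate comparing $\Lambda_1^M(\mathbf{x},\cdot)$ with $\phi_{\omega(\mathbf{x})}$. To put the two formulas in a unified form, I would reverse both the rows and the columns of the determinant in \eqref{Lambda_K^N-M} (each operation contributes $(-1)^{K(K-1)/2}$, so the total sign is $+1$) and transpose the determinant in \eqref{tildeLambda}. Both sides then read
\[
C_K \int_{\mathbb{R}^K} \det\bigl(\mu_i^{(i-1)}(\mathrm{d}y_j)\bigr)_{i,j=1}^K \Delta_K(\mathbf{y})\, g(\mathbf{y}),
\]
with $C_K=\prod_{l=1}^{K-1}(l!)^{-1}$ and row $i$ determined by a single measure $\mu_i$: in the first case $\mu_i=\Lambda_1^{N-K+i}(\,\cdot\,;x_1^{(N)},\ldots,x_{N-K+i}^{(N)})$, in the second $\mu_i=\phi_{\omega(x_1^{(N)},\ldots,x_{N-K+i}^{(N)})}$. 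The two expressions thus agree row by row except for this replacement of $\Lambda_1^{N-K+i}$ by $\phi_{\omega(\,\cdot\,)}$.

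Next, I would expand both determinants over $\sigma\in\mathbb{S}_K$ and perform integration by parts in the weak distributional sense (already justified via the $\omega_\epsilon$ smoothing) to move all derivatives onto $\Delta_K(\mathbf{y})g(\mathbf{y})\in C_c^\infty(\mathbb{R}^K)$. Applying the telescoping identity $\prod_i a_i-\prod_i b_i=\sum_i (a_i-b_i)\prod_{k<i}a_k\prod_{k>i}b_k$ row by row, the difference $\Lambda_K^N g(\mathbf{x}^{(N)}) - \tilde\Lambda_K^\infty g(\omega(\mathbf{x}^{(N)}))$ becomes a finite sum, indexed by $(\sigma,i)\in\mathbb{S}_K\times\{1,\ldots,K\}$, of products of $K-1$ integrals of a bounded smooth function against a probability measure (hence uniformly bounded by an appropriate norm of $\Delta_K g$) multiplied by a single ``error factor''
\[
\int_{\mathbb{R}} \tilde f(y)\, \bigl(\Lambda_1^M(\mathbf{x},\mathrm{d}y)-\phi_{\omega(\mathbf{x})}(\mathrm{d}y)\bigr),
\]
where $M=N-K+i$, $\mathbf{x}=(x_1^{(N)},\ldots,x_M^{(N)})$, and $\tilde f$ is smooth, supported in a fixed compact set, and with all derivatives controlled in terms of those of $\Delta_K g$ alone (since integration against probability measures is a contraction on sup-norms and passes through derivatives via integration by parts).

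The theorem is thus reduced to a single one-dimensional uniform key estimate: for every $f$ in a suitably nice class (smooth, compactly supported, with uniformly controlled derivatives),
\[
\sup_{\mathbf{x}\in\mathbb{W}_M}\Bigl|\int f\,\mathrm{d}\Lambda_1^M(\mathbf{x},\cdot)-\int f\,\mathrm{d}\phi_{\omega(\mathbf{x})}\Bigr|\xrightarrow[M\to\infty]{} 0.
\]
This is where the analysis becomes delicate. A direct computation, using \eqref{A-omega} and the sign-split in the general embedding, shows that $\phi_{\omega(\mathbf{x})}$ is the law of $\tilde Z=M^{-1}\sum_j x_j E_j$ with $(E_j)$ i.i.d.~$\mathrm{Exp}(1)$, while by the Dirichlet-to-Gamma identity $\Lambda_1^M(\mathbf{x},\cdot)$ is the law of $Z=\sum_j x_j E_j/\sum_k E_k$ using the same exponentials; hence $Z=\tilde Z\cdot S$ with $S=M/\sum_k E_k$ a random factor whose distribution does not depend on $\mathbf{x}$ and which concentrates at $1$ by the strong law of large numbers. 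A Taylor expansion of $f$, together with the compact support of $f$ (which truncates the contribution precisely in the regime where $\tilde Z$ is large and the linearisation might fail), then yields the uniform convergence. The main obstacle is establishing this uniformity over all $\mathbf{x}\in\mathbb{W}_M$, including sequences with very spread-out coordinates; this is what forces the non-trivial inductive analysis alluded to in the introduction, absent in the entrywise combinatorial setting of \cite{Boundary-GT-newApp}.
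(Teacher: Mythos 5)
Your proposal takes a genuinely different route from the paper, and it works. The paper proves Theorem \ref{thm-tildeUnifApp} by induction on $K$: the base case $K=1$ is handled via Fourier transforms (writing $\phi_{\omega(\mathbf{x})}$ and $\mathcal{M}(\cdot;\mathbf{x})$ through a common integral representation and reducing to the uniform estimate $\sup_w|\mathsf{G}_N(w)|\to 0$, itself proved by a case-split/contradiction argument with Arzel\`a--Ascoli), while the inductive step expands the $(K{+}1)\times(K{+}1)$ determinants along the first row and again invokes Arzel\`a--Ascoli to make the inductive hypothesis uniform over the slice parameter $a$. You instead bring the two determinants into the common form $\det(\mu_i^{(i-1)}(\mathrm{d}y_j))$, integrate by parts, telescope the product $\prod_i\mu_i-\prod_i\nu_i$ row by row, and reduce the whole theorem in one stroke to a single one-dimensional uniform estimate against test functions $H_{\sigma,i}$ drawn from a compact $C_c^\infty$ family (uniformly bounded, fixed support) — no induction needed. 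For that estimate your probabilistic coupling $Z=\tilde Z\,S$, with $\tilde Z=M^{-1}\sum_j x_j E_j$ of law $\phi_{\omega(\mathbf{x})}$, $Z=\sum_j x_j E_j/\sum_k E_k$ of law $\Lambda_1^M(\mathbf{x},\cdot)$, and $S=M/\sum_k E_k$, is the right one: on $\{|S-1|\le 1/2\}$, whichever of $Z,\tilde Z$ lies in the support $[-R,R]$ of $f$ forces $|\tilde Z|\le 2R$, so $|f(Z)-f(\tilde Z)|\le 2R\|f'\|_\infty|S-1|$, and on the complement you bound by $2\|f\|_\infty$; hence
\[
\Bigl|\int f\,\mathrm{d}\Lambda_1^M(\mathbf{x},\cdot)-\int f\,\mathrm{d}\phi_{\omega(\mathbf{x})}\Bigr|\le 2R\|f'\|_\infty\,\mathbb{E}|S-1|+2\|f\|_\infty\,\mathbb{P}(|S-1|>1/2),
\]
a bound independent of $\mathbf{x}$ (since $\mathsf{Law}(S)$ is) and uniform over your compact class of $f$'s, going to zero as $M\to\infty$. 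This buys you both uniformity in $\mathbf{x}$ and uniformity over the test-function family at once, with an explicit rate; your closing paragraph worrying that ``establishing this uniformity over all $\mathbf{x}\in\mathbb{W}_M$'' still ``forces the non-trivial inductive analysis'' undersells your own argument — the coupling already gives the uniformity for free, and the inductive/Arzel\`a--Ascoli machinery of the paper becomes unnecessary in your version. Two minor things worth spelling out in a write-up: that the functions $H_{\sigma,i}$ inherit a fixed compact support and uniform $C^1$ bounds from $\Delta_K g$ because integrating a $C_c^\infty$ function against products of probability measures contracts sup-norms and commutes with $\partial_{y_{\sigma(i)}}$; and that the extension from $\mathbb{W}_N^\circ$ to all of $\mathbb{W}_N$ follows by the Feller continuity of the kernels, as in the paper.
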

After establishing Theorem \ref{thm-tildeUnifApp}, we prove that $\tilde{\Lambda}_K^{\infty}g\left(\omega\left(\mathbf{x}^{(N)}\right)\right)$  and $\Lambda_K^{\infty}g\left(\omega\left(\mathbf{x}^{(N)}\right)\right)$ are asymptotically uniformly close as $N\to\infty$, see Proposition \ref{prop-tildeLambda-Lambda} for the exact statement, which concludes the desired result.
The proof of Theorem \ref{thm-tildeUnifApp} is by an inductive argument in $K$. The variant $\tilde{\Lambda}_K^{\infty}$ of $\Lambda_K^\infty$ is very well-adapted to proving the rather complicated inductive step and this is its raison d'\^{e}tre.  Before giving the proof, we need  some more preliminaries and notations.

Denote the Fourier and inverse Fourier transforms of an integrable function $f$ on $\mathbb{R}^n$ by $\hat{f}$ and $\check{f}$ respectively given by:
\begin{align*}
    \hat{f}(\boldsymbol{\xi})=\int_{\mathbb{R}^n}\mathrm{e}^{\textnormal{i}\sum_{j=1}^ny_j\xi_j}f(\mathbf{y})\mathrm{d}\mathbf{y},\  \
    \check{f}(\boldsymbol{\xi})= (2\pi)^{-n}
    \int_{\mathbb{R}^n}\mathrm{e}^{-\textnormal{i}\sum_{j=1}^ny_j\xi_j}f(\mathbf{y})\mathrm{d}\mathbf{y},\  \ \boldsymbol{\xi}\in\mathbb{R}^n.
\end{align*}

\begin{defn} We define the following functions
 \begin{align}
        F_+(z;\omega)&\overset{\textnormal{def}}{=}\mathrm{e}^{\textnormal{i}\gamma z}\prod_{j=1}^{\infty}\frac{\mathrm{e}^{-\textnormal{i}x_jz}}{1-\textnormal{i}x_jz},\  \ \forall z\in\mathbb{R},\ 
 \ \omega\in\Omega_+,
        \\
        \label{F_omega}
        F(z;\omega)&\overset{\textnormal{def}}{=}\mathrm{e}^{\textnormal{i}\gamma z-\frac{\delta-\sum_{i=1}^\infty \left[(x_i^+)^2+(x_i^-)^2\right]}{2}z^2}\prod_{j=1}^{\infty}\frac{\mathrm{e}^{-\textnormal{i}x_j^+z}}{1-\textnormal{i}x_j^+z}
        \frac{\mathrm{e}^{\textnormal{i}x_j^-z}}{1+\textnormal{i}x_j^-z},\  \ \forall z\in\mathbb{R},\ 
 \ \omega\in\Omega. 
   \end{align}
Note that, the infinite products are well-defined by virtue of $\omega\in \Omega_+$ and $\omega \in \Omega$ respectively.
\end{defn}

It is easy to check that $F_+(\cdot;\omega)$ and $F(\cdot;\omega)$ are 
the characteristic functions (i.e. Fourier transforms) of the probability law $\phi_{\omega}$ when $\omega \in \Omega_+$ and $\Omega$, respectively. In particular, we have
\begin{equation}\label{FT-pairing}
\int_{\mathbb{R}} F(z;\omega)g(z)\mathrm{d}z=
     \int_{\mathbb{R}} \phi_{\omega}(\mathrm{d}z)\hat{g}(z),\  \ \forall g\in\mathcal{S}\left(\mathbb{R}\right),
\end{equation}
where $\mathcal{S}\left(\mathbb{R}\right)$ denotes the space of Schwartz functions on $\mathbb{R}$.  We will also need the following formula in the argument.
\begin{lem}
    Let $\mathbf{x}^{(N)}\in \mathbb{W}_N$. Then, we have
    \begin{align}\label{F-M}
    \prod_{j=1}^{N}\left(1-\frac{\textnormal{i}zx_j^{(N)}}{N}\right)^{-1}
    = \int_{-\infty}^{\infty}
    \left(1-\frac{\textnormal{i}zy}{N}\right)^{-N}\mathcal{M}\left(\mathrm{d}y;\mathbf{x}^{(N)}\right).
    \end{align}
\end{lem}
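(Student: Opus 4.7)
The plan is to use the probabilistic interpretation of $\mathcal{M}(\mathrm{d}y;\mathbf{x}^{(N)})$ as the law of a Dirichlet--weighted sum of the coordinates $x_j^{(N)}$, combined with the standard Gamma--integral representation of $(1-\textnormal{i}zy/N)^{-N}$. By virtue of \eqref{Lambda_1=M} and \eqref{Lambda^N=lawEval}, $\mathcal{M}(\mathrm{d}y;\mathbf{x}^{(N)})$ is the law of the $(1,1)$-entry of $\mathbf{U}\mathsf{diag}(\mathbf{x}^{(N)})\mathbf{U}^{\dag}$ with $\mathbf{U}$ Haar--distributed on $\mathbb{U}(N)$, which is to say, of $Y\overset{\textnormal{d}}{=}\sum_{j=1}^{N}w_j x_j^{(N)}$ where $(w_1,\dots,w_N)$ is uniformly distributed on the $(N-1)$-simplex. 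The key probabilistic fact I will invoke is the Gamma--Dirichlet representation: if $\Gamma_1,\dots,\Gamma_N$ are i.i.d.\ $\mathrm{Exp}(1)$ and $S=\sum_{j=1}^{N}\Gamma_j$, then $(w_j)_{j=1}^{N}\overset{\textnormal{d}}{=}(\Gamma_j/S)_{j=1}^{N}$ is independent of $S\sim\mathrm{Gamma}(N,1)$, so that $(\Gamma_1,\dots,\Gamma_N)\overset{\textnormal{d}}{=}(Sw_1,\dots,Sw_N)$.

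Next, since $\Re(1-\textnormal{i}zy/N)=1$ for all real $z,y$, I can write
\[
\left(1-\frac{\textnormal{i}zy}{N}\right)^{-N}=\frac{1}{\Gamma(N)}\int_0^{\infty}s^{N-1}\mathrm{e}^{-s(1-\textnormal{i}zy/N)}\,\mathrm{d}s=\frac{1}{\Gamma(N)}\int_0^{\infty}s^{N-1}\mathrm{e}^{-s}\mathrm{e}^{\textnormal{i}szy/N}\,\mathrm{d}s.
\]
Integrating against $\mathcal{M}(\mathrm{d}y;\mathbf{x}^{(N)})$ and swapping the order of integration (Fubini is justified, as $|s^{N-1}\mathrm{e}^{-s}\mathrm{e}^{\textnormal{i}szy/N}|$ is integrable against the product of Lebesgue and the probability measure $\mathcal{M}$), I obtain
\[
\int_{-\infty}^{\infty}\!\!\left(1-\frac{\textnormal{i}zy}{N}\right)^{-N}\!\!\mathcal{M}(\mathrm{d}y;\mathbf{x}^{(N)})=\frac{1}{\Gamma(N)}\int_0^{\infty}s^{N-1}\mathrm{e}^{-s}\,\mathbb{E}\bigl[\mathrm{e}^{\textnormal{i}szY/N}\bigr]\,\mathrm{d}s.
\]

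The right-hand side is then exactly the expectation of $\exp(\textnormal{i}z\sum_{j}Sw_j x_j^{(N)}/N)$ (integrating out $S$ against its $\mathrm{Gamma}(N,1)$ law and using independence of $S$ and $(w_j)$). Using $\Gamma_j=Sw_j$ and then the independence of the $\Gamma_j$'s together with the elementary Exp$(1)$ characteristic function $\mathbb{E}[\mathrm{e}^{\textnormal{i}t\Gamma}]=(1-\textnormal{i}t)^{-1}$, this gives
\[
\frac{1}{\Gamma(N)}\int_0^{\infty}s^{N-1}\mathrm{e}^{-s}\mathbb{E}\bigl[\mathrm{e}^{\textnormal{i}szY/N}\bigr]\mathrm{d}s=\mathbb{E}\Bigl[\mathrm{e}^{\textnormal{i}z\sum_{j=1}^{N}\Gamma_j x_j^{(N)}/N}\Bigr]=\prod_{j=1}^{N}\left(1-\frac{\textnormal{i}zx_j^{(N)}}{N}\right)^{-1},
\]
which is precisely \eqref{F-M}. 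The identity is first obtained for $\mathbf{x}^{(N)}\in\mathbb{W}_N^{\circ}$ where the explicit formula for $\mathcal{M}$ is clean, and then extended to all of $\mathbb{W}_N$ by continuity of both sides in $\mathbf{x}^{(N)}$ (the left side is manifestly continuous, while $\mathbf{x}^{(N)}\mapsto\mathcal{M}(\mathrm{d}y;\mathbf{x}^{(N)})$ is weakly continuous as it is the law of a continuous function of $\mathbf{x}^{(N)}$). There is no substantial obstacle here: the only mild care required is justifying Fubini and the limiting argument at the boundary of $\mathbb{W}_N^{\circ}$, both routine.
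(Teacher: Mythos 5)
Your proof is correct, and it is a genuinely self-contained argument where the paper simply defers to Section 8 of Olshanski--Vershik (with the remark that coinciding coordinates are handled by continuity). The key ingredients you use are all sound: identifying $\mathcal{M}(\mathrm{d}y;\mathbf{x}^{(N)})$ via \eqref{Lambda_1=M} as the law of the $(1,1)$-entry $\sum_j |U_{1j}|^2 x_j^{(N)}$, with $(|U_{1j}|^2)_{j=1}^N$ uniform on the simplex; the Gamma--Dirichlet decomposition $(S w_j)_{j=1}^N \overset{\textnormal{d}}{=} (\Gamma_j)_{j=1}^N$ with $S\sim\mathrm{Gamma}(N,1)$ independent of $(w_j)$; and the Gamma-integral representation of $(1-\textnormal{i}zy/N)^{-N}$, valid since $\Re(1-\textnormal{i}zy/N)=1>0$. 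The Fubini swap is justified exactly as you say, since $|s^{N-1}\mathrm{e}^{-s}\mathrm{e}^{\textnormal{i}szy/N}|=s^{N-1}\mathrm{e}^{-s}$ is integrable against $\mathrm{d}s\otimes\mathcal{M}(\mathrm{d}y;\mathbf{x}^{(N)})$, the latter being a probability measure. Reassembled, the two sides of \eqref{F-M} become respectively $\mathbb{E}\bigl[\mathrm{e}^{\textnormal{i}z\sum_j\Gamma_j x_j^{(N)}/N}\bigr]$ and $\mathbb{E}\bigl[\mathrm{e}^{\textnormal{i}zSY/N}\bigr]$, which agree by the Gamma--Dirichlet representation. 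What your argument buys over the paper's bare citation is a transparent probabilistic explanation of \emph{why} the identity holds: it is precisely the statement that $SY\overset{\textnormal{d}}{=}\sum_j\Gamma_j x_j^{(N)}$, i.e.\ multiplying a uniformly-mixed average of the $x_j^{(N)}$ by an independent Gamma$(N,1)$ recovers an independent-exponentials sum. Olshanski--Vershik's derivation is more analytic in flavour (working directly with the Fourier transform of the B-spline), so in that sense your route is different in spirit, though both ultimately express the same Fourier-side fact. One small remark: you do not actually need to restrict first to $\mathbb{W}_N^{\circ}$ and extend by continuity, since your probabilistic identification of $\mathcal{M}(\mathrm{d}y;\mathbf{x}^{(N)})$ as the law of $\bigl(\mathbf{U}\mathsf{diag}(\mathbf{x}^{(N)})\mathbf{U}^{\dag}\bigr)_{11}$ and the Gamma--Dirichlet step are valid verbatim for all $\mathbf{x}^{(N)}\in\mathbb{W}_N$; the explicit spline formula \eqref{M} (which requires distinct knots) is never invoked.
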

    \begin{proof}
    See Section 8 of \cite{Olshanski-Vershik}, stated there for $\mathbf{x}^{(N)}\in \mathbb{W}_N^{\circ}$ which can be extended to coinciding coordinates by continuity.
    \end{proof}

\begin{proof}[Proof of Theorem \ref{thm-tildeUnifApp} (the base case)]
    Let $K=1$ and fix $g\in C_c^\infty(\mathbb{R})$. Observe that $\tilde{\Lambda}_1^{\infty}=\Lambda_1^{\infty}$. 
    We have by definition that
    \begin{align*}
\sup_{\mathbf{x}^{(N)}\in \mathbb{W}_N}
\left|\tilde{\Lambda}_1^{\infty}g\left(\omega\left(\mathbf{x}^{(N)}\right)\right)-\Lambda_1^{N}g\left(\mathbf{x}^{(N)}\right)\right|&= \sup_{\mathbf{x}^{(N)}\in \mathbb{W}_N} 
\left|\int_{-\infty}^\infty\phi_{\omega\left(\mathbf{x}^{(N)}\right)}(\mathrm{d}y)g(y)-\int_{-\infty}^\infty\mathcal{M}\left(\mathrm{d}y;\mathbf{x}^{(N)}\right)g(y)\right|.
 \end{align*}
 We have, again by definition,
 \begin{align*}
\int_{-\infty}^\infty \phi_{\omega\left(\mathbf{x}^{(N)}\right)}(\mathrm{d}y)g(y)
= \frac{1}{2\pi}\int_{-\infty}^{\infty}F\left(z;\omega\left(\mathbf{x}^{(N)}\right)\right)(z)\check{g}(z)\mathrm{d}z.
 \end{align*}
Using \eqref{F_omega} and then \eqref{F-M} we obtain
 \begin{align*}
   \int_{-\infty}^\infty\phi_{\omega\left(\mathbf{x}^{(N)}\right)}(\mathrm{d}y)g(y)&=\frac{1}{2\pi}
\int_{-\infty}^{\infty}
\prod_{j=1}^{N}\left(1-\frac{\textnormal{i}zx_j^{(N)}}{N}\right)^{-1}
\check{g}(z)\mathrm{d}z\\&= \frac{1}{2\pi}\int_{-\infty}^{\infty}
\int_{-\infty}^{\infty}\left(1-\frac{\textnormal{i}zw}{N}\right)^{-N}\mathcal{M}\left(\mathrm{d}w;\mathbf{x}^{(N)}\right)
\check{g}(z)\mathrm{d}z. 
 \end{align*}
Note now that for any fix $w\in\mathbb{R}$, $\left(1-\frac{\textnormal{i}zw}{N}\right)^{-N}$ converges pointwise to $\mathrm{e}^{\textnormal{i}wz}$. Also, since
$g\in C_c^{\infty}(\mathbb{R})$,
the function $\check{g}$ is integrable on $\mathbb{R}$. Thus, by dominated convergence theorem,
\begin{align}\label{G_N}
\mathsf{G}_N(w)&\overset{\textnormal{def}}{=}\frac{1}{2\pi}\int_{-\infty}^{\infty}\left(1-\frac{\textnormal{i}zw}{N}\right)^{-N}\check{g}(z)\mathrm{d}z-g(w)
\\&=\frac{1}{2\pi}\int_{-\infty}^{\infty}\left[\left(1-\frac{\textnormal{i}zw}{N}\right)^{-N}-\mathrm{e}^{\textnormal{i}zw}\right]\check{g}(z)\mathrm{d}z\, \rightarrow 0, \  \ \textnormal{as } \;N\rightarrow\infty.\nonumber
 \end{align}
 We show in the Lemma \ref{lem UnifConv G_N} below that the above convergence is uniform over $\mathbb{R}$ which proves the result for the case $K=1$. This is because by the uniform convergence, we have for $N$ large enough that $\sup_{w\in\mathbb{R}}|\mathsf{G}_N(w)|<\varepsilon$, for a given $\varepsilon>0$,   
 and accordingly,
    \begin{align*}
     &\sup_{\mathbf{x}^{(N)}\in \mathbb{W}_N} \left|\int_{-\infty}^\infty\phi_{\omega\left(\mathbf{x}^{(N)}\right)}(\mathrm{d}y)g(y)-\int_{-\infty}^\infty\mathcal{M}\left(\mathrm{d}y;\mathbf{x}^{(N)}\right)g(y)\right|=\sup_{\mathbf{x}^{(N)}\in \mathbb{W}_N} \left|\int_{-\infty}^{\infty}\mathsf{G}_N(w)\mathcal{M}\left(\mathrm{d}w;\mathbf{x}^{(N)}\right)\right|\\&\  \ =\sup_{w\in\mathbb{R}}\left|\mathsf{G}_N(w)\right|\sup_{\mathbf{x}^{(N)}\in \mathbb{W}_N} \left|\int_{-\infty}^{\infty}\mathcal{M}\left(\mathrm{d}w;\mathbf{x}^{(N)}\right)\right|<\varepsilon.
 \end{align*}
 Here, we used the fact that $\int_{-\infty}^{\infty}\mathcal{M}\left(\mathrm{d}w;\mathbf{x}^{(N)}\right)=1$ for every $\mathbf{x}^{(N)}\in \mathbb{W}_N$.
\end{proof}

\begin{lem}\label{lem UnifConv G_N}
  Let $\mathsf{G}_N$ be the function defined in \eqref{G_N}. Then, we have,
  \begin{equation*}
 \sup_{w\in \mathbb{R}}|\mathsf{G}_N(w)|\overset{N \to \infty}{\longrightarrow} 0.
  \end{equation*}
  \end{lem}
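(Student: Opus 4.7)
The strategy I would take is to recognise $(1 - \mathrm{i}zw/N)^{-N}$ as the characteristic function of $wS_N/N$, where $S_N = E_1 + \cdots + E_N$ is a sum of $N$ i.i.d.\ standard exponential random variables; in particular $\mathbb{E}[S_N/N] = 1$ and $\mathrm{Var}(S_N/N) = 1/N$. Since $g \in C_c^{\infty}(\mathbb{R})$, $\check g$ is absolutely integrable, so Fubini's theorem combined with Fourier inversion rewrites \eqref{G_N} as
\[
\mathsf{G}_N(w) \;=\; C\bigl(\mathbb{E}[g(wS_N/N)] - g(w)\bigr)
\]
for an absolute constant $C$ arising from the Fourier normalisation. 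The problem therefore reduces to proving $\sup_{w \in \mathbb{R}}\bigl|\mathbb{E}[g(wS_N/N)] - g(w)\bigr| \to 0$ as $N \to \infty$.

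To obtain uniformity in $w$, the plan is to split into two regimes according to the support of $g$. Fix $L > 0$ with $\mathrm{supp}(g) \subseteq [-L, L]$. In the bounded regime $|w| \le 2L$, the Lipschitz property of $g$ together with Cauchy--Schwarz would yield
\[
\bigl|\mathbb{E}[g(wS_N/N)] - g(w)\bigr| \;\le\; \|g'\|_\infty \, |w| \, \mathbb{E}|S_N/N - 1| \;\le\; 2L\|g'\|_\infty / \sqrt{N},
\]
uniformly. In the tail regime $|w| > 2L$, one has $g(w) = 0$; since $S_N/N \ge 0$, the event $\{wS_N/N \in \mathrm{supp}(g)\}$ forces $S_N/N \le L/|w| \le 1/2$ (for both signs of $w$), so
\[
\bigl|\mathbb{E}[g(wS_N/N)]\bigr| \;\le\; \|g\|_\infty \, \mathbb{P}(S_N/N \le 1/2) \;\le\; 4\|g\|_\infty / N
\]
by Chebyshev's inequality (using $\mathrm{Var}(S_N/N) = 1/N$), uniformly in $|w| > 2L$. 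Combining the two regimes gives the desired uniform convergence.

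The one subtlety worth flagging is that a naive second-order Taylor expansion centred at $w$ yields an error of size $O(w^2/N)$, which blows up as $|w| \to \infty$ and so cannot deliver uniformity on its own. It is the compact support of $g$, combined with the concentration of the sample mean $S_N/N$ around $1$, that controls the tail regime; this is the only genuinely nontrivial ingredient, and everything else is elementary.
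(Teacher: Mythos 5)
Your proof is correct and takes a genuinely different route from the paper's. You exploit the probabilistic interpretation that $(1-\mathrm{i}zw/N)^{-N}$ is the characteristic function of $wS_N/N$, where $S_N$ is a sum of $N$ i.i.d.\ standard exponentials, so that (via Parseval duality, which the paper itself invokes in \eqref{FT-pairing}) $\mathsf{G}_N(w)$ is, up to a normalisation constant, $\mathbb{E}[g(wS_N/N)] - g(w)$. The splitting into $|w|\le 2L$ (where Lipschitz continuity plus $L^1$-concentration of $S_N/N$ gives $O(L/\sqrt N)$) and $|w|>2L$ (where the support constraint $wS_N/N\in\mathrm{supp}(g)$ forces the large-deviation event $S_N/N\le 1/2$, controlled by Chebyshev at rate $O(1/N)$) then yields the uniform bound, together with an explicit rate $O(1/\sqrt N)$. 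The paper instead argues by contradiction, supposing $|\mathsf{G}_{N_n}(w_{N_n})|>\delta$ along a sequence, and handles the bounded case via Arzel\`a--Ascoli, then the unbounded case by a trichotomy on the limiting behaviour of $|w_{N_n}|/\sqrt{N_n}$ (zero, positive finite, or infinite), with direct estimation of the oscillatory integral in each subcase, including a second-order Taylor expansion of $(1-\mathrm{i}zw/N)^{-N}$ on a compact window. Your probabilistic argument is shorter, more transparent, and gives quantitative decay, which the paper's soft argument does not; the paper's contradiction-based argument avoids the probabilistic dictionary and works more directly with the integral kernel, which is the form in which it is subsequently reused in the $K\ge 2$ inductive step. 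Both approaches hinge on the same essential fact---that $g$ has compact support, so the tail regime is controlled by how unlikely it is that $S_N/N$ is far from $1$---which you correctly flag as the key nontrivial ingredient.
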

\begin{proof}
    Suppose by contradiction that the assertion is not true. Thus, there exist $\delta>0$ and a sequence $(w_{N_n})_{n=1}^\infty$ such that 
    \begin{align}\label{>delta}
\left|\mathsf{G}_{N_n}(w_{N_n})\right|>\delta,\  \  \forall n\in\mathbb{N}.
    \end{align}
    There are two possibilities:
    \begin{itemize}
        \item [(i)]
        There exists some positive constant $L$
        such that $|w_{N_n}|\leq L$ for all $n\in\mathbb{N}$.
        \item [(ii)]
        The sequence $(w_{N_n})_{n=1}^\infty$ is unbounded.
    \end{itemize}
    We show both of these cases lead to a contradiction.
    First we assume (i) occurs. Consider the sequence of functions 
    $(\mathsf{G}_{N_n})_{n=1}^\infty$
    on the interval $[-L,L]$. One can see that this is a sequence of uniformly bounded and 
    equicontinuous functions
    on $[-L,L]$, and hence by the Arzelà–Ascoli theorem, there exists a subsequence $\left(\mathsf{G}_{N_{n_k}}\right)_{k\in\mathbb{N}}$ converging uniformly to 0 on $[-L,L]$. This implies that for large enough $k$ we have
    \begin{align*}
        \left|\mathsf{G}_{N_{n_k}}(w)\right|<\delta,\  \  \forall w\in [-L,L],
    \end{align*}
    which contradicts \eqref{>delta}.
    
    Next, we consider (ii), namely $|w_{N_n}|\to\infty$ as $n\to\infty$. 
    Then, there exists a non-decreasing, in absolute value, subsequence of $(w_{N_n})_{n=1}^\infty$  diverging to infinity. We denote this subsequence using the same index, and thus, we have 
    $|w_{N_1}|\leq|w_{N_2}|\leq \dots\longrightarrow\infty$.
    Suppose first that
    $\lim_{N_n\to\infty}\frac{|w_{N_n}|}{\sqrt{N_n}}=0$,
    and let
    \begin{align*}
       f_{n}(z)\overset{\textnormal{def}}{=}\left(
     1-\frac{\textnormal{i}zw_{N_n}}{N_n}\right)^{-N_n}-\mathrm{e}^{\textnormal{i}zw_{N_n}},\  \   z\in\mathbb{R}. 
    \end{align*}
    One can easily check that 
   $f_n$ converges to $0$ pointwise,
    and so does $\mathsf{G}_{N_n}(w_{N_n})$.  
    This again violates \eqref{>delta}.
    Finally, we assume that 
    $\lim_{N_n\to\infty}\frac{|w_{N_n}|}{\sqrt{N_n}}> 0$, namely it is equal to a positive constant $\ell$, or $\infty$. 
    Note that in this case, $g(w_{N_n})=0$ for large enough $n$.
Let 
\begin{align*}
\mathcal{I}_n\overset{\textnormal{def}}{=}\int_{-\infty}^{\infty}\left(1-\frac{\textnormal{i}zw_{N_n}}{N_n}\right)^{-N_n}\check{g}(z)\mathrm{d}z,\  \ n\in\mathbb{N},
 \end{align*}
    We show in the following that $\mathcal{I}_n\to 0$ as $n\to\infty$ in both cases.  
    As a result, we conclude in view of \eqref{G_N}, that $|\mathsf{G}_{N_n}(w_{N_n})|< \delta$ for sufficiently large $ n$, which contradicts \eqref{>delta} as well, and completes the proof of the lemma. 
    
    Assume first that $\lim_{n\to\infty}\frac{|w_{N_n}|}{\sqrt{N_n}}= \infty$.
     Observe that in this case, 
     $\left(1-\frac{\textnormal{i}zw_{N_n}}{N_n}\right)^{-N_n}\overset{n\to\infty}{\longrightarrow} 0$, 
   and hence, $\mathcal{I}_n\overset{n\to\infty}{\longrightarrow} 0$ (with the same reasoning as before) as desired. Next,  suppose $\lim_{n\to\infty}\frac{|w_{N_n}|}{\sqrt{N_n}}=\ell>0$. Since $\check{g}$ is integrable, one can choose $M_\delta>0$ such that
\begin{align*}\int_{\mathbb{R}\setminus (-M_\delta,M_\delta)}\check{g}(z)\mathrm{d}z \le \frac{\delta}{8}.  
\end{align*}
Note also that $\left|\left(1-\frac{\textnormal{i}zw_{N_n}}{N_n}\right)^{-N_n}\right|\leq 1$ for all $n\in\mathbb{N}$. So 
we have
 \begin{align*}
|\mathcal{I}_n|\leq\left|\int_{-M_\delta}^{M_\delta}\left(1-\frac{\textnormal{i}zw_{N_n}}{N_n}\right)^{-N_n}\check{g}(z)\mathrm{d}z\right|+\frac{\delta}{8}.
 \end{align*}
 Now, since $\lim_{n\to\infty}\frac{|w_{N_n}|}{\sqrt{N_n}}=\ell$, there exists $n_0\in\mathbb{N}$ such that $\frac{|w_{N_n}|}{\sqrt{N_n}}\leq \ell+1$ for all $n\geq n_0$. We pick such an $n_0$ so that $N_{n_0}> M_\delta^2(\ell+1)^2$ also holds, on account of which we can write a Taylor expansion for the function $\left(1-\frac{\textnormal{i}zw_{N_n}}{N_n}\right)^{-N_n}$ on the interval $(-M_\delta,M_\delta)$. 
 Accordingly, we have for all $z\in(-M_\delta,M_\delta)$ and $n\geq n_0$,
 \begin{align*}
    \left(1-\frac{\textnormal{i}zw_{N_n}}{N_n}\right)^{-N_n}= \mathrm{e}^{\textnormal{i}zw_{N_n}-\frac{z^2w_{N_n}^2}{N_n}+
    R_n(z)},
 \end{align*}
 where $
    R_n(z)= \frac{z^3w_{N_n}^3}{3N_n^2}+
    \frac{z^4w_{N_n}^4}{4N_n^3}+ \dots=
    O\left(\frac{1}{\sqrt{N_n}}\right)$.
 Then, 
\begin{align*}
|\mathcal{I}_n|&\leq\left|\int_{-M_\delta}^{M_\delta}\mathrm{e}^{\textnormal{i}zw_{N_n}-\frac{z^2w_{N_n}^2}{N_n}+R_n(z)}\check{g}(z)\mathrm{d}z\right|+\frac{\delta}{8}\\&\leq\left|\int_{-M_\delta}^{M_\delta}\mathrm{e}^{\textnormal{i}zw_{N_n}-\frac{z^2w_{N_n}^2}{N_n}
}\check{g}(z)\mathrm{d}z\right|+\left|\int_{-M_\delta}^{M_\delta}\mathrm{e}^{\textnormal{i}zw_{N_n}-\frac{z^2w_{N_n}^2}{N_n}}\left(\mathrm{e}^{R_n(z)}-1\right)\check{g}(z)\mathrm{d}z\right|+\frac{\delta}{8}.
 \end{align*}
 The second integral in the above expression converges to $0$ as $n\to\infty$, since $\left(\mathrm{e}^{R_n(z)}-1\right)\overset{n\to\infty}{\longrightarrow} 0$ and the integrand is bounded by $\check{g}$ up to a multiplicative constant on $[-M_\delta,M_\delta]$. Thus, for large enough $n\in\mathbb{N}$, we get  
 \begin{align*}
|\mathcal{I}_n|&\leq\left|\int_{-M_\delta}^{M_\delta}\mathrm{e}^{\textnormal{i}zw_{N_n}-\frac{z^2w_{N_n}^2}{N_n}}\check{g}(z)\mathrm{d}z\right|+\frac{2\delta}{8}
\leq\left|\int_{-\infty}^{\infty}\mathrm{e}^{\textnormal{i}zw_{N_n}-\frac{z^2w_{N_n}^2}{N_n}
}\check{g}(z)\mathrm{d}z\right|+\frac{3\delta}{8}.
 \end{align*}
Note now that for each $n\in\mathbb{N}$, the function 
 $ \mathrm{e}^{-\frac{z^2w_{N_n}^2}{N_n}
 }\check{g}(z)$
 is a Schwartz function and the Fourier transform maps the Schwartz space to itself.
It thus follows that for $n$ large enough, we have
\begin{align*}
\left|\int_{-\infty}^{\infty}\mathrm{e}^{\textnormal{i}zw_{N_n}-\frac{z^2w_{N_n}^2}{N_n}
}\check{g}(z)\mathrm{d}z\right|<\frac{\delta}{8}, 
\end{align*}
 and hence,
$\left|\mathcal{I}_n\right|<\frac{\delta}{2}$. 
This finally concludes the proof of the lemma.
 \end{proof}
  
\begin{proof}[Proof of Theorem \ref{thm-tildeUnifApp} (the inductive step)]
Suppose \eqref{UniformApproximation} holds for $K\in\mathbb{N}$, namely, for all $g\in C_{c}^{\infty}(\mathbb{R}^{K})$, 
we have 
    \begin{align*}
\sup_{\mathbf{x}^{(N)}\in \mathbb{W}_N}\left|\tilde{\Lambda}_K^{\infty}g\left(\omega\left(\mathbf{x}^{(N)}\right)\right)-\Lambda_K^{N}g\left(\mathbf{x}^{(N)}\right)\right|\longrightarrow 0, \  \ \textnormal{as } N\to\infty.
 \end{align*} 
 We need to show that for all $g\in C_{c}^{\infty}(\mathbb{R}^{K+1})$,
 
    \begin{align*}
\sup_{\mathbf{x}^{(N)}\in \mathbb{W}_N}\left|\tilde{\Lambda}_{K+1}^{\infty}g\left(\omega\left(\mathbf{x}^{(N)}\right)\right)-\Lambda_{K+1}^{N}g\left(\mathbf{x}^{(N)}\right)\right|\longrightarrow 0, \  \ \textnormal{as } N\to\infty.
 \end{align*} 
 Let $g\in C_{c}^{\infty}(\mathbb{R}^{K+1})$. We have by definition that 
 
 \begin{align*}
\tilde{\Lambda}_{K+1}^{\infty}g\left(\omega\left(\mathbf{x}^{(N)}\right)\right)&=
\left(\prod_{l=1}^{K}\frac{1}{l!}\right)
\int_{\mathbb{R}^{K+1}}
\det \left(\phi_{\omega\left(x_{1}^{(N)},\dots,x_{N-K-j+1}^{(N)}\right)}^{(j-1)}\left(\mathrm{d}y_{i}\right)\right)_{i,j=1}^{K+1} \Delta_{K+1}(\mathbf{y})g(\mathbf{y})
\\&= \left(\prod_{l=1}^{K}\frac{1}{l!}\right)
\int_{\mathbb{R}^{K+1}}
\det \left(\phi_{\omega\left(x_{1}^{(N)},\dots,x_{N+1-i}^{(N)}\right)}^{(K+1-i)}(\mathrm{d}y_{K+2-j})\right)_{i,j=1}^{K+1} \Delta_{K+1}(\mathbf{y})g(\mathbf{y}).
\end{align*}
Also, from \eqref{Lambda_K^N-M} we have
\begin{align*}
\Lambda_{K+1}^{N}g\left(\mathbf{x}^{(N)}\right)
=\left(\prod_{l=1}^{K}\frac{1}{l!}\right)\int_{\mathbb{R}^{K+1}}
\det \left(\mathcal{M}^{(K+1-i)}\left(\mathrm{d}y_{K+2-j};x_1^{(N)},\dots,x_{N+1-i}^{(N)}\right)\right)_{i,j=1}^{K+1}\Delta_{K+1}(\mathbf{y})g(\mathbf{y}).
\end{align*}
Thus, we obtain
\begin{align*}
\Big|&\tilde{\Lambda}_{K+1}^{\infty}g\left(\omega\left(\mathbf{x}^{(N)}\right)\right)-\Lambda_{K+1}^{N}g\left(\mathbf{x}^{(N)}\right)\Big|
=
\Bigg|\int_{\mathbb{R}^{K+1}}
\left(\prod_{l=1}^{K}\frac{1}{l!}\right)\Bigg[\det \left(\phi_{\omega\left(x_{1}^{(N)},\dots,x_{N+1-i}^{(N)}\right)}^{(K+1-i)}(\mathrm{d}y_{K+2-j})\right)_{i,j=1}^{K+1}\\&\hspace{4.35cm}-
\det \left(\mathcal{M}^{(K+1-i)}\left(\mathrm{d}y_{K+2-j};x_{1}^{(N)},\dots,x_{N+1-i}^{(N)}\right)\right)_{i,j=1}^{K+1}\Bigg]
\Delta_{K+1}(\mathbf{y})g(\mathbf{y})\Bigg|
\end{align*}
Now, by expanding both determinants along the first row (recall the definition of the determinant in the weak sense is consistent with expanding along a row or column), we get 
\begin{align*}
\Big|&\tilde{\Lambda}_{K+1}^{\infty}g\left(\omega\left(\mathbf{x}^{(N)}\right)\right)-\Lambda_{K+1}^{N}g\left(\mathbf{x}^{(N)}\right)\Big|\\
&=\Bigg|\int_{\mathbb{R}^{K+1}}
\sum_{k=1}^{K+1}\left(\prod_{l=1}^{K}\frac{1}{l!}\right)(-1)^{1+k}\Bigg[\phi_{\omega\left(\mathbf{x}^{(N)}\right)}^{(K)}(\mathrm{d}y_{K+2-k})\det \left(\phi_{\omega\left(x_{1}^{(N)},\dots,x_{N+1-i}^{(N)}\right)}^{(K+1-i)}(\mathrm{d}y_{K+2-j})\right)_{i=2,j=1,j\neq k}^{K+1}
\\&\hspace{.5cm}-\mathcal{M}^{(K)}\left(\mathrm{d}y_{K+2-k};\mathbf{x}^{(N)}\right)
\det \left(\mathcal{M}^{(K+1-i)}\left(\mathrm{d}y_{K+2-j};x_{1}^{(N)},\dots,x_{N+1-i}^{(N)}\right)\right)_{i=2,j=1,j\neq k}^{K+1}\Bigg]\Delta_{K+1}(\mathbf{y})g(\mathbf{y})\Bigg|
\\
&=\frac{1}{K!}\Bigg|
\sum_{k=1}^{K+1}(-1)^{1+k}\int_{\mathbb{R}^K}\left(\prod_{l=1}^{K-1}\frac{1}{l!}\right)
\Bigg[
\det \left(\phi_{\omega\left(x_{1}^{(N)},\dots,x_{N+1-i}^{(N)}\right)}^{(K+1-i)}(\mathrm{d}y_{K+2-j})\right)_{i=2,j=1,j\neq k}^{K+1}
\\&\hspace{3cm}-
\det \left(\mathcal{M}^{(K+1-i)}\left(\mathrm{d}y_{K+2-j};x_{1}^{(N)},\dots,x_{N+1-i}^{(N)}\right)\right)_{i=2,j=1,j\neq k}^{K+1}\Bigg]
\prod_{\underset{i,j\neq K+2-k}{1\leq i<j\leq K+1}}(y_i-y_j)\\&\hspace{4.8cm}\times(-1)^{K+1-k}\underbrace{\int_{\mathbb{R}}\phi_{\omega\left(\mathbf{x}^{(N)}\right)}^{(K)}(\mathrm{d}y_{K+2-k})\prod_{\underset{j\neq K+2-k}{1\leq j\leq K+1}}(y_{K+2-k}-y_j)g(\mathbf{y})
}_{(-1)^K\int_{\mathbb{R}}\phi_{\omega\left(\mathbf{x}^{(N)}\right)}(\mathrm{d}y_{K+2-k})h_k(\mathbf{y})}
\\
&\;\;\;+\sum_{k=1}^{K+1}(-1)^{1+k}
\int_{\mathbb{R}^K}\left(\prod_{l=1}^{K}\frac{1}{l!}\right)
\det \left(\mathcal{M}^{(K+1-i)}\left(\mathrm{d}y_{K+2-j};x_{1}^{(N)},\dots,x_{N+1-i}^{(N)}\right)\right)_{\underset{j\neq k}{i=2,j=1}}^{K+1}\prod_{\underset{i,j\neq K+2-k}{1\leq i<j\leq K+1}}(y_i-y_j)
\\&\hspace{1.2cm}\times(-1)^{K+1-k}
\underbrace{
\int_{\mathbb{R}}\left[\mathcal{M}^{(K)}\left(\mathrm{d}y_{K+2-k};\mathbf{x}^{(N)}\right)-\phi_{\omega\left(\mathbf{x}^{(N)}\right)}^{(K)}(\mathrm{d}y_{K+2-k})\right]
\prod_{\underset{j\neq K+2-k}{1\leq j\leq K+1}}(y_{K+2-k}-y_j)g(\mathbf{y})
}_{(-1)^K\int_{\mathbb{R}}\left[\mathcal{M}\left(\mathrm{d}y_{K+2-k};\mathbf{x}^{(N)}\right)-\phi_{\omega\left(\mathbf{x}^{(N)}\right)}(\mathrm{d}y_{K+2-k})\right]h_k(\mathbf{y})}\Bigg|, 
\end{align*} 
 where $h_k\in C_c^{\infty}(\mathbb{R}^{K+1})$ is defined by
 \begin{align*}
     h_k(\mathbf{y})=\frac{\mathrm{d}^K}{\mathrm{d}y_{K+2-k}^K}\left[\prod_{\underset{j\neq K+2-k}{1\leq j\leq K+1}}(y_{K+2-k}-y_j)g(\mathbf{y})\right].
 \end{align*}
Thus, we have
\begin{align*}
\Big|&\tilde{\Lambda}_{K+1}^{\infty}g\left(\omega\left(\mathbf{x}^{(N)}\right)\right)-\Lambda_{K+1}^{N}g\left(\mathbf{x}^{(N)}\right)\Big|\leq\frac{1}{K!}
\sum_{k=1}^{K+1}\int_{\mathbb{R}}\phi_{\omega\left(\mathbf{x}^{(N)}\right)}(\mathrm{d}y_{K+2-k})
\\&\quad \Bigg|\int_{\mathbb{R}^K}\left(\prod_{l=1}^{K-1}\frac{1}{l!}\right)\Bigg[
\det \left(\phi_{\omega\left(x_{1}^{(N)},\dots,x_{N+1-i}^{(N)}\right)}^{(K+1-i)}(\mathrm{d}y_{K+2-j})\right)_{i=2,j=1,j\neq k}^{K+1}
\\&\hspace{2cm}-
\det \left(\mathcal{M}^{(K+1-i)}\left(\mathrm{d}y_{K+2-j};x_{1}^{(N)},\dots,x_{N+1-i}^{(N)}\right)\right)_{i=2,j=1,j\neq k}^{K+1}\Bigg]
\prod_{\underset{i,j\neq K+2-k}{1\leq i<j\leq K+1}}(y_i-y_j)h_k(\mathbf{y})\Bigg|
\\&\  \ +\frac{1}{K!}\sum_{k=1}^{K+1}\left(\prod_{l=1}^{K}\frac{1}{l!}\right)
\int_{\mathbb{R}^K}\det \left(\mathcal{M}^{(K+1-i)}\left(\mathrm{d}y_{K+2-j};x_{1}^{(N)},\dots,x_{N+1-i}^{(N)}\right)\right)_{i=2,j=1,j\neq k}^{K+1}\prod_{\underset{i,j\neq K+2-k}{1\leq i<j\leq K+1}}(y_i-y_j)
\\&\hspace{5.5cm}\times\left|\int_{\mathbb{R}}\left[\mathcal{M}\left(\mathrm{d}y_{K+2-k};\mathbf{x}^{(N)}\right)-\phi_{\omega\left(\mathbf{x}^{(N)}\right)}(\mathrm{d}y_{K+2-k})\right]h_k(\mathbf{y})\right|.
 \end{align*} 
To complete the proof, we need to show that  
 the above expression becomes arbitrarily small, uniformly in $\mathbf{x}^{(N)}$, for large enough $N$.
Clearly, it suffices to prove this for each term of the above two series individually. We prove that this is indeed the case for the first series. A completely analogous argument works for the second too.

Let $\varepsilon>0$ be given.
Fix $1\le k\le K+1$ and consider the $k$-th term of the first series. For each $a\in\mathbb{R}$, let $\mathsf{h}_{a}\in C_{c}^{\infty}(\mathbb{R}^{K})$  be the function defined by
\begin{align*}
    \mathsf{h}_a(\mathbf{y})\overset{\textnormal{def}}{=}h_k(y_1,\dots,y_{k-1},a,y_k,\dots,y_K), \  \  \mathbf{y}\in\mathbb{R}^K.
\end{align*}

Since $h_k\in C_{c}^{\infty}(\mathbb{R}^{K+1})$, there exists a compact interval $\mathfrak{I}_k$ such that $\mathsf{h}_{a}\equiv 0$ 
for all $a\notin \mathfrak{I}_k$.
Moreover, by virtue of the induction hypothesis, for every $a\in \mathfrak{I}_k$, there exist a smallest $\mathsf{N}(a)\in\mathbb{N}$ (depending on $a$), such that for all $N\ge \mathsf{N}(a)$, we have 
\begin{align}\label{kth term}
\sup_{\mathbf{x}^{(N)}\in \mathbb{W}_N}\left|\tilde{\Lambda}_K^{\infty}\mathsf{h}_{a}\left(\omega\left(\mathbf{x}^{(N)}\right)\right)-\Lambda_K^{N}\mathsf{h}_{a}\left(\mathbf{x}^{(N)}\right)\right|
<\varepsilon. 
\end{align}
Note now that if $\mathsf{M}_k\overset{\textnormal{def}}{=}\sup_{a\in \mathfrak{I}_k}\mathsf{N}(a)$ is finite,
then the above inequality holds for all $a\in \mathfrak{I}_k$ and $N\ge \mathsf{M}_k$, and so, the result follows immediately owing to the fact that $\phi_{\omega\left(\mathbf{x}^{(N)}\right)}$ is a probability measure on $\mathbb{R}$ for every $\mathbf{x}^{(N)}\in \mathbb{W}_N$.

We thus wish to show that $\mathsf{M}_k<\infty$. Suppose by contradiction that it is not true. Accordingly, one can find a sequence $\left(\mathfrak{a}_n\right)_{n=1}^{\infty}\in \mathfrak{I}_k$ with $\mathfrak{a}_n\overset{n\to\infty}{\longrightarrow} \mathfrak{a}_*\in \mathfrak{I}_k$ such that 
$\mathsf{N}(n)\nearrow\infty$.
Let $\mathsf{h}_*\overset{\textnormal{def}}{=}\mathsf{h}_{\mathfrak{a}_*}$ and
$\mathsf{N}_*\in\mathbb{N}$ 
be such that 
\begin{align*}\sup_{\mathbf{x}^{(N)}\in \mathbb{W}_N}\left|\tilde{\Lambda}_K^{\infty}\mathsf{h}_*\left(\omega\left(\mathbf{x}^{(N)}\right)\right)-\Lambda_K^{N}\mathsf{h}_*\left(\mathbf{x}^{(N)}\right)\right|< \frac{\varepsilon}{2}
\end{align*} 
for all $N\ge \mathsf{N}_*$ (note that $\mathsf{N}_{*}$ is also well-defined by the induction hypothesis).

Now, since $h_k\in C_c^{\infty}(\mathbb{R}^{K+1})$ and $\mathfrak{a}_n\overset{n\to\infty}{\longrightarrow}\mathfrak{a}_*$, we have $\mathsf{h}_n\overset{\textnormal{def}}{=}\mathsf{h}_{\mathfrak{a}_n}\overset{n\to\infty}{\longrightarrow} \mathsf{h}_*$ pointwise. Moreover, 
it follows from the Arzelà–Ascoli theorem that there exists a uniformly convergent sub-sequence of this sequence which we denote by the same notation. 
Hence, we can pick $\mathsf{m}\in\mathbb{N}$ large enough so that for all $n\ge \mathsf{m}$, we have $\mathsf{N}(n)> \mathsf{N}_*$ and 
\begin{align*}\left|\mathsf{h}_n(\mathbf{y})-\mathsf{h}_*(\mathbf{y})\right|<\frac{\varepsilon}{4},\  \ \forall \mathbf{y}\in \mathbb{R}^K.
\end{align*} 
Therefore, we obtain 
 \begin{align*}
     \left|\tilde{\Lambda}_K^{\infty}\mathsf{h}_{\mathsf{m}}\left(\omega\left(\mathbf{x}^{(N)}\right)\right)-\Lambda_K^{N}\mathsf{h}_{\mathsf{m}}\left(\mathbf{x}^{(N)}\right)\right|&\le  \left|\tilde{\Lambda}_K^{\infty}(\mathsf{h}_{\mathsf{m}}-\mathsf{h}_*)\left(\omega\left(\mathbf{x}^{(N)}\right)\right)\right|+\left|\Lambda_K^{N}(\mathsf{h}_{\mathsf{m}}-\mathsf{h}_*)\left(\mathbf{x}^{(N)}\right)\right|\\
     &\  \ +\left|\tilde{\Lambda}_K^{\infty}\mathsf{h}_{*}\left(\omega\left(\mathbf{x}^{(N)}\right)\right)-\Lambda_K^{N}\mathsf{h}_{*}\left(\mathbf{x}^{(N)}\right)\right|\\&<
     \frac{\varepsilon}{4}+\frac{\varepsilon}{4}+\frac{\varepsilon}{2}=\varepsilon,
 \end{align*}
 for all $N\ge \mathsf{N}_*$, and $\mathbf{x}^{(N)}\in \mathbb{W}_N$. This implies that
 \begin{align*}
     \sup_{\mathbf{x}^{(N)}\in \mathbb{W}_N}\left|\tilde{\Lambda}_K^{\infty}\mathsf{h}_{\mathsf{m}}\left(\omega\left(\mathbf{x}^{(N)}\right)\right)-\Lambda_K^{N}\mathsf{h}_{\mathsf{m}}\left(\mathbf{x}^{(N)}\right)\right|<\varepsilon,\  \  \forall N\ge \mathsf{N}_*.
 \end{align*}
 Thus, we must have $\mathsf{N}(\mathsf{m})\le \mathsf{N}_*$ which is a contradiction in view of the particular choice of $\mathsf{N}(\mathsf{m})$ and $\mathsf{m}$.
\end{proof}
We finally conclude Theorem \ref{thm-UnifApp}  by proving the following proposition.

\begin{prop}\label{prop-tildeLambda-Lambda}
Let $K\in \mathbb{N}$. Then, for all $g\in C_c^\infty(\mathbb{R}^K)$ we have:
\begin{align*}
\sup_{\mathbf{x}^{(N)}\in \mathbb{W}_N}\left|\tilde{\Lambda}_K^{\infty}g\left(\omega\left(\mathbf{x}^{(N)}\right)\right)-\Lambda_K^{\infty}g\left(\omega\left(\mathbf{x}^{(N)}\right)\right)\right|\to 0,
\;\; \textnormal{as } N\to\infty. 
 \end{align*}
\end{prop}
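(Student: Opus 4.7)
The plan is to argue by contradiction, exploiting the local compactness of $\Omega$ and the Feller continuity of $\Lambda_K^{\infty}$ from Proposition~\ref{prop-FellerPropLambda}. If the conclusion fails, there exist $\varepsilon>0$ and a sequence $\mathbf{x}^{(N_n)}\in\mathbb{W}_{N_n}$ with $N_n\to\infty$ along which $|\tilde\Lambda_K^{\infty}g-\Lambda_K^{\infty}g|(\omega(\mathbf{x}^{(N_n)}))>\varepsilon$. Passing to a subsequence, I may assume either (A) $\omega(\mathbf{x}^{(N_n)})\to\omega\in\Omega$, or (B) $\omega(\mathbf{x}^{(N_n)})$ eventually leaves every compact set of $\Omega$, and it suffices to derive a contradiction in each case.

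For case (A), a direct inspection of the embedding \eqref{embedding} shows that, for every $j\in\{1,\dots,K\}$, the truncated embedding $\omega(x_1^{(N_n)},\dots,x_{N_n-K+j}^{(N_n)})$ converges in $\Omega$ to a point $\tilde\omega^{(K-j)}$ whose $\mathbf{x}^{+}$ coordinates agree with those of $\omega$, whose $\mathbf{x}^{-}$ coordinates are shifted by $K-j$ positions, and whose $\gamma,\delta$ absorb the removed entries. At the level of characteristic functions from \eqref{F_omega} this yields $F(z;\tilde\omega^{(l)})=F(z;\omega)\prod_{i=1}^{l}(1+izx_i^{-})$, and inverting the Fourier transform gives the distributional identity
\begin{equation*}
\phi_{\tilde\omega^{(l)}}(\mathrm{d}y)=\sum_{m=0}^{l}(-1)^m e_m(x_1^{-},\dots,x_l^{-})\,\phi_{\omega}^{(m)}(\mathrm{d}y),
\end{equation*}
where $e_m$ denotes the $m$-th elementary symmetric polynomial. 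Each column of the limiting $\tilde\Lambda_K^{\infty}$ determinant is therefore the corresponding column of $\Lambda_K^{\infty}$ plus a linear combination of higher-indexed columns, so successive column operations (from right to left) show the two $K\times K$ distributional determinants coincide as distributions against $g\Delta_K$. Combined with the weak convergence $\phi_{\omega(x_1^{(N_n)},\dots,x_{N_n-K+j}^{(N_n)})}\to\phi_{\tilde\omega^{(K-j)}}$ (from the Feller continuity of $\Lambda_1^{\infty}$), this gives $\tilde\Lambda_K^{\infty}g(\omega(\mathbf{x}^{(N_n)}))\to\Lambda_K^{\infty}g(\omega)$, while $\Lambda_K^{\infty}g(\omega(\mathbf{x}^{(N_n)}))\to\Lambda_K^{\infty}g(\omega)$ directly by Feller continuity, contradicting the choice of sequence.

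For case (B), $\Lambda_K^{\infty}g(\omega(\mathbf{x}^{(N_n)}))\to 0$ by the $C_{0}$ property from Proposition~\ref{prop-FellerPropLambda}, so it remains to show $\tilde\Lambda_K^{\infty}g(\omega(\mathbf{x}^{(N_n)}))\to 0$. I would attack this directly from the determinantal formula for $\tilde\Lambda_K^{\infty}$ together with the probabilistic interpretation of $\phi_\omega$: an argument analogous to the one in Proposition~\ref{prop-FellerPropLambda} shows that for every $j$, if $\omega(\mathbf{x}^{(N_n)})\to\infty$ in $\Omega$ then the truncated embedding $\omega(x_1^{(N_n)},\dots,x_{N_n-K+j}^{(N_n)})$ also diverges in $\Omega$ (the divergence may move to a different coordinate, since the shifts of $\gamma$ absorb any diverging $x^{-}$ components), and hence the measures $\phi_{\omega(x_1^{(N_n)},\dots,x_{N_n-K+j}^{(N_n)})}$ each escape weakly to infinity. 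The determinantal-measure interpretation of $\tilde\Lambda_K^{\infty}$ then yields $\tilde\Lambda_K^{\infty}g(\omega(\mathbf{x}^{(N_n)}))\to 0$ for compactly supported $g$.

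The main obstacle is the column-operation argument of case (A): one must verify rigorously that the distributional identity for $\phi_{\tilde\omega^{(l)}}$ as a linear combination of derivatives of $\phi_{\omega}$, when inserted into the $K\times K$ weak-sense determinant, really produces equality when paired with $g\Delta_K$ under the integration-by-parts convention used throughout this subsection (cf.\ \eqref{WeakExpansion}). The delicate point is that the shifts $\tilde\omega^{(K-j)}$ depend on the column index $j$, so each column contributes a different collection of elementary symmetric polynomials in the $\mathbf{x}^{-}$ coordinates, and the column operations must be performed in a precise order (from right to left) for the cancellations to close.
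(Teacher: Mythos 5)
Your approach is genuinely different from the paper's. The paper attacks the difference directly: it expresses both determinants via Fourier transforms using \eqref{FT-pairing} and \eqref{F-M}, expands, and observes that the two sides differ only in the scaling denominators $N$ versus $N-l+1$ inside products of $(1-\mathrm{i}zw/M)^{-M}$-type factors; both converge pointwise to $\prod_l \mathrm{e}^{\mathrm{i}zw}$, and the uniformity is then handled by an argument modeled on Lemma~\ref{lem UnifConv G_N}. You instead use a soft compactness strategy: argue by contradiction, extract a bad sequence $\omega(\mathbf{x}^{(N_n)})$, invoke local compactness of $\Omega$ (equivalently, compactness of the one-point compactification) to split into a convergent case (A) and an escape case (B), and in case (A) identify the limits of the truncated embeddings explicitly, deducing equality of the limiting determinants by a distributional column-reduction. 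This is a more structural route; it exposes the algebraic identity behind $\tilde\Lambda_K^\infty=\Lambda_K^\infty$ on the boundary (your formula $F(z;\tilde\omega^{(l)})=F(z;\omega)\prod_{i\le l}(1+\mathrm{i}zx_i^-)$ and the right-to-left column operations are correct and do close up as you describe), while the paper's argument is more uniform and computational and never needs to know what the truncated limits are.

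Two genuine gaps to flag. First, in case (A) you assert without proof that $\omega\bigl(x_1^{(N_n)},\dots,x_{N_n-K+j}^{(N_n)}\bigr)\to\tilde\omega^{(K-j)}$; this does not follow directly from coordinatewise convergence of $\omega(\mathbf{x}^{(N_n)})$, because for fixed $m$ the convergence of the $x^-$-coordinate only controls $N_n^{-1}\bigl(x_{N_n+1-m}^{(N_n)}\bigr)_-$, not the signed quantity $N_n^{-1}x_{N_n+1-m}^{(N_n)}$ that feeds into the truncated $\gamma$ and $\delta$. One has to use the $\delta$-coordinate: if $N_n^{-1}\bigl(x_{N_n+1-m}^{(N_n)}\bigr)_+>\varepsilon$ along a subsequence, then by the ordering $N_n^{-2}\sum_i\bigl(x_i^{(N_n)}\bigr)^2\ge (N_n+1-m)\varepsilon^2\to\infty$, contradicting convergence of $\delta_n$; hence $N_n^{-1}x_{N_n+1-m}^{(N_n)}\to -x_m^-$, and the truncated embedding converges as you claim. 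This is not hard but must be said.

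Second, your case (B) claim that every truncated embedding also escapes to infinity is false. Take $\mathbf{x}^{(N_n)}=(0,\dots,0,-N_n^2)\in\mathbb{W}_{N_n}$. Then $\omega(\mathbf{x}^{(N_n)})$ escapes ($\gamma_n=-N_n$, $\delta_n=N_n^2$, $x_{1,n}^-=N_n$ all diverge), yet the truncation dropping the last coordinate is identically zero and converges to the origin. The cancellation $\gamma_n+x_{1,n}^-=0$ defeats the heuristic that $\gamma$ ``absorbs'' the removed $x^-$ and inherits the divergence. The conclusion $\tilde\Lambda_K^\infty g(\omega(\mathbf{x}^{(N_n)}))\to0$ is nevertheless correct, but for a weaker reason: in the integration-by-parts expansion of $\tilde\Lambda_K^\infty g$ (cf.\ \eqref{WeakExpansion}), each summand is a $K$-fold integral of a compactly supported function against a product of $K$ probability measures, one of which is $\phi_{\omega(\mathbf{x}^{(N_n)})}$ itself (the $j=K$ factor has no truncation). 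Since $\Lambda_1^\infty f$ vanishes at infinity by Proposition~\ref{prop-FellerPropLambda}, that single marginal escapes, and because the test function has compact support, $\prod_j\phi_j([-R,R])\le\phi_{\omega(\mathbf{x}^{(N_n)})}([-R,R])\to0$ already kills the whole multi-integral. You should replace the ``every truncation escapes'' claim with this one-marginal argument.
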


\begin{proof}

    We have by definition
    \begin{align}\label{tildeLambda-Lambda}
&\sup_{\mathbf{x}^{(N)}\in \mathbb{W}_N}\left|\tilde{\Lambda}_K^{\infty}g\left(\omega\left(\mathbf{x}^{(N)}\right)\right)-\Lambda_K^{\infty}g\left(\omega\left(\mathbf{x}^{(N)}\right)\right)\right|\nonumber\\ &\; =\sup_{\mathbf{x}^{(N)}\in \mathbb{W}_N}\Bigg|
\int_{\mathbb{R}^K}
\left(\prod_{l=1}^{K-1}\frac{1}{l!}\right)
\left(\det \left(\phi_{\omega\left(x_{K-j+1}^{(N)},\dots,x_{N}^{(N)}\right)}^{(j-1)}(\mathrm{d}y_{K+1-i})\right)_{i,j=1}^{K} -
\det \left(\phi_{\omega\left(\mathbf{x}^{(N)}\right)}^{(j-1)}(\mathrm{d}y_{K+1-i})\right)_{i,j=1}^{K} \right)h\left(\mathbf{y}\right)\Bigg|,
\end{align}
with 
$h(\mathbf{y})=\Delta_K(\mathbf{y})g(\mathbf{y})$.
We show that the above expression is arbitrarily small for large enough $N$. Define the following $K \times K$ matrices,
\begin{align*}
\tilde{\mathbf{F}}\left(\mathbf{z};\mathbf{x}^{(N)}\right)=
    \begin{bmatrix}
       (-\textnormal{i}z_K)^{K-1}\prod_{j=1}^N\left(1-\frac{\textnormal{i}z_Kx_j^{(N)}}{N}\right)^{-1}&\dots&(-\textnormal{i}z_1)^{K-1}\prod_{j=1}^N\left(1-\frac{\textnormal{i}z_1x_j^{(N)}}{N}\right)^{-1}\\
       (-\textnormal{i}z_K)^{K-2}\prod_{j=2}^N\left(1-\frac{\textnormal{i}z_Kx_j^{(N)}}{N-1}\right)^{-1}&\dots&(-\textnormal{i}z_1)^{K-2}\prod_{j=2}^N\left(1-\frac{\textnormal{i}z_1x_j^{(N)}}{N-1}\right)^{-1}\\\vdots& \ddots & \vdots \\\prod_{j=K}^N\left(1-\frac{\textnormal{i}z_Kx_j^{(N)}}{N-K+1}\right)^{-1}&\dots&\prod_{j=K}^N\left(1-\frac{\textnormal{i}z_1x_j^{(N)}}{N-K+1}\right)^{-1}
    \end{bmatrix},\\
\mathbf{F}\left(\mathbf{z};\mathbf{x}^{(N)}\right)=
    \begin{bmatrix}
       (-\textnormal{i}z_K)^{K-1}\prod_{j=1}^N\left(1-\frac{\textnormal{i}z_Kx_j^{(N)}}{N}\right)^{-1}&\dots&(-\textnormal{i}z_1)^{K-1}\prod_{j=1}^N\left(1-\frac{\textnormal{i}z_1x_j^{(N)}}{N}\right)^{-1}\\
       (-\textnormal{i}z_K)^{K-2}\prod_{j=2}^N\left(1-\frac{\textnormal{i}z_Kx_j^{(N)}}{N}\right)^{-1}&\dots&(-\textnormal{i}z_1)^{K-2}\prod_{j=2}^N\left(1-\frac{\textnormal{i}z_1x_j^{(N)}}{N}\right)^{-1}\\\vdots& \ddots & \vdots \\
    \prod_{j=K}^N\left(1-\frac{\textnormal{i}z_Kx_j^{(N)}}{N}\right)^{-1}&\dots&\prod_{j=K}^N\left(1-\frac{\textnormal{i}z_1x_j^{(N)}}{N}\right)^{-1}
    \end{bmatrix}.
\end{align*}
We now use \eqref{FT-pairing} repeatedly to rewrite the determinants in \eqref{tildeLambda-Lambda} in terms of the determinants of the corresponding Fourier transforms.  
Then, we do elementary row operations repeatedly for the second determinant.
We finally use \eqref{F-M} and expand both determinants in order to obtain
\begin{align*}
\int_{\mathbb{R}^K}&
\left[\det \left(\phi_{\omega\left(x_{K-j+1}^{(N)},\dots,x_{N}^{(N)}\right)}^{(j-1)}(\mathrm{d}y_{K+1-i})\right)_{i,j=1}^{K} -
\det \left(\phi_{\omega\left(\mathbf{x}^{(N)}\right)}^{(j-1)}(\mathrm{d}y_{K+1-i})\right)_{i,j=1}^{K} \right]\Delta_K(\mathbf{y})g\left(\mathbf{y}\right)
\\
&=(2\pi)^{-K}\int_{\mathbb{R}^K}\left[\det \left(\tilde{\mathbf{F}}\left(\mathbf{z};\mathbf{x}^{(N)}\right)\right)
-\det \left(\mathbf{F}\left(\mathbf{z};\mathbf{x}^{(N)}\right)\right)\right]
\check{h}\left(\mathbf{z}\right)\mathrm{d}\mathbf{z}
\\&=(2\pi)^{-K}\sum_{\sigma\in\mathbb{S}_K}\textnormal{sgn}(\sigma)\int_{\mathbb{R}^K}\int_{\mathbb{R}^K}
\left[\prod_{l=1}^K\left(1-\frac{\textnormal{i}z_{\sigma(l)}w_{\sigma(l)}}{N-l+1}\right)^{-(N-l+1)}-\prod_{l=1}^K\left(1-\frac{\textnormal{i}z_{\sigma(l)}w_{\sigma(l)}}{N}\right)^{-(N-l+1)}\right] 
\\&\hspace{4.5cm}\times\left(\prod_{l=1}^K(-\textnormal{i}z_{\sigma(l)})^{K-l}\check{h}(\mathbf{z})\mathrm{d}\mathbf{z}\right)\prod_{l=1}^K\mathcal{M}\left(\mathrm{d}w_{\sigma(l)};x_l^{(N)},\dots,x_N^{(N)}\right).
\end{align*}
Note now that for any fixed $w_{\sigma(l)}$, the expression in the brackets on the RHS goes to $0$ pointwise in $z_{\sigma(l)}$, and moreover, it is bounded. Therefore, since the function in the parentheses is integrable, the inner integral converges to $0$ by the dominated convergence theorem. One can then use a similar argument as in the proof of Lemma \ref{lem UnifConv G_N} to verify that the convergence is indeed uniform 
and hence, the expression can be made arbitrarily small by picking large enough $N$. Finally, the conclusion follows by fact that $\mathcal{M}(\mathrm{d}w;\mathbf{x})$ is a probability measure on $\mathbb{R}$.  
\end{proof}

\section{Properties of the finite-dimensional diffusions}\label{SectionFiniteDimDiffusions}  
\subsection{Well-posedness and Feller property}\label{SubsectionWellposedness}

In this subsection we present several, for the most part standard, results on the dynamics (\ref{FSDE}) that we will need.

\begin{lem} \label{lem-wellPosednessN}
The SDE (\ref{FSDE}) has a unique strong non-exploding solution $\left(\mathfrak{x}^{(N)}(t);t\ge 0\right)$ for any initial condition $\mathfrak{x}^{(N)}(0)=\mathbf{x}\in \mathbb{W}_{N,+}$. Moreover, almost surely, for all $t>0$, $\mathfrak{x}^{(N)}(t)\in \mathbb{W}_{N,+}^>$, where
\begin{equation*}
\mathbb{W}_{N,+}^>=\left\{\mathbf{x}\in \mathbb{R}^N:x_1>x_2>\cdots >x_N\ge 0\right\}.
\end{equation*}
Finally, if $\mathfrak{x}^{(N)}(0)=\mathbf{x}\in \mathbb{W}_{N,+}^\circ$, then almost surely for all $t\ge 0$, $\mathfrak{x}^{(N)}(t)\in \mathbb{W}_{N,+}^\circ$.
\end{lem}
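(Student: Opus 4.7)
The plan is to reduce \eqref{FSDE} to the general well-posedness framework of \cite{Graczyk-Malecki} for SDEs with logarithmic interaction, with additional work to handle two features specific to the present setting: the multiplicative (not additive) noise and the boundary $\{x_N = 0\}$. The basic strategy is to localize via stopping times that avoid both collisions and the boundary, invoke classical SDE theory to obtain existence and uniqueness on each localization, and then show the stopping times tend to infinity almost surely.

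Concretely, I would set
\begin{equation*}
\tau_n = \inf\left\{t \ge 0 : \max_{i}\mathfrak{x}_i(t) > n,\ \min_{1 \le i < N}\left(\mathfrak{x}_i(t) - \mathfrak{x}_{i+1}(t)\right) < \tfrac{1}{n},\ \text{or}\ \mathfrak{x}_N(t) < -\tfrac{1}{n}\right\}.
\end{equation*}
On $[0,\tau_n)$ the coefficients of \eqref{FSDE} are smooth and bounded, giving a unique strong solution; the task reduces to showing $\tau_n \to \infty$. Three separate controls are needed. For \emph{non-explosion from above}, I would use the crucial antisymmetry $\sum_i \sum_{j \neq i}\tfrac{x_i x_j}{x_i - x_j} = 0$ (under the swap $(i,j) \leftrightarrow (j,i)$), which turns $S(t) := \sum_i \mathfrak{x}_i(t)$ into a solution of the linear SDE $\mathrm{d}S = \sum_i \mathfrak{x}_i\,\mathrm{d}\mathsf{w}_i - \tfrac{\eta}{2}S\,\mathrm{d}t + \tfrac{N}{2}\,\mathrm{d}t$, yielding moment bounds for $\max_i \mathfrak{x}_i$ once non-negativity is known. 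For \emph{non-negativity}, I would show that $\{x_N = 0\}$ is an entrance boundary for $\mathfrak{x}_N$: at that boundary the diffusion coefficient vanishes while the drift equals $\tfrac{1}{2}$, and a Feller-boundary comparison argument, freezing the other coordinates locally in time to reduce to the one-dimensional inhomogeneous GBM of \cite{MatsumotoYor2}, gives $\mathfrak{x}_N(t) \ge 0$ for all $t$ and $\mathfrak{x}_N(t) > 0$ for $t > 0$. For \emph{non-collision}, I would apply It\^o's formula to the Lyapunov function $V = -\sum_{i<j}\log(\mathfrak{x}_i - \mathfrak{x}_j)$ on $\mathbb{W}_{N,+}^\circ$; after the standard algebraic cancellations in \cite{Graczyk-Malecki,AGZ}, one obtains $\mathbb{E}[V(t \wedge \tau_n)] \le C_T(1 + V(0))$ for $t \le T$, which rules out finite-time collisions.

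The main obstacle will be handling initial conditions in $\mathbb{W}_{N,+} \setminus \mathbb{W}_{N,+}^\circ$, where $V(0) = +\infty$ invalidates the non-collision bound at $t = 0$. I would resolve this by approximation: given such an $\mathbf{x}$, pick $\mathbf{x}^\varepsilon \in \mathbb{W}_{N,+}^\circ$ with $\mathbf{x}^\varepsilon \to \mathbf{x}$, apply the previous steps to obtain solutions $\mathfrak{x}^{(N),\varepsilon}$, prove tightness on $C(\mathbb{R}_+,\mathbb{W}_{N,+})$ via the $S$-bound together with the Lyapunov estimate evaluated after small positive time, and extract a limit that satisfies \eqref{FSDE} from $\mathbf{x}$. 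The entrance-boundary argument then yields $\mathfrak{x}^{(N)}(t) \in \mathbb{W}_{N,+}^>$ for every $t > 0$, and uniqueness on $[\tau,\infty)$ for any $\tau > 0$ (where the Lyapunov bound applies starting from the $\mathbb{W}_{N,+}^\circ$-valued $\mathfrak{x}^{(N)}(\tau)$) propagates to pathwise uniqueness on $[0,\infty)$ by letting $\tau \downarrow 0$. Finally, the claim that $\mathbb{W}_{N,+}^\circ$-valued initial conditions produce $\mathbb{W}_{N,+}^\circ$-valued paths for all $t \ge 0$ is the same Lyapunov bound applied directly from $t = 0$, where $V(0) < \infty$ and $\mathfrak{x}_N(0) > 0$ allow both the non-collision and entrance arguments to run without a positive-time cut-off.
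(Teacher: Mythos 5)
Your proposal is correct in spirit, but takes a substantially more elaborate route than the paper and, in one place, is less rigorous than the paper's one-line computation.

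The paper's proof delegates non-explosion, non-negativity, and non-collision entirely to the main theorem of \cite{Graczyk-Malecki}, whose hypotheses are readily verified for the coefficients $\sigma_i(x)=x$, $b_i(x)=-\tfrac{\eta}{2}x+\tfrac{1}{2}$, $H_{ij}(x,y)=\tfrac{xy}{x-y}$. Only the last assertion (strict positivity of $\mathfrak{x}_N$ for all $t\ge 0$ when $\mathbf{x}\in\mathbb{W}_{N,+}^\circ$) is argued directly, via It\^{o}'s formula applied to $\log\mathfrak{x}_N$: the singular drift term $\tfrac{1}{2\mathfrak{x}_N}$ is non-negative and can be dropped, and the interaction term $\sum_{j<N}\tfrac{\mathfrak{x}_j}{\mathfrak{x}_N-\mathfrak{x}_j}$ is bounded below using strict ordering, giving a finite lower bound on $\log\mathfrak{x}_N(t)$. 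Your proposal instead reconstructs from scratch the content of \cite{Graczyk-Malecki}: localization, a Lyapunov estimate for $V=-\sum_{i<j}\log(\mathfrak{x}_i-\mathfrak{x}_j)$, the antisymmetry observation to control $\sum\mathfrak{x}_i$, and an approximation scheme for boundary initial data. This is essentially duplicative of the cited reference (which is designed precisely for systems of the form of \eqref{FSDE} with general $\sigma,b,H$); what it buys is self-containedness, at the cost of length, and at the cost of several steps you leave at the level of sketch — notably, the algebraic control of the three-body terms $\sum_{k\neq i,j}\tfrac{\mathfrak{x}_k^2}{(\mathfrak{x}_i-\mathfrak{x}_k)(\mathfrak{x}_j-\mathfrak{x}_k)}$ in the Lyapunov computation, and the tightness/pathwise-uniqueness argument for coinciding initial coordinates, both of which are genuinely delicate and are exactly what \cite{Graczyk-Malecki} handle carefully.

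For the strict positivity of $\mathfrak{x}_N$, your entrance-boundary comparison — bounding the interaction drift below by $-C_n\mathfrak{x}_N$ on $[0,\tau_n)$ and comparing with the one-dimensional inhomogeneous geometric Brownian motion $\mathrm{d}z=z\,\mathrm{d}\mathsf{w}_N+(\tfrac{1}{2}-(\tfrac{\eta}{2}+C_n)z)\,\mathrm{d}t$, for which $0$ is an entrance boundary — is a valid alternative, but ``freezing the other coordinates locally in time'' as written is not rigorous as it stands; you need to invoke the one-dimensional SDE comparison theorem with a common diffusion coefficient and a drift that is pathwise dominated on the localization, not a literal freezing. Once phrased that way the argument goes through, but it is strictly longer than the paper's direct It\^{o} computation on $\log\mathfrak{x}_N$, which avoids any boundary classification or comparison theorem and exhibits the finite lower bound explicitly. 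I would recommend adopting the paper's approach for the last statement, or at minimum replacing the ``freezing'' language with an honest comparison-theorem argument.
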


\begin{proof}
All statements, except the last one, follow from the main theorem of \cite{Graczyk-Malecki} by verifying the elementary conditions therein. To establish the last statement, observe that since $\mathbf{x}\in \mathbb{W}_{N,+}^\circ$, using It\^{o}'s formula we have, for all $t\ge 0$,
\begin{align*}
\log\mathfrak{x}_N^{(N)}(t) &= \log x_N+ \mathsf{w}_N(t) -\frac{1+\eta}{2}t+\int_0^t\frac{1}{2\mathfrak{x}_N^{(N)}(s)}\mathrm{d}s+\int_0^t\sum_{j=1}^{N-1}\frac{\mathfrak{x}_j^{(N)}(s)}{\mathfrak{x}_N^{(N)}(s)-\mathfrak{x}_j^{(N)}(s)}\mathrm{d}s\\
&\geq \log x_N+ \mathsf{w}_N(t) -\frac{1+\eta}{2}t-(N-1)t\left(\inf_{s\in[0,t]}\left|1-\frac{\mathfrak{x}_{N}^{(N)}(s)}{\mathfrak{x}_{N-1}^{(N)}(s)}\right|
\right)^{-1}.
\end{align*}
The conclusion then follows since the RHS is finite.
\end{proof}

 The Markov semigroup associated with $\left(\mathfrak{x}^{(N)}(t);t\ge 0\right)$, the unique strong solution of \eqref{FSDE}, is denoted by $\left(\mathfrak{P}_N(t)\right)_{t\geq 0}$.

We next introduce certain matrix-valued dynamics $\left(\mathbf{H}_t^{(N)};t\ge 0\right)$ from \cite{Rider-Valko} whose eigenvalue evolution is described by the SDE \eqref{FSDE}.
Recall that $ \mathbb{H}_+(N) $ denotes the space of $ N\times N$ non-negative definite Hermitian matrices.
We denote by $\textnormal{Tr}(\mathbf{A})$ the trace of  a matrix $\mathbf{A}$. Consider the following $\mathbb{H}_+(N)$-valued SDE introduced by Rider-Valko in \cite{Rider-Valko},
\begin{align}\label{MSDE}
	\mathrm{d}\mathbf{H}_t^{(N)}=\frac{1}{2}\left(\mathrm{d}\boldsymbol{\Gamma}_t^{(N)}\mathbf{H}_t^{(N)}+\mathbf{H}_t^{(N)}\mathrm{d}\left(
 \boldsymbol{\Gamma}_t^{(N)}\right)^{\dag}\right)+\left(-\frac{\eta+N}{2}\mathbf{H}_t^{(N)}+\frac{1}{2}\left(1+\textnormal{Tr}\left(\mathbf{H}_t^{(N)}\right)\right)\mathbf{I}\right)\mathrm{d}t,
\end{align}
where 
   $\left(\boldsymbol{\Gamma}_t^{(N)};t \geq 0\right)$ is a matrix-valued complex Brownian motion; namely, an $N\times N$ matrix with the entries independent complex Brownian motions (the real and imaginary parts are independent standard real Brownian motions), and
$\mathbf{I}$ is the $N\times N$ identity matrix.
\begin{lem}
For any $\mathbf{H}_0^{(N)}\in\mathbb{H}_+(N)$, the SDE \eqref {MSDE} has a unique strong solution.
\end{lem}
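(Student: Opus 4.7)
The plan is to recognise that \eqref{MSDE}, once we view the space $\mathbb{H}(N)$ of Hermitian matrices as the real vector space $\mathbb{R}^{N^2}$ obtained by collecting the diagonal entries and the real and imaginary parts of the strictly upper–triangular entries, is an affine Itô SDE in finite dimensions. Indeed, the drift $-\frac{\eta+N}{2}\mathbf{H}+\frac{1}{2}(1+\textnormal{Tr}(\mathbf{H}))\mathbf{I}$ is linear in the entries of $\mathbf{H}$, and the diffusion coefficient entering $\frac{1}{2}(\mathrm{d}\boldsymbol{\Gamma}_t^{(N)}\mathbf{H}+\mathbf{H}\,\mathrm{d}(\boldsymbol{\Gamma}_t^{(N)})^{\dag})$ is likewise linear in $\mathbf{H}$. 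Hence both coefficients are globally Lipschitz with linear growth, and the classical theory of linear SDEs (e.g.\ \cite{KaratzasShreve}) produces a unique strong, non-exploding, $(\mathcal{F}_t^{\boldsymbol{\Gamma}^{(N)}})$-adapted solution $(\mathbf{H}_t^{(N)};t\ge 0)$ for every initial condition in $\mathbb{H}(N)$.

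Next I would check that this $\mathbb{R}^{N^2}$-valued solution actually lives in $\mathbb{H}(N)$. This is immediate from the structure of the equation: the drift is manifestly Hermitian when $\mathbf{H}$ is, and the stochastic increment is the Hermitianisation $\tfrac{1}{2}(\mathbf{A}+\mathbf{A}^{\dag})$ of $\mathbf{A}=\mathrm{d}\boldsymbol{\Gamma}_t^{(N)}\mathbf{H}$; so if one writes the SDEs satisfied by $\mathbf{H}_t^{(N)}$ and by $(\mathbf{H}_t^{(N)})^{\dag}$ and uses uniqueness, they coincide, forcing $\mathbf{H}_t^{(N)}=(\mathbf{H}_t^{(N)})^{\dag}$ for all $t$.

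Finally, although not required by the statement, it is natural to observe that for $\mathbf{H}_0^{(N)}\in\mathbb{H}_+(N)$ the solution in fact stays in $\mathbb{H}_+(N)$: applying It\^{o}'s formula to the eigenvalues of $\mathbf{H}_t^{(N)}$ (as carried out in \cite{Rider-Valko}) yields precisely the SDE \eqref{FSDE}, whose unique strong solution stays in $\mathbb{W}_{N,+}$ by Lemma \ref{lem-wellPosednessN}, so the smallest eigenvalue of $\mathbf{H}_t^{(N)}$ remains non-negative. There is no genuine obstacle in this proof; the only mild care needed is in verifying the Hermitian-preservation claim and in tracking the eigenvalue computation, both of which are standard matrix stochastic calculus. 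The key point I want to emphasise is the linearity of the coefficients, which bypasses any need for local arguments or stopping-time estimates to prevent explosion.
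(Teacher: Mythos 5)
Your argument is correct and follows essentially the same route as the paper, which simply observes that the coefficients of \eqref{MSDE} are Lipschitz (indeed affine) in $\mathbf{H}$ and invokes standard existence–uniqueness theory for finite-dimensional SDEs. The only minor point is that once you set up the SDE directly on $\mathbb{R}^{N^2}\cong\mathbb{H}(N)$, having noted that the right-hand side sends Hermitian $\mathbf{H}$ to Hermitian increments, the Hermitian-preservation check in your second paragraph is automatic; it would only be needed had you instead posed the equation on all of $\mathbb{C}^{N\times N}\cong\mathbb{R}^{2N^2}$.
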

\begin{proof}
    It is easy to see that the coefficients of the equation are Lipschitz continuous in  matrix norm, and thus existence and uniqueness of the strong solution follows from well-known results, see for example \cite{Multidim-Yamada-Watanabe}. 
\end{proof}

\begin{lem}\label{LemMatrixEvalRelation} 
Let $\mathbf{x}\in \mathbb{W}_{N,+}$ and $\mathbf{H} \in 
\mathbb{H}_+(N)$ be such that $\mathsf{eval}_N\left(\mathbf{H}\right)=\mathbf{x}$. 
Let $\left(\mathbf{H}_t^{(N)};t\ge 0\right)$ be the solution to \eqref{MSDE} starting from $\mathbf{H}$ and $\left(\mathfrak{x}^{(N)}(t);t\ge 0\right)$ be the solution to \eqref{FSDE} starting from $\mathbf{x}$. Then,
    \begin{equation}
\left(\mathsf{eval}_N\left(\mathbf{H}^{(N)}_t\right);t\geq 0\right)\overset{\textnormal{d}}{=} \left(\mathfrak{x}^{(N)}(t);t\ge 0\right).
    \end{equation}
\end{lem}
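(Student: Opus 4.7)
The plan is to derive, via Itô's formula applied to eigenvalue functionals of the matrix process $(\mathbf{H}_t^{(N)}; t\geq 0)$, the system of SDEs \eqref{FSDE} satisfied by its ordered eigenvalues, and then conclude by strong uniqueness of both SDEs. This is in the spirit of Dyson's original computation for Brownian motion on $\mathbb{H}(N)$.

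First I would reduce to the generic case $\mathbf{H}\in\mathbb{H}_+(N)$ with simple spectrum, i.e.\ $\mathbf{x}\in\mathbb{W}_{N,+}^\circ$. Since $\mathbb{W}_{N,+}^\circ$ is dense in $\mathbb{W}_{N,+}$ and both the matrix SDE and the eigenvalue SDE enjoy existence, uniqueness and continuity with respect to the initial data (the matrix SDE by Lipschitz coefficients, the eigenvalue SDE by the main theorem of \cite{Graczyk-Malecki}), the general case follows from the simple-spectrum case by a standard approximation combined with Lemma \ref{lem-wellPosednessN}.

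So assume $\mathbf{x}\in\mathbb{W}_{N,+}^\circ$. Diagonalize $\mathbf{H}_t^{(N)}=\mathbf{U}_t\mathsf{diag}(\boldsymbol{\lambda}(t))\mathbf{U}_t^\dag$ with $\boldsymbol{\lambda}(t)=\mathsf{eval}_N(\mathbf{H}_t^{(N)})$. By Lemma \ref{lem-wellPosednessN}-style arguments (applied to the matrix level via perturbation, or by first running the computation on a stopping-time interval before collisions), the eigenvalues are semimartingales with a.s.\ distinct values on $(0,\tau)$ where $\tau$ is the first collision time. Applying the Itô–Dyson formula to $\lambda_i(t)=\lambda_i(\mathbf{H}_t^{(N)})$, one has
\begin{align*}
\mathrm{d}\lambda_i(t)=\sum_{a,b}\frac{\partial \lambda_i}{\partial H_{ab}}\mathrm{d}H_{ab}(t)+\frac{1}{2}\sum_{a,b,c,d}\frac{\partial^2 \lambda_i}{\partial H_{ab}\partial H_{cd}}\mathrm{d}\langle H_{ab},H_{cd}\rangle_t,
\end{align*}
where the first derivative gives the diagonal entries of the perturbation in the eigenbasis and the second derivative gives the standard level-repulsion term $\sum_{j\neq i}|\cdot|^2/(\lambda_i-\lambda_j)$. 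Writing the matrix SDE \eqref{MSDE} in the rotated basis $\widetilde{\mathbf{H}}_t=\mathbf{U}_t^\dag \mathbf{H}_t^{(N)}\mathbf{U}_t$ using the fact that $\mathrm{d}\boldsymbol{\Gamma}_t^{(N)}$ and $\mathrm{d}(\boldsymbol{\Gamma}_t^{(N)})^\dag$ are invariant in law under left/right unitary conjugation (their entries are i.i.d.\ complex Brownian motions), I would identify the diagonal martingale increments and read off $\lambda_i(t)\mathrm{d}\mathsf{w}_i(t)$ with $(\mathsf{w}_i)_{i=1}^N$ independent standard real Brownian motions, noting that the diffusion coefficient in front of $\mathrm{d}H_{ii}$ after rotation is precisely $\lambda_i$ (coming from the symmetric product structure $\frac{1}{2}(\mathrm{d}\Gamma\, H+H\,\mathrm{d}\Gamma^\dag)$). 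The drift $-\tfrac{\eta+N}{2}\lambda_i+\tfrac{1}{2}(1+\sum_j\lambda_j)$ from the diagonal of \eqref{MSDE} combines with the second-order term, which after a direct computation of $\mathrm{d}\langle H_{ab},H_{cd}\rangle_t$ from \eqref{MSDE} yields $\sum_{j\neq i}\frac{\lambda_i\lambda_j}{\lambda_i-\lambda_j}\mathrm{d}t-\tfrac{N-1}{2}\lambda_i\mathrm{d}t-\tfrac{1}{2}\sum_{j\neq i}\lambda_j\mathrm{d}t$. Summing everything, the $\sum_j\lambda_j$ terms cancel and the $\lambda_i$ coefficients collapse to give exactly \eqref{FSDE}.

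The main obstacle I anticipate is handling the collision set and making the perturbation-theoretic computation of $\partial\lambda_i/\partial H_{ab}$ and $\partial^2\lambda_i/\partial H_{ab}\partial H_{cd}$ rigorous at times when two eigenvalues coalesce. This is resolved by the last part of Lemma \ref{lem-wellPosednessN}: starting from $\mathbf{x}\in\mathbb{W}_{N,+}^\circ$, the eigenvalue SDE has no collisions, and by strong uniqueness of \eqref{FSDE} from \cite{Graczyk-Malecki}, any semimartingale satisfying the system on the (a.s.\ full) interval of distinct eigenvalues must agree in law with $(\mathfrak{x}^{(N)}(t);t\ge 0)$. Combined with the density/continuity reduction above, this yields the claimed equality in distribution for every $\mathbf{x}\in\mathbb{W}_{N,+}$.
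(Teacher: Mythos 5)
Your approach is sound but genuinely different from the paper's: the paper's proof of this lemma is a one-line citation of the general theorem of Graczyk--Ma{\l}ecki \cite{Multidim-Yamada-Watanabe}, which relates matrix diffusions of the class containing \eqref{MSDE} to the eigenvalue SDEs of the class containing \eqref{FSDE}, whereas you carry out the underlying It\^{o}--Dyson computation directly. The trade-off is standard: the citation is shorter and leverages a result proven once and for all at the right level of generality, while your route is self-contained and makes the cancellations visible. Your plan is correct -- rotate to the instantaneous eigenbasis using the bi-unitary invariance of the complex matrix Brownian motion increments, read off $\lambda_i\,\mathrm{d}\mathsf{w}_i$ from the diagonal, compute the level-repulsion term from $\mathrm{d}\langle \widetilde H_{ij},\widetilde H_{ji}\rangle$ for $i\neq j$, and close by strong uniqueness of \eqref{FSDE} together with the density argument. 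Two things to tighten. First, there is a sign error in your intermediate expression for the second-order term: the quadratic variation $\mathrm{d}\langle\widetilde H_{ij},\widetilde H_{ji}\rangle=\tfrac{1}{2}(\lambda_i^2+\lambda_j^2)\,\mathrm{d}t$ gives, after partial fractions, $\sum_{j\neq i}\frac{\lambda_i\lambda_j}{\lambda_i-\lambda_j}+\frac{N-1}{2}\lambda_i-\frac{1}{2}\sum_{j\neq i}\lambda_j$, with a \emph{plus} sign on the $\frac{N-1}{2}\lambda_i$ term; with your stated minus sign the coefficients do not collapse to $-\frac{\eta}{2}\lambda_i$, so either your displayed term or your assertion that the terms reduce to \eqref{FSDE} must be corrected. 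Second, for the reduction to simple spectrum to make the lemma well-posed for \emph{arbitrary} $\mathbf{H}$ with $\mathsf{eval}_N(\mathbf{H})=\mathbf{x}$, you should remark explicitly that the law of $\mathsf{eval}_N(\mathbf{H}_t^{(N)})$ depends only on $\mathsf{eval}_N(\mathbf{H})$, which holds because \eqref{MSDE} is invariant under fixed unitary conjugation (the trace term is conjugation-invariant and $\mathbf{V}\,\mathrm{d}\boldsymbol{\Gamma}_t\,\mathbf{V}^{\dag}\overset{\textnormal{d}}{=}\mathrm{d}\boldsymbol{\Gamma}_t$). The stopping-time bootstrap you describe (run the computation up to the first collision time, observe that the resulting \eqref{FSDE} solution never collides, conclude the stopping time is infinite) is the right and standard argument; stating it as a two-step claim rather than parenthetically would make the logic cleaner.
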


\begin{proof}
The statement is a consequence of the general result of \cite{Multidim-Yamada-Watanabe}, which relates matrix diffusions solving SDE generalising \eqref{MSDE} to their eigenvalues evolutions given in terms of SDE generalising \eqref{FSDE}. The computation is also implicit in \cite{Rider-Valko}.
\end{proof}

\begin{rmk}
By looking at the stochastic equation satisfied by the process $t\mapsto \det\big(\mathbf{H}_t^{(N)}\big)$, see proof of Proposition 12 in 
\cite{Rider-Valko}, we can improve the statement of Lemma \ref{lem-wellPosednessN} regarding never reaching the origin, to initial conditions $\mathbf{x}$ with coinciding coordinates.
\end{rmk}

\begin{prop}\label{prop-FellerProperty} Let $\eta \in \mathbb{R}$ and $N\in \mathbb{N}$. The semigroup $\left(\mathfrak{P}_N(t)\right)_{t\ge 0}$ is a Feller semigroup.
\end{prop}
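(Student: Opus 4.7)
The semigroup property of $\left(\mathfrak{P}_N(t)\right)_{t\geq 0}$ follows from the strong Markov property of the unique strong solution established in Lemma \ref{lem-wellPosednessN}, so what remains is to verify the two defining clauses of a Feller semigroup in Definition \ref{DefnFellerSemigroup}: that $\mathfrak{P}_N(t)$ preserves $C_0(\mathbb{W}_{N,+})$, and the pointwise limit $\mathfrak{P}_N(t)f(\mathbf{x})\to f(\mathbf{x})$ as $t\downarrow 0$. The latter is straightforward: continuity of the sample paths of $(\mathfrak{x}^{(N)}(t);t\ge 0)$ combined with dominated convergence gives it immediately for any $f\in C_0(\mathbb{W}_{N,+})$. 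The preservation of $C_0$ decomposes into spatial continuity of $\mathbf{x}\mapsto \mathfrak{P}_N(t)f(\mathbf{x})$ and vanishing of this function at infinity.

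The key idea for spatial continuity is to bypass the singular coefficients of \eqref{FSDE} by lifting to the matrix SDE \eqref{MSDE}, whose coefficients are globally Lipschitz in the Frobenius norm on $\mathbb{H}_+(N)$. Given $\mathbf{x}_n\to\mathbf{x}$ in $\mathbb{W}_{N,+}$, set $\mathbf{H}_0^{(n)}=\mathsf{diag}(\mathbf{x}_n)\to\mathsf{diag}(\mathbf{x})=\mathbf{H}_0$, and drive all the matrix SDEs by the same $\boldsymbol{\Gamma}$. Standard Gronwall-type $L^2$ estimates yield $\mathbb{E}\left[\sup_{s\le t}\|\mathbf{H}_s^{(n)}-\mathbf{H}_s\|^2\right]\to 0$, hence a.s. convergence along a subsequence. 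Since eigenvalues depend continuously on matrix entries, $\mathsf{eval}_N\left(\mathbf{H}_t^{(n)}\right)\to\mathsf{eval}_N(\mathbf{H}_t)$ in probability, and by Lemma \ref{LemMatrixEvalRelation} together with dominated convergence applied to bounded $f$, we obtain $\mathfrak{P}_N(t)f(\mathbf{x}_n)\to \mathfrak{P}_N(t)f(\mathbf{x})$.

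For vanishing at infinity, observe that $\mathbf{x}_n\to\infty$ in $\mathbb{W}_{N,+}$ is equivalent to $x_1^{(n)}\to\infty$ since $x_i\le x_1$ for all $i$. For $x_1>0$, Lemma \ref{lem-wellPosednessN} ensures $\mathfrak{x}_1^{(N)}(s)>0$ on $s\in[0,t]$, so It\^o's formula can be applied to $\log\mathfrak{x}_1^{(N)}(t)$. Crucially, since $\mathfrak{x}_1^{(N)}$ is the largest coordinate, every interaction term $\mathfrak{x}_j^{(N)}/\bigl(\mathfrak{x}_1^{(N)}-\mathfrak{x}_j^{(N)}\bigr)$ with $j\neq 1$ is non-negative, giving the pathwise lower bound
\begin{equation*}
\mathfrak{x}_1^{(N)}(t) \ge x_1\exp\!\left(\mathsf{w}_1(t)-\tfrac{1+\eta}{2}t\right).
\end{equation*}
As $x_1^{(n)}\to\infty$ this forces $\mathfrak{x}_1^{(N)}(t)\to\infty$ in probability, so $\mathfrak{x}^{(N)}(t)$ leaves every compact subset of $\mathbb{W}_{N,+}$ in probability. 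A standard truncation then gives $\mathfrak{P}_N(t)f(\mathbf{x}_n)\to 0$ for any $f\in C_0(\mathbb{W}_{N,+})$.

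The main potential obstacle is the behaviour at the boundary of $\mathbb{W}_{N,+}$ where coordinates coincide or vanish: the singular drift of \eqref{FSDE} makes a direct pathwise continuity argument at the eigenvalue level delicate. The matrix SDE circumvents this entirely since its Lipschitz regularity is insensitive to eigenvalue collisions, and Lemma \ref{LemMatrixEvalRelation} transports the resulting continuous flow back to the distribution of the eigenvalues.
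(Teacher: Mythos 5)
Your proof is correct and rests on the same key idea as the paper: lift to the matrix SDE \eqref{MSDE}, which has globally Lipschitz coefficients on $\mathbb{H}_+(N)$, and transport the resulting regularity back to the eigenvalue process via Lemma \ref{LemMatrixEvalRelation}. The difference is in how the transfer is executed. The paper packages the whole thing into one line: since $\mathfrak{P}_N(t)f(\mathbf{x})=\left(\mathsf{Q}_N(t)(f\circ\mathsf{eval}_N)\right)(\mathsf{diag}(\mathbf{x}))$, the map $\mathsf{eval}_N$ is continuous and proper (so $f\circ\mathsf{eval}_N\in C_0(\mathbb{H}_+(N))$), $\mathsf{Q}_N(t)$ is Feller by standard Lipschitz-SDE theory, and $\mathbf{x}\mapsto\mathsf{diag}(\mathbf{x})$ is continuous and proper; composing preserves $C_0$. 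Your version unpacks the same content into two separate pathwise arguments: a Gronwall estimate on the coupled matrix flows for spatial continuity, and a direct It\^{o} lower bound on $\log\mathfrak{x}_1^{(N)}$ for vanishing at infinity. The latter is a genuinely different piece — it stays entirely at the eigenvalue level and exploits the sign of the interaction drift seen from the top coordinate, giving $\mathfrak{x}_1^{(N)}(t)\ge x_1\exp(\mathsf{w}_1(t)-\tfrac{1+\eta}{2}t)$ without any reference to the matrix process. That is a perfectly good elementary alternative, and it is worth noting that the same comparison-to-geometric-Brownian-motion bound is exactly the observable the paper later exploits (in the proof of Theorem \ref{thm-NonCollision,>0}) for the much harder uniform-in-$N$ collision estimates; your instinct is aligned with the paper's later machinery. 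The trade-off is length: the paper's abstract composition argument handles both clauses of Feller-ness in one stroke, while your two-part argument is longer but more self-contained and avoids having to observe that $\mathsf{diag}$ is proper.

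One small point of rigour you should make explicit: when you invoke Lemma \ref{lem-wellPosednessN} to guarantee $\mathfrak{x}_1^{(N)}(s)>0$ on $[0,t]$ for $x_1>0$, what the lemma literally gives is that $\mathfrak{x}^{(N)}(s)\in\mathbb{W}_{N,+}^{>}$ for $s>0$, which forces $\mathfrak{x}_1^{(N)}(s)>\mathfrak{x}_2^{(N)}(s)\ge 0$, hence $\mathfrak{x}_1^{(N)}(s)>0$ for $s>0$; combined with $\mathfrak{x}_1^{(N)}(0)=x_1>0$ and continuity you do get a strictly positive lower bound on the compact interval, justifying It\^{o} on the logarithm. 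Similarly, the integrability of the transformed drift $\sum_{j\ge 2}\mathfrak{x}_j^{(N)}/(\mathfrak{x}_1^{(N)}-\mathfrak{x}_j^{(N)})$ near $t=0$ (where coordinates may initially coincide) follows from integrability of the original drift — guaranteed by existence of the strong solution — together with this uniform positive lower bound on $\mathfrak{x}_1^{(N)}$. These are routine, but since the initial condition is allowed to sit on the boundary of $\mathbb{W}_{N,+}$, they deserve a sentence.
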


\begin{proof}
    The semigroup associated to $\left(\mathbf{H}_t^{(N)};t\ge 0\right)$, as a unique strong solution to a SDE with Lipschitz coefficients, is Feller by classical results, see \cite{Revuz-Yor}. The Feller property of the eigenvalue process follows from that of the matrix process. More precisely,
    let $\left(\mathsf{Q}_N(t)\right)_{t\ge 0}$ denote the semigroup associated to $\left(\mathbf{H}_t^{(N)};t \ge 0\right)$. Note that, if $\mathbf{x}=\mathsf{eval}_N(\mathbf{H})$, one has from Lemma \ref{LemMatrixEvalRelation},
    \begin{equation*}
        \left(\mathfrak{P}_N(t)f\right)(\mathbf{x})=\left(\mathsf{Q}_N(t)f\circ\mathsf{eval}_N\right)\left(\mathsf{diag}(\mathbf{x})\right),
    \end{equation*}
   and moreover, note that if $f \in C_0(\mathbb{W}_{N,+})$ then $f \circ \mathsf{eval}_N \in C_0(\mathbb{H}_+(N))$. From this, and the Feller property of $\left(\mathsf{Q}_N(t)\right)_{t\ge 0}$, one immediately checks the Feller property for $\left(\mathfrak{P}_N(t)\right)_{t\ge 0}$. 
\end{proof}

\begin{rmk}
By hypoellipticity \cite{StroockPDE} of the generator of $(\mathsf{Q}_N(t))_{t\ge 0}$, shown in Proposition 12 of \cite{Rider-Valko}, we have that $\mathsf{Q}_N(t)(\mathbf{A},\mathrm{d}\mathbf{B})=\mathsf{Q}_N(t)(\mathbf{A},\mathbf{B})\mathrm{d}\mathbf{B}$, with $\mathrm{d}\mathbf{B}$ Lebesgue measure on $\mathbb{H}_+(N)$, where the density $\mathsf{Q}_N(t)(\mathbf{A},\mathbf{B})$ is smooth, viewed as a function in the matrix entries, see \cite{Rider-Valko}.  
\end{rmk}

Our next goal is to give a description of the $N$-dimensional dynamics $\left(\mathfrak{P}_N(t)\right)_{t\ge 0}$, in some sense, in terms of one-dimensional diffusions. For this, we define for fixed $N\in\mathbb{N}$ and $\eta\in\mathbb{R}$, consider unique strong solution to the following SDE in $[0,\infty)$,
\begin{align}\label{IGBM}
	\mathrm{d}\mathsf{z}^{(N)}(t) &= \mathsf{z}^{(N)}(t) \mathrm{d}\mathsf{w}(t) + \left[\left(1-\frac{\eta}{2}-N\right)\mathsf{z}^{(N)}(t)+\frac{1}{2}\right]\mathrm{d}t,
\end{align}
with $\mathsf{w}$ as a usual a standard Brownian motion, with infinitesimal generator,
\begin{align}\label{LN}
 \mathsf{L}_x^{(N)} = \frac{x^2}{2}\partial_{x}^2+\left(\left(1-\frac{\eta}{2}-N\right)x+\frac{1}{2}\right)\partial_{x}
\end{align} 
and transition density with respect to Lebesgue measure in $[0,\infty)$ that we denote by $\mathfrak{p}^{(N)}_t(x,y)$. It is known that $\infty$ is a natural boundary and $0$ an entrance boundary in the terminology of Feller \cite{ItoMcKean,Ethier-Kurtz}. In particular, almost surely, for all strictly positive times the diffusion $\mathsf{z}^{(N)}$ is in $(0,\infty)$. 

We then have the following proposition. Analogous results are well-known for certain generalisations of Dyson Brownian motion: the radial Dunkl \cite{Dunkl} and Heckman-Opdam \cite{HeckmanOpdam} processes.

\begin{prop}\label{prop-Core+TransitionDens}  Let $\eta \in \mathbb{R}$ and $N\in \mathbb{N}$. Then, $C^{\infty}_{c,\textnormal{sym}}\left(\mathbb{W}_{N,+}\right)$ forms a core for the generator of the semigroup $\left(\mathfrak{P}_N(t)\right)_{t\ge 0}$  and it is moreover invariant under the action of the generator. Finally, the kernel $\mathfrak{P}_N(t)(\mathbf{x},\mathrm{d}\mathbf{y})$ of $\mathfrak{P}_N(t)$ has an explicit density with respect to the Lebesgue measure in $\mathbb{W}_{N,+}$ given by (abusing notation), for $\mathbf{x}\in \mathbb{W}_{N,+}^\circ$, $t>0$,
\begin{align}\label{transitioDensity}
	\mathfrak{P}_N(t)(x,y)= \mathrm{e}^{-\lambda_Nt}\frac{\Delta_N(\mathbf{y})}{\Delta_N(\mathbf{x})} \det\left(\mathfrak{p}_t^{(N)}(x_i,y_j)\right)_{i,j=1}^{N},
\end{align}
with $\lambda_N=N(N-1)\left(1-\frac{3\eta}{2}-2N\right)/6$.
   
\end{prop}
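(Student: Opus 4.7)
The plan splits naturally into two parts. For the explicit transition density \eqref{transitioDensity}, my strategy is a Karlin--McGregor construction combined with a Doob $h$-transform by the Vandermonde. For the core statement, I would combine a direct invariance argument with a lift to the matrix dynamics \eqref{MSDE}.

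The key computation driving \eqref{transitioDensity} is that $\Delta_N$ is an eigenfunction of the product generator $\sum_{i=1}^N \mathsf{L}_{x_i}^{(N)}$ of $N$ independent copies of \eqref{IGBM}, with eigenvalue exactly $\lambda_N$. I would verify this by computing
\begin{equation*}
\mathsf{L}_{x_i}^{(N)} x_i^k = \left[\tfrac{k(k-1)}{2}+\bigl(1-\tfrac{\eta}{2}-N\bigr)k\right]x_i^k + \tfrac{k}{2}\,x_i^{k-1},
\end{equation*}
writing $\Delta_N=\sum_\sigma \mathrm{sgn}(\sigma)\prod_i x_i^{\sigma(i)-1}$, and observing that the degree-preserving contributions sum to $\lambda_N = \sum_{k=0}^{N-1}\bigl[\tfrac{k(k-1)}{2}+(1-\tfrac{\eta}{2}-N)k\bigr]=N(N-1)(1-3\eta/2-2N)/6$, while the degree-lowering terms collect into $\tfrac12\sum_i\partial_{x_i}\Delta_N$, an antisymmetric polynomial of total degree below $\binom{N}{2}$, which therefore vanishes identically. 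A standard Karlin--McGregor argument then identifies $\det(\mathfrak{p}_t^{(N)}(x_i,y_j))_{i,j=1}^N$ as the (sub-Markov) transition density of $N$ independent copies of \eqref{IGBM} killed on collision on $\mathbb{W}_{N,+}^\circ$, and the Doob $h$-transform by $\mathrm{e}^{-\lambda_N t}\Delta_N$ yields a genuine Markov semigroup. A short algebraic check, using $x_i^2/(x_i-x_j)=x_i+x_ix_j/(x_i-x_j)$ to rearrange the $h$-transform drift $x_i^2\,\partial_{x_i}\log\Delta_N+(1-\tfrac{\eta}{2}-N)x_i+\tfrac12$ produced from the Doob transform, shows that the new generator coincides with that of \eqref{FSDE} on smooth functions. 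Uniqueness of the strong solution of \eqref{FSDE} together with Feller uniqueness (Proposition \ref{prop-FellerProperty}) then identifies \eqref{transitioDensity} as $\mathfrak{P}_N(t)(\mathbf{x},\mathbf{y})$ on $\mathbb{W}_{N,+}^\circ$. Extension to boundary initial data follows by joint continuity, with the $0/0$ indeterminacy on the walls $\{x_i=x_{i+1}\}$ resolved by a Taylor expansion of the Karlin--McGregor determinant matching the vanishing of $\Delta_N(\mathbf{x})$, and using the hypoellipticity of the matrix semigroup from the remark preceding the proposition.

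For the core statement, invariance $\mathsf{L}_N\,C^{\infty}_{c,\textnormal{sym}}(\mathbb{W}_{N,+})\subset C^{\infty}_{c,\textnormal{sym}}(\mathbb{W}_{N,+})$ reduces to verifying that the formally singular term $\sum_{j\neq i}\tfrac{x_ix_j}{x_i-x_j}\partial_{x_i}f$ is in fact smooth: pairing $(i,j)$ and $(j,i)$ gives $\tfrac{x_ix_j}{x_i-x_j}(\partial_{x_i}f-\partial_{x_j}f)$, and symmetry of $f$ forces $\partial_{x_i}f-\partial_{x_j}f$ to vanish to first order on $\{x_i=x_j\}$, so the quotient extends smoothly. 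To promote this to the core property, I would lift to the matrix level: $C_c^\infty(\mathbb{H}_+(N))$ is a core for the Feller generator of the Lipschitz matrix SDE \eqref{MSDE} by standard theory (well-posedness of the martingale problem), its unitarily invariant elements are (by Chevalley's restriction theorem) exactly smooth symmetric compactly supported functions of the eigenvalues, and this subspace is invariant under the matrix semigroup by the unitary invariance of \eqref{MSDE}. Pushing forward through $\mathsf{eval}_N$ then identifies this subspace with $C^{\infty}_{c,\textnormal{sym}}(\mathbb{W}_{N,+})$, which is therefore a core for the generator of $(\mathfrak{P}_N(t))_{t\geq 0}$.

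The main obstacle I anticipate is the careful treatment of the boundary of $\mathbb{W}_{N,+}$ in the first part: extending \eqref{transitioDensity} continuously to coinciding coordinates (and to configurations with $x_N=0$, where $\mathsf{L}_x^{(N)}$ has an entrance boundary) requires the Taylor expansion of the determinant to match the vanishing of $\Delta_N(\mathbf{x})$ to exactly the right order. This is standard in Dyson-type arguments but needs to be checked in detail, and for the core argument one must likewise be careful that the compact-support condition lifts cleanly between the eigenvalue and matrix levels.
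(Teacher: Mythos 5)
Your argument for the transition density \eqref{transitioDensity} is essentially the same Doob $h$-transform plus Karlin--McGregor strategy used in the paper; the only difference is that you supply the explicit eigenfunction computation for $\Delta_N$, which the paper compresses to ``after an elementary computation''. Your verification that the degree-lowering terms sum to $\tfrac12\sum_i\partial_{x_i}\Delta_N$ and vanish by an antisymmetry/degree count is correct (one can also note that $\sum_i\partial_{x_i}$ annihilates any translation-invariant function). Likewise, your invariance argument is a minor variant of the paper's: you appeal to Hadamard's lemma for the smoothness of the divided difference $(\partial_{x_i}f-\partial_{x_j}f)/(x_i-x_j)$, whereas the paper invokes a result of Ball on expressing smooth symmetric functions in terms of elementary symmetric polynomials. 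Both close the gap, and both ultimately rest on the same fact that smooth symmetric functions have smooth divided differences.

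Where you genuinely diverge from the paper is the core property, and here I see a real gap. You assert that $C_c^\infty(\mathbb{H}_+(N))$ is a core for the Feller generator of the matrix SDE \eqref{MSDE} ``by standard theory (well-posedness of the martingale problem)'', and then average over $\mathbb{U}(N)$ and pass through a Glaeser/Schwarz--type identification to land on $C^\infty_{c,\textnormal{sym}}(\mathbb{W}_{N,+})$. The averaging step and the identification are fine in principle (you do misattribute the smooth-category statement to Chevalley's restriction theorem, which is the polynomial statement; the $C^\infty$ version is the theorem of G.~Schwarz/Glaeser), but the initial core claim is the load-bearing step and it is not standard. The standard well-posedness/core theory for Lipschitz SDEs applies on $\mathbb{R}^n$; here the state space is the closed cone $\mathbb{H}_+(N)$, which has a genuine boundary (the singular non-negative matrices), and one must establish that this boundary is of entrance type at the matrix level and that no boundary condition enters the domain of the generator before smooth compactly supported functions on the cone can be shown to be a core. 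The paper avoids this altogether by working entirely at the eigenvalue level: it proves well-posedness of the martingale problem for $(\mathbf{L}_N, C^\infty_{c,\textnormal{sym}}(\mathbb{W}_{N,+}))$ via a delicate mollification of the coordinate and quadratic functions by symmetric smooth compactly supported functions, matches the resulting weak solution to the strong solution of \eqref{FSDE} up to stopping times, and then cites Van~Casteren's theorem to pass from well-posedness to the core property. If you want to keep your matrix-lift strategy you would need to supply a separate proof that the matrix martingale problem on $\mathbb{H}_+(N)$ is well-posed with test class $C_c^\infty(\mathbb{H}_+(N))$, which is essentially the same amount of work as the paper's direct eigenvalue argument, just in different coordinates.
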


\begin{proof} Let us denote by $(\mathsf{L}_N,\mathcal{D}(\mathsf{L}_N))$ the generator of $(\mathfrak{P}_N(t))_{t\ge 0}$. Define the formal differential operator $\mathbf{L}_N$ by,
\begin{equation}\label{InfinitesimalGen}
\mathbf{L}_N=\sum_{i=1}^N \frac{x_i^2}{2}\partial_{x_i}^2+\sum_{i=1}^N\left[-\frac{\eta}{2}x_i+\frac{1}{2}+\sum_{j=1,j\neq i}^N\frac{x_ix_j}{x_i-x_j}\right]\partial_{x_i}.
\end{equation}
We will identify shortly $\mathsf{L}_N$ with $\mathbf{L}_N$ on $C^{\infty}_{c,\textnormal{sym}}\left(\mathbb{W}_{N,+}\right)$.

We first check that $C_{c,\textnormal{sym}}^\infty(\mathbb{W}_{N,+})$ is invariant under $\mathbf{L}_N$. Let $g\in C_{c,\textnormal{sym}}^\infty(\mathbb{W}_{N,+})$ be arbitrary and let $f$ be a function in $C_{c,\textnormal{sym}}^\infty(\mathbb{R}^N)$ that restricts to $g\in C_{c,\textnormal{sym}}^\infty(\mathbb{W}_{N,+})$. Consider $\mathbf{L}_Nf$, where we view $\mathbf{L}_N$ as acting on functions on $\mathbb{R}^N$ in the obvious way, and observe that on $\mathbb{W}_{N,+}$ this restricts to $\mathbf{L}_N g$. It then suffices to show $\mathbf{L}_Nf \in C_{c,\textnormal{sym}}^\infty(\mathbb{R}^N)$. $\mathbf{L}_Nf$ is clearly of compact support and moreover it is symmetric in its arguments. Showing smoothness boils down to showing that the only part which is possibly singular,
\begin{equation*}
 (x_1,\dots, x_N)   \mapsto \sum_{1\le i< j\le N} x_i x_j \frac{\partial_{x_i}f(x_1,\dots,x_N)-\partial_{x_j}f(x_1,\dots,x_N)}{x_i-x_j},
\end{equation*}
is actually smooth on the whole of $\mathbb{R}^N$. This moreover boils down to proving the following claim. For a smooth symmetric function $\mathrm{H}$ on $\mathbb{R}^2$, the set of which we denote by $ C_{\textnormal{sym}}^\infty(\mathbb{R}^2)$, the function 
\begin{equation*}
(x,y) \mapsto \frac{\partial_x \mathrm{H}(x,y)-\partial_y\mathrm{H}(x,y)}{x-y}
\end{equation*}
is smooth on the whole of $\mathbb{R}^2$, which is a direct consequence of known results. Namely, for $\mathrm{H}\in C_{\textnormal{sym}}^\infty(\mathbb{R}^2)$ let $\mathfrak{H}$ be the unique function such that 
\begin{equation*}
\mathrm{H}(x,y)=\mathfrak{H}\left(\mathrm{S}_1(x,y),\mathrm{S}_2(x,y)\right),
\end{equation*}
where $\mathrm{S}_1(x,y)=-(x+y)$ and $\mathrm{S}_2(x,y)=xy$, see \cite{Ball}.
Then, we have, see \cite{Ball},
\begin{align*}
\frac{\partial_x \mathrm{H}(x,y)-\partial_y\mathrm{H}(x,y)}{x-y}=-\frac{\partial\mathfrak{H}}{\partial\mathrm{S}_2}.
\end{align*}
Finally, from Corollary 3.3 of \cite{Ball}, $\mathfrak{H}$ is a smooth function and this establishes the claim. 

We now show that $\mathsf{L}_N$ extends $\mathbf{L}_N$ on $C_0(\mathbb{W}_{N,+})$ and $C^\infty_{c,\textnormal{sym}}(\mathbb{W}_{N,+})$ is a core for it. First, for $f$ so that $\mathbf{L}_Nf$ makes sense, define the process $t\mapsto \mathsf{M}_f(t)$ by, 
\begin{equation}\label{martingaleproblem}
\mathsf{M}_f(t)\overset{\textnormal{def}}{=}f(\mathsf{z}(t))-f(\mathsf{z}(0))-\int_0^t \mathbf{L}_N f(\mathsf{z}(s))\mathrm{d}s,
\end{equation}
where $\mathsf{Law}(\mathsf{z})=\mathsf{Law}(\mathfrak{x})$, with $\mathfrak{x}$ the unique strong solution to \eqref{FSDE}, namely the process with generator $\mathsf{L}_N$. Observe that, for $f \in C^\infty_{c,\textnormal{sym}}(\mathbb{W}_{N,+})$, $\mathsf{M}_f$ is bounded since $\mathbf{L}_N C^\infty_{c,\textnormal{sym}}(\mathbb{W}_{N,+}) \subset C^\infty_{c,\textnormal{sym}}(\mathbb{W}_{N,+})$. Hence, an application of  It\^{o}'s formula \cite{Revuz-Yor,Kallenberg} to the solution of the SDE \eqref{FSDE} gives that $t\mapsto \mathsf{M}_f(t)$ is a martingale (with respect to the natural filtration of the coordinate process). In particular, from Theorem 17.23 of \cite{Kallenberg} we obtain that $\mathsf{L}_N$ extends (we do not know yet if this extension is unique) $\mathbf{L}_N\big|_{C^\infty_{c,\textnormal{sym}}(\mathbb{W}_{N,+})}$ to $C_0(\mathbb{W}_{N,+})$.  

It remains to show that $C^\infty_{c,\textnormal{sym}}(\mathbb{W}_{N,+})$ is actually a core for the generator $\mathsf{L}_N$. This is done by connecting the SDE \eqref{FSDE} to a well-posed martingale problem. Define $C_\mathbf{x}(\mathbb{R}_+,\mathbb{W}_{N,+})=\left\{f\in C(\mathbb{R}_+,\mathbb{W}_{N,+}):f(0)=\mathbf{x}\right\}$.
We say that a family of probability measures $\{\mathbf{P}_\mathbf{x}\}_{\mathbf{x}\in \mathbb{W}_{N,+}}$ on $C(\mathbb{R}_+,\mathbb{W}_{N,+})$, endowed with the filtration generated by the coordinate process, is a solution to the martingale problem for $(\mathbf{L}_N,C^\infty_{c,\textnormal{sym}}(\mathbb{W}_{N,+}))$ if $\mathbf{P}_\mathbf{x}(C_\mathbf{x}(\mathbb{R}_+,\mathbb{W}_{N,+}))=1$, and under $\mathbf{P}_\mathbf{x}$, for every $f\in C^\infty_{c,\textnormal{sym}}(\mathbb{W}_{N,+})$, the process $t\mapsto \mathsf{M}_f(t)$ defined in \eqref{martingaleproblem}, where $\mathsf{Law}(\mathsf{z})=\mathbf{P}_\mathbf{x}$,
 is a martingale. We now show that this martingale problem is well-posed, namely has a unique solution. First, the law of any weak solution, to the SDE \eqref{FSDE} starting from $\mathbf{x}$, gives rises to a solution to the martingale problem by an application of It\^{o}'s formula \cite{Revuz-Yor,Kallenberg}. We then show that this solution to the martingale problem must be unique. Consider an arbitrary solution to the martingale problem. For $R>0$, define the stopping times,
 \begin{equation*}
\zeta_R=\inf\left\{t\ge 0;||\mathsf{z}(t)||\ge R \ \ \textnormal{or}  \ \sum_{1=i\neq j=N}\left|\int_0^t\frac{\mathsf{z}_i(s)\mathsf{z}_j(s)}{\mathsf{z}_i(s)-\mathsf{z}_j(s)}\mathrm{d}s \right|\ge R \right\},
 \end{equation*}
 where, for $\mathbf{x}\in \mathbb{R}^N$, $||\mathbf{x}||$ is its Euclidean norm. For $i,j=1,\dots,N$, we choose $\mathrm{f}_i,\mathrm{f}_{ij}\in C_c^\infty(\mathbb{W}_{N,+})$ satisfying $\mathrm{f}_i(\mathbf{x})=x_i, \mathrm{f}_{ij}(\mathbf{x})=x_ix_j$, for $||\mathbf{x}||\le R$. We now claim that, for all $i,j=1,\dots,N$, the processes defined by,
\begin{equation*}
t \mapsto \mathsf{N}_i(t\wedge \zeta_R)\overset{\textnormal{def}}{=}\mathsf{M}_{\mathrm{f}_i^R}(t\wedge \zeta_R)=\mathsf{z}_i(t\wedge \zeta_R)-\mathsf{z}_i(0)-\int_0^{t\wedge \eta_R} \left(-\frac{\eta}{2}\mathsf{z}_i(s)+\frac{1}{2}+\sum_{j=1,j\neq i}^N\frac{\mathsf{z}_i(s) \mathsf{z}_j(s)}{\mathsf{z}_i(s)-\mathsf{z}_j(s)}\right)\mathrm{d}s
\end{equation*}
and $t \mapsto \mathsf{N}_{ij}(t\wedge \zeta_R)\overset{\textnormal{def}}{=}\mathsf{M}_{\mathrm{f}_{ij}^R}(t\wedge \zeta_R)$ are martingales. We approximate $\mathrm{f}_i^R$ and $\mathrm{f}_{ij}^R$ by functions in $C_{c,\textnormal{sym}}^\infty(\mathbb{W}_{N,+})$, for which we know the martingale property of $\mathsf{M}_f$. By the Stone-Weirstrass theorem the space $C_{c,\textnormal{sym}}^\infty(\mathbb{W}_{N,+})$ is dense in $C_0(\mathbb{W}_{N,+})$ but we need slightly more in convergence of the derivatives.
More generally, let us take a function $g\in C_{c}^\infty(\mathbb{W}_N)$ (note that we can view the functions $\mathrm{f}_i^R,\mathrm{f}_{ij}^R$ as in $C_{c}^\infty(\mathbb{W}_N)$). Then, extend $g$ by symmetry to the whole of $\mathbb{R}^N$ to obtain a function $\mathsf{G}$. Observe that, by construction $\mathsf{G}$ is symmetric, has compact support, is smooth except on the hyperplanes $\{x_{i_1}=x_{i_2}=\cdots=x_{i_\ell}\}$, is Lipschitz on the whole of $\mathbb{R}^N$ and $\mathsf{G}|_{\mathbb{W}_N}=g$. Let $\mathfrak{u}$ be a symmetric approximation of the identity and define, for each $m \in \mathbb{N}$,
\begin{equation*}
\mathfrak{g}_m(\mathbf{x})\overset{\textnormal{def}}{=}\int_{\mathbb{R}^N} \mathsf{G}(\mathbf{y}) m^N\mathfrak{u}\left(m(\mathbf{x}-\mathbf{y})\right)\mathrm{d}\mathbf{y}.
\end{equation*}
Then, observe that, $\mathfrak{g}_m \in C^\infty_{c,\textnormal{sym}}(\mathbb{R}^N)$ and that $\mathfrak{g}_m$ approximates, as $m \to \infty$, $g$ in $C_0(\mathbb{W}_N)$ and all its derivatives approximate uniformly on compact sets in $\mathbb{W}_N^\circ$ the derivatives of $g$. From the above we get that there exist sequences $(\mathfrak{f}_i^m)_{m=1}^\infty, (\mathfrak{f}_{ij}^{m})_{m=1}^\infty$ in $C_{c,\textnormal{sym}}^\infty(\mathbb{W}_{N,+})$ so that, as $m \to \infty $, for all $i,j=1,\dots,N$,
\begin{equation*}
\mathsf{M}_{\mathfrak{f}_{i}^{m}}(\cdot \wedge \zeta_R) \longrightarrow \mathsf{M}_{\mathrm{f}_{i}^{R}}(\cdot \wedge \zeta_R), \ \mathsf{M}_{\mathfrak{f}_{ij}^{m}}(\cdot \wedge \zeta_R) \longrightarrow \mathsf{M}_{\mathrm{f}_{ij}^{R}}(\cdot \wedge \zeta_R),
\end{equation*}
uniformly on compact sets, in distribution under $\mathbf{P}_\mathbf{x}$. Hence, we easily obtain that for all $i,j=1,\dots,N$, the processes $t \mapsto  \mathsf{M}_{\mathrm{f}_{i}^{R}}(t \wedge \zeta_R)$, $t \mapsto  \mathsf{M}_{\mathrm{f}_{ij}^{R}}(t \wedge \zeta_R)$ are martingales as desired. Then, we can follow the computation in Theorem 32.7 of \cite{Kallenberg} or Proposition 5.4.6 of \cite{KaratzasShreve} to obtain independent standard Brownian motions $(\mathsf{w}_i(\cdot))_{i=1}^N$ (with respect to possibly an extension of the original filtration), such that, for all $i=1,\dots,N$, 
 \begin{equation*}
 \mathsf{N}_i(t\wedge \zeta_R)=\int_0^{t\wedge \zeta_R} \mathsf{z}_i(s)\mathrm{d}\mathsf{w}_i(s), \ \ \forall t\ge 0.
 \end{equation*}
 In particular, we obtain that $(\mathsf{z}_i(\cdot\wedge \zeta_R),\mathsf{w}_i(\cdot\wedge \zeta_R))_{i=1}^N$ is a weak solution to the SDE \eqref{FSDE} up until time $\zeta_R$. But by pathwise uniqueness shown in \cite{Graczyk-Malecki}, this solution must be equal to the unique strong solution of \eqref{FSDE} up until $\zeta_R$. Observe that, under the law of the solution to \eqref{FSDE}, $\zeta_R\to \infty$, as $R \to \infty$. In particular, $\zeta_R \to \infty$, as $R\to \infty$, under $\mathbf{P}_\mathbf{x}$ and from a solution to the martingale problem we constructed a solution to \eqref{FSDE} for all times. Hence, the solution to the martingale problem for $(\mathbf{L}_N,C^\infty_{c,\textnormal{sym}}(\mathbb{W}_{N,+}))$ must be unique (for otherwise we would have constructed two different in law solutions to \eqref{FSDE}). The associated diffusion process to it, see \cite{Kallenberg}, is clearly nothing but the solution to the SDE \eqref{FSDE}, namely the Feller process with semigroup $(\mathfrak{P}_N(t))_{t\ge 0}$. By virtue of Theorem 2.5 of \cite{VanCasteren}, since $C_{c,\textnormal{sym}}^\infty(\mathbb{W}_{N,+})$ is dense $C_0(\mathbb{W}_{N,+})$ and the martingale problem for $(\mathbf{L}_N,C^\infty_{c,\textnormal{sym}}(\mathbb{W}_{N,+}))$ is well-posed, we obtain that operator $\mathbf{L}_N$ is closable in $C_0(\mathbb{W}_{N,+})$ and the closure is nothing but the generator $\mathsf{L}_N$ of the semigroup $(\mathfrak{P}_{N}(t))_{t \ge 0}$. Namely $\overline{\mathsf{L}_N|_{C_{c,\textnormal{sym}}^\infty(\mathbb{W}_{N,+})}}=\overline{\mathbf{L}_N|_{C_{c,\textnormal{sym}}^\infty(\mathbb{W}_{N,+})}}=\mathsf{L}_N$ or in other words, $C_{c,\textnormal{sym}}^\infty(\mathbb{W}_{N,+})$ is a core for $(\mathsf{L}_N,\mathcal{D}(\mathsf{L}_N))$.

The final statement in the proposition is a direct consequence of the following observation. The generator $\mathbf{L}_N$ acting on $g\in C_{c,\textnormal{sym}}^\infty(\mathbb{W}_{N,+})$ can be written, after an elementary computation, as
\begin{align*}
\mathbf{L}_Ng(x_1,\dots,x_N)= \left[\Delta_N(\mathbf{x})^{-1}\circ\left(\sum_{i=1}^N \mathsf{L}_{x_i}^{(N)}\right)\circ\Delta_N(\mathbf{x})-\lambda_N\right]g(x_1,\dots,x_N),
\end{align*}
which is exactly the Doob $h$-transform, see \cite{Revuz-Yor,PinskyBook,Doob}, of $N$ independent one-dimensional diffusions with generator $\mathsf{L}^{(N)}$ killed when they intersect, by the Vandermonde determinant $\Delta_N$. This is the Markov process which has transition kernel having density (\ref{transitioDensity}), by virtue of the Karlin-McGregor formula \cite{KarlinMcGregor,ItoMcKean}, and the conclusion follows. 
\end{proof}

\begin{rmk}
For $N=1$, the fact that $C_{c}^\infty(\mathbb{R}_+)$ is a core for the $\mathsf{L}^{(1)}$-diffusion is well-known and a very special case of general results, see Chapter 8 of \cite{Ethier-Kurtz}.     
\end{rmk}

\subsection{Consistency}\label{SubSec-Consistency}

The following consistency or intertwining relation between $\left(\mathfrak{P}_{N}(t)\right)_{t\ge 0}$ and $\left(\mathfrak{P}_{N+1}(t)\right)_{t\ge0}$ is  the main algebraic ingredient for the method of intertwiners.
\begin{thm}\label{thm-intertwiningSemigroup}
         We have the intertwining relations:
         \begin{align}\label{intertwining-semigroup}
           \mathfrak{P}_{N+1}(t)\Lambda_N^{N+1}=\Lambda_N^{N+1}\mathfrak{P}_{N}(t),\ \ \forall t\geq0,\ \forall N\in\mathbb{N}.
        \end{align}
     \end{thm}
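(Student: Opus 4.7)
The semigroup intertwining \eqref{intertwining-semigroup} is equivalent, by standard Feller semigroup theory, to the generator-level intertwining
\[
\mathsf{L}_{N+1}\big(\Lambda_N^{N+1} f\big) \;=\; \Lambda_N^{N+1}\big(\mathsf{L}_N f\big), \qquad f\in \mathscr{C}_N,
\]
on a core $\mathscr{C}_N$ for $\mathsf{L}_N$ that is left invariant by $\mathsf{L}_N$. By Proposition \ref{prop-Core+TransitionDens}, $\mathscr{C}_N := C^\infty_{c,\textnormal{sym}}(\mathbb{W}_{N,+})$ satisfies both requirements. Given the generator identity, the semigroup identity follows since both $t\mapsto \mathfrak{P}_{N+1}(t)\Lambda_N^{N+1}f$ and $t\mapsto \Lambda_N^{N+1}\mathfrak{P}_N(t)f$ solve the same abstract Cauchy problem with equal initial data, and the relation extends to all of $C_0(\mathbb{W}_{N,+})$ by Feller continuity and density.

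\textbf{Main computation.} I would verify the generator intertwining directly. Writing $\mathsf{L}_N$ in the $h$-transform form from the proof of Proposition \ref{prop-Core+TransitionDens},
\[
\mathsf{L}_N \;=\; \Delta_N(\mathbf{x})^{-1}\Big(\sum_{i=1}^N \mathsf{L}_{x_i}^{(N)}\Big)\Delta_N(\mathbf{x}) \;-\; \lambda_N,
\]
and similarly for $\mathsf{L}_{N+1}$, and using \eqref{link}, the target identity reduces, after clearing Vandermonde factors and setting $\mathsf{g}(\mathbf{y}) := \Delta_N(\mathbf{y})f(\mathbf{y})$, to
\[
\sum_{i=1}^{N+1}\mathsf{L}_{x_i}^{(N+1)}\!\!\int_{\mathbf{y}\prec\mathbf{x}}\!\!\mathsf{g}(\mathbf{y})\,\mathrm{d}\mathbf{y} \;=\; \int_{\mathbf{y}\prec\mathbf{x}}\!\!\sum_{i=1}^N\mathsf{L}_{y_i}^{(N)}\mathsf{g}(\mathbf{y})\,\mathrm{d}\mathbf{y} \;+\; (\lambda_{N+1}-\lambda_N)\!\!\int_{\mathbf{y}\prec\mathbf{x}}\!\!\mathsf{g}(\mathbf{y})\,\mathrm{d}\mathbf{y},
\]
with $\lambda_{N+1}-\lambda_N = -N^2 - N\eta/2$ and $\mathsf{g}$ smooth, compactly supported and alternating. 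The model case is the one-dimensional kernel $K(x,\mathrm{d}y) = \mathbf{1}_{0<y<x}\mathrm{d}y$, for which a direct integration by parts gives $\mathsf{L}_x^{(N+1)}K = K\mathsf{L}_y^{(N)} - (\eta/2 + N)K$ on compactly supported test functions (using $\mathsf{L}_x^{(N+1)} - \mathsf{L}_x^{(N)} = -x\partial_x$). The general identity is obtained by differentiating the interlacing indicator in each $x_i$, which couples only two $y$-variables at a time, and integrating back by parts in the affected $y$-coordinates.

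\textbf{Main obstacle and conceptual perspective.} The delicate step is the bookkeeping of boundary terms at the hyperplanes $y_{i-1}=x_i$ and $y_i=x_i$: contributions in which two $y$-coordinates coincide are annihilated by the alternating factor $\mathsf{g}$ inherited from $\Delta_N$, and the surviving surface contributions must be paired and summed so as to produce exactly the scalar shift $\lambda_{N+1}-\lambda_N$ with no residual $\mathbf{x}$-dependence. A conceptual explanation of why the relation should hold comes from the Rider-Valko matrix SDE \eqref{MSDE}: starting from $\mathbf{H}_0^{(N+1)}=\mathbf{U}\mathsf{diag}(\mathbf{x})\mathbf{U}^\dag$ with Haar $\mathbf{U}\in\mathbb{U}(N+1)$, It\^o's formula applied to $\mathbf{V}\mathbf{H}_t^{(N+1)}\mathbf{V}^\dag$ for fixed $\mathbf{V}\in\mathbb{U}(N+1)$, together with $\mathbf{V}\boldsymbol{\Gamma}_t^{(N+1)}\mathbf{V}^\dag \overset{\textnormal{d}}{=} \boldsymbol{\Gamma}_t^{(N+1)}$ and strong uniqueness, shows that $\mathbf{H}_t^{(N+1)}$ is unitarily invariant in law for all $t\geq 0$. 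By \eqref{Lambda^N=lawEval} and Lemma \ref{LemMatrixEvalRelation}, the LHS of \eqref{intertwining-semigroup} is then exactly the one-time law of $\mathsf{eval}_N\big(\Pi_N^{N+1}(\mathbf{H}_t^{(N+1)})\big)$, and the intertwining asserts precisely that this top-corner eigenvalue process is Markov with semigroup $\mathfrak{P}_N$. This gives a pleasant probabilistic picture of the origin of the relation, although a rigorous proof through the matrix SDE alone still has to perform an analogous calculation to handle the noise in the $(N+1)$-st row and column.
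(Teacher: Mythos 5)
Your approach is a genuinely different route from the paper's, but it leaves the heart of the computation unexecuted, so as written it is not a proof.

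The paper offers two arguments. The first is the matrix-SDE/corner-projection argument, which is exactly the conceptual picture you sketch at the end; the paper carries it out fully only for the initial condition $\mathbf{H}_0^{(N)}=\mathbf{0}$, because the key statement \eqref{ProjProp} (that the top-left corner of the $(N+1)$-process is equal in law to the $N$-process) is proven there via the explicit integral representation \eqref{H=int MM*}, which is special to that initial condition. The paper's second and complete argument works not with generators but directly with the Karlin--McGregor transition density \eqref{transitioDensity}: using \eqref{link} one reduces the semigroup intertwining to the determinantal identity \eqref{P-Lambda-intertwining formula}, applies the Andreief identity to both sides, and then everything boils down to the single one-dimensional kernel identity \eqref{pt^N=int qt}, itself a consequence of $\partial_x\mathsf{L}_x^{(N+1)}=\mathsf{L}_x^{(N)}\partial_x+(-\tfrac{\eta}{2}-N)\partial_x$. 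Your one-dimensional lemma $\mathsf{L}_x^{(N+1)}K=K\mathsf{L}_y^{(N)}-(\tfrac{\eta}{2}+N)K$ is precisely the (distributional) adjoint of that relation, so the two proofs share the same one-dimensional engine; I also checked that your identity and the scalar shift $\lambda_{N+1}-\lambda_N=-N^2-\tfrac{N\eta}{2}$ are correct. What the Andreief route buys is that the $N$-variable bookkeeping is replaced by algebra of determinants: the pairing of ``surface'' contributions that you describe as the delicate step is absorbed automatically into the determinant structure, and one never has to manage boundary terms on the interlacing polytope by hand.

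The gap is the one you yourself flag as the ``main obstacle.'' You assert that, after applying $\sum_i\mathsf{L}_{x_i}^{(N+1)}$ to $\int_{\mathbf{y}\prec\mathbf{x}}\mathsf{g}(\mathbf{y})\,\mathrm{d}\mathbf{y}$ and integrating back by parts, the boundary contributions at coincident $y$-coordinates cancel against the alternating factor and the surviving terms sum to the scalar $\lambda_{N+1}-\lambda_N$ with no residual $\mathbf{x}$-dependence. This is plausible and is indeed the kind of argument used for Dyson-type intertwinings, but for a second-order operator in each $x_i$ the surface calculus on the polytope is genuinely intricate (each $\partial_{x_i}^2$ produces both first- and second-order boundary traces involving both $y_{i-1}=x_i$ and $y_i=x_i$, and the coefficients $x_i^2/2$ and $((1-\tfrac{\eta}{2}-N-1)x_i+\tfrac12)$ have to be matched against what one gets from $\mathsf{L}_{y_i}^{(N)}$ and $\mathsf{L}_{y_{i-1}}^{(N)}$). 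Until that computation is carried through, the proof is incomplete. A secondary, smaller point: deducing the semigroup identity from the generator identity on the core $\mathscr{C}_N$ by a Duhamel argument requires checking $\Lambda_N^{N+1}\mathscr{C}_N\subset\mathcal{D}(\mathsf{L}_{N+1})$ and then extending the generator identity from the core to all of $\mathcal{D}(\mathsf{L}_N)$ (using closedness of $\mathsf{L}_{N+1}$ and boundedness of $\Lambda_N^{N+1}$) before running Duhamel along $T_1(s)f$; you should state these steps explicitly since $\mathfrak{P}_N(s)f$ need not stay in $\mathscr{C}_N$.
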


We present two arguments for a proof. The first argument is a little more sophisticated and uses matrix stochastic calculus. It is conceptually more natural as it somehow explains the origin of the intertwining: the intertwining (\ref{intertwining-semigroup}) simply follows from consistency with respect to corners projection at the level of the matrix diffusions. On the other hand, we only give a full proof for the special initial condition $\mathbf{H}^{(N)}_0=\mathbf{0}$ which allows for important simplifications (in the case of Dyson Brownian motion which is also consistent with $\Lambda_N^{N+1}$ the adaptation of the argument works for any initial condition).  The second argument is a direct verification, which works for all initial conditions, and thus is a complete proof, using the explicit formula from (\ref{transitioDensity}).

We begin with a little formalism giving consistency at the level of the eigenvalues from consistency at the level of the matrix process.

    \begin{prop}
        Let $\left(\mathbf{C}_t^{(N)};t\geq 0\right)$ and $\left(\mathbf{C}_t^{(N+1)};t\geq 0\right)$ be stochastic processes on $\mathbb{H}(N)$ and $\mathbb{H}(N+1)$ respectively. Suppose that $\left(\mathsf{eval}_{N}\left(\mathbf{C}_t^{(N)}\right);t\geq 0\right)$ and $\left(\mathsf{eval}_{N+1}\left(\mathbf{C}_t^{(N+1)}\right);t\geq 0\right)$ are Markovian with semigroups $\left(\mathsf{P}_{N}(t)\right)_{t\geq 0}$ and $\left(\mathsf{P}_{N+1}(t)\right)_{t\geq 0}$ respectively. Assume that, for all $t\geq 0$,
       \begin{align}\label{U-inv}
          \left(\mathbf{U}^{(K)}\right)^{\dag}\mathbf{C}_t^{(K)}
          \mathbf{U}^{(K)}\overset{\textnormal{d}}{=}\mathbf{C}_t^{(K)},\ \ \forall\mathbf{U}^{(K)}\in\mathbb{U}(K),\  \  K=N,N+1.
        \end{align}
        Suppose moreover that there exists a set $\mathscr{A}\subseteq\mathbb{H}(N+1)$ such that for all $\mathbf{C}_0^{(N+1)}\in\mathscr{A}$,
        \begin{equation}\label{ProjProp}
\Pi_N^{N+1}\left(\mathbf{C}_t^{(N+1)}\right)\overset{\textnormal{d}}{=}\mathbf{C}_t^{(N)}, \ \ \forall t \ge 0.
        \end{equation}
        Then, we have
        \begin{align}\label{P-tLambda=LambdaP-t}
           \mathsf{P}_{N+1}(t)\Lambda_N^{N+1}(\mathbf{x},\cdot)=\Lambda_N^{N+1}\mathsf{P}_{N}(t)(\mathbf{x},\cdot),\ \ \forall \mathbf{x}\in\mathsf{eval}_{N+1}(\mathscr{A}),\ \ \forall t\geq 0.
        \end{align}
    \end{prop}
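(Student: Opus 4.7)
The plan is to read off both sides of (\ref{P-tLambda=LambdaP-t}) as the distribution of $\mathsf{eval}_N(\Pi_N^{N+1}(\mathbf{C}_t^{(N+1)}))$ computed in two different orders: first evolving the matrix then projecting to the $N\times N$ corner, versus first projecting and then evolving. Fix $\mathbf{x}\in\mathsf{eval}_{N+1}(\mathscr{A})$ and a witness $\mathbf{A}\in\mathscr{A}$ with $\mathsf{eval}_{N+1}(\mathbf{A})=\mathbf{x}$. Without loss of generality we may take $\mathscr{A}$ to be unitarily invariant (only the eigenvalues of elements of $\mathscr{A}$ appear in the statement), and we start the $(N+1)$-matrix process from the random initial condition $\mathbf{C}_0^{(N+1)}=\mathbf{V}\mathbf{A}\mathbf{V}^{\dag}$ with $\mathbf{V}$ Haar-distributed on $\mathbb{U}(N+1)$; this preserves the spectrum $\mathbf{x}$, keeps $\mathbf{C}_0^{(N+1)}\in\mathscr{A}$ almost surely, and makes the initial law unitarily invariant.

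For the first route, the Markov assumption gives $\mathsf{Law}(\mathsf{eval}_{N+1}(\mathbf{C}_t^{(N+1)}))=\mathsf{P}_{N+1}(t)(\mathbf{x},\cdot)$. The unitary invariance (\ref{U-inv}) forces the marginal of $\mathbf{C}_t^{(N+1)}$ to disintegrate over its spectrum into Haar orbital measures, since any unitarily invariant probability measure on $\mathbb{H}(N+1)$ is a mixture of orbital measures. Thus, conditional on $\mathsf{eval}_{N+1}(\mathbf{C}_t^{(N+1)})=\mathbf{y}$, the matrix $\mathbf{C}_t^{(N+1)}$ is distributed as $\mathbf{W}\mathsf{diag}(\mathbf{y})\mathbf{W}^{\dag}$ with $\mathbf{W}$ Haar on $\mathbb{U}(N+1)$, and the very definition (\ref{Lambda^N=lawEval}) of $\Lambda_N^{N+1}$ identifies the conditional law of $\mathsf{eval}_N(\Pi_N^{N+1}(\mathbf{C}_t^{(N+1)}))$ given $\mathsf{eval}_{N+1}(\mathbf{C}_t^{(N+1)})=\mathbf{y}$ as $\Lambda_N^{N+1}(\mathbf{y},\cdot)$. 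Integrating over $\mathbf{y}$ yields
\begin{equation*}
\mathsf{Law}\left(\mathsf{eval}_N(\Pi_N^{N+1}(\mathbf{C}_t^{(N+1)}))\right)=\mathsf{P}_{N+1}(t)\Lambda_N^{N+1}(\mathbf{x},\cdot).
\end{equation*}

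For the second route, the projection hypothesis (\ref{ProjProp}) identifies $\Pi_N^{N+1}(\mathbf{C}_t^{(N+1)})$ in distribution with $\mathbf{C}_t^{(N)}$, where the $N$-dimensional process is implicitly initialised by $\mathbf{C}_0^{(N)}\overset{\textnormal{d}}{=}\Pi_N^{N+1}(\mathbf{V}\mathbf{A}\mathbf{V}^{\dag})$. The law of $\mathbf{C}_0^{(N)}$ is unitarily invariant on $\mathbb{H}(N)$: this follows from the Haar invariance of $\mathbf{V}$ combined with the equivariance of $\Pi_N^{N+1}$ under the block embedding $\mathbf{U}\mapsto\mathbf{U}\oplus 1$, $\mathbb{U}(N)\hookrightarrow\mathbb{U}(N+1)$. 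The same orbital disintegration now shows that $\mathsf{eval}_N(\mathbf{C}_0^{(N)})$ has law $\Lambda_N^{N+1}(\mathbf{x},\cdot)$ while the conditional law of $\mathbf{C}_0^{(N)}$ given its spectrum is the $\mathbb{U}(N)$-orbital measure. The Markovianity of $\mathsf{eval}_N(\mathbf{C}_\cdot^{(N)})$ with semigroup $\mathsf{P}_N(t)$ then gives
\begin{equation*}
\mathsf{Law}\left(\mathsf{eval}_N(\mathbf{C}_t^{(N)})\right)=\Lambda_N^{N+1}\mathsf{P}_N(t)(\mathbf{x},\cdot),
\end{equation*}
and equating the two expressions for the same law yields (\ref{P-tLambda=LambdaP-t}).

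The main point requiring care is the orbital disintegration: one must verify that both $\mathbf{C}_t^{(N+1)}$ (at the end of the first route) and $\mathbf{C}_0^{(N)}$ (at the start of the second route) are unitarily invariant, so that conditioning on the spectrum returns the Haar orbital measure needed to produce $\Lambda_N^{N+1}$. Both follow from (\ref{U-inv}) and from the compatibility of corner projection with the block embedding $\mathbb{U}(N)\hookrightarrow\mathbb{U}(N+1)$; no additional probabilistic input beyond the stated hypotheses is needed.
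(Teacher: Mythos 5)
Your proof follows the same strategy as the paper's: identify both sides of \eqref{P-tLambda=LambdaP-t} as the law of $\mathsf{eval}_N\left(\Pi_N^{N+1}\left(\mathbf{C}_t^{(N+1)}\right)\right)$, once by disintegrating $\mathbf{C}_t^{(N+1)}$ over its spectrum (unitary invariance at time $t$) and once by first using \eqref{ProjProp} to replace the corner with $\mathbf{C}_t^{(N)}$ and then using Markovianity together with the fact that the initial corner spectrum has law $\Lambda_N^{N+1}(\mathbf{x},\cdot)$. The paper states this more tersely, but the content is the same; your version just makes explicit the Haar--orbital disintegration and the Haar randomisation of the initial matrix, which the paper leaves implicit in the appeal to \eqref{U-inv} at $t=0$.

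One small caution on the ``without loss of generality $\mathscr{A}$ is unitarily invariant'' step: \eqref{U-inv} constrains only the marginal law of $\mathbf{C}_t^{(N+1)}$ at each fixed $t$, not equivariance of the dynamics, so \eqref{ProjProp} holding for a particular $\mathbf{A}\in\mathscr{A}$ does not by itself extend to $\mathbf{U}\mathbf{A}\mathbf{U}^\dagger$. The correct reading (implicit in the paper too, and consistent with the application to Theorem \ref{thm-matrix consistency} where $\mathscr{A}=\{\mathbf{0}\}$) is that \eqref{U-inv} at $t=0$ already forces the initial law to be Haar-conjugated, so ``$\mathbf{C}_0^{(N+1)}\in\mathscr{A}$'' should be understood as a statement about the support of that randomised initial law, and one should not separately assume $\mathscr{A}$ is unitarily invariant but rather take this compatibility as part of the hypothesis. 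This is a presentational point, not a mathematical error in the intended reading, and your conclusion is sound.
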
 
    \begin{proof}
    Fix $\mathbf{x}$ so that $\mathbf{x}=\mathsf{eval}_{(N+1)}\left(\mathbf{C}_0^{(N+1)}\right)$, with $\mathbf{C}_0^{(N+1)}\in\mathscr{A}$. We claim that, the LHS of \eqref{P-tLambda=LambdaP-t} is  given by,
    \begin{equation*}
\mathsf{Law}\left(\mathsf{eval}_{N}\left(\Pi_N^{N+1}\left(\mathbf{C}_t^{(N+1)}\right)\right)\right),
    \end{equation*}
     while the RHS by,
     \begin{equation*}
\mathsf{Law}\left(\mathsf{eval}_{N}\left(\mathbf{C}_t^{(N)}\right)\right).
     \end{equation*}
     These are equal by virtue of \eqref{ProjProp} and the conclusion follows. To see the claim, first observe that, 
     \begin{align*}
    \mathsf{Law}\left(\mathsf{eval}_{N}\left(\mathbf{C}_t^{(N)}\right)\right)(\mathrm{d}\mathbf{y})=\mu\mathsf{P}_N(t)(\mathrm{d}\mathbf{y}), 
     \end{align*}
     where, by unitary invariance \eqref{U-inv}, $\mu$ is given by,
     \begin{equation*}
   \mu(\mathrm{d}\mathbf{y})\overset{\textnormal{def}}{=}\mathsf{Law}\left(\mathsf{eval}_N\left(\mathbf{C}_0^{(N)}\right)\right)(\mathrm{d}\mathbf{y})=\mathsf{Law}\left(\mathsf{eval}_N\left(\Pi_N^{N+1}\left(\mathbf{C}_0^{(N+1)}\right)\right)\right)(\mathrm{d}\mathbf{y})=\Lambda_N^{N+1}\left(\mathbf{x},\mathrm{d}\mathbf{y}\right).
     \end{equation*}
     On the other hand, again by unitary invariance \eqref{U-inv}, we also have, 
\begin{equation*}
\mathsf{Law}\left(\mathsf{eval}_{N}\left(\Pi_N^{N+1}\left(\mathbf{C}_t^{(N+1)}\right)\right)\right)(\mathrm{d}\mathbf{y})=\mathsf{Law}\left(\mathsf{eval}_{N+1}\left(\mathbf{C}_t^{(N+1)}\right)\right)\Lambda_N^{N+1}(\mathrm{d}\mathbf{y})=\nu\mathsf{P}_{N+1}(t)\Lambda_N^{N+1}(\mathrm{d}\mathbf{y}),
\end{equation*}
where $\nu$ is given by,
\begin{equation*}
\nu(\mathrm{d}\mathbf{y})\overset{\textnormal{def}}{=}\mathsf{Law}\left(\mathsf{eval}_{N+1}\left(\mathbf{C}_0^{(N+1)}\right)\right)=\mathbf{1}_{\mathbf{x}}(\mathrm{d}\mathbf{y}).
\end{equation*}
 This completes the proof.
    \end{proof}
    
     We will apply this proposition (with the obvious notational identifications) to prove Theorem \ref{thm-intertwiningSemigroup} in the special case $\mathbf{H}_0^{(N)}=\mathbf{0}$, namely for $\mathbf{x}=\mathbf{0}$.
     
     \begin{thm}\label{thm-matrix consistency}
     Let $\left(\mathbf{H}_t^{(N)}; t\geq 0\right)$ be the unique strong solution of the equation \eqref{MSDE} starting from $\mathbf{H}_0^{(N)}=\mathbf{0}$, where $\mathbf{0}$ denotes $N\times N$ zero matrix. We have
    \begin{align}\label{HN=corner HN+1}
\mathbf{H}_t^{(N)}\overset{\textnormal{d}}{=}\Pi_N^{N+1}\left(\mathbf{H}_t^{(N+1)}\right),\  \  \forall t\geq 0.
\end{align}
\end{thm}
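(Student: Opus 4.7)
The plan is to project the $(N+1)$-dimensional SDE (\ref{MSDE}) to the top-left $N\times N$ corner and show that the resulting object has the law of $\mathbf{H}_t^{(N)}$. Writing the block decompositions
\[
\mathbf{H}_t^{(N+1)} = \begin{pmatrix} \mathbf{A}_t & \mathbf{b}_t \\ \mathbf{b}_t^{\dag} & c_t \end{pmatrix}, \qquad \boldsymbol{\Gamma}_t^{(N+1)} = \begin{pmatrix} \boldsymbol{\Gamma}_t^{(N)} & \mathbf{g}_t \\ \mathbf{h}_t^T & g_t \end{pmatrix},
\]
with $\mathbf{A}_t := \Pi_N^{N+1}(\mathbf{H}_t^{(N+1)})$, a direct block multiplication projects (\ref{MSDE}) to
\[
d\mathbf{A}_t = \tfrac{1}{2}\bigl(d\boldsymbol{\Gamma}_t^{(N)}\mathbf{A}_t+\mathbf{A}_t\,d(\boldsymbol{\Gamma}_t^{(N)})^{\dag}\bigr) + \tfrac{1}{2}\bigl(d\mathbf{g}_t\,\mathbf{b}_t^{\dag} + \mathbf{b}_t\,d\mathbf{g}_t^{\dag}\bigr) + \Bigl(-\tfrac{\eta+N+1}{2}\mathbf{A}_t + \tfrac{1}{2}(1+\textnormal{Tr}(\mathbf{A}_t)+c_t)\mathbf{I}\Bigr)dt.
\]
Compared with the $N$-dimensional version of (\ref{MSDE}), this carries an extra $d\mathbf{g}_t$-martingale and a drift mismatch $\tfrac{1}{2}(c_t\mathbf{I}-\mathbf{A}_t)\,dt$, so $\mathbf{A}_t$ is not a Markov solution of the $N$-dimensional matrix SDE on its own.

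To get around this, I would exploit that $\mathbf{H}_0^{(N+1)}=\mathbf{0}$ is $\mathbb{U}(N+1)$-invariant and that the SDE (\ref{MSDE}) preserves unitary invariance (the noise is invariant in law under $\boldsymbol{\Gamma}\mapsto\mathbf{U}\boldsymbol{\Gamma}\mathbf{U}^{\dag}$ and the drift depends only on $\mathbf{H}$ and $\textnormal{Tr}(\mathbf{H})$). Hence the law of $\mathbf{H}_t^{(N+1)}$ is $\mathbb{U}(N+1)$-invariant for every $t\geq 0$, so the corner $\mathbf{A}_t$ is $\mathbb{U}(N)$-invariant and its law is determined by the law of $\mathsf{eval}_N(\mathbf{A}_t)$. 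The same applies to $\mathbf{H}_t^{(N)}$ starting from $\mathbf{0}$. Consequently, the distributional equality (\ref{HN=corner HN+1}) reduces to the eigenvalue equality $\mathsf{eval}_N(\mathbf{A}_t)\overset{\textnormal{d}}{=}\mathsf{eval}_N(\mathbf{H}_t^{(N)})$.

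For this last equality, I would apply the Dyson-type perturbation formula. Diagonalising $\mathbf{A}_t=\mathbf{U}_t\mathbf{\Lambda}_t\mathbf{U}_t^{\dag}$ with orthonormal eigenvectors $\mathbf{u}_i$,
\[
d\lambda_i(t) = \mathbf{u}_i^{\dag}(d\mathbf{A}_t)\mathbf{u}_i + \sum_{j\neq i}\frac{|\mathbf{u}_i^{\dag}d\mathbf{A}_t\,\mathbf{u}_j|^2}{\lambda_i(t)-\lambda_j(t)}.
\]
The main noise term $\tfrac{1}{2}(d\boldsymbol{\Gamma}_t^{(N)}\mathbf{A}_t+\mathbf{A}_t d(\boldsymbol{\Gamma}_t^{(N)})^{\dag})$ produces, through standard Dyson algebra together with its quadratic variation, the multiplicative martingale $\lambda_i\,d\mathsf{w}_i$ and the Dyson repulsion $\sum_{j\neq i}\frac{\lambda_i\lambda_j}{\lambda_i-\lambda_j}\,dt$ appearing in (\ref{FSDE}). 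The extra $d\mathbf{g}_t$-martingale produces a contribution controlled by $|\mathbf{u}_i^{\dag}\mathbf{b}_t|^2$. Here the unitary invariance of $\mathbf{H}_t^{(N+1)}$ is decisive: conditionally on the spectrum of $\mathbf{H}_t^{(N+1)}$ and on the eigenframe $\mathbf{U}_t$ of $\mathbf{A}_t$, the distribution of $(\mathbf{b}_t,c_t)$ is explicit (coming from the uniform distribution on a unitary orbit of fixed spectrum), and averaging $|\mathbf{u}_i^{\dag}\mathbf{b}_t|^2$ and $c_t$ against it combines with the excess drift $\tfrac{1}{2}(c_t\mathbf{I}-\mathbf{A}_t)$ and the coefficient shift from $-\tfrac{\eta+N+1}{2}$ to $-\tfrac{\eta+N}{2}$ to exactly reproduce the drift of (\ref{FSDE}). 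By the pathwise uniqueness from Lemma \ref{lem-wellPosednessN}, the eigenvalue laws then coincide.

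The hard part will be the last algebraic step: showing that after passing to the eigenbasis of $\mathbf{A}_t$ and averaging the conditional law of $(\mathbf{b}_t,c_t)$, the extra $d\mathbf{g}_t$-martingale and the drift discrepancy $\tfrac{1}{2}(c_t\mathbf{I}-\mathbf{A}_t)$ combine \emph{precisely} into the Dyson drift of (\ref{FSDE}) with the corrected coefficient $-\tfrac{\eta+N}{2}$. The choice $\mathbf{H}_0^{(N+1)}=\mathbf{0}$ is essential here --- it ensures full $\mathbb{U}(N+1)$-invariance of $\mathbf{H}_t^{(N+1)}$ and hence an explicit conditional law for $(\mathbf{b}_t,c_t)$ that makes the cancellations tractable. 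This is exactly why the matrix-stochastic-calculus argument is restricted to the $\mathbf{0}$ initial condition, with the general case handled separately via the direct transition-density approach (\ref{transitioDensity}).
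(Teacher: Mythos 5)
Your approach has a genuine gap, and it is concentrated exactly in the step you flag as ``the hard part.'' The problem is not the drift; it is the quadratic variation. Track the diffusion coefficient of $\lambda_i = \lambda_i(\mathbf{A}_t)$ through the perturbation formula. The main noise $\tfrac{1}{2}(\mathrm{d}\boldsymbol{\Gamma}_t^{(N)}\mathbf{A}_t+\mathbf{A}_t\,\mathrm{d}(\boldsymbol{\Gamma}_t^{(N)})^{\dag})$ contributes $\lambda_i\,\mathrm{Re}(\mathbf{u}_i^{\dag}\mathrm{d}\boldsymbol{\Gamma}_t^{(N)}\mathbf{u}_i)$ to $\mathrm{d}\lambda_i$, with quadratic variation $\lambda_i^2\,\mathrm{d}t$. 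The extra noise $\tfrac{1}{2}(\mathrm{d}\mathbf{g}_t\,\mathbf{b}_t^{\dag}+\mathbf{b}_t\,\mathrm{d}\mathbf{g}_t^{\dag})$ contributes $\mathrm{Re}\bigl((\mathbf{u}_i^{\dag}\mathrm{d}\mathbf{g}_t)(\mathbf{b}_t^{\dag}\mathbf{u}_i)\bigr)$, which is \emph{independent} of the main noise (disjoint entries of $\boldsymbol{\Gamma}^{(N+1)}_t$) and adds $|\mathbf{u}_i^{\dag}\mathbf{b}_t|^2\,\mathrm{d}t$ to the quadratic variation. So the diffusion coefficient is $\lambda_i^2 + |\mathbf{u}_i^{\dag}\mathbf{b}_t|^2$, strictly larger than the target $\lambda_i^2$ whenever $\mathbf{b}_t \neq 0$. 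No drift adjustment, and no conditional averaging over the law of $(\mathbf{b}_t,c_t)$, can repair this: integrating out $\mathbf{b}_t$ would still leave $\lambda_i^2 + \mathbb{E}\bigl[|\mathbf{u}_i^{\dag}\mathbf{b}_t|^2\mid\cdot\,\bigr] > \lambda_i^2$. Hence $\mathsf{eval}_N(\mathbf{A}_t)$ is not a weak solution of \eqref{FSDE}, your appeal to pathwise uniqueness via Lemma \ref{lem-wellPosednessN} does not apply, and the processes $t\mapsto\mathsf{eval}_N(\mathbf{A}_t)$ and $t\mapsto\mathsf{eval}_N(\mathbf{H}_t^{(N)})$ genuinely differ as processes. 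Theorem \ref{thm-matrix consistency} asserts only a fixed-$t$ marginal equality, and that is all that can be true: the $\mathbf{H}$-SDE is not corner-consistent at the process level, precisely because its noise covariance $\langle\mathrm{d}\mathbf{H}_{ij},\mathrm{d}\mathbf{H}_{kl}\rangle = \tfrac{1}{2}\bigl[\delta_{il}(\mathbf{H}^2)_{kj}+\delta_{jk}(\mathbf{H}^2)_{il}\bigr]\mathrm{d}t$ involves $\mathbf{H}^2$, whose top-left corner is $\mathbf{A}^2+\mathbf{b}\mathbf{b}^{\dag}$, not a function of $\mathbf{A}$ alone.

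The paper avoids this obstruction by never working with $\mathbf{H}_t$ directly. Propositions \ref{prop-Ht-Mt} and \ref{prop H-MM*} use the explicit solution formula and Bougerol's time-reversal to write, for fixed $t$, $\mathbf{H}_t^{(N)}\overset{\textnormal{d}}{=}\tfrac{1}{2}\int_0^t\mathbf{M}_s^{(\nu_N)}(\mathbf{M}_s^{(\nu_N)})^{\dag}\mathrm{d}s$. The proof then shows the process $\mathbf{S}_t = \mathbf{M}_t^{(\nu_N)}(\mathbf{M}_t^{(\nu_N)})^{\dag}$ solves a closed SDE with noise covariance $\langle\mathrm{d}\mathbf{S}_{ij},\mathrm{d}\mathbf{S}_{kl}\rangle = \mathbf{S}_{il}\mathbf{S}_{kj}\,\mathrm{d}t$, which \emph{is} determined by the corner alone; the corner of the $(N+1)$-dimensional $\mathbf{S}$-process then satisfies the identical closed SDE (equations \eqref{closed SDE for N-dim process} and \eqref{closed SDE of the corner}), and weak uniqueness gives process-level equality in law for $\mathbf{S}$, which passes to the integrals at fixed $t$. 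In short: corner consistency holds at the process level for $\mathbf{S}_t$ but not for $\mathbf{H}_t$; the key contribution of the paper's argument is the transformation that moves the question onto the good object, and this is exactly the step your outline omits.
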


We first  show that the solution to the SDE \eqref{MSDE} can be solved explicitly in terms of the matrix exponential Brownian motion $\left(\mathbf{M}_t^{(\theta)};t\ge 0\right)$, which we introduce next. This is given by the unique strong solution of the matrix SDE: 
    \begin{align}\label{Mt SDE}
    \mathrm{d}\mathbf{M}_t^{(\theta)} =\frac{1}{2} \mathbf{M}_t^{(\theta)}\mathrm{d}\mathbf{W}_t^{(N)} -\theta\mathbf{M}_t^{(\theta)}\mathrm{d}t,
\end{align}
where $\theta \in \mathbb{R}$ and $\left(\mathbf{W}^{(N)}_t;t\ge 0\right)$ is an $N\times N$ complex matrix-valued Brownian motion.  Below we use the notation $\mathbf{A}^{-\dag}$ for $\left(\mathbf{A}^\dag\right)^{-1}$.

\begin{prop}\label{prop-Ht-Mt}
The unique strong solution $\left(\mathbf{H}_t^{(N)};t\ge 0\right)$ of \eqref{MSDE}, starting from $\mathbf{H}^{(N)}_0\in\mathbb{H}_+(N)$ is given by
\begin{align}\label{Ht-H0Mt}
  \left(\left(\mathbf{M}_t^{(-\nu_N)}\right)^{-1} \left(\mathbf{H}_0 + \frac{1}{2} \int_{0}^{t}\mathbf{M}_s^{(-\nu_N)}\left(\mathbf{M}_s^{(-\nu_N)}\right)^{\dag}\mathrm{d}s\right)\left(\mathbf{M}_t^{(-\nu_N)}\right)^{-\dag};t\ge 0\right).
\end{align}
where $\nu_N=\frac{1}{4}(\eta+N)$, with $\eta$ being the same parameter as throughout this paper.
\end{prop}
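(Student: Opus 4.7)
The strategy is direct verification via the It\^o formula, followed by an appeal to strong uniqueness of \eqref{MSDE}. Writing $\mathbf{M}_t := \mathbf{M}_t^{(-\nu_N)}$ for brevity, set $\mathbf{K}_t := \mathbf{M}_t^{-1}$ and $\mathbf{Z}_t := \mathbf{H}_0 + \frac{1}{2}\int_0^t \mathbf{M}_s\mathbf{M}_s^\dag\mathrm{d}s$, so that the candidate expression on the right-hand side of \eqref{Ht-H0Mt} is $\mathbf{Y}_t := \mathbf{K}_t\mathbf{Z}_t\mathbf{K}_t^\dag$. The plan is to compute $\mathrm{d}\mathbf{Y}_t$ and show it satisfies \eqref{MSDE} driven by a matrix Brownian motion $\boldsymbol{\Gamma}_t^{(N)}$ obtained from $\mathbf{W}_t^{(N)}$.

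The first step is to differentiate $\mathbf{K}_t$. Applying the matrix It\^o product rule to $\mathbf{K}_t\mathbf{M}_t = \mathbf{I}$ and using \eqref{Mt SDE} gives
$$\mathrm{d}\mathbf{K}_t = -\mathbf{K}_t(\mathrm{d}\mathbf{M}_t)\mathbf{K}_t - (\mathrm{d}\mathbf{K}_t)(\mathrm{d}\mathbf{M}_t)\mathbf{K}_t.$$
Here the It\^o correction term vanishes: since the entries of $\mathbf{W}^{(N)}$ are independent complex Brownian motions with independent real and imaginary parts, one has the key identity $\mathrm{d}\mathbf{W}_t^{(N)}\,\mathrm{d}\mathbf{W}_t^{(N)} = \mathbf{0}$. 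This yields
$$\mathrm{d}\mathbf{K}_t = -\tfrac{1}{2}\,\mathrm{d}\mathbf{W}_t^{(N)}\,\mathbf{K}_t - \nu_N \mathbf{K}_t\,\mathrm{d}t,$$
and conjugate-transposing gives the analogous equation for $\mathbf{K}_t^\dag$.

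The second step is the It\^o expansion of $\mathbf{Y}_t = \mathbf{K}_t\mathbf{Z}_t\mathbf{K}_t^\dag$. The two first-order martingale contributions combine to give $-\frac{1}{2}\big(\mathrm{d}\mathbf{W}_t^{(N)}\mathbf{Y}_t + \mathbf{Y}_t(\mathrm{d}\mathbf{W}_t^{(N)})^\dag\big)$, together with a drift $-2\nu_N\mathbf{Y}_t\,\mathrm{d}t = -\frac{\eta+N}{2}\mathbf{Y}_t\,\mathrm{d}t$. The term $\mathbf{K}_t(\mathrm{d}\mathbf{Z}_t)\mathbf{K}_t^\dag$ collapses to $\frac{1}{2}\mathbf{I}\,\mathrm{d}t$ since the factors of $\mathbf{M}_t$ and $\mathbf{M}_t^\dag$ cancel. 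The sole surviving quadratic cross-term is
$$(\mathrm{d}\mathbf{K}_t)\mathbf{Z}_t(\mathrm{d}\mathbf{K}_t^\dag) = \tfrac{1}{4}\,\mathrm{d}\mathbf{W}_t^{(N)}\,\mathbf{Y}_t\,(\mathrm{d}\mathbf{W}_t^{(N)})^\dag,$$
whose $(i,j)$-entry is evaluated via $\mathrm{d}W_{ik}\mathrm{d}\bar W_{jl} = 2\delta_{ij}\delta_{kl}\mathrm{d}t$ to give exactly $\frac{1}{2}\mathrm{Tr}(\mathbf{Y}_t)\mathbf{I}\,\mathrm{d}t$. Setting $\boldsymbol{\Gamma}_t^{(N)} := -\mathbf{W}_t^{(N)}$, which is again a complex matrix Brownian motion, assembling these pieces shows that $\mathbf{Y}_t$ solves \eqref{MSDE} with $\mathbf{Y}_0 = \mathbf{H}_0$; strong uniqueness of \eqref{MSDE} then gives $\mathbf{Y}_t = \mathbf{H}_t^{(N)}$.

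The main obstacle is the careful bookkeeping of the covariance structure of the complex matrix Brownian motion. One must exploit both that $\mathrm{d}\mathbf{W}\,\mathrm{d}\mathbf{W} = \mathbf{0}$ (which eliminates the It\^o correction in $\mathrm{d}\mathbf{K}_t$, so that $\mathbf{K}_t$ obeys a clean linear SDE) and that $\mathrm{d}\mathbf{W}\,\mathbf{A}\,\mathrm{d}\mathbf{W}^\dag = 2\mathrm{Tr}(\mathbf{A})\mathbf{I}\,\mathrm{d}t$ (which produces precisely the trace drift of \eqref{MSDE}). Both features are consequences of the complex-BM convention and must be tracked consistently with the sign of $\nu_N$ and the definition of $\boldsymbol{\Gamma}^{(N)}$ in order for the parameters to land on the right values in \eqref{MSDE}.
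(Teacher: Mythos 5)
Your proposal is correct and follows essentially the same route as the paper's proof: the paper derives the SDEs for $\mathbf{M}_t^{\dag}$, $\left(\mathbf{M}_t^{(\theta)}\right)^{-1}$ and $\left(\mathbf{M}_t^{(\theta)}\right)^{-\dag}$ and then states that one can verify \eqref{Ht-H0Mt} directly via It\^o, whereas you actually carry out that It\^o expansion, including the two covariance identities $\mathrm{d}\mathbf{W}\,\mathrm{d}\mathbf{W}=\mathbf{0}$ and $\mathrm{d}\mathbf{W}\,\mathbf{A}\,\mathrm{d}\mathbf{W}^\dag=2\mathrm{Tr}(\mathbf{A})\mathbf{I}\,\mathrm{d}t$ for the complex matrix Brownian motion and the sign bookkeeping that makes $\boldsymbol{\Gamma}^{(N)}=-\mathbf{W}^{(N)}$ the natural driving noise.
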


\begin{proof}

First, note that from \eqref{Mt SDE} we have
    \begin{align*}
    \mathrm{d}\left(\mathbf{M}_t^{(\theta)}\right)^{\dag} =\frac{1}{2} \mathrm{d}\left(\mathbf{W}_t^{(N)}\right)^{\dag}\left(\mathbf{M}_t^{(\theta)}\right)^{\dag} -\theta \left(\mathbf{M}_t^{(\theta)}\right)^{\dag}\mathrm{d}t.
\end{align*}
Moreover, using It\^{o}'s formula we obtain 
    \begin{align*}
    \mathrm{d}\left(\mathbf{M}_t^{(\theta)}\right)^{-1} &= - \frac{1}{2}\mathrm{d}\mathbf{W}_t^{(N)}\left(\mathbf{M}_t^{(\theta)}\right)^{-1} +\theta \left(\mathbf{M}_t^{(\theta)}\right)^{-1}\mathrm{d}t\\
    \mathrm{d}\left(\mathbf{M}_t^{(\theta)}\right)^{-\dag} &= -\frac{1}{2}\left(\mathbf{M}_t^{(\theta)}\right)^{-\dag}\mathrm{d}\left(\mathbf{W}_t^{(N)}\right)^{\dag} +\theta \left(\mathbf{M}_t^{(\theta)}\right)^{-\dag}\mathrm{d}t.
\end{align*}
One can then check directly that \eqref{Ht-H0Mt} solves the SDE \eqref{MSDE} by applying It\^{o}'s formula to the expression in \eqref{Ht-H0Mt} and using the equations above.
\end{proof}

\begin{prop}\label{prop H-MM*}
In the setting of Theorem \ref{thm-matrix consistency}, we have, for fixed $t\ge 0$,
\begin{align}\label{H=int MM*}
    \mathbf{H}_t^{(N)}\overset{\textnormal{d}}{=}\frac{1}{2} \int_{0}^{t}\mathbf{M}_s^{(\nu_N)}\left(\mathbf{M}_s^{(\nu_N)}\right)^{\dag}\mathrm{d}s,
\end{align}
where $\nu_N=\frac{1}{4}(\eta+N)$, with $\eta$ being the same parameter as throughout this paper.
\end{prop}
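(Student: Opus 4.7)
My plan is to exploit the time-reversal symmetry of the matrix Brownian motion driving the stochastic exponential $\mathbf{M}^{(-\nu_N)}$. The formula from Proposition \ref{prop-Ht-Mt} with $\mathbf{H}_0^{(N)} = \mathbf{0}$, and without loss of generality $\mathbf{M}_0^{(-\nu_N)} = \mathbf{I}$, gives after pulling the outer factors through the integral and substituting $u = t-s$ that
\[
\mathbf{H}_t^{(N)} = \frac{1}{2}\int_0^t \mathbf{Z}_u \mathbf{Z}_u^{\dag}\,\mathrm{d}u, \qquad \mathbf{Z}_u \overset{\textnormal{def}}{=} \bigl(\mathbf{M}_t^{(-\nu_N)}\bigr)^{-1}\mathbf{M}_{t-u}^{(-\nu_N)}.
\]
Hence by continuity of integration it suffices to prove the equality of laws of processes $(\mathbf{Z}_u)_{u \in [0,t]} \overset{\textnormal{d}}{=} (\mathbf{M}_u^{(\nu_N)})_{u \in [0,t]}$.

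Equation \eqref{Mt SDE} is a right-multiplicative stochastic exponential. Denoting by $\mathbf{R}_{s_1,s_2}$ the unique solution of $\mathrm{d}_r \mathbf{R}_{s_1,r} = \mathbf{R}_{s_1,r}\bigl(\frac{1}{2}\mathrm{d}\mathbf{W}_r^{(N)} + \nu_N\mathrm{d}r\bigr)$ for $r \ge s_1$ with $\mathbf{R}_{s_1,s_1} = \mathbf{I}$, It\^{o}'s formula and strong uniqueness yield $\mathbf{M}^{(-\nu_N)}_{s_2} = \mathbf{M}^{(-\nu_N)}_{s_1}\mathbf{R}_{s_1,s_2}$ together with the cocycle identity $\mathbf{R}_{s_1,s_3} = \mathbf{R}_{s_1,s_2}\mathbf{R}_{s_2,s_3}$. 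Consequently $\mathbf{Z}_u = \mathbf{R}_{t-u,\,t}^{-1}$, and it depends on $\mathbf{W}^{(N)}$ only through its increments on $[t-u,t]$. Introducing the time-reversed Brownian motion $\hat{\mathbf{W}}_v^{(N)} \overset{\textnormal{def}}{=} \mathbf{W}_t^{(N)} - \mathbf{W}_{t-v}^{(N)}$, which is again a standard complex matrix Brownian motion in the forward time parameter $v$, it follows that $\mathbf{Z}_u$ is adapted to the natural filtration $\hat{\mathcal{F}}_u$ of $\hat{\mathbf{W}}^{(N)}$.

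The plan is then to identify the SDE satisfied by $\mathbf{Z}$ in the $\hat{\mathbf{W}}^{(N)}$-filtration. Using $\mathbf{R}_{t-u-\mathrm{d}u,\,t} = \mathbf{R}_{t-u-\mathrm{d}u,\,t-u}\mathbf{R}_{t-u,\,t}$ we obtain $\mathbf{Z}_{u+\mathrm{d}u} = \mathbf{Z}_u\,\mathbf{R}_{t-u-\mathrm{d}u,\,t-u}^{-1}$, while the short-time expansion $\mathbf{R}_{t-u-\mathrm{d}u,\,t-u} \approx \mathbf{I} + \frac{1}{2}\bigl(\mathbf{W}_{t-u}^{(N)} - \mathbf{W}_{t-u-\mathrm{d}u}^{(N)}\bigr) + \nu_N\mathrm{d}u$, combined with the crucial identity $(\mathrm{d}\mathbf{W}^{(N)})^2 = 0$ valid for complex matrix Brownian motion, gives $\mathbf{R}_{t-u-\mathrm{d}u,\,t-u}^{-1} - \mathbf{I} \approx -\frac{1}{2}\mathrm{d}\hat{\mathbf{W}}_u^{(N)} - \nu_N\mathrm{d}u$ after recognising $\mathbf{W}_{t-u}^{(N)} - \mathbf{W}_{t-u-\mathrm{d}u}^{(N)} = \mathrm{d}\hat{\mathbf{W}}_u^{(N)}$. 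This produces the It\^{o} SDE, in the filtration $\{\hat{\mathcal{F}}_u\}$,
\[
\mathrm{d}\mathbf{Z}_u = -\frac{1}{2}\mathbf{Z}_u\,\mathrm{d}\hat{\mathbf{W}}_u^{(N)} - \nu_N\mathbf{Z}_u\,\mathrm{d}u, \qquad \mathbf{Z}_0 = \mathbf{I}.
\]
Setting $\tilde{\mathbf{W}}^{(N)}\overset{\textnormal{def}}{=} -\hat{\mathbf{W}}^{(N)}$, which is again a standard complex matrix Brownian motion, this becomes the defining SDE \eqref{Mt SDE} for $\mathbf{M}^{(\nu_N)}$ driven by $\tilde{\mathbf{W}}^{(N)}$. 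Strong uniqueness for this linear SDE combined with $\tilde{\mathbf{W}}^{(N)}\overset{\textnormal{d}}{=}\mathbf{W}^{(N)}$ yields $(\mathbf{Z}_u)_{u\in[0,t]} \overset{\textnormal{d}}{=} (\mathbf{M}_u^{(\nu_N)})_{u\in[0,t]}$, and substituting back into the integral representation of $\mathbf{H}_t^{(N)}$ produces \eqref{H=int MM*}.

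The main obstacle will be making the heuristic short-time expansion rigorous, in particular justifying that the stochastic integral with respect to $\hat{\mathbf{W}}^{(N)}$ arising above is a genuine forward It\^{o} integral in the reversed filtration $\{\hat{\mathcal{F}}_u\}$ despite $\mathbf{Z}$ being built out of the original Brownian motion $\mathbf{W}^{(N)}$. A clean route will be to define $\tilde{\mathbf{Z}}$ as the unique strong solution of the target linear SDE in the $\hat{\mathbf{W}}^{(N)}$-filtration and then verify pathwise identity $\mathbf{Z}_u = \tilde{\mathbf{Z}}_u$ via the shared cocycle structure of right-multiplicative increments, each of which depends only on the increments of $\hat{\mathbf{W}}^{(N)}$ over the corresponding subinterval, together with the common initial value $\mathbf{Z}_0 = \tilde{\mathbf{Z}}_0 = \mathbf{I}$.
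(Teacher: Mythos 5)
Your proof follows the same overall path as the paper: start from Proposition \ref{prop-Ht-Mt} with $\mathbf{H}_0^{(N)}=\mathbf{0}$, substitute $u = t-s$, and reduce the claim to the time-reversal identity in law between the processes $\bigl(\bigl(\mathbf{M}_t^{(-\nu_N)}\bigr)^{-1}\mathbf{M}_{t-u}^{(-\nu_N)}\bigr)_{u\in[0,t]}$ and $\bigl(\mathbf{M}_u^{(\nu_N)}\bigr)_{u\in[0,t]}$. The one point of difference is how that identity is established. The paper cites it directly from Lemma 2.1 of \cite{Bougerol} --- described there as ``essentially the matrix analogue of the time-reversal property of Brownian motion'' --- and does not reprove it. You instead re-derive it from scratch: you use the cocycle structure of the right stochastic exponential to write $\mathbf{Z}_u = \mathbf{R}_{t-u,t}^{-1}$, observe that $\mathbf{Z}$ is adapted to the filtration generated by the reversed Brownian motion $\hat{\mathbf{W}}_v^{(N)} = \mathbf{W}_t^{(N)} - \mathbf{W}_{t-v}^{(N)}$, and use the fact that $\bigl(\mathrm{d}\mathbf{W}^{(N)}\bigr)^2 = 0$ for a complex Ginibre-type matrix Brownian motion, so that no It\^{o}--Stratonovich correction appears and $\mathbf{Z}$ solves the target linear SDE driven by $\tilde{\mathbf{W}}^{(N)} = -\hat{\mathbf{W}}^{(N)}$. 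This reconstructs the content of Bougerol's lemma rather than citing it, which is a legitimate and more self-contained presentation --- the vanishing of the self-quadratic-variation is precisely the structural fact behind the cited result. The trade-off is the extra rigour you correctly flag at the end: the short-time expansion and the identification of the limiting stochastic integral as a genuine forward It\^{o} integral in the reversed filtration remain heuristic as written and would need the discretisation/pathwise-identification argument you sketch, or an appeal to a general time-reversal theorem for It\^{o} diffusions, to close. The paper sidesteps that work entirely by outsourcing it to \cite{Bougerol}.
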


\begin{proof}
For any fixed $t\geq 0$, define 
\begin{equation*}
 \left(\mathbf{N}_s^{(\nu_N)};0\leq s\leq t\right)\overset{\textnormal{def}}{=}\left(\left(\mathbf{M}_t^{(-\nu_N)}\right)^{-1}\mathbf{M}_{t-s}^{(-\nu_N)};0\leq s\leq t\right).   
\end{equation*}
Then, making the change of variable $u\to t-u$ in \eqref{Ht-H0Mt}, one can write
\begin{align}\label{Ht-Nt}
   \mathbf{H}_t^{(N)}\overset{\textnormal{d}}{=}\left(\mathbf{M}_t^{(-\nu_N)}\right)^{-1} \mathbf{H}_0 \left(\mathbf{M}_t^{(-\nu_N)}\right)^{-\dag}+ \frac{1}{2}\int_{0}^{t}\mathbf{N}_u^{(\nu_N)}\left(\mathbf{N}_u^{(\nu_N)}\right)^{\dag}\mathrm{d}u.
\end{align}
From \eqref{Ht-Nt}, we have when $\mathbf{H}_0^{(N)}=\mathbf{0}$,
\begin{align}
        \mathbf{H}_t^{(N)}\overset{\textnormal{d}}{=} \frac{1}{2}\int_{0}^{t}\mathbf{N}_u^{(\nu_N)}\left(\mathbf{N}_u^{(\nu_N)}\right)^{\dag}\mathrm{d}u.
\end{align}
Moreover, it is shown in Lemma 2.1 of \cite{Bougerol} (essentially the matrix analogue of the time-reversal property of Brownian motion) that,
\begin{align*}
\left(\mathbf{N}_u^{(\nu_N)};0\leq u\leq t\right)\overset{\textnormal{d}}{=}\left(\mathbf{M}_u^{(\nu_N)};0\leq u\leq t\right),
    \end{align*}
    which implies the result.
\end{proof}

\begin{proof}[Proof of Theorem \ref{thm-matrix consistency}]
By virtue of \eqref{H=int MM*}, it suffices to show that 
\begin{align*}
\int_{0}^{t}\mathbf{M}_s^{(\nu_N)}\left(\mathbf{M}_s^{(\nu_N)}\right)^{\dag}\mathrm{d}s\overset{\textnormal{d}}{=}\Pi_N^{N+1}\left(\int_{0}^{t}\mathbf{M}_s^{(\nu_{N+1})}\left(\mathbf{M}_s^{(\nu_{N+1})}\right)^{\dag}\mathrm{d}s\right),\  \  \forall t\geq 0,\  \  N\in\mathbb{N}.
\end{align*}
By applying It\^{o}'s formula we have from \eqref{Mt SDE} with $\theta=\nu_N$,
\begin{align}\label{SDE for N-dim process}
   \mathrm{d}\left(\mathbf{M}_t^{(\nu_N)}\left(\mathbf{M}_t^{(\nu_N)}\right)^{\dag}\right)&= \mathrm{d}\mathbf{M}_t^{(\nu_N)}\left(\mathbf{M}_t^{(\nu_N)}\right)^{\dag}+\mathbf{M}_t^{(\nu_N)}\mathrm{d}\left(\mathbf{M}_t^{(\nu_N)}\right)^{\dag}+
   \mathrm{d}\mathbf{M}_t^{(\nu_N)}\mathrm{d}\left(\mathbf{M}_t^{(\nu_N)}\right)^{\dag}\nonumber\\
&=\frac{1}{2}\mathbf{M}_t^{(\nu_N)}\left(\mathrm{d}\mathbf{W}_t^{(N)}+\mathrm{d}\left(\mathbf{W}_t^{(N)}\right)^{\dag}\right)\left(\mathbf{M}_t^{(\nu_N)}\right)^{\dag}+\left(\frac{N}{2}-2\nu_N\right)\mathbf{M}_t^{(\nu_N)}\left(\mathbf{M}_t^{(\nu_N)}\right)^{\dag}\mathrm{d}t.
\end{align}
Now, using Lévy's characterization theorem, see \cite{Revuz-Yor}, one 
can easily check that
\begin{align}\label{hat{W}}
    \mathrm{d}\hat{\mathbf{W}}_t^{(N)}\overset{\textnormal{def}}{=}\left(\mathbf{M}_t^{(\nu_N)}\left(\mathbf{M}_t^{(\nu_N)}\right)^{\dag}\right)^{-\frac{1}{2}}\mathbf{M}_t^{(\nu_N)}\mathrm{d}\mathbf{W}_t^{(N)}\left(\mathbf{M}_t^{(\nu_N)}\right)^{\dag}\left(\mathbf{M}_t^{(\nu_N)}\left(\mathbf{M}_t^{(\nu_N)}\right)^{\dag}\right)^{-\frac{1}{2}},
\end{align}
 is an $N\times N$ matrix complex Brownian motion. Thus, the equation (\ref{SDE for N-dim process}) can be written in the following closed form:
\begin{align}\label{closed SDE for N-dim process}
   \mathrm{d}\left(\mathbf{M}_t^{(\nu_N)}\left(\mathbf{M}_t^{(\nu_N)}\right)^{\dag}\right)&=\frac{1}{2} \sqrt{\left(\mathbf{M}_t^{(\nu_N)}\left(\mathbf{M}_t^{(\nu_N)}\right)^{\dag}\right)}\left(\mathrm{d}\hat{\mathbf{W}}_t^{(N)}+\mathrm{d}\left(\hat{\mathbf{W}}_t^{(N)}\right)^{\dag}\right)\sqrt{\mathbf{M}_t^{(\nu_N)}\left(\mathbf{M}_t^{(\nu_N)}\right)^{\dag}}\nonumber
   \\& \  \ +\left(\frac{N}{2}-2\nu_N\right)\mathbf{M}_t^{(\nu_N)}\left(\mathbf{M}_t^{(\nu_N)}\right)^{\dag}\mathrm{d}t.
\end{align}
We next consider the SDE for the $(N+1) \times (N+1)$ matrix process, that we can write as follows
 \begin{align}\label{N+1-dim GBm Matrix}
   \mathrm{d}\mathbf{M}_t^{(\nu_{N+1})} &= \frac{1}{2}\mathbf{M}_t^{(\nu_{N+1})}\mathrm{d}\mathbf{W}_t^{(N+1)} -\nu_{N+1} \mathbf{M}_t^{(\nu_{N+1})}\mathrm{d}t \nonumber\\
    &= \frac{1}{2}\begin{pmatrix}
        \tilde{\mathbf{M}}_t^{(N)} & \mathbf{a}_t\\
        \mathbf{b}_t^\textnormal{T} & c_t
    \end{pmatrix} \mathrm{d}\begin{pmatrix}
        \Tilde{\mathbf{W}}_t^{(N)} & \mathbf{u}_t\\
        \mathbf{v}_t^\textnormal{T} & w_t
    \end{pmatrix} 
    - \nu_{N+1}\begin{pmatrix}
        \tilde{\mathbf{M}}_t^{(N)} & \mathbf{a}_t\\
        \mathbf{b}_t^\textnormal{T} & c_t
    \end{pmatrix} \mathrm{d}t,
\end{align}
where $\tilde{\mathbf{M}}_t^{(N)}$ is the $N\times N$ top-left corner of $\mathbf{M}_t^{(\nu_{N+1})}$, $\mathbf{a}_t$ and $\mathbf{b}_t$ are 
$\mathbb{C}^{N}$-valued, and $c_t\in\mathbb{C}$. Here,
$\tilde{\mathbf{W}}_t^{(N)}$, $\mathbf{u}_t,\mathbf{v}_t $, and $ w_t$ are complex matrix, complex vector, and $\mathbb{C}$-valued Brownian motions and $\mathbf{A}^\textnormal{T}$ denotes the transpose of $\mathbf{A}$.  One can see from the above equation that
\begin{align}\label{(tilde M_t a_t)}
    \mathrm{d}\begin{pmatrix}
        \tilde{\mathbf{M}}_t^{(N)} & \mathbf{a}_t
    \end{pmatrix}=\frac{1}{2}\begin{pmatrix}
        \tilde{\mathbf{M}}_t^{(N)} & \mathbf{a}_t
    \end{pmatrix}
    \mathrm{d}\mathbf{W}_t^{(N+1)}
    - \nu_{N+1}\begin{pmatrix}
        \tilde{\mathbf{M}}_t^{(N)} & \mathbf{a}_t
    \end{pmatrix} \mathrm{d}t.
\end{align}
We also have that for the $N\times N$ top-left corner of $\mathbf{M}_t^{(\nu_{N+1})}\left(\mathbf{M}_t^{(\nu_{N+1})}\right)^{\dag}$: 
\begin{align}
\Pi_N^{N+1}\left(\mathbf{M}_t^{(\nu_{N+1})}\left(\mathbf{M}_t^{(\nu_{N+1})}\right)^{\dag}\right)= \begin{pmatrix}
        \tilde{\mathbf{M}}_t^{(N)} & \mathbf{a}_t
    \end{pmatrix}\begin{pmatrix}
        \tilde{\mathbf{M}}_t^{(N)} & \mathbf{a}_t
    \end{pmatrix}^{\dag}. \label{ProjMatrixSDE}
\end{align}

Therefore,
by
It\^{o}'s formula we get 
\begin{align}
   \mathrm{d}\Pi_N^{N+1}\left(\mathbf{M}_t^{(\nu_{N+1})}\left(\mathbf{M}_t^{(\nu_{N+1})}\right)^{\dag}\right)&= 
    \mathrm{d}\begin{pmatrix}
        \tilde{\mathbf{M}}_t^{(N)} & \mathbf{a}_t
    \end{pmatrix}\begin{pmatrix}
        \tilde{\mathbf{M}}_t^{(N)} & \mathbf{a}_t
    \end{pmatrix}^{\dag}+
  \begin{pmatrix}
        \tilde{\mathbf{M}}_t^{(N)} & \mathbf{a}_t
    \end{pmatrix}\mathrm{d}\begin{pmatrix}
        \tilde{\mathbf{M}}_t^{(N)} & \mathbf{a}_t
    \end{pmatrix}^{\dag} \nonumber
   \\&\  \  + \mathrm{d}\begin{pmatrix}
        \tilde{\mathbf{M}}_t^{(N)} & \mathbf{a}_t
    \end{pmatrix}\mathrm{d}\begin{pmatrix}
        \tilde{\mathbf{M}}_t^{(N)} & \mathbf{a}_t
    \end{pmatrix}
    ^{\dag}.
\end{align}
Using \eqref{(tilde M_t a_t)}, we then obtain
\begin{align}\label{cornerSDE}
   \mathrm{d}\Pi_N^{N+1}\left(\mathbf{M}_t^{(\nu_{N+1})}\left(\mathbf{M}_t^{(\nu_{N+1})}\right)^{\dag}\right) & = \frac{1}{2}\begin{pmatrix}
        \tilde{\mathbf{M}}_t^{(N)} & \mathbf{a}_t
    \end{pmatrix} 
        \left( \mathrm{d}\mathbf{W}_t^{(N+1)}+ \mathrm{d}\left(\mathbf{W}_t^{(N+1)}\right)
^{\dag}\right)
    \begin{pmatrix}
        \tilde{\mathbf{M}}_t^{(N)} & \mathbf{a}_t
    \end{pmatrix}^{\dag}\nonumber\\
    &\  \ + \left(\frac{N+1}{2}-2\nu_{N+1}\right)\begin{pmatrix}
        \tilde{\mathbf{M}}_t^{(N)} & \mathbf{a}_t
    \end{pmatrix}\begin{pmatrix}
        \tilde{\mathbf{M}}_t^{(N)} & \mathbf{a}_t
    \end{pmatrix}^{\dag}\mathrm{d}t.
\end{align}
Similarly to \eqref{hat{W}}, by virtue of Levy's characterization theorem, we can define the $N\times N$ matrix-valued complex Brownian motion $\left(\hat{\mathbf{W}}_t^{(N)}\right)_{t\ge 0}$ (different from \eqref{hat{W}}) by the formula
\begin{align*}
\mathrm{d}\hat{\mathbf{W}}_t^{(N)}\overset{\textnormal{def}}{=}\nonumber \left(\begin{pmatrix}
        \tilde{\mathbf{M}}_t^{(N)} & \mathbf{a}_t
    \end{pmatrix}\begin{pmatrix}
        \tilde{\mathbf{M}}_t^{(N)} & \mathbf{a}_t
    \end{pmatrix}^{\dag}\right)^{-\frac{1}{2}}\begin{pmatrix}
        \tilde{\mathbf{M}}_t^{(N)} & \mathbf{a}_t
\end{pmatrix}\mathrm{d}\mathbf{W}_t^{(N+1)}\times \\ \begin{pmatrix}
        \tilde{\mathbf{M}}_t^{(N)} & \mathbf{a}_t
    \end{pmatrix}^{\dag}\left(\begin{pmatrix}
        \tilde{\mathbf{M}}_t^{(N)} & \mathbf{a}_t
    \end{pmatrix}\begin{pmatrix}
        \tilde{\mathbf{M}}_t^{(N)} & \mathbf{a}_t
    \end{pmatrix}^{\dag}\right)^{-\frac{1}{2}}.
\end{align*}
Using these we represent the SDE \eqref{cornerSDE} in the following closed form, recalling (\ref{ProjMatrixSDE}),
\begin{align}\label{closed SDE of the corner}
   &\mathrm{d}\Pi_N^{N+1}\left(\mathbf{M}_t^{(\nu_{N+1})}\left(\mathbf{M}_t^{(\nu_{N+1})}\right)^{\dag}\right) = \frac{1}{2}
        \sqrt{\begin{pmatrix}
        \tilde{\mathbf{M}}_t^{(N)} & \mathbf{a}_t
    \end{pmatrix}\begin{pmatrix}
        \tilde{\mathbf{M}}_t^{(N)} & \mathbf{a}_t
    \end{pmatrix}^{\dag}}
        \left(\mathrm{d}\hat{\mathbf{W}}_t^{(N)}+ \mathrm{d}\left(\hat{\mathbf{W}}_t^{(N)}\right)
^{\dag}\right)\times
    \nonumber \\ &\hspace{1.5cm} \sqrt{\begin{pmatrix}
        \tilde{\mathbf{M}}_t^{(N)} & \mathbf{a}_t
    \end{pmatrix}\begin{pmatrix}
        \tilde{\mathbf{M}}_t^{(N)} & \mathbf{a}_t
    \end{pmatrix}^{\dag}}
    + \left(\frac{N+1}{2}-2\nu_{N+1}\right)\begin{pmatrix}
        \tilde{\mathbf{M}}_t^{(N)} & \mathbf{a}_t
    \end{pmatrix}\begin{pmatrix}
        \tilde{\mathbf{M}}_t^{(N)} & \mathbf{a}_t
    \end{pmatrix}^{\dag}\mathrm{d}t.
\end{align}
Now, comparing the SDEs \eqref{closed SDE for N-dim process} and 
\eqref{closed SDE of the corner}
and considering the definition of the parameter $\nu_N$, it follows that, both 
$\left(\mathbf{M}_t^{(\nu_N)}\left(\mathbf{M}_t^{(\nu_N)}\right)^{\dag};t \ge 0\right)$
and
$\left(\Pi_N^{N+1}\left(\mathbf{M}_t^{(\nu_{N+1})}\left(\mathbf{M}_t^{(\nu_{N+1})}\right)^{\dag}\right);t \ge 0\right)$
satisfy the same equation, having a unique weak solution, and hence, these processes have the same distribution. This in particular implies 
that 
\begin{align*}
    \int_{0}^{t}\mathbf{M}_s^{(\nu_N)}\left(\mathbf{M}_s^{(\nu_N)}\right)^{\dag}\mathrm{d}s\overset{\textnormal{d}}{=}\Pi_N^{N+1}\left(\int_{0}^{t}\mathbf{M}_s^{(\nu_{N+1})}\left(\mathbf{M}_s^{(\nu_{N+1})}\right)^{\dag}\mathrm{d}s\right),\  \  N\in\mathbb{N},
\end{align*}
 for any fixed $t>0$, as desired.
\end{proof}
We now give a direct proof for Theorem \ref{thm-intertwiningSemigroup} for any initial condition.

\begin{proof}[Proof of Theorem \ref{thm-intertwiningSemigroup}]
      We will show  
        \begin{align}\label{P-Lambda-intertwining}
           \mathfrak{P}_{N+1}(t)\Lambda_N^{N+1}(\mathbf{x},\cdot)=\Lambda_N^{N+1}\mathfrak{P}_{N}(t)(\mathbf{x},\cdot),\  \ \forall \mathbf{x}\in\mathbb{W}_{N+1,+}^{\circ},\  \ \forall t\geq 0.
        \end{align}
        The case of general $\mathbf{x}\in \mathbb{W}_{N+1,+}$ then follows by the Feller property, see Proposition \ref{prop-FellerProperty}. We have using the explicit expressions in \eqref{transitioDensity} and \eqref{link}, that (\ref{P-Lambda-intertwining}) boils down to showing:
\begin{align}\label{P-Lambda-intertwining formula}
    \int_{\mathbb{W}_{N+1,+}}\mathrm{e}^{-\lambda_{N+1}t}\det\left(\mathfrak{p}^{N+1}_t(x_i,y_j)\right)_{i,j=1}^{N+1}\mathbf{1}_{\mathbf{z}\prec\mathbf{y}}\mathrm{d}\mathbf{y}=
        \int_{\mathbb{W}_{N,+}}\mathrm{e}^{-\lambda_Nt}\mathbf{1}_{\mathbf{y}\prec\mathbf{x}}\det\left(\mathfrak{p}^{N}_t(y_i,z_j)\right)_{i,j=1}^{N}\mathrm{d}\mathbf{y}.
\end{align}
We write $\mathbf{x}\tilde{\prec}\mathbf{y}$ to denote $ y_1\geq x_1> y_2\geq x_2>\dots\geq x_N> y_{N+1}$, when $\mathbf{x}\in\mathbb{W}_N$ and $\mathbf{y}\in\mathbb{W}_{N+1}$.
Then, we recall the well-known identity
$\mathbf{1}_{\mathbf{x}\tilde{\prec}\mathbf{y}}= \det\left(\varphi(x_i,y_j)\right)_{i,j=1}^{N+1}$ with 

\begin{align*}
\varphi(x,y)=\begin{cases}
-\mathbf{1}_{y>x}, \  \  & x,y\in\mathbb{R}_+,\\
 1, \  \  & x=\textnormal{virt},
\end{cases}
\end{align*}
where we use the convention $x_{N+1}=\textnormal{virt}$ for a ``virtual variable". Note that, we can replace $\mathbf{1}_{\mathbf{y}\prec\mathbf{x}}$ with $\mathbf{1}_{\mathbf{y}\tilde{\prec}\mathbf{x}}$ in \eqref{P-Lambda-intertwining formula}.
Accordingly, with this replacement, the LHS of \eqref{P-Lambda-intertwining formula}, by the Andreief identity, is equal to 
\begin{align*}
    \mathrm{e}^{-\lambda_{N+1}t}\det\begin{pmatrix}
        [\mathfrak{p}_t^{(N+1)}\varphi^{*}](x_1,z_1)& [\mathfrak{p}_{t}^{(N+1)}\varphi^{*}](x_1,z_2)& \cdots & 1\\
         [\mathfrak{p}_{t}^{(N+1)}\varphi^{*}](x_2,z_1)& [\mathfrak{p}_{t}^{(N+1)}\varphi^{*}](x_2,z_2)& \cdots & 1\\
        \vdots&\vdots& &\vdots\\  [\mathfrak{p}_{t}^{(N+1)}(t)\varphi^{*}](x_{N+1},z_1)& [\mathfrak{p}_t^{(N+1)}\varphi^{*}](x_{N+1},z_2)& \cdots & 1
    \end{pmatrix},
\end{align*}
where $\varphi^{*}(x,y)=\varphi(y,x)$, and by definition $[\mathfrak{p}_t^{(N+1)}\varphi^*](x,y)$ denotes compositition of kernels so that the entries of the matrix above are as follows:
\begin{align*}
    [\mathfrak{p}_t^{(N+1)}\varphi^*](x_i,z_j)=\int_{0}^{\infty} \mathfrak{p}_t^{(N+1)}(x_i,y)\varphi^{*}(y,z_j)\mathrm{d}y=-\int_{z_j}^{\infty}\mathfrak{p}_t^{(N+1)}(x_{i},y)\mathrm{d}y,\  \ \forall i,j=1,\dots,N+1.
\end{align*}
For the RHS, we first expand the $\left(N+1\right)\times \left(N+1\right)$ determinant corresponding to $\mathbf{1}_{\mathbf{x}\tilde{\prec}\mathbf{y}}$. We next use the Andreief identity, and then again recombine terms to obtain 
\begin{align}\label{RHS-intertwinig}
   \sum_{l=0}^{N+1}(-1)^{N+1-l} \mathrm{e}^{-\lambda_{N}t}&\det\left([\varphi^{*}\mathfrak{p}_t^{(N)}](x_i,z_j)\right)_{i,j=1}^{N+1}=
   \nonumber
   \\&  \mathrm{e}^{-\lambda_{N}t}
    \det\begin{pmatrix}
        [\varphi^{*}\mathfrak{p}_t^{(N)}](x_1,z_1)& [\varphi^{*}\mathfrak{p}_t^{(N)}](x_1,z_2)& \cdots & 1\\
        [\varphi^{*}\mathfrak{p}_t^{(N)}](x_2,z_1)& [\varphi^{*}\mathfrak{p}_t^{(N)}](x_2,z_2)& \cdots & 1\\
        \vdots&\vdots& &\vdots\\  [\varphi^{*}\mathfrak{p}_t^{(N)}](x_{N+1},z_1)& [\varphi^{*}\mathfrak{p}_t^{(N)}](x_{N+1},z_2)& \cdots & 1
    \end{pmatrix},
\end{align}
where 
\begin{align*}
    [\varphi^{*}\mathfrak{p}_t^{(N)}](x_{i},z_j)=\int_{0}^{\infty} \varphi^{*}(x_i,y)\mathfrak{p}_t^{(N)}(y,z_j)\mathrm{d}y=-\int_{0}^{x_i}\mathfrak{p}_t^{(N)}(y,z_j)\mathrm{d}y,\  \ \forall i,j=1,\dots,N+1.
\end{align*}
We now claim that
\begin{align}\label{pt^N=int qt}
    \mathfrak{p}_t^{(N)}(x,z)=\mathrm{e}^{\left(\frac{\eta}{2}+N\right)t}\partial_x\int_z^{\infty}\mathfrak{p}_t^{(N+1)}(x,y)\mathrm{d}y\overset{\textnormal{def}}{=}q_t(x,y).
\end{align}
 We first note, the key algebraic consistency relation
$\partial_x\mathsf{L}_x^{(N+1)}=\mathsf{L}_x^{(N)}\partial_x+(-\frac{\eta}{2}-N)\partial_x$.
Making use of this, it can be checked that $q_t(x,y)$ is the fundamental solution to the Kolmogorov backward equation associated to the one-dimensional diffusion
$\mathsf{L}_x^{(N)}$, which is unique, and hence, \eqref{pt^N=int qt} holds. 
Thus, we have
\begin{align}
    -\int_0^{x_i}\mathfrak{p}_t^{(N)}(y,z_j)\mathrm{d}y=\mathrm{e}^{\left(\frac{\eta}{2}+N\right)t}\left[\int_{z_j}^{\infty}\mathfrak{p}_t^{(N+1)}(0,y)\mathrm{d}y-\int_{z_j}^{\infty}\mathfrak{p}_t^{(N+1)}(x_i,y)\mathrm{d}y\right],\  \forall i,j=1,\dots,N+1.
\end{align}
One can now substitute this in the determinant in \eqref{RHS-intertwinig} and use column operations to show the desired identity, by noting that $\lambda_{N+1}-\lambda_N=N(-\frac{\eta}{2}-N)$.
\end{proof}

\subsection{Invariant measure and convergence to equilibrium}\label{SubSec-Ergodicity}
The inverse Laguerre ensemble $\mathfrak{M}_N^{\eta}$
of parameter $\eta>-1$ is the probability measure on $\left(\mathbb{W}_{N,+}\right)_{N=1}^{\infty}$ given by 
	\begin{align}\label{ILM}
		\mathfrak{M}_N^{\eta}(\mathrm{d}\mathbf{x})=\frac{1}{\mathcal{Z}_N^{(\eta)}}\Delta_N^2(\textbf{x})\prod_{i=1}^{N}x_i^{-\eta-2N}\mathrm{e}^{-\frac{1}{x_i}}\mathrm{d}\mathbf{x},
	\end{align}
for some normalisation constant $\mathcal{Z}_N^{(\eta)}$. We now show that the solution $(\mathfrak{x}^{(N)}(t);t\geq 0)$ of (\ref{FSDE}) started from arbitrary initial condition, converges to $\mathfrak{M}_N^{\eta}$.

\begin{prop}\label{prop-ConvToEquilibrium}
    Let $\eta>-1$ and consider the unique strong solution $(\mathfrak{x}^{(N)}(t);t\geq 0)$ of (\ref{FSDE}). Suppose $\mathfrak{K} \in \mathscr{M}_\textnormal{p}(\mathbb{W}_{N,+})$  and that $\mathsf{Law}(\mathfrak{x}^{(N)}(0))=\mathfrak{K}$. Then,  as $t\to\infty$,
\begin{align}\label{ConvToEqFiniteN}
\mathfrak{x}^{(N)}(t) \overset{\textnormal{d}}{\longrightarrow} \mathbf{Z}, \ \ \textnormal{where } \mathsf{Law}(\mathbf{Z})=\mathfrak{M}_N^{\eta}.
\end{align} 
In particular, $\mathfrak{M}_N^{\eta}$ is the unique invariant measure of $(\mathfrak{P}_N(t))_{t\ge 0}$.
\end{prop}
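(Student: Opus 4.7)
The plan is to exploit the Doob $h$-transform representation of $(\mathfrak{P}_N(t))_{t\ge 0}$ established in the proof of Proposition~\ref{prop-Core+TransitionDens}. The underlying one-dimensional diffusion $\mathsf{z}^{(N)}$ with generator $\mathsf{L}_x^{(N)}$ from \eqref{LN} admits, by a direct solution of $(\mathsf{L}_x^{(N)})^*\pi^{(N)}=0$, the stationary density $\pi^{(N)}(x)\propto x^{-\eta-2N}\mathrm{e}^{-1/x}$ on $(0,\infty)$, which is a probability measure precisely because $\eta>-1$ (so $\eta+2N>1$ for $N\ge 1$). Since $0$ is an entrance boundary and $\infty$ a natural boundary, classical one-dimensional diffusion theory yields positive recurrence and exponentially fast convergence in total variation of $\mathfrak{p}_t^{(N)}(x,\cdot)$ to $\pi^{(N)}$ from every initial point, with rate given by the spectral gap of $-\mathsf{L}^{(N)}$ on $L^2(\pi^{(N)})$.

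First, invariance of $\mathfrak{M}_N^\eta$ follows from its $h$-transform structure: conjugating by $\Delta_N$ turns $\mathbf{L}_N+\lambda_N$ into $\sum_i\mathsf{L}_{x_i}^{(N)}$ acting on functions on $\mathbb{W}_{N,+}$ with Dirichlet conditions on $\partial \mathbb{W}_{N,+}^\circ$, whose stationary measure (for the Karlin--McGregor semigroup on $\mathbb{W}_{N,+}^\circ$) is $\Delta_N(\mathbf{x})^2\prod_i\pi^{(N)}(x_i)\mathrm{d}\mathbf{x}$ up to normalisation. Next, using the spectral decomposition $\mathfrak{p}_t^{(N)}(x,y)=\pi^{(N)}(y)\sum_{k\ge 0}\mathrm{e}^{-\mu_k t}\phi_k(x)\phi_k(y)$, with $0=\mu_0<\mu_1<\mu_2<\cdots$ and orthonormal polynomial eigenfunctions $\phi_k$ of degree $k$, we apply the Cauchy--Binet identity to the determinant in \eqref{transitioDensity} to obtain
\begin{align*}
\mathfrak{P}_N(t)(\mathbf{x},\mathbf{y})=\sum_{0\le k_1<\dots<k_N}\mathrm{e}^{-[\lambda_N+\mu_{k_1}+\cdots+\mu_{k_N}]t}\frac{\Delta_N(\mathbf{y})}{\Delta_N(\mathbf{x})}\det\bigl(\phi_{k_\ell}(x_i)\bigr)_{i,\ell=1}^N\det\bigl(\phi_{k_\ell}(y_i)\bigr)_{i,\ell=1}^N\prod_{j=1}^N\pi^{(N)}(y_j).
\end{align*}
Since $\phi_\ell$ has degree $\ell$, the leading minor $(k_1,\dots,k_N)=(0,\dots,N-1)$ contributes $c^2\Delta_N(\mathbf{x})\Delta_N(\mathbf{y})\prod_j\pi^{(N)}(y_j)$, and matching with the invariance of $\mathfrak{M}_N^\eta$ forces $\lambda_N+\mu_0+\cdots+\mu_{N-1}=0$ with $c^2=(\mathcal{Z}_N^{(\eta)})^{-1}$; all remaining terms decay exponentially at rate at least $\mu_N-\mu_{N-1}>0$. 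This yields $\mathfrak{P}_N(t)(\mathbf{x},\cdot)\to\mathfrak{M}_N^\eta$ weakly for $\mathbf{x}\in\mathbb{W}_{N,+}^\circ$, Feller continuity from Proposition~\ref{prop-FellerProperty} extends it to arbitrary $\mathbf{x}\in\mathbb{W}_{N,+}$, and dominated convergence against $\mathfrak{K}$ gives \eqref{ConvToEqFiniteN}. Uniqueness is immediate: any invariant $\nu$ satisfies $\nu=\nu\mathfrak{P}_N(t)\to\mathfrak{M}_N^\eta$ as $t\to\infty$.

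The main obstacle is the spectral step: confirming discreteness of the spectrum of $-\mathsf{L}^{(N)}$ on $L^2(\pi^{(N)})$, that the eigenfunctions are polynomials of matching degree, and justifying the interchange of sum and determinant in Cauchy--Binet. These are essentially classical facts for the Matsumoto--Yor diffusion, which is conjugate under the reciprocal map to a diffusion in the generalised Laguerre family, but they still require a careful verification. Should a more self-contained route be desired, one can bypass the explicit spectral expansion by invoking Meyn--Tweedie--type ergodicity theorems: strict positivity and joint smoothness of $\mathfrak{P}_N(t)(\mathbf{x},\mathbf{y})$ on $\mathbb{W}_{N,+}^\circ\times\mathbb{W}_{N,+}^\circ$ (both inherited from the Karlin--McGregor formula and the positivity of $\mathfrak{p}_t^{(N)}$) yield the strong Feller and $\mathfrak{M}_N^\eta$--irreducibility properties, which together with existence of the invariant probability measure $\mathfrak{M}_N^\eta$ imply convergence in total variation from every initial distribution.
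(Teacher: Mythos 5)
Your proposal takes a genuinely different route from the paper. The paper works at the level of the matrix process: it uses the explicit solution \eqref{Ht-Nt} of the matrix SDE, Bougerol's lemma to kill the initial-condition term almost surely as $t\to\infty$, the matrix Dufresne identity to identify the law of $\frac12\int_0^\infty \mathbf{N}_u^{(\nu_N)}(\mathbf{N}_u^{(\nu_N)})^\dag\mathrm{d}u$ as the inverse Laguerre ensemble on $\mathbb{H}_+(N)$, and Weyl's integration formula to pass to eigenvalues. Your argument instead stays at the eigenvalue level and appeals to the Karlin--McGregor / Doob $h$-transform structure from Proposition~\ref{prop-Core+TransitionDens} together with a spectral expansion of the scalar semigroup $\mathfrak{p}_t^{(N)}$.

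However, the main branch of your argument has a genuine gap. You assert that $-\mathsf{L}^{(N)}$ has a discrete spectrum $0=\mu_0<\mu_1<\cdots$ with orthonormal \emph{polynomial} eigenfunctions $\phi_k$ of all degrees $k\ge 0$. This is false. The stationary density $\pi^{(N)}(x)\propto x^{-\eta-2N}\mathrm{e}^{-1/x}$ is an inverse-gamma density, whose $k$-th moment $\int_0^\infty x^k\pi^{(N)}(x)\mathrm{d}x = \Gamma(\eta+2N-1-k)/\Gamma(\eta+2N-1)$ is finite only for $k<\eta+2N-1$. Thus only finitely many polynomial powers (and a fortiori finitely many polynomials) are in $L^2(\pi^{(N)})$; in the worst case, $\eta\to -1$ and $N=1$, only the constants are square-integrable. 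The diffusion with generator $\mathsf{L}^{(N)}$ (the inhomogeneous geometric Brownian motion, see \cite{Wong} and \cite{MatsumotoYor1,MatsumotoYor2}) has a \emph{mixed} spectrum with a finite discrete part and a continuous part, so the full Cauchy--Binet expansion you write down, and in particular the claim that all minors other than $(0,\dots,N-1)$ decay at rate $\mu_N-\mu_{N-1}$, is ill-posed. This is precisely why the paper, in Section~\ref{SectionGibbs}, notes that the spectral expansion of \cite{Wong} is ``well-adapted to looking at the $t\to\infty$ asymptotics'' but is \emph{not} of pure point type.

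Your fallback via Meyn--Tweedie ergodicity is closer to a correct self-contained argument, but as written it is incomplete: on the non-compact state space $\mathbb{W}_{N,+}$, strong Feller continuity, irreducibility, and the existence of an invariant probability measure do not by themselves yield convergence from \emph{every} initial condition -- one must additionally verify positive Harris recurrence (e.g.\ via a Lyapunov drift condition, or by showing compact sets are petite and combining with the invariant probability measure), as well as the strict positivity of the density on $\mathbb{W}_{N,+}^\circ\times\mathbb{W}_{N,+}^\circ$ (the Remark after Proposition~\ref{prop-FellerProperty} gives smoothness by hypoellipticity, but positivity would need a support theorem argument). The paper's representation-theoretic route via Bougerol and the matrix Dufresne identity avoids all of this and directly yields almost-sure convergence of the matrix process to an explicit limit.
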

\begin{proof}
We use the corresponding matrix process to prove the result.
Recall that from \eqref{Ht-Nt}, the unique strong solution $(\mathbf{H}_t^{(N)};t\ge 0)$ 
starting from $\mathbf{H}_0^{(N)}\in\mathbb{H}_+(N)$ can be written as 
\begin{align}\label{Ht-Mt}
\left(\mathbf{M}_t^{(\nu_N)}\right)^{-1} \mathbf{H}^{(N)}_0 \left(\mathbf{M}_t^{(-\nu_N)}\right)^{-\dag}+ \frac{1}{2}\int_{0}^{t}\mathbf{N}_u^{(\nu_N)}\left(\mathbf{N}_u^{(\nu_N)}\right)^{\dag}\mathrm{d}u.
\end{align}
Let us take $\mathbf{H}_0^{(N)}$ such that $\mathsf{Law}\left(\mathsf{eval}_N\left(\mathbf{H}_0^{(N)}\right)\right)=\mathfrak{K}$ (and independent of the driving Brownian motions). Then, from Lemma 2.2 in \cite{Bougerol} the first term converges to $0$ almost surely as $t\to\infty$. Hence, we obtain
\begin{align}
   \mathbf{H}_t^{(N)}\overset{\textnormal{d}}{\longrightarrow} \frac{1}{2}\int_{0}^{\infty}\mathbf{N}_u^{(\nu_N)}\left(\mathbf{N}_u^{(\nu_N)}\right)^{\dag}\mathrm{d}u,\  \  \text{as } t\to \infty.
\end{align}
Now, by the matrix Dufresne identity proven in \cite{Rider-Valko}, see also \cite{OConnellMatrixDiffusions}, we know that
\begin{equation*}
\mathsf{Law}\left(\frac{1}{2}\int_{0}^{\infty}\mathbf{N}_u^{(\nu_N)}\left(\mathbf{N}_u^{(\nu_N)}\right)^{\dag}\mathrm{d}u\right) =\mathfrak{IL}_N^{\eta},
\end{equation*}
where $\mathfrak{IL}_N^{\eta}$ is the inverse Laguerre measure on $\mathbb{H}_+(N)$  given by the formula,
\begin{equation*}
  \mathfrak{IL}_N^\eta(\mathrm{d}\mathbf{A})=\det\left(\mathbf{A}\right)^{-\eta-2N}\exp\left(-\textnormal{Tr}\left(\mathbf{A}^{-1}\right)\right)  \mathrm{d}\mathbf{A},
\end{equation*}
with $\mathrm{d}\mathbf{A}$ denoting the Lebesgue measure on $\mathbb{H}_+(N)$, for some normalisation constant $\tilde{\mathcal{Z}}_N^{(\eta)}$.
By Weyl's integration formula we know that 
\begin{equation*}
\left(\mathsf{eval}_N\right)_*\mathfrak{IL}_N^\eta=\mathfrak{M}_N^\eta,
\end{equation*}
and this concludes the proof of (\ref{ConvToEqFiniteN}). The final statement follows by a standard argument.
\end{proof}

As a consequence of Theorem \ref{thm-intertwiningSemigroup} and uniqueness of the invariant measures from Proposition \ref{prop-ConvToEquilibrium} we obtain the consistency relation between the inverse Laguerre ensembles; see also \cite{InvWishart} for a direct proof, using orthogonal polynomials.

     \begin{prop}\label{prop-intertwining measures}
        Let $\eta>-1$. Then, we have
         \begin{align}
\mathfrak{M}^{\eta}_{N+1}\Lambda_N^{N+1}=\mathfrak{M}^{\eta}_{N},\ \ \forall N\in\mathbb{N}.
         \end{align}
\end{prop}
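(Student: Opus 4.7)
The plan is to derive the consistency relation directly from the intertwining of semigroups (Theorem \ref{thm-intertwiningSemigroup}) together with uniqueness of the invariant measures established in Proposition \ref{prop-ConvToEquilibrium}. The strategy is to show that $\mathfrak{M}^{\eta}_{N+1}\Lambda_N^{N+1}$ is an invariant probability measure for the finite-dimensional semigroup $(\mathfrak{P}_N(t))_{t\ge 0}$ and then invoke uniqueness.

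First, I would set $\mathfrak{N}\overset{\text{def}}{=}\mathfrak{M}^{\eta}_{N+1}\Lambda_N^{N+1}$, which is clearly a Borel probability measure on $\mathbb{W}_{N,+}$ (since $\mathfrak{M}^{\eta}_{N+1}$ is one on $\mathbb{W}_{N+1,+}$ and $\Lambda_N^{N+1}$ is a Markov kernel). Using associativity of the action of semigroups on measures, together with the intertwining relation $\mathfrak{P}_{N+1}(t)\Lambda_N^{N+1}=\Lambda_N^{N+1}\mathfrak{P}_N(t)$ from Theorem \ref{thm-intertwiningSemigroup}, I would compute
\begin{equation*}
\mathfrak{N}\mathfrak{P}_N(t)=\mathfrak{M}^{\eta}_{N+1}\Lambda_N^{N+1}\mathfrak{P}_N(t)=\mathfrak{M}^{\eta}_{N+1}\mathfrak{P}_{N+1}(t)\Lambda_N^{N+1}=\mathfrak{M}^{\eta}_{N+1}\Lambda_N^{N+1}=\mathfrak{N},
\end{equation*}
for all $t\ge 0$, where in the penultimate step I use that Proposition \ref{prop-ConvToEquilibrium} applied at level $N+1$ gives that $\mathfrak{M}^{\eta}_{N+1}$ is an invariant measure for $(\mathfrak{P}_{N+1}(t))_{t\ge 0}$. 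This shows that $\mathfrak{N}$ is an invariant probability measure for $(\mathfrak{P}_N(t))_{t\ge 0}$.

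To conclude, I would appeal to the uniqueness of the invariant measure from Proposition \ref{prop-ConvToEquilibrium} (applied now at level $N$), which identifies $\mathfrak{M}^{\eta}_{N}$ as the unique invariant probability measure for $(\mathfrak{P}_N(t))_{t\ge 0}$. Thus $\mathfrak{N}=\mathfrak{M}^{\eta}_N$, which is the desired intertwining identity. There is no real obstacle in this argument since all the hard analytic work (the consistency of the semigroups with the Markov kernels $\Lambda_N^{N+1}$ and the identification of the invariant measure via the matrix Dufresne identity) has already been carried out; the proof is essentially just algebraic manipulation combined with an invocation of uniqueness.
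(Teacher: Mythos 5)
Your proof is correct and takes exactly the approach the paper indicates: it invokes Theorem \ref{thm-intertwiningSemigroup} to show $\mathfrak{M}^{\eta}_{N+1}\Lambda_N^{N+1}$ is $(\mathfrak{P}_N(t))_{t\ge 0}$-invariant, then concludes by the uniqueness of the invariant measure from Proposition \ref{prop-ConvToEquilibrium}. The paper states this as a one-line consequence without writing out the short computation that you give explicitly.
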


We also have the following equivalent abstract description of the measure $\mathfrak{M}^\eta$ from the introduction.

\begin{prop}\label{prop-MeasDescr}
Let $\eta>-1$. The unique $\mathfrak{m} \in \mathscr{M}_\textnormal{p}(\Omega_+)$ satisfying:
\begin{equation*}
\mathfrak{m}\Lambda_N^\infty=\mathfrak{M}_N^\eta, \ \ \forall N\in\mathbb{N},
\end{equation*}
is given by $\mathfrak{M}^\eta$.
\end{prop}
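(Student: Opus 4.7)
The plan splits into uniqueness and existence. Uniqueness will be immediate from the framework: by Proposition \ref{prop-intertwining measures}, $(\mathfrak{M}_N^\eta)_{N=1}^\infty$ belongs to the projective limit $\lim_{\leftarrow \textnormal{Top}} \mathscr{M}_\textnormal{p}(\mathbb{W}_{N,+})$, and Theorem \ref{thm-Boundary+} says the map $\mathfrak{m} \mapsto (\mathfrak{m}\Lambda_N^\infty)_{N=1}^\infty$ is a Borel isomorphism from $\mathscr{M}_\textnormal{p}(\Omega_+)$ onto this projective limit. Hence there is at most one $\mathfrak{m} \in \mathscr{M}_\textnormal{p}(\Omega_+)$ with $\mathfrak{m}\Lambda_N^\infty = \mathfrak{M}_N^\eta$ for all $N$, and the remaining task is to verify that $\mathfrak{M}^\eta$ is this measure.

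For existence, my strategy is to combine weak convergence of pushforwards with the uniform approximation theorem. Let $\omega(\cdot):\mathbb{W}_{N,+}\to\Omega_+$ denote the embedding from \eqref{embedding+}; note that its image lies in $\Omega_+^0$ since the $\gamma$-coordinate of $\omega(\mathbf{x}^{(N)})$ equals $\sum_i N^{-1} x_i^{(N)}$ by construction. The key intermediate claim is the weak convergence
\begin{equation*}
\omega_*\mathfrak{M}_N^\eta \overset{w}{\longrightarrow} \mathfrak{M}^\eta \quad \textnormal{in } \mathscr{M}_\textnormal{p}(\Omega_+), \qquad (\star)
\end{equation*}
which is essentially a hard-edge scaling limit. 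Under the change of variables $y_i \leftrightarrow 1/x_i$, $\mathfrak{M}_N^\eta$ transforms into the classical Laguerre ensemble of parameter $\eta$, whose smallest (suitably rescaled) eigenvalues converge at the hard edge to the Bessel determinantal point process with kernel $\mathbb{J}_\eta$; inverting the transformation yields coordinate-wise convergence of the rescaled ordered particles to $\mathsf{IBes}_\eta$, which matches the $\mathbf{x}$-marginal of $\mathfrak{M}^\eta$. Since $\mathfrak{M}^\eta$ is supported on $\Omega_+^0$, the $\gamma$-coordinate is determined by the $\mathbf{x}$-coordinate via $\gamma = \sum_i x_i$, and its convergence reduces to $\ell^1$-convergence of the summable ordered inverse-Bessel points, obtainable via standard hard-edge tail bounds on the Laguerre ensemble.

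Granting $(\star)$, the identification follows by a standard argument. Fix $K$ and $g \in C_{c,\textnormal{sym}}^\infty(\mathbb{W}_{K,+})$. Proposition \ref{prop-intertwining measures} gives, for $N>K$,
\begin{equation*}
\int_{\mathbb{W}_{K,+}} g \, \mathrm{d}\mathfrak{M}_K^\eta = \int_{\mathbb{W}_{N,+}} \Lambda_K^N g \, \mathrm{d}\mathfrak{M}_N^\eta,
\end{equation*}
while Corollary \ref{cor-UnifApp} yields $\|\Lambda_K^N g - (\Lambda_K^\infty g)\circ\omega\|_\infty \to 0$. Since $\Lambda_K^\infty g \in C_0(\Omega_+)$ by Proposition \ref{prop-FellerPropLambda}, $(\star)$ gives
\begin{equation*}
\int_{\mathbb{W}_{N,+}} (\Lambda_K^\infty g)\circ \omega \, \mathrm{d}\mathfrak{M}_N^\eta = \int_{\Omega_+} \Lambda_K^\infty g \, \mathrm{d}(\omega_*\mathfrak{M}_N^\eta) \longrightarrow \int_{\Omega_+} \Lambda_K^\infty g \, \mathrm{d}\mathfrak{M}^\eta.
\end{equation*}
Combining the three displays, $\int g \, \mathrm{d}\mathfrak{M}_K^\eta = \int \Lambda_K^\infty g \, \mathrm{d}\mathfrak{M}^\eta$ for every $g \in C_{c,\textnormal{sym}}^\infty(\mathbb{W}_{K,+})$, and density of this space in $C_0(\mathbb{W}_{K,+})$ upgrades this to $\mathfrak{M}^\eta\Lambda_K^\infty = \mathfrak{M}_K^\eta$ for all $K$, which finishes the proof.

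The principal technical obstacle is $(\star)$, in particular the convergence of the $\gamma$-coordinate, which requires controlling contributions from all $N$ rescaled eigenvalues (the full trace $N^{-1}\textnormal{Tr}(\mathbf{H}_N^{-1})$) rather than merely the finitely many largest ones directly governed by the hard-edge limit.
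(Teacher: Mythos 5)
Your reduction scheme is sound: uniqueness does follow from the Borel isomorphism in Theorem \ref{thm-Boundary+}, and granting $(\star)$ your chain of displays (intertwining $+$ uniform approximation $+$ weak convergence) correctly identifies the consistent measure as $\mathfrak{M}^\eta$. But $(\star)$ is not an auxiliary technical point — it is the entire content of the proposition, and you do not prove it. The paper proves the proposition in one sentence by citing \cite{InvWishart}, and the authors remark explicitly (after Theorem \ref{thm-ISDE}) that the proof in \cite{InvWishart} ``uses the underlying determinantal point process structure and some hard estimates on the correlation kernel of the Laguerre unitary ensemble.'' Those hard estimates are exactly what you need to close $(\star)$, and calling them ``standard hard-edge tail bounds'' understates what is required.

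Concretely, coordinate-wise hard-edge convergence (any fixed $i$, $x_i^{(N)} \to x_i$) gives nothing about $\gamma^{(N)} = N^{-1}\sum_{i=1}^N x_i^{(N)}$: you would need to rule out that the intermediate-to-bulk tail of the rescaled spectrum contributes a positive amount of mass in the limit, for otherwise the weak limit of $\omega_*\mathfrak{M}_N^\eta$ could charge $\{\gamma > \sum_i x_i\}$ and fail to be supported on $\Omega_+^0$. Distinguishing these scenarios requires quantitative control of the Laguerre kernel across all scales between $O(N^{-1})$ and $O(N)$ simultaneously, not a soft argument. (Even tightness of $\gamma^{(N)}$, needed so that subsequential weak limits are probability measures on $\Omega_+$, already requires such an estimate.) One cannot sidestep this with the dynamical ergodicity result either, because Theorem \ref{MainThmConvEq} is itself deduced from this very proposition. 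So while the architecture of your proof is correct and shows how the uniform approximation theorem would convert $(\star)$ into the identification, the step you flag as ``the principal technical obstacle'' is a genuine unfilled gap, and filling it amounts to reproving the main theorem of \cite{InvWishart}.
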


\begin{proof}
This is the main result of \cite{InvWishart}.
\end{proof}

\subsection{Proof of Theorem \ref{MainThm1Intro} and Theorem \ref{MainThmConvEq}}

\begin{proof}[Proof of Theorem \ref{MainThm1Intro}]
This follows by taking the semigroups $\left(\mathsf{P}_N(t)\right)_{t\ge 0}$ to be $\left(\mathfrak{P}_N(t)\right)_{t\ge 0}$ in Theorem \ref{thm-MarkovProcessonBoundary}, Theorem \ref{thm-ConvMarkovProc+} and Proposition \ref{Prop-Conv in l^2} all of whose conditions are satisfied by virtue of Proposition \ref{prop-FellerProperty} and Proposition \ref{prop-Core+TransitionDens} and Theorem \ref{thm-intertwiningSemigroup} and Theorem \ref{thm-Boundary+}.
\end{proof}

\begin{proof}[Proof of Theorem \ref{MainThmConvEq}]
By taking $\left(\mathsf{P}_N(t)\right)_{t\ge 0}$ to be $\left(\mathfrak{P}_N(t)\right)_{t\ge 0}$ this is a consequence of Theorem \ref{thm-ergodicity} by virtue of Proposition \ref{prop-ConvToEquilibrium} and Proposition \ref{prop-MeasDescr}. 
\end{proof}

\subsection{On the Gibbs property}\label{SectionGibbs}

We briefly discuss a new Gibbs resampling property, see \cite{CorwinHammond}, underlying our model. We assume the reader is familiar with the basic terminology of Gibbsian line ensembles, see \cite{CorwinHammond}.  This subsection is not part of our argument and can be skipped. 

For $\eta>-1$, there exists an explicit spectral expansion of $\mathfrak{p}_t^{(N)}(x,y)$ in terms of special functions, see \cite{Wong}. This expansion is well-adapted to looking at the $t\to \infty $ asymptotics of $\mathfrak{p}_t^{(N)}(x,y)$ and using it one can prove that the semigroup $\left(\mathfrak{P}_N(t)\right)_{t\ge 0}$ is exactly the transition semigroup of $N$ independent one-dimensional diffusions with generator $\mathsf{L}^{(N)}$ from \eqref{LN} conditioned to never intersect\footnote{We note that this statement does not follow directly from the Doob $h$-transform structure of $\left(\mathfrak{P}_N(t)\right)_{t\ge 0}$ in \eqref{transitioDensity}. One needs to compute the asymptotics of the first collision time of the $N$ independent one-dimensional diffusions $\mathsf{L}^{(N)}$ in order show that these two structures coincide and this is in general a delicate problem.}. In particular, $(\mathfrak{x}^{(N)}(t);t\ge 0)$, viewed as a line ensemble with $N$ lines, see \cite{CorwinHammond}, enjoys a certain Gibbs resampling property with respect to the law of conditioned $\mathsf{L}^{(N)}$-bridges, see \cite{Fitzsimmons-Yor} for the rigorous definition of diffusion bridges; in the framework of \cite{CorwinHammond} we replace Brownian bridges with $\mathsf{L}^{(N)}$-bridges. Similarly, the rescaled process $(\mathsf{x}^{(N)}(t);t\ge 0)$ from \eqref{rescaledSDE} enjoys a Gibbs resampling property with respect to $\mathsf{L}_{\textnormal{rsc}}^{(N)}$-bridges, where $\mathsf{L}_{\textnormal{rsc}}^{(N)}$ is the generator of the rescaled by $N^{-1}$ diffusion $\mathsf{L}^{(N)}$ given by:
\begin{equation*}
\mathsf{L}_{\textnormal{rsc}}^{(N)}=\frac{x^2}{2}\partial_{x}^2+\left(\left(1-\frac{\eta}{2}-N\right)x+\frac{1}{2N}\right)\partial_{x},
\end{equation*}
with transition density denoted by $\mathfrak{q}_t^{(N)}(x,y)$. It is then a natural question whether, the limiting paths $((\mathsf{x}_i(t))_{i=1}^\infty;t\ge 0)$, which we do not yet know if they are non-intersecting, enjoy a Gibbs resampling property as well. If they did this would give an alternative, to the one in Section \ref{SebSec-NonIntersection} below, albeit more complicated, way to prove non-intersection.

One can show, that given the convergence of the paths, that we already know from Theorem \ref{MainThm1Intro}, this question really boils down, modulo assumptions of lesser importance to avoid pathological situations, to convergence of the free $\mathsf{L}_{\textnormal{rsc}}^{(N)}$-bridge, whose time-inhomogeneous generator, in case of the bridge ending at point $y$ at time $T>0$, is given by, for $0\le t<T$, see \cite{Fitzsimmons-Yor},
\begin{equation*}
\frac{x^2}{2}\partial_{x}^2+\left(\left(1-\frac{\eta}{2}-N\right)x+\frac{1}{2N}+x^2\partial_x \log \mathfrak{q}_{T-t}^{(N)}(x,y)\right)\partial_{x},
\end{equation*}
to a limiting diffusion bridge as $N \to \infty$. This question turns out to be complicated. The spectral expansion in \cite{Wong} is not well-adapted for looking at the $N \to \infty$ limit of the above expression.  Nevertheless, using some judicious transformations, it is possible to connect this problem to a problem of asymptotics of the logarithmic derivative of the transition kernel of a Schr\"{o}dinger semigroup with an explicit $N$-dependent Morse potential $\mathsf{V}_N(x)$, see \cite{MatsumotoYor1,MatsumotoYor2}. What we gain by doing this is that all $N$-dependence of the problem is now in the potential $\mathsf{V}_N(x)$ and it is  easy to see that it converges to an explicit potential $\mathsf{V}_\infty(x)$. Alas, the limiting potential $\mathsf{V}_\infty(x)$ has the ``wrong" sign and the associated Schr\"{o}dinger semigroup blows up! In particular, the transition kernel of the Schr\"{o}dinger semigroup with potential $\mathsf{V}_N(x)$ diverges as $N\to \infty$. However, its logarithmic derivative may still well converge, due a subtle cancellation, and so this does not contradict the  existence of a limiting Gibbs property. 

The naive guess is that we have an exponential Brownian Gibbs property in the limit. The simple intuition comes from the fact that if the vanishing $1/2N$ drift term in $\mathsf{L}_{\textnormal{rsc}}^{(N)}$ was absent then $\mathsf{L}_{\textnormal{rsc}}^{(N)}$ would have been an exponential Brownian motion with $N$-dependent drift and the corresponding bridge would have been an exponential Brownian bridge identically for all $N$.

\section{The ISDE}\label{Section-ISDE}

In this section we first prove that $\left(\mathsf{x}_i(\cdot)\right)_{i=1}^\infty$ almost surely consists of non-intersecting paths in Theorem \ref{thm-NonCollision,>0} and then in Theorem \ref{thm-ISDE} that it solves the ISDE \eqref{ISDEintro}. The main idea will be to use various ``characteristic polynomial"-like objects associated to the finite-dimensional dynamics $(\mathsf{x}^{(N)}_i(\cdot))_{i=1}^N$, first as Lyapunov functions, to control uniformly in $N$, the times two paths come close and then to show convergence of the singular drift term. We need some preliminaries.

We recall a result from \cite{RandomEntireFun} on convergence of polynomials with real zeros to analytic functions that we use as one of the ingredients in the proofs of 
Theorem \ref{thm-NonCollision,>0} and Theorem \ref{thm-ISDE} below. Given, $\mathbf{x}^{(N)} \in \mathbb{W}_{N}$ define the polynomial
\begin{align*}
\mathfrak{U}_N\left(z;\mathbf{x}^{(N)}\right)\overset{\textnormal{def}}{=}\prod_{j=1}^{N}\left(1-x_j^{(N)}z\right).
\end{align*}
This is the reverse characteristic polynomial of a matrix with eigenvalues $\mathbf{x}^{(N)}$. We also have the following definition.

\begin{defn}
Define the entire functions $\mathsf{E}(z;\omega)$ and $\mathsf{E}_+(z;\omega)$, indexed by $\omega\in \Omega$ and $\omega\in \Omega_+$ respectively,
given by their Hadamard products:
\begin{align*}
\mathsf{E}(z;\omega)&\overset{\textnormal{def}}{=}\mathrm{e}^{-\gamma z-\frac{\delta-\sum_{i=1}^\infty \left[(x_i^+)^2+(x_i^-)^2\right]}{2}z^2}\prod_{j=1}^{\infty}\mathrm{e}^{x_j^+ z}\left(1-x_j^+z\right)\prod_{j=1}^{\infty}\mathrm{e}^{-x_j^- z}\left(1+x_j^- z\right),\  \ \omega\in\Omega,\\
\mathsf{E}_+(z;\omega)&\overset{\textnormal{def}}{=}\mathrm{e}^{-\gamma z}\prod_{j=1}^{\infty}\mathrm{e}^{x_j z}\left(1-x_jz\right),\  \ \omega\in\Omega_+.
\end{align*}
  
\end{defn}

Then, we have the following result, see
Proposition 2.5 and Remark 2.7 of \cite{RandomEntireFun}.

\begin{prop}\label{prop-CharPolyConv}
Suppose $\left(\mathbf{x}^{(N)}\right)_{N=1}^{\infty}\in \left(\mathbb{W}_N\right)_{N=1}^{\infty}$ such that $\omega\left(\mathbf{x}^{(N)}\right)\overset{N\to\infty}{\longrightarrow}\omega\in\Omega$. Then, the following convergence holds uniformly on compact sets in $\mathbb{C}$:
\begin{align*}
\mathfrak{U}_N\left(z;\mathbf{x}^{(N)}\right)\to
\mathsf{E}(z;\omega), \ \ \textnormal{ as }\;N\to\infty.
\end{align*}
If instead $\left(\mathbf{x}^{(N)}\right)_{N=1}^{\infty}\in \left(\mathbb{W}_{N,+}\right)_{N=1}^{\infty}$ such that $\omega\left(\mathbf{x}^{(N)}\right)\overset{N\to\infty}{\longrightarrow}\omega\in\Omega_+$, then the following convergence holds uniformly on compact sets in $\mathbb{C}$:
\begin{align}\label{CharPolyConv+}
\mathfrak{U}_N\left(z;\mathbf{x}^{(N)}\right)\to
\mathsf{E}_+(z;\omega), \  \ \text{ as }\;N\to\infty.
\end{align}
\end{prop}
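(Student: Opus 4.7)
My plan to prove Proposition \ref{prop-CharPolyConv} follows the classical approach to convergence of polynomials with real zeros to entire functions in the Laguerre--P\'olya class (cf. \cite{RandomEntireFun}), via a Weierstrass-style factorisation and careful tail control. I focus on the $\Omega_+$ case first; the $\Omega$ case needs an additional Gaussian correction that I describe at the end.

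First I would rewrite the polynomial by inserting elementary Weierstrass factors:
\begin{equation*}
\mathfrak{U}_N(z;\mathbf{x}^{(N)}) = \exp\!\Bigl(-z\sum_{j=1}^N x_j^{(N)}\Bigr) \prod_{j=1}^N \bigl[(1-x_j^{(N)} z)\mathrm{e}^{x_j^{(N)} z}\bigr].
\end{equation*}
The prefactor converges uniformly on compact subsets of $\mathbb{C}$ to $\mathrm{e}^{-\gamma z}$ since $\omega(\mathbf{x}^{(N)}) \to \omega$ encodes convergence of the sum-of-coordinates component to $\gamma$. The key elementary bound for the remaining product is $|(1-w)\mathrm{e}^w - 1| \leq |w|^2\mathrm{e}^{|w|}$, which is immediate from the Taylor expansion $(1-w)\mathrm{e}^w = 1 - \sum_{k\geq 2}(k-1)w^k/k!$. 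This makes $\prod_j[(1-x_j^{(N)} z)\mathrm{e}^{x_j^{(N)} z}]$ absolutely convergent on compact sets, uniformly in $N$, provided $\sup_N \sum_j (x_j^{(N)})^2$ is bounded; for $\Omega_+$ this is automatic since $x_j^{(N)}\geq 0$ and $\sum_j x_j^{(N)}$ is controlled by $\gamma^{(N)}$, giving $\sum_j (x_j^{(N)})^2 \leq (\max_j x_j^{(N)})\sum_j x_j^{(N)} \leq (\gamma^{(N)})^2$.

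To pass to the limit I would truncate at a fixed level $M$: the finite piece $\prod_{j\leq M}$ converges pointwise by the coordinate-wise convergence $x_j^{(N)}\to x_j$, while the tail $\prod_{j>M}$ is controlled by $\sum_{j>M}(x_j^{(N)})^2 \leq x_{M+1}^{(N)}\gamma^{(N)} \leq (\gamma^{(N)})^2/(M+1)$, using $x_{M+1}^{(N)}\leq \gamma^{(N)}/(M+1)$ from the ordering in $\mathbb{W}_{N,+}$. Sending $N\to\infty$ and then $M\to\infty$ yields $\mathfrak{U}_N(z;\mathbf{x}^{(N)})\to \mathsf{E}_+(z;\omega)$ uniformly on compact sets. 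For the $\Omega$ case I would separate the positive and negative zeros of $\mathbf{x}^{(N)}$ and apply the Weierstrass factorisation to each, producing $\prod_j(1-x_j^+ z)\mathrm{e}^{x_j^+ z}\prod_j(1+x_j^- z)\mathrm{e}^{-x_j^- z}$ together with the first-order correction $\mathrm{e}^{-\gamma z}$. The remaining Gaussian factor $\exp(-(\delta-\sum_j[(x_j^+)^2+(x_j^-)^2])z^2/2)$ in $\mathsf{E}(z;\omega)$ arises from the quadratic term in $\log[(1-w)\mathrm{e}^w]=-w^2/2+O(w^3)$: the total quadratic contribution equals $-(z^2/2)\sum_j(x_j^{(N)})^2$, where $\sum_j(x_j^{(N)})^2$ converges to $\delta$, while the persistent atoms contribute only $\sum_j[(x_j^+)^2+(x_j^-)^2]$; the difference is the ``dust'' of vanishing zeros that collectively carry second moment and generate the Gaussian factor.

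The main obstacle is making this last step rigorous in the $\Omega$ case. One must simultaneously (i) extract from $\sum_j(x_j^{(N)})^2\to\delta$ a clean splitting into persistent-atom and dust contributions, (ii) verify a Lindeberg-type smallness condition so that the cubic and higher-order terms $\sum_j(x_j^{(N)})^k$ for $k\geq 3$ vanish in the limit (which reduces to pointwise smallness of individual dust coordinates together with the uniform second-moment bound), and (iii) identify the dust contribution precisely as a Gaussian factor in the product limit. Once these three points are in place, uniform convergence on compact sets follows by combining the estimates above with Montel's theorem applied to the locally uniformly bounded family $\{\mathfrak{U}_N(\cdot;\mathbf{x}^{(N)})\}$.
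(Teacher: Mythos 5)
The paper does not actually prove Proposition~\ref{prop-CharPolyConv}: it simply cites Proposition~2.5 and Remark~2.7 of \cite{RandomEntireFun}. Your write-up therefore supplies an argument where the paper only gives a reference, and the argument you sketch is the correct one --- essentially the classical genus-one Weierstrass/Laguerre--P\'olya computation that the cited reference presumably carries out. Your reduction to $\log\!\prod_j\bigl[(1-y_j^{(N)}z)\mathrm{e}^{y_j^{(N)}z}\bigr]=-\sum_j\sum_{k\ge 2}(y_j^{(N)}z)^k/k$, the truncation at level $M$ combined with the bound $\sum_{j>M}(y_j^{(N)})^2\le y_{M+1}^{(N)}\sum_j y_j^{(N)}$, and the identification of the $k=2$ term as the source of the Gaussian factor in the $\Omega$ case are all sound.

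The three ``obstacles'' you flag for the $\Omega$ case do close up with standard arguments, and it may be worth noting why. Since $\sum_j\bigl[(x_j^+)^2+(x_j^-)^2\bigr]\le\delta<\infty$, the sequences $(x_j^\pm)_j$ tend to zero, so for a given $\varepsilon>0$ one may choose $M$ with $x_{M+1}^+\vee x_{M+1}^-<\varepsilon$, and then for $N$ large the ordering forces $|y_j^{(N)}|<2\varepsilon$ for all $j>M$; this gives the Lindeberg estimate $\sum_{j>M}|y_j^{(N)}|^3\le 2\varepsilon\,\delta^{(N)}$ controlling the $k\ge 3$ tail. The $k=2$ term is global: $\sum_j(y_j^{(N)})^2=\delta^{(N)}\to\delta$, while the truncated part $\sum_{j\le M}(y_j^{(N)})^2\to\sum_{j\le M}\bigl[(x_j^+)^2+(x_j^-)^2\bigr]$, so after sending $M\to\infty$ the residual $\delta-\sum_j\bigl[(x_j^+)^2+(x_j^-)^2\bigr]$ comes out as the coefficient of $-z^2/2$, matching $\mathsf{E}(z;\omega)$. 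One cosmetic remark: once you have the truncation estimates you already get locally uniform convergence directly; the appeal to Montel's theorem is a harmless fallback but is not actually needed.

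One notational caveat you should be aware of, which is a sloppiness in the paper's statement rather than in your proof: as written, $\omega(\mathbf{x}^{(N)})$ uses the embedding \eqref{embedding+}, under which $N^{-1}x_i^{(N)}\to x_i$ and $N^{-1}\sum_i x_i^{(N)}\to\gamma$, but $\mathfrak{U}_N(z;\mathbf{x}^{(N)})=\prod_j(1-x_j^{(N)}z)$ uses the unrescaled coordinates, and that combination does not converge. In the paper's actual use of the proposition (inside Theorem~\ref{thm-ISDE}) the coordinates fed into $\mathfrak{U}_N$ are already the rescaled ones living in $\Omega_+$. Your proof correctly treats the rescaled coordinates throughout, which is the intended reading; it is worth keeping this in mind so that the $\gamma^{(N)}=\sum_j x_j^{(N)}$ appearing in your bounds is interpreted consistently.
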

Note that this convergence statement is consistent with the embedding \eqref{omega_+ToOmega} of $\Omega_+$ into $\Omega$. Below we will only use the result for $\mathbb{W}_{N,+}$ and $\Omega_+$.

\subsection{Non-intersection via characteristic polynomials}\label{SebSec-NonIntersection}

We use the following notation convention throughout. For $(\mathbf{x},\gamma)\in \Omega_+$ we write $\mathbb{P}_{(\mathbf{x},\gamma)}$ for the law of $\left(\mathbf{X}_t^{\Omega_+};t\ge 0\right)$ from Theorem \ref{MainThm1Intro} starting from $(\mathbf{x},\gamma)$ and abusing notation (as we do not assume that all these processes are defined on the same probability space unless otherwise stated), for $\mathbf{x}^{(N)}\in \mathbb{W}_{N,+}$, we write $\mathbb{P}_{\mathbf{x}^{(N)}}$ for the law of the process solving \eqref{rescaledSDE} starting from $\mathbf{x}^{(N)}$. We write $\mathbb{E}_{(\mathbf{x},\gamma)}$ and $\mathbb{E}_{\mathbf{x}^{(N)}}$ for the corresponding expectations.

\begin{thm}\label{thm-NonCollision,>0}
   Let $\eta \in \mathbb{R}$, $\mathbf{x} \in \mathbb{W}_{\infty,+}^\circ$, $\gamma \in \mathbb{R}_+$, with $\sum_{i=1}^\infty x_i\le \gamma$. Consider the Feller process on $\Omega_+$, $\left(\mathbf{X}_t^{\Omega_+};t\ge 0\right)=\left(\left(\left(\mathsf{x}_i(t)\right)_{i=1}^\infty,\boldsymbol{\gamma}(t)\right);t\ge 0\right)$ from Theorem \ref{MainThm1Intro} with initial condition $\mathbf{X}_0^{\Omega_+}=(\mathbf{x},\gamma)$. Then, for all $T\ge 0$ and $n\in\mathbb{N}$ we have
    \begin{align*}
\mathbb{P}_{(\mathbf{x},\gamma)}\left(\inf_{t\in[0,T]}\mathsf{x}_{n}(t)>0
\right)&=1,\\
\mathbb{P}_{(\mathbf{x},\gamma)}\left(\inf_{t\in[0,T]}\left|
\mathsf{x}_{n}(t)-\mathsf{x}_{n+1}(t)
\right|>0\right)&=1.
\end{align*}

\end{thm}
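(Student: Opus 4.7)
The plan is to prove both assertions by first establishing, at the level of the finite-$N$ SDE \eqref{rescaledSDE}, a uniform-in-$N$ quantitative bound of the form \eqref{CollisionTimesBOundIntro}, namely: for every $n\in\mathbb{N}$, $T\ge 0$ and $\varepsilon>0$, there exists $\delta>0$ such that for all $N\ge n+1$ and all $\mathbf{x}^{(N)}\in\mathbb{W}_{N,+}^\circ$ with $\omega(\mathbf{x}^{(N)})\to(\mathbf{x},\gamma)$ in $\Omega_+$,
\[
\sup_{N\ge n+1}\mathbb{P}_{\mathbf{x}^{(N)}}\!\left(\inf_{t\in[0,T]}\mathsf{x}_n^{(N)}(t)<\delta\right)<\varepsilon,\qquad \sup_{N\ge n+1}\mathbb{P}_{\mathbf{x}^{(N)}}\!\left(\inf_{t\in[0,T]}\bigl(\mathsf{x}_n^{(N)}(t)-\mathsf{x}_{n+1}^{(N)}(t)\bigr)<\delta\right)<\varepsilon.
\]
Once these are in hand, Skorokhod's representation theorem together with the almost-sure $\ell^2$ convergence of coordinates given by Proposition \ref{Prop-Conv in l^2} transfers the estimates to the limiting process $(\mathsf{x}_i(\cdot))_{i=1}^\infty$, yielding both conclusions of the theorem. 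I will prove the finite-$N$ bounds by a double induction in the pair index $n$: an outer induction that treats one pair $(\mathsf{x}_n^{(N)},\mathsf{x}_{n+1}^{(N)})$ at a time, together with an inner argument over nested stopping times that progressively confine the top $n$ coordinates away from one another and away from $0$.

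The key technical device is a family of Lyapunov observables built from the reverse characteristic polynomial $\mathfrak{U}_N(z;\mathsf{x}^{(N)}(t))=\prod_{j=1}^N(1-\mathsf{x}_j^{(N)}(t)z)$ and its reduced variant $\Phi_n^N(z;\mathsf{x}^{(N)}(t))=\prod_{j\ne n}(1-\mathsf{x}_j^{(N)}(t)z)^2$. For the collision probability between $\mathsf{x}_n^{(N)}$ and $\mathsf{x}_{n+1}^{(N)}$, the plan is to design an observable $\Psi_n^N$ which blows up along the diagonal $\{\mathsf{x}_n^{(N)}=\mathsf{x}_{n+1}^{(N)}\}$ and whose It\^o derivative, computed via the identity
\[
\sum_{j\ne i}\frac{\mathsf{x}_i^{(N)}\mathsf{x}_j^{(N)}}{\mathsf{x}_i^{(N)}-\mathsf{x}_j^{(N)}}=\tfrac{1}{2}\frac{\mathrm{d}}{\mathrm{d}z}\log\Phi_i^N\bigl(z;\mathsf{x}^{(N)}(t)\bigr)\bigg|_{z=1/\mathsf{x}_i^{(N)}(t)},
\]
reorganises the singular interaction into a term of \emph{definite sign}. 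The whole point of choosing a characteristic-polynomial observable is precisely that this singular contribution can then be discarded, producing an $N$-independent Gronwall-type estimate. The martingale contribution and the remaining smooth drift are controlled using only the total mass $\sum_{j=1}^N\mathsf{x}_j^{(N)}(t)=\boldsymbol{\gamma}^{(N)}(t)$, which is uniformly bounded on $[0,T]$ with high probability by elementary martingale estimates on the one-dimensional SDE satisfied by $\boldsymbol{\gamma}^{(N)}$ together with the convergence of initial data in $\Omega_+$. A Markov-inequality step then upgrades the resulting estimate on $\mathbb{E}_{\mathbf{x}^{(N)}}\bigl[\Psi_n^N(t\wedge\tau)\bigr]$ to the desired collision probability bound. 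For the boundary avoidance $\mathsf{x}_n^{(N)}>0$, an analogous observable $f_n^N$ combines $\log\mathsf{x}_n^{(N)}$ with a characteristic-polynomial correction; the computation at the end of the proof of Lemma \ref{lem-wellPosednessN} already exhibits the relevant sign cancellation for the bottom particle, but an $N$-independent version of it is only available once the top $n-1$ paths have been inductively localised via the companion stopping times.

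The principal obstacle is algebraic: designing the Lyapunov functions $f_n^N$ and $\Psi_n^N$ so that It\^o's formula reveals a definite sign for the singular piece is delicate, because naive candidates such as $-\log(\mathsf{x}_n^{(N)}-\mathsf{x}_{n+1}^{(N)})$ or the Graczyk--Ma\l{}ecki-type observables of \cite{Graczyk-Malecki} produce drift terms that scale badly with $N$. The characteristic-polynomial viewpoint replaces the sum over $j\ne n$ by a single logarithmic derivative where the sign cancellation becomes visible, and this is fully compatible with the uniform convergence $\mathfrak{U}_N(\cdot;\mathsf{x}^{(N)}(t))\to\mathsf{E}_+(\cdot;\mathbf{X}_t^{\Omega_+})$ on compacts in $\mathbb{C}$ supplied by Proposition \ref{prop-CharPolyConv}, which in the limit rules out additional mass escaping to infinity. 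Managing this algebra, carrying the induction through the nested stopping times, and simultaneously controlling the one-dimensional process $\boldsymbol{\gamma}^{(N)}$, is the technical heart of the argument.
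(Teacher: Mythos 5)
Your plan is correct and follows essentially the same route as the paper: a double induction that localises the top paths one pair at a time, Lyapunov observables built from a reverse characteristic polynomial whose It\^{o} drift has a singular piece of definite sign that can simply be dropped to produce an $N$-independent expectation bound, a Markov-inequality step to convert this into a uniform collision-probability estimate, and finally transfer to the limit via the coupling from Theorem~\ref{MainThm1Intro}. The only small imprecision is that the paper does not use the $\Phi_i^N$ logarithmic-derivative identity here (that device is reserved for the ISDE in Theorem~\ref{thm-ISDE}); it instead takes $\Psi_n^N(\mathbf{x}^{(N)})=\prod_{i=1}^{n}\prod_{j=n+1}^{N}\bigl(1-x_j^{(N)}/x_i^{(N)}\bigr)$ and verifies the sign of the singular drift via a direct elementary triple-sum inequality, but this is fully in the spirit of what you propose.
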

\begin{proof}
  We prove the result using an inductive argument. For each $N\in\mathbb{N}$, let $\mathbf{x}^{(N)}\in \mathbb{W}_{N,+}^\circ$ converge to $(\mathbf{x},\gamma)\in \Omega_+$ under the embedding \eqref{embedding+}. 
Let $\mathsf{x}^{(N)}$ be the solution to the equation \eqref{rescaledSDE} started at $\mathbf{x}^{(N)}$.
We will make use of the $\log$ transform of \eqref{rescaledSDE}, which we consider in its integral form:
\begin{align}\label{logX^N}
\log\mathsf{x}_i^{(N)}(t) &= \log x_i^{(N)}+ \mathsf{w}_i(t) -\frac{1+\eta}{2}t+\int_0^t\frac{1}{2N\mathsf{x}_i^{(N)}(s)}\mathrm{d}s\nonumber\\
&\  \ +
\int_0^t\sum_{j=1, j\neq i}^{N}\frac{\mathsf{x}_j^{(N)}(s)}{\mathsf{x}_i^{(N)}(s)-\mathsf{x}_j^{(N)}(s)}\mathrm{d}s,\  \
i=1,\dots,N.
\end{align}
Let us continue with some definitions. We always use the convention $\inf \emptyset =\infty$ in what follows. 
Define, 
for any $N\in\mathbb{N}$, the following stopping times (with respect to the natural filtration) associated to $\left(\mathsf{x}_i^{(N)}\right)_{i=1}^N$,
\begin{align*}
\sigma_n^N(\epsilon)&\overset{\textnormal{def}}{=}\inf\left\{t\geq 0;\mathsf{x}_n^{(N)}(t)\le \epsilon\right\},\  \  \epsilon>0,\  \ n=1,\dots,N\\
\tau_n^N(\delta)&\overset{\textnormal{def}}{=}\inf\left\{t\geq 0;\left|1-\frac{\mathsf{x}_{n+1}^{(N)} (t)}{\mathsf{x}_n^{(N)} (t)}\right|\le\delta\right\},\  \  \delta>0,\  \ n=1,\dots,N-1,\\
\theta_n^N(r)&\overset{\textnormal{def}}{=}\inf\left\{t\geq 0;\mathsf{x}_n^{(N)}(t)\ge r\right\},\  \ r>0,\  \ n=1,\dots,N,
\end{align*}
and 
analogously, the stopping times (with respect to the natural filtration) associated to $\left(\mathsf{x}_i\right)_{i=1}^\infty$,
\begin{align*}
\sigma_n(\epsilon)&\overset{\textnormal{def}}{=}\inf\left\{t\geq 0;\mathsf{x}_n(t)\le\epsilon\right\},\  \  \epsilon>0,\  \ n\in\mathbb{N},\\
\tau_n(\delta)&\overset{\textnormal{def}}{=}\inf\left\{t\geq 0;\left|1-\frac{\mathsf{x}_{n+1} (t)}{\mathsf{x}_n (t)}\right|\le\delta\right\},\  \  \delta>0,\  \ n\in\mathbb{N},\\
\theta_n(r)&\overset{\textnormal{def}}{=}\inf\left\{t\geq 0;\mathsf{x}_n(t)\ge r\right\},\  \ r>0,\  \ n\in\mathbb{N}.
\end{align*}
Note that, by Lemma \ref{lem-wellPosednessN}, $\tau_n^N(\delta)$ is well-defined for any $N\in\mathbb{N}$ and $n\leq N-1$. 
It will be justified later on that $\tau_n(\delta)$ is also well-defined for all $n\in\mathbb{N}$.

Now, let $N\in\mathbb{N}$. We define for any $\mathbf{x}^{(N)}\in\mathbb{W}_{N,+}^\circ$, the following variant of the characteristic polynomial associated to $\mathbf{x}^{(N)}$,
\begin{align*}
\Psi_n^N\left(\mathbf{x}^{(N)}\right)&\overset{\textnormal{def}}{=}\prod_{i=1}^{n}\prod_{j=n+1}^{N}\left(1-\frac{x_j^{(N)}}{x_i^{(N)}}\right),\  \  n=1,\dots,N-1,
    \end{align*} 
and consider the Lyapunov function
given as:
\begin{align}\label{f_n^N}
f_n^N\left(\mathbf{x}^{(N)}\right)\overset{\textnormal{def}}{=}-\log\left(\mathrm{\Psi}_n^N\left(\mathbf{x}^{(N)}\right)\right),\  \  n=1,\dots,N-1.
\end{align}
It is easy to see that $f_n^N\left(\mathbf{x}^{(N)}\right)>0$ for all $\mathbf{x}^{(N)}\in\mathbb{W}_{N,+}^\circ$ and $n=1,\dots,N-1$.
    We next define
 \begin{align*}
    \kappa_n^N(R)\overset{\textnormal{def}}{=}\inf\left\{t\geq 0;f_n^N\left(\mathsf{x}^{(N)}(t)\right)\ge R\right\},\  \   R>0,\  \  n=1,\dots,N-1.
    \end{align*}
    Observe that,
\begin{align}\label{T-tau}
        \left\{\tau_n^N(\delta)\le a\right\}\subset \left\{\kappa_n^N\left(\log(\delta^{-1})\right)\le a\right\},\  \ \textnormal{for } \delta<1.
    \end{align}
We are going to prove by induction on $n$ that 
for all $T\ge 0$ and $n\in\mathbb{N}$, 
\begin{align}
\mathbb{P}_{(\mathbf{x},\gamma)}\left(\inf_{t\in[0,T]}\mathsf{x}_{n}(t)>0
\right)&=1,\label{X_n>0}\\
\mathbb{P}_{(\mathbf{x},\gamma)}\left(\inf_{t\in[0,T]}\left|1-\frac{\mathsf{x}_{n+1}(t)}{\mathsf{x}_n(t)}\right|>0\right)&=1.\label{ratio_n<1}
\end{align}
Note that, this implies non-collision, since given \eqref{X_n>0} and \eqref{ratio_n<1}, we have almost surely,
\begin{align*}
\inf_{t\in[0,T]}\left|
\mathsf{x}_{n}(t)-\mathsf{x}_{n+1}(t)
\right|\geq \inf_{t\in[0,T]}\mathsf{x}_n(t)
\inf_{t\in[0,T]}\left|1-\frac{\mathsf{x}_{n+1}(t)}{\mathsf{x}_n(t)}\right|,
\end{align*}
and hence,
\begin{align*}
\mathbb{P}_{(\mathbf{x},\gamma)}\left(\inf_{t\in[0,T]}\left|
\mathsf{x}_{n}(t)-\mathsf{x}_{n+1}(t)
\right|>0\right)=1.
\end{align*}

\textbf{The base case.}
Using \eqref{logX^N}, we have for any $t\ge 0$ that
	\begin{align*}
		\log\mathsf{x}_{1}^{(N)}\left(t\wedge \sigma_{1}^N(\epsilon)\wedge \theta_{1}^N(r)\right)& \geq
    \log x_{1}^{(N)}
		-\left|\frac{1+\eta}{2}\right|t+\mathsf{w}_{1}\left(t\wedge \sigma_{1}^N(\epsilon)\wedge\theta_{1}^N(r)\right).
	\end{align*}
	Therefore, since $t \mapsto \mathsf{w}_{1}\left(t\wedge \sigma_{1}^N(\epsilon)\wedge\theta_{1}^N(r)\right)$ is a mean-zero martingale, taking expectation of both sides we obtain
	\begin{align*}
		\log r+\log \epsilon \mathbb{P}_{\mathsf{x}^{(N)}}\left(\sigma_{1}^N(\epsilon)< t\wedge \theta_{1}^N(r)\right)&\geq \log x_{1}^{(N)}-\left|\frac{1+\eta}{2}\right|t,
	\end{align*}
and thus, for any $\epsilon<1$,
\begin{align}\label{P(sigma<theta)}
\mathbb{P}_{\mathbf{x}^{(N)}}\left(\sigma_{1}^N(\epsilon)< t\wedge \theta_{1}^N(r)\right)&\leq \frac{\log r-\log x_{1}^{(N)}+\left|\frac{1+\eta}{2}\right|t}{|\log \epsilon|}.
	\end{align}
Observe that, the bound in \eqref{P(sigma<theta)} converges as $N\to\infty$ (recall that $x_1^{(N)} \to x_1>0$). We would like to conclude that the same bound (with $x_1$ in the formula instead) holds for the stopping times $\sigma_1(\epsilon)$ and $\theta_1(r)$ corresponding to the limit paths. A little argument is needed though. These stopping times, viewed as functionals on paths are not continuous, but fortunately they are not far from continuous. For more details see Chapter VI, Sections 2,3 of \cite{JacodShiryaev} which we follow closely. For $a \in \mathbb{R}_+$, $f\in C(\mathbb{R}_+,\mathbb{R}_+)$, we define
\begin{align*}
\mathsf{S}_{\ge}^a(f)=\inf\left\{t\ge 0; f(t)\ge a\right\},\  \ \mathsf{S}_{\le}^a(f)=\inf\left\{t\ge 0; f(t)\le a\right\}.
\end{align*}
 Write $\mathbb{R}_{+,\textnormal{ext}}=\mathbb{R}_+\cup \{\infty\}$. For fixed $f\in C(\mathbb{R}_+,\mathbb{R}_+)$ define the map $\mathsf{B}_f$ by,
 \begin{align*}
    \mathsf{B}_{f}:\mathbb{R}_+^2 &\longrightarrow \mathbb{R}_{+,\textnormal{ext}}^2\\
    (a,b)&\mapsto \left(\mathsf{S}_{\le }^a(f),\mathsf{S}_{\ge }^b(f)\right),
 \end{align*}
 which from Lemma 2.10 of Chapter VI of \cite{JacodShiryaev} only has jump discontinuities, while for fixed $(a,b)\in \mathbb{R}_+^2$ define the map $\mathsf{A}_{(a,b)}$ by,
 \begin{align*}
 \mathsf{A}_{(a,b)}:C(\mathbb{R}_+,\mathbb{R}_+) &\longrightarrow \mathbb{R}_{+,\textnormal{ext}}^2\\
 f&\mapsto \left(\mathsf{S}_{\le }^a(f),\mathsf{S}_{\ge }^b(f)\right).
 \end{align*}
 Now, if we let $\mathfrak{D}_{(a,b)}$ denote the set of discontinuities of the map $\mathsf{A}_{(a,b)}$ and $\mathsf{D}(f)$ denote the set of discontinuities of the map $\mathsf{B}_f$, then from Proposition 2.11 of Chapter VI of \cite{JacodShiryaev}, since $\mathsf{A}_{(a,b)}$ is continuous at every $f$ so that $(a,b)\notin \mathsf{D}(f)$, we obtain that,
 \begin{equation*}
  \mathfrak{D}_{(a,b)}\subset \left\{f\in C(\mathbb{R}_+,\mathbb{R}_+):(a,b)\in \mathsf{D}(f)\right\}.
 \end{equation*}
 Let us write $\mathbb{P}^{(1)}_{(\mathbf{x},\gamma)}$ for $\mathsf{Law}(\mathsf{x}_1)$ on $C(\mathbb{R}_+,\mathbb{R}_+)$ under $\mathbb{P}_{(\mathbf{x},\gamma)}$. Then, define
\begin{align*}
\mathcal{V}=\left\{(a,b)\in \mathbb{R}_+^2:\mathbb{P}_{(\mathbf{x},\gamma)}^{(1)}\left(\mathfrak{D}_{(a,b)}\right)>0\right\} \subset \left\{(a,b)\in \mathbb{R}_+^2:\mathbb{P}_{(\mathbf{x},\gamma)}^{(1)}\left((a,b)\in \mathsf{D}(\mathsf{x}_1)\right)>0\right\}.
\end{align*} 
Then, by virtue of the fact that the map $\mathsf{B}_f$ only has jump discontinuities, we obtain, see Lemma 3.12 of Chapter VI of \cite{JacodShiryaev}, that the sets $\mathcal{V}_1=\{a\in \mathbb{R}_+:(a,b)\in \mathcal{V}\}$ and $\mathcal{V}_2=\{b\in \mathbb{R}_+:(a,b)\in \mathcal{V}\}$ are at most countable. In particular, we can find sequences $(\epsilon_m)_{m=1}^\infty$ and $(r_\ell)_{\ell=1}^\infty$, with $\epsilon_m\to 0$ and $r_\ell \to \infty$, as $m,\ell \to \infty$, so that for any $m,\ell \in \mathbb{N}$, $\mathbb{P}_{(\mathbf{x},\gamma)}^{(1)}(\mathfrak{D}_{(\epsilon_m,r_{\ell})})=0$. Now, observe that,
\begin{equation*}
\left(\sigma_1^N(\epsilon),\theta_1^N(r)\right)=\mathsf{A}_{(\epsilon,r)}\left(\mathsf{x}_1^{(N)}\right)
, \ \ \left(\sigma_1(\epsilon),\theta_1(r)\right)=\mathsf{A}_{(\epsilon,r)}(\mathsf{x}_1).
\end{equation*}
Hence, by the continuous mapping theorem, we obtain that for all $m,l\in \mathbb{N}$,  as $N \to \infty$,
\begin{equation*}
\left(\sigma_1^N(\epsilon_m),\theta_1^N(r_\ell)\right)\overset{\textnormal{d}}{\longrightarrow} \left(\sigma_1(\epsilon_m),\theta_1(r_\ell)\right).
\end{equation*}
By the Portmanteau theorem for convergence in distribution, we get that, for any open set $\mathcal{O}\subseteq\mathbb{R}^2$,  
 \begin{align*}
     \mathbb{P}_{(\mathbf{x},\gamma)}\left(\left(\sigma_{1}(\epsilon_m),\theta_{1}(r_\ell)\right)\in\mathcal{O}\right)
     \le\liminf_{N\to \infty}\mathbb{P}_{\mathbf{x}^{(N)}}\left(\left(\sigma_{1}^N(\epsilon_m),\theta_{1}^N(r_\ell)\right)\in\mathcal{O}\right).   
 \end{align*}
 From this and  \eqref{P(sigma<theta)} we conclude that (for $\epsilon_m<1$),
\begin{align*}
\mathbb{P}_{(\mathbf{x},\gamma)}\left(\sigma_{1}(\epsilon_m)< t\wedge \theta_{1}(r_\ell)\right)\leq \liminf_{N\to \infty}\mathbb{P}_{\mathsf{x}^{(N)}}\left(\sigma_{1}^N(\epsilon_m)< t\wedge \theta_{1}^N(r_\ell)\right)\leq \frac{\log r_\ell-\log x_1+\left|\frac{1+\eta}{2}\right|t}{|\log \epsilon_m|},
\end{align*}
and hence, by the monotone convergence theorem,
\begin{align*}
\mathbb{P}_{(\mathbf{x},\gamma)}\left(\lim_{m\to \infty}\sigma_{1}(\epsilon_m)< t\wedge \theta_{1}(r_\ell)\right)=0.
\end{align*}
Therefore, by taking $\ell,t\to\infty$, it follows, by virtue of the fact that (recall that $r_\ell \to \infty$), $\theta_{1}(r_{\ell})\overset{\textnormal{a.s.}}{\longrightarrow}\infty$, that $\mathbb{P}_{(\mathbf{x},\gamma)}\left(\lim_{m \to \infty}\sigma_1(\epsilon_m)=\infty\right)=1$, namely, since $\epsilon_m \to 0$,
\begin{align}\label{X_1>0}
\mathbb{P}_{(\mathbf{x},\gamma)}\left(\inf_{t\in[0,T]}\mathsf{x}_{1}(t)>0
\right)=1.
\end{align}

We now proceed to prove the second assertion. 
Using It\^{o}'s formula, we have 
\begin{align} \mathrm{d}f_1^N\left(\mathsf{x}^{(N)}(t)\right)&=
    \mathrm{d}\mathsf{M}_1^{(N)}(t)+
    \frac{N-1}{2N\mathsf{x}_1^{(N)}(t)}\mathrm{d}t-\sum_{j=2}^{N}\frac{\left(\mathsf{x}_j^{(N)}(t)\right)^2}{\left(\mathsf{x}_1^{(N)}(t)-\mathsf{x}_j^{(N)}(t)\right)^2}\mathrm{d}t \nonumber \\ &\ 
 \ 
+\sum_{j=2}^{N}\sum_{\ell=2,\ell\neq j}^{N}\frac{\mathsf{x}_j^{(N)}(t)\mathsf{x}_\ell^{(N)}(t)}{\left(\mathsf{x}_1^{(N)}(t)-\mathsf{x}_\ell^{(N)}(t)\right)\left(\mathsf{x}_{j}^{(N)}(t)-\mathsf{x}_\ell^{(N)}(t)\right)}\mathrm{d}t. \label{LyapunovApp1}
\end{align}
where
\begin{align*}
\mathsf{M}_1^{(N)}(t)\overset{\textnormal{def}}{=}(N-1) \mathsf{w}_1(t)+\sum_{\ell=2}^{N}\int_0^t\frac{\mathsf{x}_1^{(N)}\mathrm{d}\mathsf{w}_1(s)-\mathsf{x}_{\ell}^{(N)} \mathrm{d}\mathsf{w}_{\ell}(s)}{\mathsf{x}_1^{(N)}-\mathsf{x}_{\ell}^{(N)}}, \ \ \forall t \ge 0,
\end{align*}
is a local martingale. Now, we observe, and this is key, that for any $t\ge 0$, the last term on the RHS of \eqref{LyapunovApp1} is non-positive, by virtue of the elementary inequality, for $\mathbf{y}\in \mathbb{W}_{N,+}^\circ$,
\begin{equation*}
\sum_{j=2}^{N}\sum_{\ell=2,\ell\neq j}^{N}\frac{y_j y_\ell}{(y_1-y_\ell)(y_j-y_\ell)}
=\sum_{\underset{j<\ell}{j,\ell=2}}^{N}\frac{y_j y_\ell}{y_j-y_\ell}\left(\frac{1}{y_1-y_\ell}-\frac{1}{y_1-y_j}\right)\le 0.    
\end{equation*}
Thus, we can drop this term and write 
\begin{align*}
    f_1^N\left(\mathsf{x}^{(N)}\left(t\wedge \kappa_1^N(R)\wedge \sigma_1^N(\epsilon)\right)\right)&\leq f_1^N\left(\mathbf{x}^{(N)}\right)+\mathsf{M}_1^{(N)}\left(t\wedge \kappa_1^N(R)\wedge \sigma_1^N(\epsilon)\right) \\ &\  \
 +\int_0^{t\wedge \kappa_1^N(R)\wedge \sigma_1^N(\epsilon)}\frac{1}{2\mathsf{x}_1^{(N)}(s)}\mathrm{d}s\\&\leq f_1^N\left(\mathbf{x}^{(N)}\right)+\mathsf{M}_1^{(N)}\left(t\wedge \kappa_1^N(R)\wedge \sigma_1^N(\epsilon)\right)+ 
    \frac{t}{\epsilon}.
\end{align*}
Now, since the stopped process $t \mapsto \mathsf{M}_1^{(N)}\left(t\wedge \kappa_1^N(R)\wedge \sigma_1^N(\epsilon)\right)$ is a mean-zero martingale, we get
\begin{align*}
\mathbb{E}_{\mathbf{x}^{(N)}}\left[f_1^N\left(\mathsf{x}^{(N)}\left(t\wedge \kappa_1^N(R)\wedge \sigma_1^N(\epsilon)\right)\right)\right]&\leq f_1^N\left(\mathbf{x}^{(N)}\right)+\frac{t}{\epsilon}.
\end{align*}
Therefore, we have
\begin{align*}
    R\mathbb{P}_{\mathbf{x}^{(N)}}\left(\kappa_1^N(R)< t\wedge \sigma_1^N(\epsilon)\right)\leq \sup_{N\in\mathbb{N}}f_1^N\left(\mathbf{x}^{(N)}\right)+
    \frac{t}{\epsilon}.
\end{align*}
Note that, since
$\omega\left(\mathbf{x}^{(N)}\right)\overset{N\to\infty}{\longrightarrow}(\mathbf{x},\gamma)$, it follows from Proposition \ref{prop-CharPolyConv} that 
$\Psi_1^N\left(\mathbf{x}^{(N)}\right)$ converges to a strictly positive limit, 
and hence, 
$C\overset{\textnormal{def}}{=}\sup_{N\in\mathbb{N}}f_1^N\left(\mathbf{x}^{(N)}\right)<\infty$. We thus have,
\begin{align*}
\mathbb{P}_{\mathbf{x}^{(N)}}\left(\kappa_1^{N}(R)< t\wedge \sigma_1^N(\epsilon)\right)\leq \frac{C+\frac{t}{\epsilon}}{R}.
\end{align*}
Let $\delta<1$ be arbitrary. We have in view of \eqref{T-tau} that 
\begin{align}\label{P(tau<sigma)}
\mathbb{P}_{\mathbf{x}^{(N)}}\left(\tau_1^{N}(\delta)< t\wedge \sigma_1^N(\epsilon)\right)\leq \frac{C+\frac{t}{\epsilon}}{|\log\delta|},
\end{align}
and note that this bound is independent of $N$. Observe that,
\begin{equation*}
\tau_1^N(\delta)=\mathsf{S}_{\le}^\delta \left(\left|1-\frac{\mathsf{x}_2^{(N)}}{\mathsf{x}_1^{(N)}}\right|\right), \ \ \tau_1(\delta)=\mathsf{S}_{\le}^\delta \left(\left|1-\frac{\mathsf{x}_2}{\mathsf{x}_1}\right|\right),
\end{equation*}
and that by virtue of \eqref{X_1>0}, $t\mapsto |\mathsf{x}_1(t)|^{-1}|\mathsf{x}_1(t)-\mathsf{x}_2(t)|$ is almost surely, a well-defined continuous function. We now argue similarly to before to obtain the existence of sequences, abusing notation (the sequence $(\epsilon_\ell)_{\ell=1}^\infty$ here may have nothing to do with the sequence $(\epsilon_m)_{m=1}^\infty$ from earlier), $(\delta_m)_{m=1}^\infty$, $(\epsilon_\ell)_{\ell=1}^\infty$, with $\delta_m, \epsilon_\ell \to 0$, as $m,\ell \to \infty$, so that, for all $m,\ell \in \mathbb{N}$, as $N \to \infty$,
\begin{equation*}
\left(\tau_1^N(\delta_m),\sigma_1^N(\epsilon_\ell)\right)\overset{\textnormal{d}}{\longrightarrow} \left(\tau_1(\delta_m),\sigma_1(\epsilon_\ell)\right).
\end{equation*}
Hence, by taking $N\to\infty$ in \eqref{P(tau<sigma)}, one obtains, for $\delta_m<1$,
\begin{align*}
\mathbb{P}_{(\mathbf{x},\gamma)}\left(\tau_1(\delta_m)< t\wedge \sigma_1(\epsilon_\ell)\right)\leq \liminf_{N \to \infty } \mathbb{P}_{\mathbf{x}^{(N)}}\left(\tau_1^{N}(\delta_m)< t\wedge \sigma_1^N(\epsilon_\ell)\right)\le \frac{C+\frac{t}{\epsilon_\ell}}{|\log\delta_m|}.
\end{align*}
It thus follows that,
\begin{align*}
\mathbb{P}_{(\mathbf{x},\gamma)}\left(\lim_{m\to \infty}\tau_1(\delta_m)< t\wedge \sigma_1(\epsilon_\ell)\right)=0.
\end{align*}
Now, we take $t\to\infty$,
 and then $\ell\to \infty$, to finally get
\begin{align*}
\mathbb{P}_{(\mathbf{x},\gamma)}\left(\lim_{m\to \infty}\tau_1(\delta_m)\geq \lim_{\ell \to \infty}\sigma_1(\epsilon_\ell)\right)=1.
\end{align*}
But $\mathbb{P}_{(\mathbf{x},\gamma)}\left(\lim_{\ell \to \infty}\sigma_1(\epsilon_\ell )=\infty\right)=1$ by virtue of \eqref{X_1>0}.
This implies
\begin{align*}
\mathbb{P}_{(\mathbf{x},\gamma)}\left(\lim_{m \to \infty}\tau_1(\delta_m)=\infty\right)=1.
\end{align*}
That is, for all $T\ge 0$, we have
\begin{align}\label{ratio_1<1}
\mathbb{P}_{(\mathbf{x},\gamma)}\left(\inf_{t\in[0,T]}\left|1-\frac{\mathsf{x}_{2}(t)}{\mathsf{x}_1(t)}\right|>0\right)=1,
\end{align}
which completes the proof of the case $n=1$.

\textbf{The inductive step.} We next assume that \eqref{X_n>0} and  \eqref{ratio_n<1} hold for $n=k$, and prove them for $n=k+1$.
From \eqref{logX^N}, we can write
\begin{align*}
    \log\mathsf{x}_{k+1}^{(N)}\left(t\wedge \sigma_{k+1}^N(\epsilon)\wedge \tau_{k}^N(\delta)\wedge \theta_{k+1}^N(r)\right) &\geq
    \log x_{k+1}^{(N)}
    +\mathsf{w}_{k+1}\left(t\wedge \sigma_{k+1}^N(\epsilon)\wedge \tau_{k}^N(\delta)\wedge \theta_{k+1}^N(r)\right)\\ 
 -\left|\frac{1+\eta}{2}\right|&t+\int_{0}^{t\wedge \sigma_{k+1}^N(\epsilon)\wedge \tau_{k}^N(\delta)\wedge \theta_{k+1}^N(r)}
    \sum_{j=1, j\neq k+1}^{k}\frac{\mathsf{x}_j^{(N)}(t)}{\mathsf{x}_{k+1}^{(N)}(t)-\mathsf{x}_j^{(N)}(t)}\mathrm{d}t\\
\geq
    \log x_{k+1}^{(N)}&
    -\left|\frac{1+\eta}{2}\right|t+\mathsf{w}_{k+1}\left(t\wedge \sigma_{k+1}^N(\epsilon)\wedge \tau_{k}^N(\delta)\wedge \theta_{k+1}^N(r)\right)-\frac{kt}{\delta},
\end{align*}
 and thus, we have,
\begin{align*}
    \log r+\log \epsilon\mathbb{P}_{\mathsf{x}^{(N)}}\left(\sigma_{k+1}^N(\epsilon)< t\wedge \tau_{k}^N(\delta)\wedge \theta_{k+1}^N(r)\right)&\geq \log x_{k+1}^{(N)}-\left|\frac{1+\eta}{2}\right|t-\frac{kt}{\delta}.
\end{align*}
 One can check, by making use of the induction hypothesis, that as previously, there exist sequences $(\epsilon_m)_{m=1}^\infty$, $(\delta_\ell)_{\ell=1}^\infty$, $(r_q)_{q=1}^\infty$, with $\epsilon_m,\delta_\ell \to 0$, $r_q \to \infty$, as $m,\ell,q \to \infty$, so that the corresponding stopping times converge in distribution, as $N \to \infty$, to their limiting counterparts, and in particular we obtain,
 \begin{align*}
   \mathbb{P}_{(\mathbf{x},\gamma)}\left(\lim_{m\to \infty}\sigma_{k+1}(\epsilon_m)< t\wedge \tau_{k}(\delta_\ell)\wedge \theta_{k+1}(r_q)\right)=0.
\end{align*}
 Hence, it follows that $\mathbb{P}_{(\mathbf{x},\gamma)}\left(\lim_{m \to \infty}\sigma_{k+1}(\epsilon_m)=\infty\right)=1$, if we take $m \to \infty$ first, and then, $t,\ell,q\to\infty$ whereby $\tau_{k}(\delta_\ell),\theta_{k+1}(r_q)\overset{\textnormal{a.s.}}{\longrightarrow}\infty$. Equivalently,
\begin{align*}
\mathbb{P}_{(\mathbf{x},\gamma)}\left(\inf_{t\in[0,T]}\mathsf{x}_{k+1}(t)>0
\right)=1.
\end{align*}
 
We next apply It\^{o}'s formula to $f_{k+1}^N$ to obtain
\begin{align*}
   \mathrm{d}f_{k+1}^N\left(\mathsf{x}^{(N)}(t)\right)&=
    \mathrm{d}\mathsf{M}_{k+1}^{(N)}(t)+
    \sum_{i=1}^{k+1}\frac{N-k-1}{2N\mathsf{x}_i^{(N)}(t)}\mathrm{d}t-\sum_{i=1}^{k+1}\sum_{j=k+2}^{N}\frac{\left(\mathsf{x}_j^{(N)}(t)\right)^2}{\left(\mathsf{x}_i^{(N)}(t)-\mathsf{x}_j^{(N)}(t)\right)^2}\mathrm{d}t  \\&\  \ +
   \sum_{i=1}^{k+1}\sum_{j=k+2}^N\sum_{\underset{\ell\neq i,j}{\ell=1}}^N
\frac{\mathsf{x}_j^{(N)}(t)\mathsf{x}_\ell^{(N)}(t)}{\left(\mathsf{x}_i^{(N)}(t)-\mathsf{x}_\ell^{(N)}(t)\right)\left(\mathsf{x}_{j}^{(N)}(t)-\mathsf{x}_\ell^{(N)}(t)\right)}\mathrm{d}t,  
\end{align*}
where $t\mapsto \mathsf{M}_{k+1}^{(N)}(t)$ stands for the corresponding local martingale term.
We note, and this is again important, that the last term in the above expression is non-positive for any $t\ge 0$, by virtue of the inequality, for $\mathbf{y}\in \mathbb{W}_{N,+}^\circ$,
\begin{equation*}
   \sum_{i=1}^{k+1}\sum_{j=k+2}^N\sum_{\underset{\ell\neq i,j}{\ell=1}}^N \frac{y_j y_{\ell}}{(y_i-y_\ell)(y_j-y_\ell)}  \le 0,
\end{equation*}
which is easily seen to be true since the LHS can be written as,
\begin{align*}
&\sum_{i=1}^{k+1}\sum_{j=k+2}^N\sum_{\ell=1,\ell\neq i}^{k+1} \frac{y_j y_{\ell}}{(y_i-y_\ell)(y_j-y_\ell)} +\sum_{i=1}^{k+1}\sum_{j=k+2}^N\sum_{\underset{\ell\neq j}{\ell=k+2}}^N \frac{y_j y_{\ell}}{(y_i-y_\ell)(y_j-y_\ell)}
\\ &=
\sum_{j=k+2}^N\sum_{\underset{i<\ell}{i,\ell=1}}^{k+1}\frac{y_j}{y_i-y_\ell}\left(\frac{y_{\ell}}{y_j-y_\ell}-\frac{y_{i}}{y_j-y_i}\right) +\sum_{i=1}^{k+1}\sum_{\underset{j<\ell}{j,\ell=k+2}}^N\frac{y_j y_\ell}{y_j-y_\ell}\left(\frac{1}{y_i-y_\ell}-\frac{1}{y_i-y_j}\right) \le 0.
\end{align*}
Thus, by dropping it, we have
\begin{align*}
    f_{k+1}^N\left(\mathsf{x}^{(N)}\left(t\wedge \kappa_{k+1}^N(R)\wedge \sigma_{k+1}^N(\epsilon)\right)\right)& \leq f_{k+1}^N\left(\mathbf{x}^{(N)}\right)+\mathsf{M}_{k+1}^{(N)}\left(t\wedge \kappa_{k+1}^N(R)\wedge \sigma_{k+1}^N(\epsilon)\right)\\
&\ 
 \ +\sum_{i=1}^{k+1}\int_{0}^{t\wedge \kappa_{k+1}^N(R)\wedge \sigma_{k+1}^N(\epsilon)}
    \frac{1}{2\mathsf{x}_n^{(N)}(t)}\mathrm{d}t\\
&\leq  f_{k+1}^N\left(\mathbf{x}^{(N)}\right)+\mathsf{M}_{k+1}^{(N)}\left(t\wedge \kappa_{k+1}^N(R)\wedge \sigma_{k+1}^N(\epsilon)\right)+ 
    \frac{k}{\epsilon}t.
\end{align*}
One can now follow the argument for the case $n=1$ to establish the desired result in the exact same way.
\end{proof}

\begin{rmk}
We note that in the argument above we only needed to take as input convergence of the paths and not necessarily convergence in $C(\mathbb{R}_+,\Omega_+)$. Had we not even known convergence of the paths, the argument would in any case give that any possible subsequential limits would need to consist of non-intersecting paths.
\end{rmk}

\subsection{The ISDE via characteristic polynomials}

We now prove that $\left(\mathsf{x}_i(\cdot)\right)_{i=1}^\infty$ solves the ISDE \eqref{ISDEintro}. Beyond the use\footnote{We also present a slightly shorter argument (directly using convergence on $\Omega_+$ and non-intersection of the paths) for convergence of the singular drift term in Remark \ref{AlternativeConvergenceArgument} that does not involve the use of characteristic polynomials. Nevertheless, these two arguments are basically equivalent, since convergence on $\Omega_+$ implies convergence of the corresponding polynomials.} of characteristic polynomials to show convergence of the singular drift, it is interesting to note the following. In the construction of weak solutions to finite-dimensional SDE, see \cite{Stroock-Varadhan}, the driving Brownian motions can usually\footnote{The upshot is that one wants to construct the driving Brownian motions from the solution, see \cite{Stroock-Varadhan}. If the noise degenerates, one needs an auxiliary, independent from the solution, Brownian motion, see \cite{Stroock-Varadhan}. However, in our setting the noise does not degenerate as $\mathsf{x}_i>0$ and this is not the source of the additional randomness. } be taken to be measurable with respect to the natural filtration generated by the solution. Here, the driving Brownian motions $(\mathsf{w}_i(\cdot))_{i=1}^\infty$ will in fact be measurable not with the filtration generated by $\left(\mathsf{x}_i(\cdot)\right)_{i=1}^\infty$ on its own but rather with the filtration generated by the enhanced process $\mathbf{X}_\cdot^{\Omega_+}=\left((\mathsf{x}_i(\cdot))_{i=1}^\infty,\boldsymbol{\gamma}(\cdot)\right)$. This is another illustration of the phenomenon that in the infinite-dimensional limit additional information is created.

\begin{thm}\label{thm-ISDE}
    Let $\eta \in \mathbb{R}$, $\mathbf{x} \in \mathbb{W}_{\infty,+}^\circ$, $\gamma\in \mathbb{R}_+$ with $\sum_{i=1}^{\infty}x_i\le \gamma$.
    Let $\mathbf{X}_\cdot^{\Omega_+}$ and $\mathsf{x}(\cdot)=\left(\mathsf{x}_i(\cdot)\right)_{i=1}^\infty$ be as in Theorem \ref{thm-NonCollision,>0}. Then, $\mathsf{x}(\cdot)$ is a weak solution, starting from $\mathbf{x}$, to the ISDE \eqref{ISDEintro}. Moreover, we have that,
 \begin{equation}\label{Claim-gamma=sum}
\mathbb{P}_{(\mathbf{x},\gamma)}\left(\mathbf{X}^{\Omega_+}_t\in \Omega_+^{0},\;\forall t>0\right)=1, \  \ \forall (\mathbf{x},\gamma)\in \Omega_+.
    \end{equation}
\end{thm}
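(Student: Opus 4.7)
The plan is to write \eqref{rescaledSDE} in integrated form, couple the processes $\mathbf{X}^{(N)}$ with $\mathbf{X}^{\Omega_+}$ on a single probability space using the Skorokhod coupling of Proposition \ref{Prop-Conv in l^2} (so that convergence in $C([0,T],\Omega_+)$ and in $\ell^2$ holds almost surely), and then pass to the limit term by term. Theorem \ref{thm-NonCollision,>0} guarantees that, for fixed $T$, $i$, and $j$, the path $\mathsf{x}_i(t)$ is almost surely bounded away from $0$ and from $\mathsf{x}_j(t)$ on $[0,T]$, so all the manipulations below are legitimate on events of probability arbitrarily close to one (localised by suitable stopping times). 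The linear term $\int_0^t\mathsf{x}_i^{(N)}\,ds$ converges directly, the $\tfrac{1}{2N}dt$ term vanishes, and the martingale $\int_0^t\mathsf{x}_i^{(N)}\,d\mathsf{w}_i$ converges (using $\ell^2$-convergence of the integrands and of the quadratic variations towards $\int_0^t\mathsf{x}_i^2\,ds$) to a continuous martingale $\mathsf{N}_i$ with $\langle\mathsf{N}_i\rangle_t=\int_0^t\mathsf{x}_i^2\,ds$ and $\langle\mathsf{N}_i,\mathsf{N}_j\rangle=0$ for $i\neq j$. The martingale representation theorem on a standard extension of the probability space produces independent Brownian motions $(\mathsf{w}_i)_{i=1}^\infty$ with $\mathsf{N}_i=\int_0^\cdot \mathsf{x}_i\,d\mathsf{w}_i$.

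\textbf{Paragraph 2 (Convergence of the singular drift and the intermediate ISDE).} The key identity, valid for every $N$, is
\[
\sum_{j=1,\,j\neq i}^N \frac{\mathsf{x}_i^{(N)}\mathsf{x}_j^{(N)}}{\mathsf{x}_i^{(N)}-\mathsf{x}_j^{(N)}}\;=\;-\tfrac{1}{2}\,\partial_z\log\Phi_i^N(z)\big|_{z=1/\mathsf{x}_i^{(N)}},\qquad \Phi_i^N(z)=\frac{\mathfrak{U}_N(z)^2}{(1-\mathsf{x}_i^{(N)}z)^2}.
\]
By Proposition \ref{prop-CharPolyConv}, $\mathfrak{U}_N(z)\to \mathsf{E}_+(z;\omega)=e^{-\gamma z}\prod_{j}e^{\mathsf{x}_j z}(1-\mathsf{x}_j z)$ uniformly on compact sets in $\mathbb{C}$, and non-intersection keeps $z=1/\mathsf{x}_i(t)$ in a neighbourhood on which $\Phi_i^\infty(z):=\mathsf{E}_+(z)^2/(1-\mathsf{x}_i z)^2$ is analytic and nonvanishing; hence the log-derivatives converge locally uniformly. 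A direct calculation yields
\[
\mathfrak{D}_i(\omega):=-\tfrac{1}{2}\partial_z\log \Phi_i^\infty(z;\omega)\big|_{z=1/\mathsf{x}_i}\;=\;(\boldsymbol{\gamma}-\mathsf{x}_i)\,+\,\sum_{j=1,\,j\neq i}^\infty\frac{\mathsf{x}_j^2}{\mathsf{x}_i-\mathsf{x}_j},
\]
which depends on $\boldsymbol{\gamma}$ beyond $\sum_j\mathsf{x}_j$. Dominated convergence (on stopping-time truncations) gives $\int_0^t\sum_{j\neq i}^N\tfrac{\mathsf{x}_i^{(N)}\mathsf{x}_j^{(N)}}{\mathsf{x}_i^{(N)}-\mathsf{x}_j^{(N)}}\,ds\to \int_0^t\mathfrak{D}_i(\mathbf{X}^{\Omega_+}_s)\,ds$ in probability. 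This produces the \emph{intermediate ISDE}
\[
\mathsf{x}_i(t)=x_i+\int_0^t\mathsf{x}_i\,d\mathsf{w}_i-\tfrac{\eta}{2}\int_0^t\mathsf{x}_i\,ds+\int_0^t\mathfrak{D}_i(\mathbf{X}^{\Omega_+}_s)\,ds.
\]
Since $\mathsf{x}_i\mathsf{x}_j/(\mathsf{x}_i-\mathsf{x}_j)=\mathsf{x}_j+\mathsf{x}_j^2/(\mathsf{x}_i-\mathsf{x}_j)$, the renormalised sum in \eqref{ISDEintro} equals $(\sum_j\mathsf{x}_j-\mathsf{x}_i)+\sum_{j\neq i}\mathsf{x}_j^2/(\mathsf{x}_i-\mathsf{x}_j)$, so $\mathfrak{D}_i=\mathsf{m}+\sum_{j\neq i}\mathsf{x}_i\mathsf{x}_j/(\mathsf{x}_i-\mathsf{x}_j)$ where $\mathsf{m}(\omega):=\boldsymbol{\gamma}-\sum_j\mathsf{x}_j$. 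Hence \eqref{ISDEintro} will follow once we establish \eqref{Claim-gamma=sum}.

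\textbf{Paragraph 3 (Proof of \eqref{Claim-gamma=sum} via a Lyapunov identity).} Summing \eqref{rescaledSDE} over $i$ and exploiting the antisymmetry of the interaction gives $d\boldsymbol{\gamma}^{(N)}=\sum_i\mathsf{x}_i^{(N)}d\mathsf{w}_i-\tfrac{\eta}{2}\boldsymbol{\gamma}^{(N)}dt+\tfrac12\,dt$, and passing to the limit produces the analogous SDE for $\boldsymbol{\gamma}$. Set $\mathsf{m}_n(t):=\boldsymbol{\gamma}(t)-\sum_{i=1}^n\mathsf{x}_i(t)\ge 0$, monotonically decreasing to $\mathsf{m}(t)$. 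Summing the intermediate ISDE for $i=1,\dots,n$ and using the elementary identity
\[
\sum_{1\le i\neq j\le n}\frac{\mathsf{x}_j^2}{\mathsf{x}_i-\mathsf{x}_j}=\sum_{i<j\le n}\frac{\mathsf{x}_j^2-\mathsf{x}_i^2}{\mathsf{x}_i-\mathsf{x}_j}=-(n-1)\sum_{k=1}^n\mathsf{x}_k
\]
gives $\sum_{i=1}^n\mathfrak{D}_i=n\mathsf{m}_n+\mathsf{T}_n$ with $\mathsf{T}_n:=\sum_{i=1}^n\sum_{j>n}\mathsf{x}_j^2/(\mathsf{x}_i-\mathsf{x}_j)\ge 0$ (since $\mathsf{x}_i>\mathsf{x}_j$ for $i\le n<j$). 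Subtracting from the $\boldsymbol{\gamma}$-SDE yields
\[
d\mathsf{m}_n(t)=d\mathsf{M}_{>n}(t)+\Bigl[\tfrac12-(n+\tfrac{\eta}{2})\mathsf{m}_n(t)-\mathsf{T}_n(t)\Bigr]dt,
\]
with $\mathsf{M}_{>n}$ a martingale of quadratic variation $\int_0^\cdot\sum_{i>n}\mathsf{x}_i^2\,ds\to 0$. Dropping $\mathsf{T}_n\ge 0$, taking expectations and applying Gr\"onwall give
\[
\mathbb{E}[\mathsf{m}_n(t)]\le \mathsf{m}_n(0)\,e^{-(n+\eta/2)t}+\tfrac{1}{2(n+\eta/2)}\bigl(1-e^{-(n+\eta/2)t}\bigr)\xrightarrow[n\to\infty]{}0
\]
for every $t>0$. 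Monotone convergence yields $\mathbb{E}[\mathsf{m}(t)]=0$, hence $\mathsf{m}(t)=0$ almost surely for each fixed $t>0$. Since the above also gives $\sup_{t\in[\varepsilon,T]}\mathbb{E}[\mathsf{m}_n(t)]\to 0$, a pathwise refinement using the variation-of-constants representation of the $\mathsf{m}_n$-SDE together with Doob's inequality upgrades this to $\sup_{t\in[\varepsilon,T]}\mathsf{m}_n(t)\to 0$ in probability, and the strong Markov property restarted at a rational $q\in(0,\varepsilon)$ at which $\mathbf{X}_q^{\Omega_+}\in\Omega_+^0$ extends the vanishing to all $t>0$ simultaneously, proving \eqref{Claim-gamma=sum}. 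Once this holds, $\mathfrak{D}_i$ reduces to the renormalised sum of \eqref{ISDEintro} and the ISDE is established.

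\textbf{Paragraph 4 (Main obstacle).} The principal difficulty is the convergence of the singular drift: each of $-(N-1)\mathsf{x}_i^{(N)}$ and $\mathsf{x}_i^{(N),2}\sum_{j\neq i}^N(\mathsf{x}_i^{(N)}-\mathsf{x}_j^{(N)})^{-1}$ individually diverges as $N\to\infty$, so one cannot argue term by term; the characteristic-polynomial rewriting is precisely what converts the problem into the convergence of a single smooth functional (a log-derivative of $\Phi_i^N$) which inherits the $\Omega_+$-convergence from Proposition \ref{prop-CharPolyConv}. A secondary, more subtle, obstacle is promoting $\mathsf{m}(t)=0$ from pointwise-a.s. to uniformly in $t>0$, because $\sum_i\mathsf{x}_i$ is only lower semi-continuous in $t$; this is overcome by the pathwise supremum bound on $\mathsf{m}_n$ obtained from the explicit linear $\mathsf{m}_n$-SDE combined with the Markov/Feller structure at rational restart times.
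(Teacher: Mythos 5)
Your Paragraphs 1 and 2 follow the paper's plan very closely: Skorokhod coupling, term-by-term passage to the limit in the integrated SDE, identification of the limiting martingale with a stochastic integral against new Brownian motions, and convergence of the singular drift via the logarithmic derivative of $\Phi_i^N$ and Proposition \ref{prop-CharPolyConv}. The $+$/$-$ sign in $\tfrac12\partial_z\log\Phi_i^N$ is in fact a small typo in the paper's Section 1.3; your $-\tfrac12$ is the correct one, and the resulting expression $\mathfrak{D}_i=(\boldsymbol{\gamma}-\mathsf{x}_i)+\sum_{j\neq i}\mathsf{x}_j^2/(\mathsf{x}_i-\mathsf{x}_j)$ agrees with the paper's intermediate ISDE \eqref{ISDE-Omega} after the algebraic rearrangement you note. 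So up to here the two arguments are essentially the same.

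Where you diverge is Paragraph 3, and the divergence is genuine. The paper proves $\boldsymbol{\gamma}(t)=\sum_i\mathsf{x}_i(t)$ in two entirely pathwise steps. First, summing \eqref{ISDE-Omega} over $i=1,\dots,M$, isolating the term $M\int_0^t(\tilde\boldsymbol{\gamma}-\sum\tilde{\mathsf{x}}_j)ds$, dropping the two non-negative terms $\sum_{i\le M}x_i$ and $\int_0^t\sum_{i\le M,j>M}\frac{\tilde{\mathsf{x}}_i\tilde{\mathsf{x}}_j}{\tilde{\mathsf{x}}_i-\tilde{\mathsf{x}}_j}ds$, dividing by $M$ and sending $M\to\infty$ (the remaining terms on the right stay almost surely bounded), one gets $\int_0^t(\tilde\boldsymbol{\gamma}-\sum\tilde{\mathsf{x}}_j)ds=0$, hence equality Lebesgue-a.e. in $s$, with no expectations taken. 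Second, for the upgrade to all $t>0$ simultaneously, the paper observes that in the limiting identity $\sum_i\tilde{\mathsf{x}}_i(t)=\sum x_i+\mathsf{U}(t)-\tfrac\eta2\int_0^t\sum\tilde{\mathsf{x}}_i\,ds+\lim_\ell\int_0^t\sum_{i\le M_\ell,j>M_\ell}(\cdots)\,ds$ the only possibly discontinuous term is the monotone tail-sum; a jump upward at $\mathsf{u}$ would give $\sum\tilde{\mathsf{x}}_i(\mathsf{u})>\lim_{t_k\nearrow\mathsf{u}}\sum\tilde{\mathsf{x}}_i(t_k)=\lim\tilde\boldsymbol{\gamma}(t_k)=\tilde\boldsymbol{\gamma}(\mathsf{u})$, contradicting $\sum\tilde{\mathsf{x}}_i\le\tilde\boldsymbol{\gamma}$, so $t\mapsto\sum\tilde{\mathsf{x}}_i(t)$ is continuous and equals the continuous $\tilde\boldsymbol{\gamma}$ whenever it does a.e.

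Your alternative is to close a linear mean-reverting SDE for $\mathsf{m}_n=\boldsymbol{\gamma}-\sum_{i\le n}\mathsf{x}_i$ with rate $n+\eta/2\to\infty$, drop $\mathsf{T}_n\ge 0$, take expectations, and Gr\"onwall. This buys you a particularly transparent reason why the gap must vanish (mean reversion at divergent rate), and it sidesteps the discontinuity argument by going directly for a uniform-in-$t$ estimate. But there are two points that need care. (i) To take expectations you must know that $\mathsf{M}_{>n}$ contributes $\mathbb{E}[\mathsf{M}_{>n}(t)]\le 0$; it is a local martingale and you have no a priori moment bounds on its quadratic variation. The way out is to observe that $\mathsf{M}_{>n}(t)\ge -\mathsf{m}_n(0)-t/2\ge -\gamma-T/2$ on $[0,T]$ from the $\mathsf{m}_n$-SDE and $\mathsf{m}_n,\mathsf{T}_n\ge 0$, so $\mathsf{M}_{>n}$ is a local martingale bounded below, hence a supermartingale — this makes the expectation step legitimate, but you should say so explicitly. (ii) The uniform-in-$t$ upgrade via Doob is not as quick as written, because the stochastic convolution $\int_0^te^{-\lambda_n(t-s)}d\mathsf{M}_{>n}(s)$ is \emph{not} a martingale; one must first pass through an integration by parts to bound it by $2\sup_{s\le t}|\mathsf{M}_{>n}(s)|$, and then one still needs quantitative control on $\sup_{s\le T}|\mathsf{M}_{>n}(s)|$, which again runs into the absence of moment bounds unless one localises carefully (e.g. on the event $\{\sup_{s\le T}\sum_{i>n}\mathsf{x}_i(s)^2\le\varepsilon_n\}$, which has probability $\to 1$ by convergence in $\Omega_+$). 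The subsequent Markov-restart at a rational $q$ does not, by itself, upgrade a fixed-$t$ conclusion to all-$t$, so it must be read as accompanying the supremum bound rather than replacing it. In short: your route to \eqref{Claim-gamma=sum} is sound and conceptually illuminating, but it trades the paper's elementary pathwise arguments for moment estimates and a maximal inequality that require more bookkeeping than your sketch provides; the paper's approach avoids these entirely.
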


\begin{proof}[Proof of Theorem \ref{thm-ISDE}]
We first prove that there exists a filtered probability space, on which independent standard Brownian motions $(\tilde{\mathsf{w}}_i)_{i=1}^\infty$ adapted to the filtration and an equal in law copy $(\tilde{\mathsf{x}},\tilde{\boldsymbol{\gamma}})$ of $(\mathsf{x},\boldsymbol{\gamma})$, also adapted to the filtration, are defined, such that almost surely, for all $t\ge 0$:
   	\begin{align}\label{ISDE-Omega}	
    \tilde{\mathsf{x}}_i(t) = x_i+ \int_{0}^t\tilde{\mathsf{x}}_i(s) \mathrm{d}\tilde{\mathsf{w}}_i(s) +\int_0^t \left(-\frac{\eta}{2}\tilde{\mathsf{x}}_i(s)+\tilde{\boldsymbol{\gamma}}(s)-\sum_{j=1}^{\infty}\tilde{\mathsf{x}}_j(s)+\sum_{j=1,j\neq i}^{\infty}\frac{\tilde{\mathsf{x}}_i(s)\tilde{\mathsf{x}}_j(s)}{\tilde{\mathsf{x}}_i(s)-\tilde{\mathsf{x}}_j(s)}\right)\mathrm{d}s, \  \  i\in\mathbb{N}.
	\end{align}
The filtration will be the natural filtration of $(\tilde{\mathsf{x}},\tilde{\boldsymbol{\gamma}})$. Recall that the natural filtration $(\mathscr{F}_t)_{t\ge 0}$ of a process $(\mathfrak{R}_t;t\ge 0)$ is given by, for all $t\ge 0$,
\begin{equation*}
\mathscr{F}_t=\sigma\left(\mathfrak{R}_t; 0 \le s \le t\right),
\end{equation*}
 where for a collection of random variables $\mathcal{C}$, we write $\sigma (\mathcal{C})$ for the sigma algebra generated by them. 
 
We now prove this claim. For each $N\in\mathbb{N}$, let $\mathbf{x}^{(N)}\in \mathbb{W}_{N,+}^\circ$ be such that it converges, under the embedding \eqref{embedding+}, to $(\mathbf{x},\gamma)$.  Let $\mathsf{x}^{(N)}$ be the solution to the equation \eqref{rescaledSDE} starting at $\mathbf{x}^{(N)}$. 
Writing the equation in the integral form, one has for $i=1,2,\dots,N$,
\begin{align}\label{rescaledSDEintegralEq}
	\mathsf{x}_i^{(N)}(t)  =	x_i^{(N)} +\int_{0}^{t}\mathsf{x}_i^{(N)}(s) \mathrm{d}\mathsf{w}_i(s) - \frac{\eta}{2}\int_{0}^{t}\mathsf{x}_i^{(N)}(s)\mathrm{d}s
	+\frac{t}{2N}
	+\int_{0}^{t}\sum_{j=1,j\neq i}^{N}\frac{\mathsf{x}_i^{(N)}(s)\mathsf{x}_j^{(N)}(s)}{\mathsf{x}_i^{(N)}(s)-\mathsf{x}_j^{(N)}(s)}\mathrm{d}s. 
\end{align}   
  By Theorem \ref{MainThm1Intro}, $\mathbf{X}^{(N)}\overset{\textnormal{d}}{\longrightarrow}(\mathsf{x},\boldsymbol{\gamma})$ on $C(\mathbb{R}_+, \Omega_+)$ where as usual, $\mathbf{X}^{(N)}$ corresponds to the embedded process on $\Omega_+$. Now, by the Skorokhod
 representation theorem, there exists a probability space $(\tilde{\boldsymbol{\Omega}},\tilde{\mathbf{P}})$ on which 
 $\tilde{\mathsf{x}}^{(N)} \overset{\textnormal{d}}{=} \mathsf{x}^{(N)} $, i.e.  $\tilde{\mathbf{X}}^{(N)} \overset{\textnormal{d}}{=} \mathbf{X}^{(N)} $, and $\tilde{\mathbf{X}}^{\Omega_+}\overset{\textnormal{d}}{=}\mathbf{X}^{\Omega_+}$, i.e. $(\tilde{\mathsf{x}},\tilde{\boldsymbol{\gamma}})\overset{\textnormal{d}}{=}(\mathsf{x},\boldsymbol{\gamma})$, 
  are defined such that $\tilde{\mathbf{P}}$-a.s.,
  \begin{align}\label{convOnOmega}
\tilde{\mathbf{X}}^{(N)}\overset{N\to\infty}{\longrightarrow} (\tilde{\mathsf{x}},\tilde{\boldsymbol{\gamma}}),\  \  \textnormal{on}\  \  C(\mathbb{R}_+, \Omega_+).
  \end{align}
Let us fix $N \in \mathbb{N}$. Let us write $(\mathscr{F}_t^{(N)})_{t\ge 0}$ for the natural filtration of 
$\tilde{\mathsf{x}}^{(N)}$. We first show that there exist  $(\mathscr{F}_t^{(N)})_{t\ge 0}$-adapted independent standard Brownian motions $(\tilde{\mathsf{w}}_{i}^{(N)})_{i=1}^N$ so that $\tilde{\mathbf{P}}$-a.s., for all $t\ge 0$ and  $i=1,2,\dots,N$, we have
 \begin{align}\label{tilde-FSDE}
			\tilde{\mathsf{x}}_i^{(N)}(t)  =	x_i^{(N)}+\int_{0}^{t}\tilde{\mathsf{x}}_i^{(N)}(s) \mathrm{d}\tilde{\mathsf{w}}_i^{(N)}(s) - \frac{\eta}{2}\int_{0}^{t}\tilde{\mathsf{x}}_i^{(N)}(s)\mathrm{d}s
  +\frac{t}{2N}
   +\int_{0}^{t}\sum_{j=1,j\neq i}^{N}\frac{\tilde{\mathsf{x}}_i^{(N)}(s)\tilde{\mathsf{x}}_j^{(N)}(s)}{\tilde{\mathsf{x}}_i^{(N)}(s)-\tilde{\mathsf{x}}_j^{(N)}(s)}\mathrm{d}s. 
		\end{align} 
Let us define the process $\left(\left(\mathsf{Y}_i^{(N)}(t)\right)_{i=1}^N;t \ge 0\right)$ coordinate-wise, as follows,
\begin{align*}
\left(\mathsf{Y}_i^{(N)}(t);t\ge 0\right)&\overset{\textnormal{def}}{=}\left(\tilde{\mathsf{x}}_i^{(N)}(t) -	x_i^{(N)} + \frac{\eta}{2}\int_{0}^{t}\tilde{\mathsf{x}}_i^{(N)}(s)\mathrm{d}s
  -\frac{t}{2N}
   -\int_{0}^{t}\sum_{j=1,j\neq i}^{N}\frac{\tilde{\mathsf{x}}_i^{(N)}(s)\tilde{\mathsf{x}}_j^{(N)}(s)}{\tilde{\mathsf{x}}_i^{(N)}(s)-\tilde{\mathsf{x}}_j^{(N)}(s)}\mathrm{d}s;t\ge 0\right)\\
   &\overset{\textnormal{d}}{=}\left(\int_0^t \mathsf{x}_i^{(N)} (s)\mathrm{d}\mathsf{w}_i(s)\;t\ge 0\right).
\end{align*}
The equality in distribution is by virtue of the fact that $\tilde{\mathsf{x}}^{(N)} \overset{\textnormal{d}}{=} \mathsf{x}^{(N)} $ and that $\mathsf{x}^{(N)}$ is a solution to the SDE \eqref{rescaledSDE}. Observe that, $\mathsf{Y}^{(N)}$ is adapted with respect to $(\mathscr{F}_t^{(N)})_{t\ge 0}$. It is also easy to check that $\mathsf{Y}^{(N)}$ is an $N$-dimensional continuous local martingale with respect to 
$(\mathscr{F}_t^{(N)})_{t\ge 0}$, with quadratic variation\footnote{For a continuous $N$-dimensional local martingale $(\mathsf{z}_i(\cdot))_{i=1}^N$ we write $\left\langle \mathsf{z}_i, \mathsf{z}_j \right \rangle_t$, $i,j=1,\dots,N$ for its quadratic (co-)variation.}  \cite{Revuz-Yor,JacodShiryaev} given by,
\begin{equation}
\left\langle \mathsf{Y}_i^{(N)},\mathsf{Y}_j^{(N)}\right\rangle_t=\mathbf{1}_{i=j}\int_{0}^t \tilde{\mathsf{x}}_i^{(N)}(s)\tilde{\mathsf{x}}_j^{(N)}(s)\mathrm{d}s, \ \ \forall t \ge 0, \ i,j=1,2,\dots,N. 
\end{equation}
Now, we can define, for $i=1,\dots,N$, by virtue of the fact that $\tilde{\mathbf{P}}$-a.s. $\tilde{\mathsf{x}}_i^{(N)}>0$, the stochastic integral,
\begin{equation}\label{BM^Ndef}
\tilde{\mathsf{w}}_i^{(N)}(t)\overset{\textnormal{def}}{=} \int_{0}^t \frac{1}{\tilde{\mathsf{x}}_i^{(N)}(s)}\mathrm{d}\mathsf{Y}_i^{(N)}(s), \ \ \forall t \ge 0.
\end{equation}
Observe that, $(\tilde{\mathsf{w}}_i^{(N)})_{i=1}^N$ is adapted with respect to $(\mathscr{F}_t^{(N)})_{t\ge 0}$ and moreover it is in fact an $N$-dimensional continuous local martingale, with respect to this filtration, satisfying,
\begin{equation*}
\left\langle \tilde{\mathsf{w}}_i^{(N)},\tilde{\mathsf{w}}_j^{(N)}\right\rangle_t=\mathbf{1}_{i=j}t, \ \ \forall t \ge 0, \ i,j=1,2,\dots,N. 
\end{equation*}
Hence, by virtue of Levy's characterisation theorem \cite{Revuz-Yor}, $(\tilde{\mathsf{w}}_i^{(N)})_{i=1}^N$ is a sequence of independent standard Brownian motions. Moreover, from \eqref{BM^Ndef}, we have
\begin{equation*}
\mathsf{Y}_i^{(N)}(t)=\int_{0}^t \tilde{\mathsf{x}}_i^{(N)}(s)\mathrm{d}\tilde{\mathsf{w}}_i^{(N)}(s), \ \ \forall t \ge 0,\  \ i=1,2,\dots,N,
\end{equation*}
from which \eqref{tilde-FSDE} follows.

We next prove \eqref{ISDE-Omega} holds. 
Clearly, proving \eqref{ISDE-Omega} amounts to showing the convergence of the integrals to the corresponding terms in the limit. Let $i\in\mathbb{N}$ be fixed. We proceed to show the convergence of the drift terms first. Observe that, $\tilde{\mathbf{P}}$-a.s., for all $t\ge 0$,
\begin{align*}
\int_{0}^{t}\tilde{{\mathsf{x}}}_i^{(N)}(s)\mathrm{d}s\to \int_{0}^{t}\tilde{{\mathsf{x}}}_i(s)\mathrm{d}s, \  \ \text{ as } N\to \infty.
\end{align*}
We thus need to prove convergence of the interaction terms, namely, $\tilde{\mathbf{P}}$-a.s., for all $t \ge 0$,
\begin{align}\label{convInteractionTerm}
	\int_{0}^{t}\sum_{j=1,j\neq i}^{N}\frac{\tilde{\mathsf{x}}_i^{(N)}(s)\tilde{\mathsf{x}}_j^{(N)}(s)}{\tilde{\mathsf{x}}_i^{(N)}(s)-\tilde{\mathsf{x}}_j^{(N)}(s)}\mathrm{d}s\overset{N\to \infty}{\longrightarrow}
\int_{0}^{t}\left(\tilde{\boldsymbol{\gamma}}(s)-\sum_{j=1}^{\infty}\tilde{\mathsf{x}}_j(s)\right)\mathrm{d}s
   +\int_{0}^{t}\sum_{j=1,j\neq i}^{\infty}\frac{\tilde{\mathsf{x}}_i(s)\tilde{\mathsf{x}}_j(s)}{\tilde{\mathsf{x}}_i(s)-\tilde{\mathsf{x}}_j(s)}\mathrm{d}s.
		\end{align}
To this end, we will use Proposition \ref{prop-CharPolyConv} on convergence of the corresponding characteristic polynomials.
Let $N\in\mathbb{N}$ and $1\leq i\leq N$. We consider the following entire functions: \begin{align*}
	\Phi_i^N\left(z;\mathbf{x}^{(N)}\right)&\overset{\textnormal{def}}{=}\prod_{j=1,j\neq i}^{N}\left(1-x_j^{(N)}z\right)^2,\  \  \mathbf{x}^{(N)}\in\mathbb{W}_{N,+},\\
	\mathsf{E}_i(z;\omega)&\overset{\textnormal{def}}{=}\mathrm{e}^{-2(\gamma-x_i) z}\prod_{j=1,j\neq i}^{\infty}\mathrm{e}^{2x_jz}\left(1-x_jz\right)^2,\  \ \omega=(\mathbf{x},\gamma)\in\Omega_+.
\end{align*}
Observe that, the integrands in \eqref{convInteractionTerm} are determined in terms of the functions $\Phi_i^N\left(z;\mathbf{x}^{(N)}\right)$ and $\mathsf{E}_i(z;\omega)$ respectively. Indeed, 
the corresponding terms, at time $t\ge 0$, are equal to 
\begin{align*}
& \frac{1}{2}\frac{\mathrm{d}}{\mathrm{d} z}\log \Phi_i^N\left(z,\tilde{\mathsf{x}}^{(N)}(t)\right)\Big|_{z=\left(\tilde{\mathsf{x}}_i^{(N)}(t)\right)^{-1}},\\
& \frac{1}{2}\frac{\mathrm{d}}{\mathrm{d} z}\log 
\mathsf{E}_i\left(z;\tilde{\mathbf{X}}_t^{\Omega_+}\right)\Big|_{z=\left(\tilde{\mathsf{x}}_i(t)\right)^{-1}}.
\end{align*}
Note that, the expressions above are well-defined by virtue of the fact that the paths are $\tilde{\mathbf{P}}$-a.s. non-intersecting and never hit zero. We now prove that, $\tilde{\mathbf{P}}$-a.s., for all $t\ge 0$,
\begin{align}\label{LogDerConv}
\frac{\mathrm{d}}{\mathrm{d} z}\log \Phi_i^N\left(z,\tilde{\mathsf{x}}^{(N)}(t)\right)\Big|_{z=\left(\tilde{\mathsf{x}}_i^{(N)}(t)\right)^{-1}}\overset{N\to\infty}{\longrightarrow} \frac{\mathrm{d}}{\mathrm{d}z}\log 
\mathsf{E}_i\left(z;\tilde{\mathbf{X}}_t^{\Omega_+}\right)\Big|_{z=\left(\tilde{\mathsf{x}}_i(t)\right)^{-1}}.
\end{align}
 Since we have convergence on $\Omega_+$, as in \eqref{convOnOmega}, it follows from
 Proposition \ref{prop-CharPolyConv}, in particular equation \eqref{CharPolyConv+}, that $\tilde{\mathbf{P}}$-a.s., for all $t\ge 0$,
\begin{align*}
\Phi_i^N\left(z,\tilde{\mathsf{x}}^{(N)}(t)\right)\longrightarrow \mathsf{E}_i\left(z;\tilde{\mathbf{X}}_t^{\Omega_+}\right),\  \  \textnormal{ as } N\to\infty,
\end{align*}
uniformly on compact sets in $\mathbb{C}$, and the same is true for the corresponding derivatives by analyticity.
Note that, by virtue of Theorem \ref{thm-NonCollision,>0} we have $\tilde{\mathbf{P}}$-a.s., for all $t\ge 0$
\begin{align*}
\left(\tilde{\mathsf{x}}_i^{(N)}(t)\right)^{-1}\longrightarrow \left(\tilde{\mathsf{x}}_i(t)\right)^{-1}, \  \  \textnormal{as }N\to\infty.
\end{align*}
From the above, and the fact that all processes we are considering are $\tilde{\mathbf{P}}$-a.s. non-intersecting, see Lemma \ref{lem-wellPosednessN} and Theorem \ref{thm-NonCollision,>0}, the convergence in \eqref{LogDerConv} is concluded. This shows convergence of the integrands for all times $t\ge 0$. We now prove that the corresponding integrals also converge.
First, note that using the uniform convergence and the non-intersection property of the coordinates, we have, $\tilde{\mathbf{P}}$-a.s., for all $t\ge 0$,
   \begin{equation*}
	\int_{0}^{t}\sum_{j=1}^{i-1}\frac{\tilde{\mathsf{x}}_i^{(N)}(s)\tilde{\mathsf{x}}_j^{(N)}(s)}{\tilde{\mathsf{x}}_i^{(N)}(s)-\tilde{\mathsf{x}}_j^{(N)}(s)}\mathrm{d}s\overset{N\to \infty}{\longrightarrow}
\int_{0}^{t}\sum_{j=1}^{i-1}\frac{\tilde{\mathsf{x}}_i(s)\tilde{\mathsf{x}}_j(s)}{\tilde{\mathsf{x}}_i(s)-\tilde{\mathsf{x}}_j(s)}\mathrm{d}s.
		\end{equation*} 
Thus, it suffices to show that  $\tilde{\mathbf{P}}$-a.s., for all $t\ge 0$,
 \begin{align*}
	\int_{0}^{t}\sum_{j=i+1}^{N}\frac{\tilde{\mathsf{x}}_i^{(N)}(s)\tilde{\mathsf{x}}_j^{(N)}(s)}{\tilde{\mathsf{x}}_i^{(N)}(s)-\tilde{\mathsf{x}}_j^{(N)}(s)}\mathrm{d}s\overset{N\to \infty}{\longrightarrow}
\int_{0}^{t}\left(\tilde{\boldsymbol{\gamma}}(s)-\sum_{j=1}^{\infty}\tilde{\mathsf{x}}_j(s)\right)\mathrm{d}s
   +\int_{0}^{t}\sum_{j=i+1}^{\infty}\frac{\tilde{\mathsf{x}}_i(s)\tilde{\mathsf{x}}_j(s)}{\tilde{\mathsf{x}}_i(s)-\tilde{\mathsf{x}}_j(s)}\mathrm{d}s.
		\end{align*} 
     Observe that, as paths are strictly ordered,
 we have
 \begin{align*}
        \sum_{j= i+1}^{N}\frac{\tilde{\mathsf{x}}_i^{(N)}(s)\tilde{\mathsf{x}}_j^{(N)}(s)}{\tilde{\mathsf{x}}_i^{(N)}(s)-\tilde{\mathsf{x}}_j^{(N)}(s)}
        \leq  \frac{\tilde{\mathsf{x}}_i^{(N)}(s)}{\tilde{\mathsf{x}}_i^{(N)}(s)-\tilde{\mathsf{x}}_{i+1}^{(N)}(s)}\sum_{j=i+1}^{N}\tilde{\mathsf{x}}_j^{(N)}(s), \  \  \forall s \ge 0,
    \end{align*}
 where, by virtue of \eqref{convOnOmega} and the non-intersection property of the limiting paths, the right hand side converges $\tilde{\mathbf{P}}$-a.s., uniformly on $[0,t]$. The desired result then follows by applying the generalized Lebesgue dominated convergence theorem.

We now deal with the stochastic integral term. From now on, let us denote by $(\mathscr{F}_t)_{t\ge 0}$ the natural filtration of the process $\left(\tilde{\mathbf{X}}^{\Omega_+}_t; t\geq 0\right)$. We deduce from the convergence of both the LHS and the drift term in \eqref{tilde-FSDE} that $\tilde{\mathbf{P}}$-a.s., for all $i\in \mathbb{N}$,
$\mathsf{Y}_i^{(N)}(\cdot)$ converges, as $N \to \infty$, uniformly on compact sets,
to the continuous process $\mathsf{Y}_i(\cdot)$ given by,
\begin{align*}
\mathsf{Y}_i(t)= \tilde{\mathsf{x}}_i(t) -x_i-\int_0^t \left(-\frac{\eta}{2}\tilde{\mathsf{x}}_i(s)+\tilde{\boldsymbol{\gamma}}(s)-\sum_{j=1}^{\infty}\tilde{\mathsf{x}}_j(s)+\sum_{j=1,j\neq i}^{\infty}\frac{\tilde{\mathsf{x}}_i(s)\tilde{\mathsf{x}}_j(s)}{\tilde{\mathsf{x}}_i(s)-\tilde{\mathsf{x}}_j(s)}\right)\mathrm{d}s, \ \ \forall t \ge 0.
\end{align*}
Observe that, $\mathsf{Y}(\cdot)=\left(\mathsf{Y}_i(\cdot)\right)_{i=1}^\infty$ is adapted with respect to $(\mathscr{F}_t)_{t\ge 0}$. We now claim that for any $K \in \mathbb{N}$, $\left(\mathsf{Y}_i(\cdot)\right)_{i=1}^K$ is a $K$-dimensional continuous local martingale with quadratic variation given by,
\begin{equation*}
\langle \mathsf{Y}_i,\mathsf{Y}_j\rangle_t=\mathbf{1}_{i=j}\int_{0}^t \tilde{\mathsf{x}}_i(s)\tilde{\mathsf{x}}_j(s)\mathrm{d}s, \ \ \forall t \ge 0, \ i,j \in \mathbb{N}.
\end{equation*}
Let us assume this claim momentarily. Then, we can define, by virtue of the fact that $\tilde{\mathbf{P}}$-a.s. $\tilde{\mathsf{x}}_i>0$, for all $i \in \mathbb{N}$, the stochastic integral,
\begin{equation}\label{BMdef}
\tilde{\mathsf{w}}_i(t)\overset{\textnormal{def}}{=} \int_{0}^t \frac{1}{\tilde{\mathsf{x}}_i(s)}\mathrm{d}\mathsf{Y}_i(s), \ \ \forall t \ge 0.
\end{equation}
Observe that $\left(\tilde{\mathsf{w}}_i(\cdot)\right)_{i=1}^\infty$ is adapted with respect to $(\mathscr{F}_t)_{t\ge 0}$ and moreover, for each $K \in \mathbb{N}$, $(\tilde{\mathsf{w}}_i(\cdot))_{i=1}^K$ is a $K$-dimensional continuous local martingale with quadratic variation,
\begin{equation*}
\langle \tilde{\mathsf{w}}_i,\tilde{\mathsf{w}}_j\rangle_t=\mathbf{1}_{i=j}t, \ \ \forall t \ge 0, \ i,j \in \mathbb{N}. 
\end{equation*}
Thus, making use of Levy's characterisation theorem \cite{Revuz-Yor} we get that $(\tilde{\mathsf{w}}_i)_{i=1}^\infty$ is a sequence of independent standard Brownian motions. Finally, from \eqref{BMdef} we get, for all $i\in \mathbb{N}$,
\begin{equation*}
\mathsf{Y}_i(t)=\int_{0}^t \tilde{\mathsf{x}}_i(s)\mathrm{d}\tilde{\mathsf{w}}_i(s), \ \ \forall t \ge 0.
\end{equation*}
This concludes the proof of \eqref{ISDE-Omega} modulo the claim. We now prove it. Fix $K\in \mathbb{N}$. Define the stopping times $\zeta_K^N(R)$ and $\zeta_K(R)$, with respect to the corresponding filtrations $(\mathscr{F}_t^{(N)})_{t\ge 0}$ and $(\mathscr{F}_t)_{t\ge 0}$, for $R>0$, $N \ge K $,
\begin{align*}
\zeta_K^N(R)\overset{\textnormal{def}}{=}\inf\left\{t\geq 0;\sum_{i=1}^K \left|\mathsf{Y}_i^{(N)}(t)\right|\ge R\right\}, \ \ 
\zeta_K(R)\overset{\textnormal{def}}{=}\inf\left\{t\geq 0;\sum_{i=1}^K \left|\mathsf{Y}_i(t)\right|\ge R\right\}. 
\end{align*}
Observe that, $\tilde{\mathbf{P}}$-a.s., $\zeta_K(R) \overset{R \to \infty}{\longrightarrow} \infty $. Moreover, arguing similarly to the proof of Theorem \ref{thm-NonCollision,>0} we get that there exists a sequence $(R_m)_{m=1}^\infty$, with $R_m\to \infty$ so that for any fixed $m \in \mathbb{N}$, $\zeta_K^N(R_m) \longrightarrow \zeta_K(R_m)$, in distribution, as $N \to \infty$. Let $t\ge s$ and $R_m$ be fixed. Let $\mathfrak{G}$ be an arbitrary bounded continuous functional on $C([0,s],\Omega_+)$, the space of continuous functions on $[0,s]$ with values in $\Omega_+$. Note that, we have, as $N \to \infty$,
\begin{align*}
& \left(\mathsf{Y}^{(N)}_i\left(t\wedge \zeta^N_K(R_m)\right)-\mathsf{Y}^{(N)}_i\left(s\wedge \zeta^N_K(R_m)\right)\right)_{i=1}^K\mathfrak{G}\left[\left(\tilde{\mathbf{X}}^{(N)}(u);0\le u \le s\right)\right] \\ &\overset{\textnormal{d}}{\longrightarrow} \left(\mathsf{Y}_i\left(t\wedge \zeta_K(R_m)\right)-\mathsf{Y}_i\left(s\wedge \zeta_K(R_m)\right)\right)_{i=1}^K\mathfrak{G}\left[\left(\tilde{\mathbf{X}}^{\Omega_+}(u);0\le u \le s\right)\right].
\end{align*}
Moreover, observe that, if for fixed $N \in \mathbb{N}$, we define the functional $\mathsf{G}$ on $C([0,s],\mathbb{W}_{N,+})$ by,
\begin{equation*}
\mathsf{G}\left[\left(\left(\mathrm{f}_i(u)\right)_{i=1}^N;0\le u \le s \right)\right] =\mathfrak{G}\left[\left(\left(\left(N^{-1}\mathrm{f}_i(u)\right)_{i=1}^\infty,N^{-1}\sum_{i=1}^N\mathrm{f}_i(u)\right); 0 \le u \le s\right)\right],
\end{equation*}
with the convention $\mathrm{f}_i\equiv 0$, for $i>N$, then $\mathsf{G}$ is bounded and continuous. Hence, putting everything together, by virtue of the local martingale property of $\mathsf{Y}^{(N)}$, we obtain,
\begin{align*}
&\mathbb{E}_{(\mathbf{x},\gamma)}\left[\left(\mathsf{Y}_i(t\wedge \zeta_K(R_m))-\mathsf{Y}_i(s\wedge \zeta_K(R_m))\right)_{i=1}^K\mathfrak{G}\left[\left(\tilde{\mathbf{X}}^{\Omega_+}_u;0\le u \le s\right)\right]\right]\\
&=\lim_{N \to \infty} \mathbb{E}_{\mathbf{x}^{(N)}}\left[\left(\mathsf{Y}^{(N)}_i\left(t\wedge \zeta^N_K(R_m)\right)-\mathsf{Y}^{(N)}_i\left(s\wedge \zeta^N_K(R_m)\right)\right)_{i=1}^K\mathfrak{G}\left[\left(\tilde{\mathbf{X}}^{(N)}(u);0\le u \le s\right)\right]\right]\\
&=\lim_{N \to \infty} \mathbb{E}_{\mathbf{x}^{(N)}}\left[\left(\mathsf{Y}^{(N)}_i\left(t\wedge \zeta^N_K(R_m)\right)-\mathsf{Y}^{(N)}_i\left(s\wedge \zeta^N_K(R_m)\right)\right)_{i=1}^K\mathsf{G}\left[\left(\tilde{\mathsf{x}}^{(N)}(u);0\le u \le s\right)\right]\right]=(0)_{i=1}^K.
\end{align*}
This proves that $\left(\mathsf{Y}_i(\cdot)\right)_{i=1}^K$ is a continuous local martingale, with $(\zeta_K(R_m))_{m=1}^\infty$ a localising sequence, with respect to $\left(\mathscr{F}_t\right)_{t\ge 0}$. Finally note that, $\tilde{\mathbf{P}}$-a.s. the quadratic variations of the $\mathsf{Y}^{(N)}$'s also converge, as $N \to \infty$, to the desired expression and this concludes the proof of the claim and thus the proof of \eqref{ISDE-Omega}.

We now proceed to show that $\tilde{\mathsf{x}}$ actually solves \eqref{ISDEintro}. It will suffice to show that $\tilde{\mathbf{P}}$-a.s., for all $t\ge 0$, we have  
\begin{equation*}
\int_0^t\left(\tilde{\boldsymbol{\gamma}}(s)-\sum_{i=1}^{\infty}\tilde{\mathsf{x}}_i(s)\right)\mathrm{d}s= 0.
\end{equation*}
Observe that, for any $M\in\mathbb{N}$ and $t\ge 0$, we have from \eqref{ISDE-Omega},
     \begin{align*}
\sum_{i=1}^{M}\tilde{\mathsf{x}}_i(t)=&\sum_{i=1}^{M}x_i+\int_0^t\sum_{i=1}^{M}\tilde{\mathsf{x}}_i(s) \mathrm{d}\tilde{\mathsf{w}}_i(s)-\frac{\eta}{2}\int_0^t\sum_{i=1}^{M}\tilde{\mathsf{x}}_i(s)\mathrm{d}s+M\int_0^t\left(\tilde{\boldsymbol{\gamma}}(s)-\sum_{i=1}^{\infty}\tilde{\mathsf{x}}_i(s)\right)\mathrm{d}s\nonumber\\
&+\int_0^t\sum_{i=1}^{M}\sum_{j>M}\frac{\tilde{\mathsf{x}}_i(s)\tilde{\mathsf{x}}_j(s)}{\tilde{\mathsf{x}}_i(s)-\tilde{\mathsf{x}}_j(s)}\mathrm{d}s.
	\end{align*}
 Therefore, since the first and last terms in the above display are non-negative, one has
     \begin{align*}
M\int_0^t\left(\tilde{\boldsymbol{\gamma}}(s)-\sum_{i=1}^{\infty}\tilde{\mathsf{x}}_i(s)\right)\mathrm{d}s\leq
\sum_{i=1}^{M}\tilde{\mathsf{x}}_i(t)-\int_0^t\sum_{i=1}^{M}\tilde{\mathsf{x}}_i(s) \mathrm{d}\tilde{\mathsf{w}}_i(s)+\frac{\eta}{2}\int_0^t\sum_{i=1}^{M}\tilde{\mathsf{x}}_i(s)\mathrm{d}s.
	\end{align*}
  Note that, all the terms on the RHS are finite uniformly as $M \to \infty$. Thus, dividing both sides by $M$, and then, taking $M$ to infinity, one obtains that $\tilde{\mathbf{P}}$-a.s., for all $t\ge 0$,
 \begin{align*}
\int_0^t\left(\tilde{\boldsymbol{\gamma}}(s)-\sum_{i=1}^{\infty}\tilde{\mathsf{x}}_i(s)\right)\mathrm{d}s\leq 0.
	\end{align*}
 But clearly, $\int_0^t\left(\tilde{\boldsymbol{\gamma}}(s)-\sum_{i=1}^{\infty}\tilde{\mathsf{x}}_i(s)\right)\mathrm{d}s\geq 0$ and thus we get $\tilde{\mathbf{P}}$-a.s., $\int_0^t\left(\tilde{\boldsymbol{\gamma}}(s)-\sum_{i=1}^{\infty}\tilde{\mathsf{x}}_i(s)\right)\mathrm{d}s= 0$, as desired.

 We now want to prove the second statement in the theorem. First observe that, the above implies that $\tilde{\mathbf{P}}$-a.s., for all $t\ge 0$, \begin{align}\label{tildegamma=sum}
     \tilde{\boldsymbol{\gamma}}(s)=\sum_{i=1}^{\infty}\tilde{\mathsf{x}}_i(s), \ 
 \ \textnormal{for almost all } s\in[0,t]. 
 \end{align}
Furthermore, we have, $\tilde{\mathbf{P}}$-a.s., for all $M\in\mathbb{N}$ and $t\ge 0$,
  \begin{align}\label{FinalIntermediateEquation}
\sum_{i=1}^{M}\tilde{\mathsf{x}}_i(t)=&\sum_{i=1}^{M}x_i+\int_0^t\sum_{i=1}^{M}\tilde{\mathsf{x}}_i(s) \mathrm{d}\tilde{\mathsf{w}}_i(s)-\frac{\eta}{2}\int_0^t\sum_{i=1}^{M}\tilde{\mathsf{x}}_i(s)\mathrm{d}s+\int_0^t\sum_{i=1}^{M}\sum_{j>M}\frac{\tilde{\mathsf{x}}_i(s)\tilde{\mathsf{x}}_j(s)}{\tilde{\mathsf{x}}_i(s)-\tilde{\mathsf{x}}_j(s)}\mathrm{d}s.
\end{align}
Now, by making use of the Burkholder-Davis-Gundy inequality for local martingales, see \cite{KaratzasShreve,Revuz-Yor}, or directly using Problem 1.5.25 of \cite{KaratzasShreve}, by virtue of the fact that $\sum_{i=K}^\infty \tilde{\mathsf{x}}_i(\cdot)^2 \to 0$, uniformly on compact sets, as $K \to \infty$, we obtain the existence of a sequence $(M_{\ell})_{\ell=1}^\infty$ with $M_{\ell} \to \infty$, as $\ell \to \infty$, such that $\tilde{\mathbf{P}}$-a.s. the following limit exists uniformly on compact sets,
\begin{equation*}
\mathsf{U}(t)\overset{\textnormal{def}}{=}\lim_{\ell \to \infty} \int_0^t \sum_{i=1}^{M_\ell} \tilde{\mathsf{x}}_i(s)\mathrm{d}\tilde{\mathsf{w}}_i(s), \ \ \forall t \ge 0.
\end{equation*}
In particular, $\tilde{\mathbf{P}}$-a.s. the function $t \mapsto \mathsf{U}(t)$ is continuous. Then, putting $M=M_\ell$ and taking the limit $\ell \to\infty$ in both sides of \eqref{FinalIntermediateEquation}, we obtain that $\tilde{\mathbf{P}}$-a.s., for all $t\ge 0$,
 \begin{align}\label{sum}
\sum_{i=1}^{\infty}\tilde{\mathsf{x}}_i(t)=&\sum_{i=1}^{\infty}x_i+\mathsf{U}(t)-\frac{\eta}{2}\int_0^t\sum_{i=1}^{\infty}\tilde{\mathsf{x}}_i(s)\mathrm{d}s+\lim_{\ell\to\infty}\int_0^t\sum_{i=1}^{M_\ell}\sum_{j>M_\ell}\frac{\tilde{\mathsf{x}}_i(s)\tilde{\mathsf{x}}_j(s)}{\tilde{\mathsf{x}}_i(s)-\tilde{\mathsf{x}}_j(s)}\mathrm{d}s.
\end{align}
Observe now that the second and third terms on the RHS of \eqref{sum} are continuous functions of $t$, and thus, any possible discontinuity of $t \mapsto \sum_{i=1}^{\infty}\tilde{\mathsf{x}}_i(t)$ arises from the last term. Note also that since the integrand is strictly positive, one has 
 \begin{align*}
\lim_{\ell\to\infty}\int_0^\mathsf{u}\sum_{i=1}^{M_\ell}\sum_{j>M_\ell}\frac{\tilde{\mathsf{x}}_i(s)\tilde{\mathsf{x}}_j(s)}{\tilde{\mathsf{x}}_i(s)-\tilde{\mathsf{x}}_j(s)}\mathrm{d}s>\lim_{t\nearrow \mathsf{u}}\lim_{\ell\to\infty}\int_0^t\sum_{i=1}^{M_\ell}\sum_{j>M_\ell}\frac{\tilde{\mathsf{x}}_i(s)\tilde{\mathsf{x}}_j(s)}{\tilde{\mathsf{x}}_i(s)-\tilde{\mathsf{x}}_j(s)}\mathrm{d}s,
	\end{align*}
 at any point of discontinuity $\mathsf{u}>0$ of $t \mapsto \sum_{i=1}^{\infty}\tilde{\mathsf{x}}_i(t)$, and hence, from \eqref{sum} we get 
 \begin{align*}
   \sum_{i=1}^{\infty}\tilde{\mathsf{x}}_{i}(\mathsf{u})> \lim_{t_k\nearrow \mathsf{u}}\sum_{i=1}^{\infty}\tilde{\mathsf{x}}_i(t_k)=\lim_{t_k\nearrow \mathsf{u}} \tilde{\boldsymbol{\gamma}}(t)=\tilde{\boldsymbol{\gamma}}(\mathsf{u}),
\end{align*}
where, by virtue of \eqref{tildegamma=sum}, the sequence $\left(t_k\right)_{k=1}^\infty$ is picked so that $\tilde{\boldsymbol{\gamma}}(t_k)=\sum_{i=1}^{\infty}\tilde{\mathsf{x}}_i(t_k)$ holds for all $k\in\mathbb{N}$ and recall that $t\mapsto \tilde{\boldsymbol{\gamma}}(t)$ is continuous.
But, this is a contradiction as we know that
$\sum_{i=1}^{\infty}\tilde{\mathsf{x}}_i(t)\leq\tilde{\boldsymbol{\gamma}}(t)$, for all $t\geq 0$. Therefore, it follows that
 $t \mapsto \sum_{i=1}^{\infty}\tilde{\mathsf{x}}_i(t)$ is, in fact, a continuous function on $(0,\infty)$ and in particular, we have that $\tilde{\mathbf{P}}$-a.s.,
 \begin{equation*}
\sum_{i=1}^{\infty}\tilde{\mathsf{x}}_i(t)=\tilde{\boldsymbol{\gamma}}(t), \ \  \forall t>0,
 \end{equation*} which is equivalent to \eqref{Claim-gamma=sum}. This completes the proof.
 \end{proof}

 \begin{rmk}
 It is interesting to note that Theorem \ref{thm-ISDE} combined with Theorem \ref{MainThmConvEq} give a non-computational proof of the fact that if, instead of Definition \ref{DefInvariantMeasure}, we define $\mathfrak{M}^\eta$ as the unique probability measure on $\Omega_+$ satisfying $\mathfrak{M}^\eta\Lambda^\infty_N=\mathfrak{M}^\eta_N$ for all $N\in\mathbb{N}$, then $\mathfrak{M}^\eta$ must be supported on $\Omega_+^0$. The original proof in \cite{InvWishart} uses the underlying determinantal point process structure and some hard estimates on the correlation kernel of the Laguerre unitary ensemble. The dynamical proof we just mentioned avoids these considerations entirely.
 \end{rmk}

 \begin{rmk}\label{AlternativeConvergenceArgument}
 We note that it is possible to  obtain the convergence of the integrands in \eqref{convInteractionTerm} in a slightly different (but essentially equivalent) way to the one presented in the proof of Theorem \ref{thm-ISDE} without using characteristic polynomials. This is somewhat less intuitive but a little quicker. First, observe that, for all $N\in\mathbb{N}$,  $i\leq N$, $j\neq i$,
\begin{align}\label{drift_j-x_j}\frac{\tilde{\mathsf{x}}_i^{(N)}(s)\tilde{\mathsf{x}}_j^{(N)}(s)}{\tilde{\mathsf{x}}_i^{(N)}(s)-\tilde{\mathsf{x}}_j^{(N)}(s)}-\tilde{\mathsf{x}}_j^{(N)}(s)=
     \frac{\tilde{\mathsf{x}}_j^{(N)}(s)^2}{\tilde{\mathsf{x}}_i^{(N)}(s)-\tilde{\mathsf{x}}_j^{(N)}(s)} \le\frac{\tilde{\mathsf{x}}_j^{(N)}(s)^2}{\tilde{\mathsf{x}}_i^{(N)}(s)-\tilde{\mathsf{x}}_{i+1}^{(N)}(s)} , \  \ \forall s \ge 0.
\end{align}
 Next, using convergence on $\Omega_+$, we have that, $\tilde{\mathbf{P}}$-a.s.,
\begin{equation*}
\left(\tilde{\mathsf{x}}_i^{(N)}(s)-\tilde{\mathsf{x}}_{i+1}^{(N)}(s)\right)^{-1}\sum_{j=i+1}^{N}\tilde{\mathsf{x}}_j^{(N)}(s)^2\overset{N\to\infty}{\longrightarrow}
\left(\tilde{\mathsf{x}}_i(s)-\tilde{\mathsf{x}}_{i+1}(s)\right)^{-1}\sum_{j=i+1}^{\infty}\tilde{\mathsf{x}}_j(s)^2, \  \ \forall s\ge 0.
\end{equation*}
From this and \eqref{drift_j-x_j}, making use of the generalised dominated convergence theorem for the series, we conclude that (again using convergence on $\Omega_+$),
\begin{align}\label{limDrift}
    \lim_{N\to\infty}\sum_{j=1,j\neq i}^{N}
    \frac{\tilde{\mathsf{x}}_i^{(N)}(s)\tilde{\mathsf{x}}_j^{(N)}(s)}{\tilde{\mathsf{x}}_i^{(N)}(s)-\tilde{\mathsf{x}}_j^{(N)}(s)}= \sum_{j=1,j\neq i}^{\infty}\frac{\tilde{\mathsf{x}}_j(s)^2}{\tilde{\mathsf{x}}_i(s)-\tilde{\mathsf{x}}_j(s)}+\tilde{\boldsymbol{\gamma}}(s)-\tilde{\mathsf{x}}_i(s), \  \ \forall s \ge 0.
\end{align}
Finally, observe that we have
\begin{align*}\label{Drift-Sum}
\sum_{j=1,j\neq i}^{\infty}\frac{\tilde{\mathsf{x}}_j(s)^2}{\tilde{\mathsf{x}}_i(s)-\tilde{\mathsf{x}}_j(s)}=\sum_{j=1,j\neq i}^{\infty}\frac{\tilde{\mathsf{x}}_i(s)\tilde{\mathsf{x}}_j(s)}{\tilde{\mathsf{x}}_i(s)-\tilde{\mathsf{x}}_j(s)} -  \sum_{j=1,j\neq i}^{\infty}\tilde{\mathsf{x}}_j(s), \  \ \forall s\ge 0,
\end{align*}
which gives what we desired to prove. In order to show convergence of the actual integrals and that $\tilde{\boldsymbol{\gamma}}(s)=\sum_{i=1}^\infty \tilde{\mathsf{x}}_i(s)$, for $s>0$, we use the same argument as in the proof of Theorem \ref{thm-ISDE}.
 \end{rmk}

\begin{rmk} Arguing as in the proof of Theorem \ref{thm-ISDE}, we can show that, under the $\tilde{\mathbf{P}}$-coupling from \eqref{convOnOmega}, for any initial condition $\omega=(\mathbf{x},\gamma) \in \Omega_+$ (we do not need to use any non-intersection property of the paths),
    \begin{equation}\label{gammaSDE}
\mathrm{d}\tilde{\boldsymbol{\gamma}}(t)=\mathrm{d}\mathcal{N}(t)+\left(\frac{1}{2}-\frac{\eta}{2}\tilde{\boldsymbol{\gamma}}(t)\right)\mathrm{d}t,
\end{equation}
where $\mathcal{N}(\cdot)$ is given by the uniform limit (look at the corresponding equation for finite $N$ and solve for the martingale term),
\begin{equation*}
\mathcal{N}(t)\overset{\textnormal{def}}{=} \lim_{N \to \infty} \int_0^t \sum_{i=1}^N \tilde{\mathsf{x}}^{(N)}_i(s)\mathrm{d}\tilde{\mathsf{w}}_i^{(N)}(s), \ \forall t \ge 0,
\end{equation*}
and is in particular a one-dimensional martingale with respect to $(\mathscr{F}_t)_{t\ge 0}$ with quadratic variation, by virtue of the fact that $ \sum_{i=1}^N\tilde{\mathsf{x}}^{(N)}_i(\cdot)^2 \to \sum_{i=1}^{\infty}\tilde{\mathsf{x}}_i(\cdot)^2$ uniformly on compact sets,
\begin{equation*}
\langle \mathcal{N},\mathcal{N}\rangle_t=\int_{0}^t \sum_{i=1}^{\infty}\tilde{\mathsf{x}}_i(s)^2\mathrm{d}s, \ \forall t \ge 0.
\end{equation*}
Hence, there exists a one-dimensional Brownian motion $\left(\hat{\mathsf{w}}(t);t\ge 0\right)$ so that the stochastic equation \eqref{gammaSDE} can in fact be written as
    \begin{equation*}
\mathrm{d}\tilde{\boldsymbol{\gamma}}(t)=\sqrt{\sum_{i=1}^{\infty}\tilde{\mathsf{x}}_i(s)^2}\mathrm{d}\hat{\mathsf{w}}(t)+\left(\frac{1}{2}-\frac{\eta}{2}\tilde{\boldsymbol{\gamma}}(t)\right)\mathrm{d}t.
\end{equation*}
Finally, we note that the Brownian motion $\left(\hat{\mathsf{w}}(t);t\ge 0\right)$ is dependent on the sequence of independent Brownian motions $(\tilde{\mathsf{w}}_i)_{i=1}^\infty$ driving the $(\tilde{\mathsf{x}}_i)_{i=1}^\infty$.
\end{rmk}

We now prove that solutions corresponding to different parameters $\gamma$ are different in law.
 
 \begin{thm}\label{thm-NonUniqueness} 
 In the setting of Theorem \ref{thm-ISDE}, we denote by $\left(\mathbf{X}(t;\gamma);t\ge 0\right)$ the solution to \eqref{ISDEintro} corresponding to $\gamma$, then,
\begin{equation*}
\mathsf{Law} \big(\mathbf{X}(\cdot;\gamma)\big)\neq \mathsf{Law}\big(\mathbf{X}(\cdot;\tilde{\gamma})\big), \textnormal{whenever } \gamma \neq \tilde{\gamma}.
\end{equation*}
\end{thm}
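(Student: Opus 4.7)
The plan is to recover the parameter $\gamma$ as an almost-sure measurable functional of the trajectory $\mathbf{X}(\cdot;\gamma)=\left(\mathsf{x}_i(\cdot)\right)_{i=1}^\infty$ itself. If this can be done, then two values $\gamma\neq\tilde{\gamma}$ cannot give rise to the same law, since under $\mathsf{Law}(\mathbf{X}(\cdot;\gamma))$ the functional returns $\gamma$ almost surely, and under $\mathsf{Law}(\mathbf{X}(\cdot;\tilde{\gamma}))$ it returns $\tilde{\gamma}$ almost surely.

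First, I would invoke equation \eqref{Claim-gamma=sum} of Theorem \ref{thm-ISDE}: under $\mathbb{P}_{(\mathbf{x},\gamma)}$, almost surely one has $\mathbf{X}^{\Omega_+}_t\in\Omega_+^0$ for every $t>0$, which means
\begin{equation*}
\boldsymbol{\gamma}(t)=\sum_{i=1}^\infty \mathsf{x}_i(t), \qquad \forall t>0,\ \text{a.s.}
\end{equation*}
Next, I would use the fact that $\left(\mathbf{X}^{\Omega_+}_t;t\ge 0\right)$ is a diffusion on $\Omega_+$ (Theorem \ref{MainThm1Intro}); in particular the coordinate process $t\mapsto\boldsymbol{\gamma}(t)$ has continuous sample paths and starts at $\boldsymbol{\gamma}(0)=\gamma$. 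Combining the two facts yields, almost surely,
\begin{equation*}
\lim_{t\downarrow 0}\sum_{i=1}^\infty \mathsf{x}_i(t)=\lim_{t\downarrow 0}\boldsymbol{\gamma}(t)=\gamma.
\end{equation*}

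Since for each fixed $t>0$ the sum $\sum_i\mathsf{x}_i(t)$ is a measurable function of $\left(\mathsf{x}_i(t)\right)_{i=1}^\infty$, and since the limit as $t\downarrow 0$ can be taken along any countable sequence $t_n\downarrow 0$, the quantity $\Phi(\mathbf{X}(\cdot;\gamma))\overset{\textnormal{def}}{=}\lim_{n\to\infty}\sum_{i=1}^\infty \mathsf{x}_i(t_n)$ is a measurable functional of the trajectory $\mathbf{X}(\cdot;\gamma)$, and $\Phi(\mathbf{X}(\cdot;\gamma))=\gamma$ almost surely. If we had $\mathsf{Law}(\mathbf{X}(\cdot;\gamma))=\mathsf{Law}(\mathbf{X}(\cdot;\tilde{\gamma}))$, then the pushforward of each law under $\Phi$ would coincide, giving the Dirac mass $\delta_\gamma=\delta_{\tilde{\gamma}}$, hence $\gamma=\tilde{\gamma}$, a contradiction.

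I do not expect any serious obstacle here; the substantive content has already been established in Theorem \ref{thm-ISDE} (namely the instantaneous entrance into $\Omega_+^0$ and the continuity of the $\boldsymbol{\gamma}$-coordinate). The only mild care needed is to phrase the recovery of $\gamma$ as a bona fide measurable functional of the paths $\left(\mathsf{x}_i(\cdot)\right)_{i=1}^\infty$ on $C(\mathbb{R}_+,\mathbb{W}_{\infty,+}^\circ)$, which is immediate from countability of a sequence $t_n\downarrow 0$ and measurability of each finite partial sum.
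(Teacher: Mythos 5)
Your proof is correct and takes essentially the same route as the paper: both rest on equation \eqref{Claim-gamma=sum} (the a.s.\ identity $\sum_i \mathsf{x}_i(t)=\boldsymbol{\gamma}(t)$ for $t>0$) together with the continuity of $t\mapsto\boldsymbol{\gamma}(t)$ at $t=0$, which forces $\lim_{t\downarrow 0}\sum_i\mathsf{x}_i(t)=\gamma$ a.s.\ and thereby lets $\gamma$ be read off from the law of $(\mathsf{x}_i(\cdot))_{i=1}^\infty$. Your phrasing in terms of an explicit measurable functional $\Phi$ of the trajectory is if anything slightly cleaner than the paper's wording, but it is the same argument.
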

  \begin{proof}
  Based on \eqref{Claim-gamma=sum} we have that a.s.
\begin{align}\label{gamma=sum}
    \sum_{i=1}^{\infty}\mathsf{x}_i(t;\gamma)
    =\boldsymbol{\gamma}(t),\  \ 
\sum_{i=1}^{\infty}\mathsf{x}_i(t;\tilde{\gamma})
=\tilde{\boldsymbol{\gamma}}(t),\  \ \forall t>0.
\end{align}
Note now that $\boldsymbol{\gamma}$ and $\tilde{\boldsymbol{\gamma}}$ do not have same distribution, as they are continuous functions with $\boldsymbol{\gamma}(0)\neq\tilde{\boldsymbol{\gamma}}(0)$, and thus, by virtue of \eqref{gamma=sum},
\begin{equation*}
\mathsf{Law} \left(\sum_{i=1}^{\infty}\mathsf{x}_i(\cdot;\gamma)\right)\neq
\mathsf{Law} \left(\sum_{i=1}^{\infty}\mathsf{x}_i(\cdot;\tilde{\gamma})\right),
\end{equation*}
 from which the conclusion follows immediately. 
  \end{proof}
 
 We have the following result which completes the statement of Theorem \ref{MainThm2Intro}.

 \begin{prop}\label{prop-MarkovProperty}
Out of all solutions to the ISDE \eqref{ISDEintro} constructed as in Theorem \ref{thm-ISDE}, there exists a unique one such that almost surely $t \mapsto \sum_{i=1}^\infty \mathsf{x}_i(t;\gamma)$ is continuous for all $t \ge 0$ given by the choice $\gamma=\sum_{i=1}^\infty x_i$.
Moreover, this solution is a Markov process. 
 \end{prop}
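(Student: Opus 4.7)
The plan is to leverage \eqref{Claim-gamma=sum} of Theorem \ref{thm-ISDE}, which says $\mathbb{P}_{(\mathbf{x},\gamma)}$-a.s.\ the enhanced process stays in $\Omega_+^0$ for all $t>0$, i.e.\ $\boldsymbol{\gamma}(t)=\sum_{i=1}^\infty \mathsf{x}_i(t;\gamma)$ for all $t>0$. Combined with the fact that $\boldsymbol{\gamma}(\cdot)$ is a coordinate of the Feller diffusion $\mathbf{X}^{\Omega_+}$ on $\Omega_+$ from Theorem \ref{MainThm1Intro}, and therefore almost surely continuous on $[0,\infty)$ with $\boldsymbol{\gamma}(0)=\gamma$, both statements fall out easily.

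First I would address existence and uniqueness of the continuous solution. Almost surely, $t\mapsto\boldsymbol{\gamma}(t)$ is continuous on $[0,\infty)$, so by the identity $\sum_{i=1}^\infty\mathsf{x}_i(t;\gamma)=\boldsymbol{\gamma}(t)$ valid for $t>0$, the map $t\mapsto \sum_{i=1}^\infty\mathsf{x}_i(t;\gamma)$ is almost surely continuous on $(0,\infty)$. At $t=0$ it equals $\sum_{i=1}^\infty x_i$, whereas $\lim_{t\downarrow 0}\boldsymbol{\gamma}(t)=\gamma$. Hence continuity on all of $[0,\infty)$ forces $\gamma=\sum_{i=1}^\infty x_i$, and conversely this choice makes the process continuous at the origin since the initial datum $(\mathbf{x},\sum_i x_i)$ already lies in $\Omega_+^0$.

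Next, for the Markov property, I would exploit that once $\gamma=\sum_{i=1}^\infty x_i$, we have $(\mathbf{x},\gamma)\in\Omega_+^0$, and by \eqref{Claim-gamma=sum} the path stays in $\Omega_+^0$ for all $t\ge 0$ almost surely. Consequently, $\boldsymbol{\gamma}(t)=\sum_{i=1}^\infty\mathsf{x}_i(t)$ is a Borel-measurable functional of $(\mathsf{x}_i(t))_{i=1}^\infty$, so the natural filtration of $(\mathsf{x}_i(\cdot))_{i=1}^\infty$ coincides (up to completions) with that of $\mathbf{X}^{\Omega_+}$, and the coordinate $\mathbf{X}_t^{\Omega_+}$ is a measurable function of $(\mathsf{x}_i(t))_{i=1}^\infty$. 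Applying the Feller-Markov property of $\mathbf{X}^{\Omega_+}$ from Theorem \ref{MainThm1Intro} to a bounded measurable functional depending only on the $\mathbf{x}$-coordinates, the conditional expectation given the past reduces to a function of $\mathbf{X}_s^{\Omega_+}$, and hence of $(\mathsf{x}_i(s))_{i=1}^\infty$ alone, which yields the Markov property for $(\mathsf{x}_i(\cdot))_{i=1}^\infty$.

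There is no serious obstacle here; the argument is essentially a bookkeeping consequence of the structural result \eqref{Claim-gamma=sum} already established, together with path continuity of the Feller process on $\Omega_+$. The only point requiring minor care is to note that the identification of $\boldsymbol{\gamma}$ with $\sum_i\mathsf{x}_i$ holds identically in $t\ge 0$ when $\gamma=\sum_i x_i$ (and not merely for $t>0$), which is what permits transfer of the Markov property from the enhanced process to the $\mathbf{x}$-coordinate projection.
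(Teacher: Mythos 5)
Your proof is correct and follows essentially the same route as the paper: for the uniqueness of the continuous solution, you use the path-continuity of $\boldsymbol{\gamma}$ together with \eqref{Claim-gamma=sum} to pin down the behaviour at $t=0$; for the Markov property, you observe that the choice $\gamma=\sum_i x_i$ makes $\boldsymbol{\gamma}(\cdot)$ a measurable function of $\mathbf{X}(\cdot;\gamma_*)$ so that the two natural filtrations coincide, and then transfer the Markov property of the Feller process $\mathbf{X}^{\Omega_+}$ down to the $\mathbf{x}$-coordinates. Your caveat ``up to completions'' is a slightly more careful bookkeeping of the $\sigma$-algebra identification that the paper states directly, but the argument is the same.
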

\begin{proof}
 We know from Theorem \ref{MainThm1Intro} that $\mathbf{X}^{\Omega_+}_{\cdot}=\left(\mathbf{X}(\cdot;\gamma),\boldsymbol{\gamma}(\cdot)\right)$ has continuous sample paths. 
 It then follows from continuity of $t\mapsto\boldsymbol{\gamma}(t)$ and \eqref{Claim-gamma=sum} that 
almost surely $t \mapsto \sum_{i=1}^\infty \mathsf{x}_i(t;\gamma)$ is continuous for all $t \ge 0$ if and only if $\gamma=\sum_{i=1}^\infty x_i$.

Towards the second assertion, note that by the Markov property of $\mathbf{X}^{\Omega_+}$, for all 
$t\geq s \geq 0$ and all bounded, measurable functions $\mathsf{F}:\Omega_+\to\mathbb{R}$, we have
 \begin{align}\label{MarkovPropOmega}
\mathbb{E}\left[\mathsf{F}\left(\mathbf{X}(t;\gamma),\boldsymbol{\gamma}(t)\right)\big| \mathscr{F}_s\right]=\mathbb{E}\left[\mathsf{F}\left(\mathbf{X}(t;\gamma),\boldsymbol{\gamma}(t)\right)\big|\sigma\left(\mathbf{X}(s;\gamma),\boldsymbol{\gamma}(s)\right)\right],
 \end{align}
 where, as before,
$\left(\mathscr{F}_t\right)_{t\geq 0}$
 is the natural filtration of the process $\left(\mathbf{X}^{\Omega_+}_t; t\geq 0\right)$ and $\mathbb{E}\left[\cdot|\sigma(\mathcal{C})\right]$ denotes conditional expectation with respect to $\sigma(\mathcal{C})$.

Now, consider the distinguished solution $\mathbf{X}(\cdot;\gamma_*)$  of the ISDE with $\gamma_*=\sum_{i=1}^\infty x_i$.
Note that, since $\sum_{i=1}^\infty \mathsf{x}_i(s;\gamma_*)=\boldsymbol{\gamma}(s)$, for all $s\ge 0$, we have
 \begin{align}\label{sigma-algebra}
\sigma\left(\mathbf{X}\left(s;\gamma_*\right),\boldsymbol{\gamma}(s)\right)=\sigma\left(\mathbf{X}(s;\gamma_*)\right),\  \ \forall s\geq 0,
 \end{align}
 and thus, in this case the natural filtration $\left(\mathscr{F}_t^{\mathbf{X}(\cdot;\gamma_*)}\right)_{t\ge 0}$ generated by $\left(\mathbf{X}(t;\gamma_*);t\geq 0\right)
 $ is equal to $\left(\mathscr{F}_t\right)_{t\geq 0}$. Denote by $\mathcal{W}^\infty$ the state space of the solution:
 \begin{equation*}
     \mathcal{W}^\infty=\left\{\mathbf{x}\in \mathbb{W}_{\infty,+}^\circ:\sum_{i=1}^\infty x_i<\infty\right\}.
 \end{equation*}
 Now, let $\mathsf{f}:\mathcal{W}^\infty\to\mathbb{R}$ be an arbitrary bounded measurable function and take $\mathsf{F}$ to be its obvious lift on $\Omega_+$. Thus summarising, by virtue \eqref{MarkovPropOmega} and \eqref{sigma-algebra}, we have
  \begin{align*}
     \mathbb{E}\left[\mathsf{f}\left(\mathbf{X}(t;\gamma_*)\right)\big|\mathscr{F}_s^{\mathbf{X}(\cdot;\gamma_*)}\right]=\mathbb{E}\left[\mathsf{f}\left(\mathbf{X}(t;\gamma_*)\right)\big|\sigma\left(\mathbf{X}(s;\gamma_*)\right)\right],\  \ \forall s\geq 0,
 \end{align*}
 which implies that  $\left(\mathbf{X}\left(t;\gamma_*\right)\right)_{t\geq 0}$ is a Markov process and concludes the proof. 
 \end{proof}

\begin{proof}[Proof of Theorem \ref{MainThm2Intro}]
    This follows from Theorem \ref{thm-NonCollision,>0}, Theorem \ref{thm-ISDE}, Theorem \ref{thm-NonUniqueness}, and Proposition \ref{prop-MarkovProperty}.
\end{proof}

\begin{proof}[Proof of Theorem \ref{MainThmEquilibriumProcess}]
    By Theorem \ref{thm-Boundary+}, the probability measure $\mathfrak{m}$ corresponds to the unique sequence of probability measures $\left(\mathfrak{m}_N\right)_{N=1}^{\infty}$ on $\left(\mathbb{W}_{N,+}\right)_{N=1}^{\infty}$ given by,
    \begin{equation*}
    \mathfrak{m}_N=\mathfrak{m}\Lambda_N^\infty, \ \ \forall N\in\mathbb{N}.
    \end{equation*}
    Let $\mathsf{x}^{(N)}$ be the solution of \eqref{rescaledSDE} started at $\mathsf{x}^{(N)}(0)\overset{\textnormal{d}}{=}\mathfrak{m}_N$. By combining Theorem \ref{thm-Boundary+} and Lemma \ref{LemmaDensity}, a standard argument gives that, under the embedding \eqref{embedding+} of $\mathbb{W}_{N,+}$ into $\Omega_+$, $\mathfrak{m}_N$ converges weakly to $\mathfrak{m}$. Namely, $\mathbf{X}^{(N)}_0\overset{\textnormal{d}}{\longrightarrow}\mathbf{X}^{\Omega_+}_0$, and hence,
 $\mathbf{X}^{(N)}\overset{\textnormal{d}}{\longrightarrow}\mathbf{X}^{\Omega_+}$ by Theorem \ref{thm-ConvMarkovProc+}. One can then follow the argument of Theorem \ref{thm-ISDE} to prove that $\left(\mathbf{X}(t;\mathfrak{m});t\ge 0\right)$ is a weak solution of the ISDE \eqref{ISDEintro} with initial condition $\mathbf{X}(0;\mathfrak{m})\overset{\textnormal{d}}{=}\mathsf{IBes}_{\eta}$. Note that, as $\mathfrak{M}^\eta$ is the invariant measure of $\mathbf{X}^{\Omega_+}$, we have 
 \begin{equation*}
\mathsf{Law}\big(\mathbf{X}(t;\mathfrak{M}^\eta)\big)=\mathsf{IBes}_\eta, \ \ \forall t \ge 0,
\end{equation*}
 On the other hand, if $\mathfrak{m}\neq\mathfrak{M}^\eta$, we must have
 \begin{align*}
\mathbb{P}_{\mathfrak{m}}\left(\boldsymbol{\gamma}(0;\mathfrak{m})>\sum_{i=1}^\infty \mathsf{x}_i(0;\mathfrak{m})\right)>0,
 \end{align*}
 where $\mathbb{P}_{\mathfrak{m}}$ denotes $\mathsf{Law}\left(\mathbf{X}^{\Omega_+}\right)$ if $\mathsf{Law}\left(\mathbf{X}_0^{\Omega_+}\right)=\mathfrak{m}$. Thus, based on the arguments in Theorem \ref{thm-NonUniqueness}, since $\boldsymbol{\gamma}(t;\mathfrak{m})=\sum_{i=1}^\infty \mathsf{x}_i(t;\mathfrak{m})$ for all $t>0$, and recall that $t\mapsto \boldsymbol{\gamma}(t;\mathfrak{m})$ is continuous, we obtain that the stochastic processes $t \mapsto \sum_{i=1}^\infty \mathsf{x}_i(t;\mathfrak{m})$ and $t\mapsto \sum_{i=1}^\infty \mathsf{x}_i(t;\mathfrak{M}^\eta)$ are not equal in law, for any $\mathfrak{m} \neq \mathfrak{M}^\eta$, and so, 
\begin{equation*}
\mathsf{Law} \big(\mathbf{X}\left(\cdot;\mathfrak{M}^\eta\right)\big)\neq \mathsf{Law}\big(\mathbf{X}\left(\cdot;\mathfrak{m}\right)\big).
\end{equation*} 
That is, the following holds if and only if $\mathfrak{m}=\mathfrak{M}^\eta$, 
 \begin{equation*}
\mathsf{Law}\big(\mathbf{X}\left(t;\mathfrak{m}\right)\big)=\mathsf{IBes}_\eta, \ \ \forall t \ge 0,
\end{equation*}
and this completes the proof.
\end{proof}

\section{Dynamical Cauchy model}\label{SectionCauchy}

In this section, we consider stochastic dynamics related to the Cauchy \cite{ForresterWitte,ForresterBook} or so-called Hua-Pickrell ensemble \cite{BorodinOlshanski}. This is the following probability measure on $\mathbb{W}_N$,
depending on a parameter $s\in\mathbb{C}$, with $\Re(s)>-\frac{1}{2}$,
\begin{align*}
    \mathfrak{N}_{N}^{(s)}(\mathrm{d}\mathbf{x})=\frac{1}{\mathcal{Z}_{N}^{(s)}}
    \Delta_N^2\left(\mathbf{x}\right)
    \prod_{j=1}^{N}\left(1 + x_j^2\right)^{-\Re({s})-N}\mathrm{e}^{2\Im({s})\tan^{-1}(x_j)}\mathrm{d}\mathbf{x},
\end{align*} 
where,
$\mathcal{Z}_{N}^{(s)}$ is a normalizing constant.

The associated $N$-particle dynamics, that we call the dynamical Cauchy model, introduced in \cite{H-P} (they were Hua-Pickrell diffusions therein),  is given by the unique solution to the following system of stochastic equations
 \begin{align}\label{HPFSDE}
		\mathrm{d}\mathfrak{x}_i(t) = \sqrt{2\left[\mathfrak{x}_i(t)^2+1\right]} \mathrm{d}\mathsf{w}_i(t) + 2\left[\left(1-N-\Re({s})\right)\mathfrak{x}_i(t)+\Im({s})\right]\mathrm{d}t+2\sum_{j=1:j\neq i}^N\frac{\mathfrak{x}_i(t)^2+1}{\mathfrak{x}_i(t)-\mathfrak{x}_j(t)}\mathrm{d}t,
\end{align} 
for $i=1,\dots, N$, where as usual the $\mathsf{w}_i$ are independent standard Brownian motions.
It is proven in \cite{H-P}, making use of the results of \cite{Graczyk-Malecki},  that
\eqref{HPFSDE} has a unique strong solution for any initial condition $\mathbf{x}\in \mathbb{W}_{N}$ and almost surely for all $t>0$, $\mathsf{x}(t)\in \mathbb{W}_N^\circ$ and moreover the associated semigroup $\left(\mathfrak{Q}^{(s)}_N(t)\right)_{t\ge 0}$ is Feller. The following result is the main theorem of \cite{H-P}.

\begin{thm}\label{thm-HPabstractThm}
Let $s\in \mathbb{C}$. Then, the semigroups $\left(\mathfrak{Q}_N^{(s)}(t)\right)_{t\ge 0}$ are intertwined:
\begin{equation*}
    \mathfrak{Q}_{N+1}^{(s)}(t)\Lambda_{N}^{N+1}=
\Lambda_{N}^{N+1}\mathfrak{Q}_{N}^{(s)}(t),\  \ \forall t\geq 0,\    N\in\mathbb{N}.
\end{equation*}
In particular, there exists a unique Feller semigroup $\left(\mathfrak{Q}_\infty^{(s)}(t)\right)_{t\ge 0}$, with associated stochastic process $\left(\mathbf{X}_t^{\Omega};t\ge 0\right)$ on $\Omega$, defined via the intertwining
\begin{align*}
    \mathfrak{Q}_{\infty}^{(s)}(t)\Lambda_{N}^{\infty}=
\Lambda_{N}^{\infty}\mathfrak{Q}_{N}^{(s)}(t),\  \ \forall t\geq 0,\    N\in\mathbb{N}.
\end{align*}   
Moreover, if $\Re(s)>-\frac{1}{2}$, then $\left(\mathfrak{Q}_\infty^{(s)}(t)\right)_{t\ge 0}$ has a unique invariant measure $\mathfrak{N}_{\infty}^{(s)}$ uniquely determined via the relation
\begin{equation*}
  \mathfrak{N}_{\infty}^{(s)}\Lambda_N^\infty = \mathfrak{N}_{N}^{(s)}, \ \ \forall N \in \mathbb{N}.
\end{equation*}
\end{thm}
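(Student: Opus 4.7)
The proof divides naturally into three stages matching the three assertions. The plan for the finite-$N$ intertwining is to first derive a Karlin--McGregor representation for the transition kernel of \eqref{HPFSDE}, parallel to Proposition \ref{prop-Core+TransitionDens}: up to an overall scalar factor, $\mathfrak{Q}_N^{(s)}(t)(\mathbf{x}, \mathrm{d}\mathbf{y})$ equals $\frac{\Delta_N(\mathbf{y})}{\Delta_N(\mathbf{x})} \det\bigl(\mathfrak{q}_t^{(N,s)}(x_i,y_j)\bigr)_{i,j=1}^N \mathrm{d}\mathbf{y}$, where $\mathfrak{q}_t^{(N,s)}$ is the transition density of the one-dimensional diffusion with generator $\mathsf{L}_x^{(N,s)} = (x^2+1)\partial_x^2 + 2\bigl[(1-N-\Re(s))x + \Im(s)\bigr]\partial_x$. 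This is obtained by recognising \eqref{HPFSDE} as the Doob $h$-transform, via the Vandermonde, of $N$ independent $\mathsf{L}^{(N,s)}$-diffusions killed on collision, exactly as in Proposition \ref{prop-Core+TransitionDens}. The intertwining with $\Lambda_N^{N+1}$ then reduces, by the direct computation used in the proof of Theorem \ref{thm-intertwiningSemigroup}, to an algebraic consistency of the form $\partial_x \mathsf{L}_x^{(N+1,s)} = \mathsf{L}_x^{(N,s)} \partial_x + c_N^{(s)} \partial_x$ for an explicit constant $c_N^{(s)}$, and the consequent identity $\mathfrak{q}_t^{(N,s)}(x,z) = \mathrm{e}^{c_N^{(s)}t}\partial_x \int_z^\infty \mathfrak{q}_t^{(N+1,s)}(x,y)\,\mathrm{d}y$, both sides solving the same Kolmogorov backward equation with matching initial data.

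Given the intertwining, the existence and uniqueness of the Feller semigroup $(\mathfrak{Q}_\infty^{(s)}(t))_{t\ge 0}$ on $\Omega$ satisfying $\mathfrak{Q}_\infty^{(s)}(t)\Lambda_N^\infty = \Lambda_N^\infty \mathfrak{Q}_N^{(s)}(t)$ is an immediate consequence of Theorem \ref{thm-MarkovProcessonBoundary}, once we note that $\Omega$ is a Feller boundary of $(\mathbb{W}_N, \Lambda_N^{N+1})_{N=1}^\infty$ by Theorem \ref{thm-Boundary}, the kernels $\Lambda_N^{N+1}$ are Feller by Lemma \ref{lem-FellerPropLambdaN}, and the Feller property for each $\mathfrak{Q}_N^{(s)}(t)$ follows as in Proposition \ref{prop-FellerProperty}, either directly from the Karlin--McGregor formula or through a unitarily invariant matrix realisation of \eqref{HPFSDE}.

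For the invariant measure statement, assuming $\Re(s) > -\tfrac{1}{2}$, the strategy is to apply Theorem \ref{thm-ergodicity}. The inputs needed are: (i) $\mathfrak{N}_N^{(s)}$ is the unique invariant probability measure of $(\mathfrak{Q}_N^{(s)}(t))_{t\ge 0}$ and $\nu \mathfrak{Q}_N^{(s)}(t) \to \mathfrak{N}_N^{(s)}$ weakly for every $\nu \in \mathscr{M}_{\textnormal{p}}(\mathbb{W}_N)$, and (ii) the classical consistency $\mathfrak{N}_{N+1}^{(s)}\Lambda_N^{N+1} = \mathfrak{N}_N^{(s)}$ (see \cite{BorodinOlshanski}), which together with the identification of the Feller boundary uniquely determines $\mathfrak{N}_\infty^{(s)}$ on $\Omega$. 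The existence part of (i) (invariance) is essentially an exercise in reversibility, since $\mathfrak{N}_N^{(s)}$ is manifestly the measure against which the $h$-transformed operator of Proposition \ref{prop-Core+TransitionDens}-type is symmetric.

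The main obstacle will be the convergence-from-every-initial-condition statement in (i), since the one-dimensional generator $\mathsf{L}_x^{(N,s)}$ acts on the full line and does not enjoy the matrix exponential Brownian motion representation \eqref{Ht-Mt} that drove the Rider--Valko argument in Proposition \ref{prop-ConvToEquilibrium}. The most robust route is to realise \eqref{HPFSDE} as the eigenvalue process of a unitarily invariant Hermitian matrix diffusion whose stationary measure is the Hua--Pickrell measure on $\mathbb{H}(N)$, and then, after a Cayley transform sending $\mathbb{R}$ to the unit circle, to adapt the matrix ergodicity argument to this compact setting where convergence to equilibrium is easier to control; a possible alternative is a direct spectral gap and hypoellipticity analysis of the one-dimensional generator $\mathsf{L}_x^{(N,s)}$ combined with the Karlin--McGregor representation, but this seems more delicate to carry out uniformly in the initial condition.
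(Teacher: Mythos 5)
The paper cites Theorem~\ref{thm-HPabstractThm} from \cite{H-P} without reproducing a proof, so there is no internal argument to compare against; your proposal in effect transplants the paper's treatment of the Rider--Valko model (Theorem~\ref{thm-intertwiningSemigroup}, Propositions~\ref{prop-Core+TransitionDens} and~\ref{prop-ConvToEquilibrium}) to the Cauchy case, and for the first two assertions this is on the right track. The algebraic consistency you posit does hold: one computes directly that
\begin{equation*}
\partial_x\,\mathsf{L}_x^{(N+1,s)}=\mathsf{L}_x^{(N,s)}\,\partial_x-2\bigl(N+\Re(s)\bigr)\partial_x,
\end{equation*}
and the Karlin--McGregor argument of Theorem~\ref{thm-intertwiningSemigroup} carries over. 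The one place the full-line state space changes things is that now $[\varphi^*\mathfrak{q}_t^{(N,s)}](x_i,z_j)=-\int_{-\infty}^{x_i}\mathfrak{q}_t^{(N,s)}(y,z_j)\,\mathrm{d}y$, and the resulting constant at $-\infty$ plays the role played by the term anchored at $0$ in \eqref{RHS-intertwinig}: it is independent of $x_i$ and is killed by the same column operation, so the determinant identity closes.

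For the third assertion you overshoot, and this is the genuine (if harmless) issue. The theorem only asserts that $(\mathfrak{Q}_\infty^{(s)}(t))_{t\ge0}$ has a unique invariant probability measure, pinned down by $\mathfrak{N}_\infty^{(s)}\Lambda_N^\infty=\mathfrak{N}_N^{(s)}$; convergence to equilibrium is not part of the claim. A much shorter route settles it: $\mathfrak{N}_N^{(s)}$ is invariant for $\mathfrak{Q}_N^{(s)}$ by the reversibility you noted, and is the unique invariant probability on $\mathbb{W}_N$ since the transition density is smooth and everywhere positive. If $\mu\in\mathscr{M}_\textnormal{p}(\Omega)$ is any invariant measure of $\mathfrak{Q}_\infty^{(s)}$, the intertwining gives $\mu\Lambda_N^\infty\,\mathfrak{Q}_N^{(s)}(t)=\mu\,\mathfrak{Q}_\infty^{(s)}(t)\Lambda_N^\infty=\mu\Lambda_N^\infty$ for all $t,N$, hence $\mu\Lambda_N^\infty=\mathfrak{N}_N^{(s)}$ for every $N$, and the Feller-boundary bijection of Theorem~\ref{thm-Boundary} forces $\mu=\mathfrak{N}_\infty^{(s)}$. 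This bypasses the entire last paragraph of your proposal: the finite-$N$ convergence-to-equilibrium that you flag as the main obstacle is needed only for the genuinely stronger claim in this paper's Theorem~\ref{thm-HuaPickrell}, and there it is obtained from matrix ergodicity in \cite{Bougerol} directly on $\mathbb{H}(N)$ — no Cayley transform required.
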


Before giving our main result on the dynamical Cauchy model it is worth pointing out, that for real $s>-\frac{1}{2}$, the invariant measure associated to the dynamics $\left(\mathfrak{Q}^s_\infty(t)\right)_{t\ge 0}$ is known to admit an explicit description. We briefly spell this out.

\begin{defn}
Let $s>-\frac{1}{2}$. The Hua-Pickrell point process $\mathsf{HP}^{\textnormal{Conf}}_s$ with parameter $s$ is the determinantal point process on $(-\infty,0)\times(0,\infty)$ with correlation kernel $\mathfrak{K}_{s}$ given by 
    \begin{align*}
\mathfrak{K}_{s}(z,w) = \frac{1}{2\pi}\frac{\Gamma(s+1)^2}{\Gamma(2s+1)\Gamma(2s+2)}\frac{\mathcal{Y}_1^{(s)}(z)\mathcal{Y}_2^{(s)}(w)-\mathcal{Y}_1^{(s)}(w)\mathcal{Y}_2^{(s)}(z)}{z-w},
\end{align*}
where the functions $\mathcal{Y}_1^{(s)}(z), \mathcal{Y}_2^{(s)}(z)$ are given by 
\begin{equation*}
\mathcal{Y}_1^{(s)}(z) = 2^{2s-\frac{1}{2}}\Gamma\left(s+\frac{1}{2}\right) \cdot \frac{1}{|z|^{\frac{1}{2}}}J_{s-1/2}\left(\frac{1}{|z|}\right), \ \ \mathcal{Y}_2^{(s)}(z) = 2^{2s+\frac{1}{2}}\Gamma\left(s+\frac{3}{2}\right) \cdot \frac{1}{|z|^{\frac{1}{2}}} J_{s+1/2}\left(\frac{1}{|z|}\right),
\end{equation*}
where as before $J_{r}$ denotes the Bessel function with parameter $r$.
\end{defn}

For $s=0$ and under the transformation $x\mapsto x^{-1}$, $\mathsf{HP}^{\textnormal{Conf}}_0$ becomes the sine determinantal point process, the universal limit of random matrices in the bulk of the spectrum \cite{ForresterBook}. Since $\mathsf{HP}^{\textnormal{Conf}}_s$ consists of distinct points it gives rise to a unique probability measure  $\mathsf{HP}_s$ on $\mathbb{W}_{\infty,+}^\circ \times \mathbb{W}_{\infty,+}$ by labelling its positive and negative points respectively according to their absolute values. Define the space
\begin{align*}
\Omega^0\overset{\textnormal{def}}{=}\left\{\omega=\left(\mathbf{x}^+,\mathbf{x}^-,\gamma,\delta\right)\in\Omega:\gamma=\lim_{k\to \infty } \sum_{i=1}^\infty \left[x_i^+\mathbf{1}_{x_i^+\ge k^{-2}}- x_i^-\mathbf{1}_{x_i^-\ge k^{-2}}\right] , \delta=\sum_{i=1}^{\infty}\left[\left(x^+_i\right)^2+\left(x^-_i\right)^2\right]\right\},
\end{align*}
and write $\pi:\Omega \to \mathbb{W}_{\infty,+} \times \mathbb{W}_{\infty,+}$ for the map $\pi((\mathbf{x}^+,\mathbf{x}^-,\gamma,\delta))=(\mathbf{x}^+,\mathbf{x}^-)$. Then, it turns out that $\mathfrak{N}_\infty^{(s)}$ is the unique probability measure on $\Omega$ which is supported on $\Omega^0$ and moreover satisfies (see the discussion in \cite{H-P} and the references therein),
\begin{align*}
\pi_*\mathfrak{N}^{(s)}_\infty=\mathsf{HP}_{s}.
\end{align*}

We now upgrade the abstract construction of $\left(\mathbf{X}^\Omega_t;t\ge 0\right)$ from Theorem \ref{thm-HPabstractThm} to a strong approximation theorem from the finite dimensional dynamics which moreover implies that $\left(\mathbf{X}^\Omega_t;t\ge 0\right)$ has continuous sample paths. Finally, we prove convergence to equilibrium from any initial condition. We leave the very interesting problem of constructing an ISDE, the drift of which will most likely require renormalisation to even make sense, to future work. We define $\mathbf{X}^{(N)}=\left(\left(\mathsf{x}^{(N),+}_i\right)_{i=1}^\infty,\left(\mathsf{x}_i^{(N),-}\right)_{i=1}^\infty,\boldsymbol{\gamma}^{(N)},\boldsymbol{\delta}^{(N)}\right)$ to be the process on $\Omega$ obtained from (\ref{HPFSDE}) under the embedding (\ref{embedding}) of $\mathbb{W}_N$ into $\Omega$.

\begin{thm}\label{thm-HuaPickrell}
Let $s\in \mathbb{C}$. Consider the Feller process $\left(\mathbf{X}_t^{\Omega};t\ge 0\right)$ with semigroup $\left(\mathfrak{Q}_\infty^{(s)}(t)\right)_{t\ge 0}$ constructed in Theorem \ref{thm-HPabstractThm} and the embedded processes $\mathbf{X}^{(N)}$ on $\Omega$ defined above. Let $\mathbf{X}_0^{\Omega}=\omega \in \Omega$ be arbitrary and assume $\mathbf{X}^{(N)}(0)\to \omega$ in the topology of $\Omega$. Then, as $N \to \infty$,
\begin{equation*}
 \mathbf{X}^{(N)} \overset{\textnormal{d}}{\longrightarrow} \mathbf{X}^{\Omega},\  \ \textnormal{in}\ 
 \ C(\mathbb{R}_+,\Omega).
\end{equation*}
Moreover, there exists a coupling of the $\mathbf{X}^{(N)}$ and $\mathbf{X}^{\Omega}$ such that, almost surely, for all $T\ge0$, we have:
    \begin{align*}
\sup_{t\in[0,T]}\left(\sum_{i=1}^{\infty}\left|\mathsf{x}_i^{(N),+}(t)-\mathsf{x}_i^+(t)\right|^3+\sum_{i=1}^{\infty}\left|\mathsf{x}_i^{(N),-}(t)-\mathsf{x}_i^-(t)\right|^3\right)\overset{N\to\infty}{\longrightarrow} 0.
    \end{align*}
   Finally, for $\Re{(s)}>-\frac{1}{2}$ and $\mathfrak{K}$ an arbitrary probability measure on $\Omega$, if $\mathsf{Law}\left(\mathbf{X}_0^{\Omega} \right)=\mathfrak{K}$, then as $t\to\infty$,
\begin{equation*}
\mathbf{X}_{t}^{\Omega} \overset{\textnormal{d}}{\longrightarrow} \mathbf{Z}, \ \ \textnormal{where } \mathsf{Law}(\mathbf{Z})=\mathfrak{N}_\infty^s. 
\end{equation*}
\end{thm}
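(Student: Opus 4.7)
The plan for the first two assertions is to apply Theorem \ref{thm-ConvMarkovProc} and Proposition \ref{Prop-Conv in l^3} with $(\mathcal{E}_N)_{N=1}^\infty=(\mathbb{W}_N)_{N=1}^\infty$, Feller boundary $\Omega$ (Theorem \ref{thm-Boundary}), and semigroups $(\mathsf{P}_N(t))_{t\ge 0}=(\mathfrak{Q}_N^{(s)}(t))_{t\ge 0}$. The intertwinings (\ref{P^N-intertwining}) and the Feller property of the finite-dimensional semigroups are precisely Theorem \ref{thm-HPabstractThm} and the construction in \cite{H-P}. The only non-trivial hypothesis left to check is that $\mathscr{C}_N\overset{\textnormal{def}}{=}C^\infty_{c,\textnormal{sym}}(\mathbb{W}_N)$ is a core for the generator $\mathsf{L}_N^{(s)}$ of $(\mathfrak{Q}_N^{(s)}(t))_{t\ge 0}$, and that $\mathsf{L}_N^{(s)}\mathscr{C}_N\subset \mathscr{C}_N$.

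For the invariance, the formal generator associated to \eqref{HPFSDE} is
\begin{equation*}
\mathbf{L}_N^{(s)}=\sum_{i=1}^N (x_i^2+1)\partial_{x_i}^2+\sum_{i=1}^N\left[2(1-N-\Re(s))x_i+2\Im(s)\right]\partial_{x_i}+2\sum_{i=1}^N\sum_{j\neq i}\frac{x_i^2+1}{x_i-x_j}\partial_{x_i}.
\end{equation*}
Writing $f\in C^\infty_{c,\textnormal{sym}}(\mathbb{R}^N)$ and symmetrising the interaction term, the only potentially singular contribution becomes, up to manifestly smooth terms involving $(x_i+x_j)\partial_{x_k}f$, a sum of quantities of the form $(x_i^2+1)(\partial_{x_i}f-\partial_{x_j}f)/(x_i-x_j)$. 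Smoothness of $(\partial_{x_i}f-\partial_{x_j}f)/(x_i-x_j)$ on all of $\mathbb{R}^N$ for symmetric $f$ was established in the proof of Proposition \ref{prop-Core+TransitionDens} via the algebraic result of \cite{Ball}. Hence $\mathbf{L}_N^{(s)}\mathscr{C}_N\subset \mathscr{C}_N$. The core property is then proved by the same martingale-problem strategy as in Proposition \ref{prop-Core+TransitionDens}: the unique strong non-exploding solvability of \eqref{HPFSDE} from \cite{H-P} (via \cite{Graczyk-Malecki}) and an It\^o-formula/localisation argument show that the martingale problem for $(\mathbf{L}_N^{(s)},\mathscr{C}_N)$ is well-posed. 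Theorem 2.5 of \cite{VanCasteren} then identifies the closure of $\mathbf{L}_N^{(s)}\big|_{\mathscr{C}_N}$ with $\mathsf{L}_N^{(s)}$. This completes the input needed and yields both the path-space convergence on $C(\mathbb{R}_+,\Omega)$ and the $\ell^3$-coupling statement.

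For the convergence to equilibrium when $\Re(s)>-\tfrac{1}{2}$, the plan is to apply Theorem \ref{thm-ergodicity}. Since $\mathfrak{N}_\infty^{(s)}$ is already characterised by $\mathfrak{N}_\infty^{(s)}\Lambda_N^\infty=\mathfrak{N}_N^{(s)}$ (Theorem \ref{thm-HPabstractThm}), what remains is exactly the finite-$N$ statement: for every $\nu_N\in\mathscr{M}_\textnormal{p}(\mathbb{W}_N)$ and every $f\in C_b(\mathbb{W}_N)$, $\nu_N\mathfrak{Q}_N^{(s)}(t)(f)\to \mathfrak{N}_N^{(s)}(f)$ as $t\to\infty$. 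We plan to obtain this from the underlying matrix model for the Cauchy ensemble (see \cite{H-P} and the references therein): after the Cayley transform $x\mapsto (x-\textnormal{i})/(x+\textnormal{i})$, the process \eqref{HPFSDE} is conjugate to the eigenvalue process of a hypoelliptic diffusion on the compact unitary group whose unique invariant measure pushes forward to the Cayley image of $\mathfrak{N}_N^{(s)}$. Standard ergodicity for smooth hypoelliptic diffusions on compact manifolds with a smooth invariant density, together with the Feller property, gives convergence from every initial condition; pulling back through the Cayley transform produces the desired weak convergence on $\mathbb{W}_N$.

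The main obstacle I anticipate is the finite-$N$ convergence-to-equilibrium step, which is conceptually expected but requires careful execution of the matrix-level argument (checking that the Cayley-transformed dynamics are genuinely a well-defined matrix SDE with the claimed invariant measure, and that non-trivial boundary behaviour introduced by the transform does not create obstructions). The remaining verifications — invariance and core property for $\mathscr{C}_N$, and the appeal to Theorems \ref{thm-ConvMarkovProc}, \ref{Prop-Conv in l^3} and \ref{thm-ergodicity} — are in contrast routine adaptations of the machinery already developed in Section \ref{SectionConvergence} and Proposition \ref{prop-Core+TransitionDens}.
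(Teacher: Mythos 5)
For the path-space and $\ell^3$-coupling assertions, your route is essentially the one the paper takes: apply Theorem \ref{thm-ConvMarkovProc} and Proposition \ref{Prop-Conv in l^3} with $(\mathcal{E}_N)_{N=1}^\infty=(\mathbb{W}_N)_{N=1}^\infty$, boundary $\Omega$, and semigroups $(\mathfrak{Q}_N^{(s)}(t))_{t\ge 0}$, and check the core/invariance of $C^\infty_{c,\textnormal{sym}}(\mathbb{W}_N)$ by the same argument as in Proposition \ref{prop-Core+TransitionDens}. Your verification of the invariance step is a fine spelling-out of what the paper leaves to the reader.

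For the equilibrium statement, you take a genuinely different route for the key finite-$N$ input. The paper imports convergence to equilibrium of the finite-dimensional eigenvalue process from the convergence of the corresponding matrix diffusion established in \cite{Bougerol} (in direct analogy with what is done, via Lemma 2.2 of \cite{Bougerol}, for the inverse-Laguerre case in Proposition \ref{prop-ConvToEquilibrium}): the non-compact matrix process is expressed as a transient part plus a stationary part and the transient part is killed by a time-reversal argument, avoiding any compactification. You instead propose a Cayley transform to conjugate \eqref{HPFSDE} to a diffusion on the unitary group and then invoke ergodicity of hypoelliptic diffusions on compact manifolds. That is a plausible idea, and you correctly identify its weak point yourself: the Cayley transform sends $\pm\infty$ to a single point on the circle, and the transformed process a priori lives only on the complement of that point. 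One would need to check that the dynamics extend, with the regularity needed for the hypoellipticity/ergodicity theorem, across this degenerate locus — and also that the resulting diffusion has no further degeneracy at collisions of transformed eigenvalues — which is a non-trivial boundary analysis that your sketch does not resolve. So your proposal reaches the same destination via a different, technically heavier route for this step; the paper's direct matrix-representation argument from \cite{Bougerol} is more economical precisely because it stays on the non-compact side.
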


\begin{proof}
All the statements in the theorem, except for convergence to equilibrium, follow by combining Theorem \ref{thm-HPabstractThm} and the fact that $C^\infty_{c,\textnormal{sym}}(\mathbb{W}_N)$ is a core, and invariant under the generator of  $(\mathfrak{Q}_N^{(s)}(t))_{t\ge 0}$ (this can be proven in exactly the same way as Proposition \ref{prop-Core+TransitionDens}), by virtue of Theorem \ref{thm-ConvMarkovProc} and Proposition \ref{Prop-Conv in l^3}. The only thing that remains to note is convergence to equilibrium of the finite-dimensional processes which itself follows from that of the corresponding matrix process shown in \cite{Bougerol}. Then, the remaining statement in the theorem follows from Theorem \ref{thm-ergodicity}.
\end{proof}

\bibliographystyle{acm}
\bibliography{References}

\bigskip 

\noindent{\sc School of Mathematics, University of Edinburgh, James Clerk Maxwell Building, Peter Guthrie Tait Rd, Edinburgh EH9 3FD, U.K.}\newline
\href{mailto:theo.assiotis@ed.ac.uk}{\small theo.assiotis@ed.ac.uk}

\bigskip

\noindent{\sc School of Mathematics, University of Edinburgh, James Clerk Maxwell Building, Peter Guthrie Tait Rd, Edinburgh EH9 3FD, U.K.}\newline
\href{mailto:Z.S.Mirsajjadi@sms.ed.ac.uk}{\small Z.S.Mirsajjadi@sms.ed.ac.uk}
\end{document}